\documentclass[11pt]{amsart}

\usepackage[utf8]{inputenc}

\usepackage{amssymb}
\usepackage{amsthm}
\usepackage{stmaryrd}

\binoppenalty=10000
\relpenalty=10000

\usepackage[shortlabels]{enumitem}
\setenumerate{itemsep=0.005ex}
\usepackage[bookmarks,colorlinks=true,citecolor=blue]{hyperref}

\usepackage{microtype}

\theoremstyle{plain}
\newtheorem{thm}{Theorem}[section]
\newtheorem{mthm}{Theorem}
\newtheorem*{BTDet}{Borel-Turing determinacy}
\newtheorem*{thm1.1}{Theorem 1}
\newtheorem*{thm1.1a}{Theorem 1}
\newtheorem*{thm1.2}{Theorem 2}
\newtheorem*{thm1.2a}{Theorem 2}

\newtheorem{lem}[thm]{Lemma}
\newtheorem*{lem*}{Lemma}
\newtheorem*{mlem*}{Main Lemma}
\newtheorem{cor}[thm]{Corollary}
\newtheorem*{cor*}{Corollary}
\newtheorem{prop}[thm]{Proposition}
\newtheorem*{prop*}{Proposition}

\theoremstyle{definition}
\newtheorem{defn}[thm]{Definition}
\newtheorem*{defn*}{Definition}

\theoremstyle{remark}
\newtheorem{remark}[thm]{Remark}

\numberwithin{equation}{section}

\title{Effective Randomness for Continuous Measures}

\author{Jan Reimann}

\address{Department of Mathematics  \\
Pennsylvania State University, University Park
}
\email{reimann@math.psu.edu}
\thanks{Reimann was partially supported by NSF grants DMS-0801270 and DMS-1201263.}

\author{Theodore A.\ Slaman}

\address{Department of Mathematics  \\
University of California, Berkeley
}
\email{slaman@math.berkeley.edu}

\thanks{Slaman was partially supported by NSF grants DMS-0501167 and DMS-1001551.}

\newcommand{\Bit}{\ensuremath{\{0,1\}}}
\newcommand{\Nat}{\ensuremath{\omega}}

\newcommand{\Rat}{\ensuremath{\mathbb{Q}}}
\newcommand{\Real}{\ensuremath{\mathbb{R}}}
\newcommand{\Cant}{\ensuremath{2^{\omega}}}

\newcommand{\Str}[1][<\omega]{\ensuremath{2^{#1}}}

\newcommand{\Sle}{\ensuremath{\subset}}
\newcommand{\Sleq}{\ensuremath{\subseteq}}

\newcommand{\Sgeq}{\ensuremath{\supseteq}}

\newcommand{\Cl}[1]{\ensuremath{#1}}

\newcommand{\Cyl}[1]{\ensuremath{\llbracket #1 \rrbracket}}
\newcommand{\ACyl}[1]{\ensuremath{\llbracket #1 \rrbracket}}

\newcommand{\Meas}{\ensuremath{\mathcal{M}(\Cant)}}

\newcommand{\ML}{Martin-L\"of}

\newcommand{\Rest}[1]{\ensuremath{ \lceil_{#1}}}
\newcommand{\Op}[1]{\ensuremath{\operatorname{#1}}}

\newcommand{\Conc}{\ensuremath{\mbox{}^\frown}}
\newcommand{\Estr}{\ensuremath{\varnothing}}
\newcommand{\Tup}[1]{\ensuremath{\langle #1 \rangle}}

\newcommand{\join}[1][\mbox{}]{\ensuremath{\oplus_{#1}}}

\newcommand{\Leb}{\ensuremath{\lambda}}

\renewcommand{\and}{\ensuremath{\:\wedge \:}}

\newcommand{\Ax}[1]{\ensuremath{\mathsf{#1}}}

\newcommand{\ZFC}{\ensuremath{\mathsf{ZFC}}}
\newcommand{\ZF}{\ensuremath{\mathsf{ZF}}}

\renewcommand{\P}{\ensuremath{\mathcal{P}}}
\newcommand{\PC}{\ensuremath{\mathcal{PC}}}

\newcommand{\bbP}{\ensuremath{\mathbb{P}}}
\newcommand{\Z}{\ensuremath{\vec{Z}}}

\newcommand{\nmodels}{\ensuremath{\nvDash}}

\newcommand{\Lb}[1]{\ensuremath{L_{\beta_{#1}}}}
\newcommand{\JA}[2][\alpha]{\ensuremath{\Tup{J_{\rho^{#2}_{#1}},A^{#2}_{#1}}}}
\newcommand{\EM}[2][\alpha]{\ensuremath{\Tup{X^{#2}_{#1},M^{#2}_{#1}}}}

\newcommand{\JP}[2][\alpha]{\ensuremath{J_{\rho^{#2}_{#1}}}}
\newcommand{\XM}{\ensuremath{\Tup{X,M}}}
\newcommand{\vecJ}{\ensuremath{\vec{J}}}
\newcommand{\Struc}[1]{\ensuremath{\Tup{F_{{#1}}, E_{{#1}}}}}

\DeclareMathOperator{\NCR}{NCR}
\DeclareMathOperator{\T}{T}
\DeclareMathOperator{\TT}{tt}

\DeclareMathOperator{\Ord}{Ord}

\DeclareMathOperator{\Def}{DEF}

\DeclareMathOperator{\Set}{Set}

\DeclareMathOperator{\Dom}{dom}
\DeclareMathOperator{\Rud}{rud}
\DeclareMathOperator{\Fld}{Field}

\begin{document}

\begin{abstract}
	We investigate which infinite binary sequences (reals) are effectively random
  with respect to some continuous (i.e., non-atomic) probability measure. We
  prove that for every $n$, all but countably many reals are $n$-random for such
  a measure, where $n$ indicates the arithmetical complexity of the Martin-Löf
  tests allowed. The proof rests upon an application of Borel determinacy. Therefore, the proof
  presupposes the existence of infinitely many iterates of the power set of the
  natural numbers. In the second part of the paper we present a metamathematical
  analysis showing that this assumption is indeed necessary. More precisely,
  there exists a computable function $G$ such that, for any $n$, the statement
  ``All but countably many reals are $G(n)$-random with respect to a continuous
  probability measure'' cannot be proved in $\ZFC^-_n$. Here $\ZFC^-_n$ stands
  for Zermelo-Fraenkel set theory with the Axiom of Choice, where the Power Set
  Axiom is replaced by the existence of $n$-many iterates of the power set of
  the natural numbers. The proof of the latter fact rests on a very general
  obstruction to randomness, namely the presence of an internal definability
  structure.
\end{abstract}

\maketitle

%
%
\section{Introduction}~\label{sec:intro}

The goal of this paper is study under what circumstances an infinite binary
sequence (real) is random with respect to some probability measure. We use the
framework of Martin-L\"of randomness to investigate this question. Given a
measure $\mu$, a Martin-L\"of test is an effectively presented $G_\delta$
$\mu$-nullset in which the measure of the open sets converges effectively to
zero. As there are only countably many such tests, only measure-zero many reals
can be covered by a Martin-Löf test for $\mu$. The reals that cannot be covered
are called Martin-Löf random for $\mu$. Obviously, if a real $X$ is an atom of
a measure $\mu$, then $X$ is random for $\mu$. If we rule out this
trivial way of being random, the task becomes harder: Given a real $X$, does
there exist a probability measure on the space of all infinite binary sequences
such that $X$ is not an atom of $\mu$ but $X$ is $\mu$-random?

In~\cite{reimann-slaman:tams},  we were able  to show that  if a real  $X$ is
not computable, then such a measure exists. It is not hard
to see that if a real $X$ is computable, then the only way that $X$ is random
with respect to a measure $\mu$ is for it to be an atom of $\mu$.
Hence having non-trivial random content (with respect to any measure at all) in
the sense of Martin-L\"of is equivalent to being non-computable. 
Besides Martin-Löf randomness, various other notions of algorithmic
randomness have been thoroughly investigated, such as Schnorr randomness or Kurtz randomness. Two recent books on algorithmic
randomness~\cite{downey-hirschfeldt:2010,nies:2009} provide a good overview
over the various concepts. They all have in common that they use algorithmic
features to separate non-randomness from randomness. Moreover, in terms of the
arithmetic hierarchy, the complexities of the underlying test notions usually fall within two or three quantifiers of each other. 

This suggests that in order to study the random content of a real from the point of view of algorithmic randomness in general, we should look at how this content behaves when making tests more powerful by giving them access to  oracles (or equivalently, considering nullsets whose definitions are more complicated). For Martin-Löf tests, this means the test has to be effectively $G_\delta$ only in some parameter
$Z$. This enlarges the family of admissible nullsets and, correspondingly,
shrinks the set of random reals. If the parameter $Z$ is an instance
$\emptyset^{(n)}$ of the Turing jump, i.e., $Z$ is real that can decide all
$\Sigma_n$ statements about arithmetic, we speak of $(n+1)$-randomness. 

\medskip
Our goal is to understand the nature of the set of reals that are not $n$-random with respect to \emph{any continuous probability measure}. In particular, we want to understand how this set behaves as $n$ grows larger (and more reals will have this property).

The restriction to continuous measures makes sense for the following reasons.
By a result of Haken~\cite[Theorem~5]{Haken:2014a}, if a real is $n$-random, $n>2$, with
respect to some (not necessarily continuous) probability measure and not an atom of the measure, it is
$(n-2)$-random with respect to a continuous probability measure. Thus considering arbitrary probability measures would only shift the question of how random a real is by a couple of quantifiers. And the core problem of finding a measure that makes a real random without making the real an atom of the measure remains.
While we ignore features of randomness for arbitrary measures at lower levels, we develop insights into randomness for continuous measures. 
At the level of $1$-randomness, there is an interesting connection with computability theory: In~\cite{reimann-slaman:tams}, drawing on a result of Woodin~\cite{woodin:2008},
we showed that if a real $X$ is not hyperarithmetic, then there
exists a continuous probability measure for which $X$ is $1$-random.

\medskip
Our first main result concerns the size of the set of reals that are not
$n$-random with respect to any continuous measure. The case $n=1$  follows of course from the result in~\cite{reimann-slaman:tams} mentioned above.

\begin{mthm}\label{thm:first-main}
	For any $n \in \Nat$, all but countably many reals are $n$-random with respect
  to some continuous probability measure.
\end{mthm}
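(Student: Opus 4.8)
The plan is to establish the equivalent assertion that the set $\NCR_n$ of reals that are \emph{not} $n$-random with respect to any continuous probability measure is countable. For $n=1$ this is exactly the content of the theorem of Woodin~\cite{woodin:2008} invoked in~\cite{reimann-slaman:tams}: every real that is not hyperarithmetic is $1$-random for some continuous measure, so $\NCR_1$ is contained in the countable set of hyperarithmetic reals, namely $L_{\OCK}\cap\Cant$. The strategy for arbitrary $n$ is to mimic this: to exhibit a \emph{fixed} countable ordinal $\gamma_n$ and show $\NCR_n\subseteq L_{\gamma_n}$. Since $\gamma_n<\omega_1$, the model $L_{\gamma_n}$ is countable, hence so is $L_{\gamma_n}\cap\Cant$, and the theorem follows. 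Heuristically $\gamma_n$ should be an ordinal whose size measures the strength of $\Sigma^0_{n+c}$-determinacy: for $n=1$ it is the least admissible ordinal $\OCK$, and in general it grows with the number of iterates of the power set of $\Nat$ that Martin's inductive proof of Borel determinacy consumes at level $n$. This is why the abstract flags the use of iterated power sets, and why proving the statement uniformly in $n$ requires all finite iterates.

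So fix a real $X\notin L_{\gamma_n}$; the goal is to build a continuous probability measure $\mu$ for which $X$ is $n$-random. As in the $n=1$ construction of~\cite{reimann-slaman:tams}, I would proceed by forcing: conditions are finite approximations to a continuous measure --- say, finite labelled subtrees of $2^{<\omega}$ whose leaf-weights are positive rationals summing to $1$ and are kept from ever concentrating on a single path, so that the limit measure $\mu_G$ of a generic filter $G$ is automatically non-atomic --- and for a suitably generic $G$ the measure $\mu_G$ avoids every relevant Martin-L\"of test. The delicate point, already present at $n=1$, is that the open sets one has to keep $X$ out of are $\Sigma^0_1(\mu_G\oplus\emptyset^{(n-1)})$ classes of small $\mu_G$-measure whose defining indices refer to $\mu_G$ itself; one sidesteps this circularity by working in the ground model with \emph{names} for such classes rather than with the classes themselves, so that the dense sets that decide them belong to the ground model.

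The role of Borel determinacy is to guarantee that this forcing actually succeeds whenever $X\notin L_{\gamma_n}$, i.e.\ that $X$ cannot be ``captured'' by the ground model in a way that blocks the construction. I would encode the obstruction as a game of Borel rank roughly $n+c$: one player builds a descending sequence of forcing conditions, the opponent enumerates a name for an open set meant to swallow $X$; by determinacy one of them has a winning strategy. A winning strategy for the construction player produces the generic $G$, hence a continuous measure $\mu=\mu_G$ witnessing that $X\notin\NCR_n$. A winning strategy for the capturing player, on the other hand, should make $X$ recoverable from that strategy by a definition of bounded complexity; and since, by Martin's analysis, a winning strategy for a game of this Borel rank already lies at a bounded level of the constructible hierarchy --- precisely the level where the $n$ iterates of the power set are spent --- this would place $X$ in $L_{\gamma_n}$, contrary to our choice of $X$. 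Hence the construction player wins.

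I expect the crux to be the balance between two competing demands on this game. Its payoff must sit at a \emph{bounded} level of the Borel hierarchy, so that the required instance of determinacy is provable from only finitely many iterates of the power set --- otherwise the metamathematical analysis in the second half of the paper, which shows that these hypotheses are necessary, could not be matched. Yet a win for the construction player must deliver a genuinely \emph{continuous} measure for which $X$ escapes \emph{every} $n$-test, not merely those the opponent happened to play out. Reconciling the two forces a careful staging of the moves --- interleaving the condition-building rounds with ``challenge'' rounds for each name of an open set, with enough delay that the challenger is in effect free to present an arbitrary test --- and it is this bookkeeping, together with the absoluteness argument that recovers $X$ inside $L_{\gamma_n}$ from a capturing strategy, that I expect to constitute the technical heart of the proof.
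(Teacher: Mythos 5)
Your top-level plan---bound $\NCR_n$ inside a fixed countable level of $L$ whose height reflects the power-set iterates needed for $\Sigma^0_{n+c}$-determinacy---is exactly the paper's skeleton. But the engine you propose, forcing directly with finite approximations to a continuous measure and adjudicating success through a custom game between a condition-builder and a test-enumerator, has a genuine gap at the precise point where determinacy is supposed to do the work. If the payoff set of your game mentions the fixed real $X$, then Martin's construction only locates a winning strategy in a level of $L[X]$, not of $L$, so a win for the ``capturing'' player gives no route to $X \in L_{\gamma_n}$: the strategy is already entitled to use $X$ as a parameter. If the payoff does not mention $X$, it is unclear how a win for the builder yields a measure tailored to this particular $X$ escaping \emph{every} $(\mu,n)$-test, since the tests are relative to a measure determined only in the limit of the play. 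Moreover, the claim that a capturing strategy makes $X$ ``recoverable by a definition of bounded complexity'' is asserted, not argued; winning such a game only says every play of the builder gets covered by some small open set, and that does not by itself define $X$ at any bounded level.

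The paper resolves this quite differently: it never forces a measure. Given $X \notin L_{\beta_{n+4}}$, Kumabe--Slaman forcing (Lemma~\ref{thm-posrob-L}) produces a single real $\Phi$ with $L_{\beta_{n+4}}[\Phi] \models \ZFC^-_{n+4}$ and every real of $L_{\beta_{n+4}}[\Phi]$ Turing reducible to $X \oplus \Phi$; this is the device that ties the arbitrary $X$ to the countable model. Determinacy then enters only through Borel--Turing (cone) determinacy for the Turing-invariant $\Sigma^0_{n+4}(\Phi)$ set of reals $W\oplus\Phi$ whose degree splits as $Z\oplus\Phi\oplus R$ with $R$ being $(Z\oplus\Phi,n+3)$-random: this set meets every cone, so it contains the cone above its winning strategy $S \in L_{\beta_{n+4}}[\Phi]$ (Mostowski absoluteness guaranteeing $S$ wins all plays in $V$), and $X\oplus\Phi \geq_{\T} S$ places $X\oplus\Phi$ in the set. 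Finally, the purely degree-theoretic conclusion is converted into actual randomness for a continuous measure by the conservation-of-randomness machinery---Demuth's theorem plus Kurtz domination packaged in Lemma~\ref{prop:rand-via-turing}, followed by an image-measure and normalization step---at the cost of a few levels of randomness, which is why the bound is $\beta_{n+4}$. These two ingredients, the join real $\Phi$ and the transfer of randomness down Turing reductions, are what your sketch is missing, and without something playing their role the circularities you yourself flag (tests referring to the measure being built, and the game referring to $X$) remain unresolved.
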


The proof features a metamathematical argument. Let us denote by $\NCR_n$ the
set of all reals that are not $n$-random with respect to any continuous
probability measure. We show that for each $n$, $\NCR_n$ is contained in a
countable model of a fragment of set theory. More precisely, this fragment is
$\ZFC^{-}_{n+3}$, where $\ZFC^{-}_{n+3}$ denotes the axioms of Zermelo-Fraenkel set
theory with the Axiom of Choice, with the power set axiom replaced by a
sentence that assures the existence of $n+3$ iterates of the power set of the
natural numbers.

One may wonder whether this metamathematical argument is really necessary to
prove the countability of a set of reals, in particular, whether one needs the
existence of infinitely many iterates of the power set of $\Nat$ to prove
Theorem~\ref{thm:first-main}, a result about sets of reals. It turns out that
this is indeed the case. This is the subject of our second main result.

\begin{mthm}\label{thm:second-main}
  There exists a computable function $G(n)$ such that for every $n \in \Nat$, the statement
	\begin{quote}
		``There exist only countably many reals that are not $G(n)$-random with
    respect to some continuous probability measure.''
	\end{quote}
	is not provable in $\ZFC^{-}_n$.
\end{mthm}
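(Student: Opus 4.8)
The plan is to establish an unprovability result by the standard reflection/Löb-style trick: if $\ZFC^{-}_n$ proved the statement in question, it would prove the existence of a countable ordinal bounding the ``randomness-theoretic complexity'' of all reals in $\NCR_{G(n)}$, and from this one extracts a consistency statement for $\ZFC^{-}_n$ itself, contradicting Gödel's second incompleteness theorem. Concretely, first I would recall from the proof of Theorem~\ref{thm:first-main} that $\NCR_{m}$ is contained in a countable transitive model of $\ZFC^{-}_{m+3}$ — indeed, in the least such model, whose ordinal height I will call $\beta_m$. The key observation is a kind of converse: a real $X$ that ``codes'' a well-founded model of a sufficiently strong fragment of set theory is, by the internal-definability obstruction advertised in the abstract, automatically in $\NCR_{m}$ for an appropriate $m$ computed from the complexity of the coding. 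So membership in $\NCR_{G(n)}$ is closely tied to computing ordinals up to $\beta_n$-like heights.

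The main steps, in order. (1) Fix the computable function $G$ so that $G(n)$ exceeds the number of quantifiers needed to recognize, in a $\Sigma^0_{G(n)}$ way, that a real is (a code for) a countable model of $\ZFC^{-}_n$ — here one uses that the satisfaction predicate for a fixed fragment is arithmetic, and that well-foundedness of the membership relation of the coded model, while $\Pi^1_1$, can be replaced by the weaker but still obstructive requirement used in the internal-definability argument. (2) Show: if $M$ is a countable model of $\ZFC^{-}_n$ with membership relation coded by a real $X$, then $X \in \NCR_{G(n)}$. This is the heart of the matter and follows the ``internal definability structure'' principle: the model $M$ sees itself and its own reals, and any continuous measure $\mu$ making $X$ random would have to be sufficiently generic over $M$, but $X$ — being a presentation of $M$ — is definable from $\mu$ inside a slightly larger model, producing a $G(n)$-test covering $X$. (3) Now suppose, for contradiction, that $\ZFC^{-}_n$ proves ``$\NCR_{G(n)}$ is countable.'' Working inside $\ZFC^{-}_n$, run the Löwenheim–Skolem / Mostowski collapse to produce a countable transitive $M \models \ZFC^{-}_{n}$ (this requires $\ZFC^{-}_n$ to prove its own consistency-via-a-model, which is exactly where the contradiction will bite); the real coding $M$ then belongs to $\NCR_{G(n)}$ by step (2). (4) Since $\ZFC^{-}_n$ proves $\NCR_{G(n)}$ countable, it proves there is an injection of $\NCR_{G(n)}$ into $\omega$, hence a real $Y$ enumerating $\NCR_{G(n)}$; but then $\ZFC^{-}_n$ proves the existence of a code for a well-founded model of $\ZFC^{-}_n$, i.e., proves $\Con(\ZFC^{-}_n)$ — contradicting Gödel II, provided $\ZFC^{-}_n$ is consistent.

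The delicate part is twofold. First, the internal-definability argument of step (2) must be made uniform in $n$ with an explicitly computable bookkeeping of quantifier complexity, so that a single computable $G$ works for all $n$; this is where care is needed to ensure the $\Sigma^0_{G(n)}$-test genuinely covers the coding real and that $G$ does not secretly depend on the model. Second, step (3)–(4) requires that from ``$\NCR_{G(n)}$ countable'' one can, provably in $\ZFC^{-}_n$, manufacture an actual well-founded set model of $\ZFC^{-}_n$ rather than merely an ill-founded one — otherwise no consistency statement is extracted. The resolution is that the reals put into $\NCR_{G(n)}$ in step (2) can be taken to code \emph{transitive} (hence well-founded) models, and $\ZFC^{-}_n$ proves such a model exists (e.g., a countable elementary submodel of some $H(\kappa)$ collapsed), so its code must appear on the enumerating real $Y$; decoding $Y$ then yields, provably in $\ZFC^{-}_n$, a well-founded model of $\ZFC^{-}_n$, which is the desired contradiction. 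I expect step (2) — the careful, complexity-uniform version of the internal-definability obstruction — to be the principal obstacle.
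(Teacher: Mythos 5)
Your step (2) is false as stated, and it is the load-bearing claim. It is not true that every real coding a countable model of $\ZFC^-_n$ lies in $\NCR_{G(n)}$: the set of codes of a fixed countable structure is upward closed in the Turing degrees, so one can choose a presentation $X$ of the model that is Turing equivalent to a real which is highly random relative to some fixed code; by Demuth-type transfer (Proposition~\ref{prop:relative-demuth} and Lemma~\ref{prop:rand-via-turing}, or Proposition~\ref{prop:char-cont-rand}), such an $X$ \emph{is} random at a substantial level for a continuous measure. The internal-definability obstruction only applies to \emph{canonical} presentations — the master codes of standard $J$-structures, whose arithmetic structure reflects the stratification of $L$ — and making that precise is exactly what the fine-structure machinery (canonical copies, pseudocopies, the ``stair trainer'' arguments) is for; it cannot be waved through for arbitrary codes of arbitrary models of $\ZFC^-_n$.

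The consistency-extraction in steps (3)--(4) also fails. $\ZFC^-_n$ does not prove the existence of \emph{any} model of $\ZFC^-_n$ (well-founded or not); that would already contradict G\"odel's second incompleteness theorem independently of the randomness statement. The proposed repair via a collapsed countable elementary submodel of some $H(\kappa)$ does not work in $\ZFC^-_n$: the relevant $H(\kappa)$ is not a set there (its existence needs further iterates of the power set), and class reflection only yields finite fragments. So from ``$\NCR_{G(n)}$ is countable'' you never get a code of a model of $\ZFC^-_n$ onto your enumerating real, and no statement of the form $\mathrm{Con}(\ZFC^-_n)$ is extracted; the G\"odel II route has no purchase. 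The paper's proof is structurally different: it fixes the concrete model $L_{\beta_n}\models\ZFC^-_n$ and shows the statement is \emph{false} there, by producing reals (the canonical copies of standard $J$-structures with projectum $1$, for $\alpha<\beta_n$) that are not $G(n)$-random for any continuous measure (Theorem~\ref{thm:mastercodes-in-NCR}), are cofinal in the Turing degrees of $L_{\beta_n}$, hence uncountable inside $L_{\beta_n}$, while membership in $\NCR_{G(n)}$ is downward absolute to $\omega$-models. Unprovability then follows from the existence of a model of $\ZFC^-_n$ in which the statement fails, not from incompleteness phenomena.
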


This metamathematical property of $\NCR$ is reminiscent of Borel
determinacy~\cite{martin:1975}. Even before Martin proved that every Borel
game is determined, Friedman~\cite{friedman:1970} had shown that any proof of
Borel determinacy had to use uncountably many iterates of the power set of
$\Nat$. Borel determinacy is a main ingredient in our proof of
Theorem~\ref{thm:first-main}. Theorem~\ref{thm:second-main} establishes that
this use is, in a certain sense, inevitable.  

Theorem~\ref{thm:second-main} is proved via a fine structure analysis of the
countable models used to show $\NCR_n$ is countable. These models are certain
levels $L_\beta$ of Gödel's constructible hierarchy. In these $L_\beta$ (or
rather Jensen's version, the $J$-hierarchy) we exhibit sequences of
non-random reals with Turing degrees cofinal among those of the model. These reals are \emph{master codes}~\cite
{boolos-putnam:1968, jensen:1972}, reals that code initial segments of the $J$-hierarchy in a way that
arithmetically reflects the strong stratification of $L$. The main feature of this proof is a very general principle that
manifests itself in various forms: an internal stratified definability structure
forms a strong obstruction to randomness. This principle works for both iterated
Turing jumps as well as certain levels of the $J$-hierarchy.

\medskip
Before we proceed, we make one more comment on the restriction to continuous measures. Note that Theorem~\ref{thm:first-main} is a stronger statement for continuous measures than for arbitrary measures. Furthermore, by Haken's result~\cite{Haken:2014a}, Theorem 2 holds for arbitrary measures if we replace $G(n)$ by $G(n)+2$.

\medskip 
The paper is organized as follows. In Section 2, we introduce
effective randomness for arbitrary (continuous) probability measures. We also
prove some fundamental facts on randomness. In particular,
we will give various ways to obtain reals that are random for \emph{some}
continuous measure from standard Martin-L\"of random reals (i.e., random with
respect to Lebesgue measure). We also consider the definability strength of
random reals.
Section 3 features the proof that for any $n$, all but countably many reals
are $n$-random with respect to some continuous measure (Theorem~\ref{thm:first-main}). 
Finally, Section 4 is devoted to the metamathematical analysis of
Theorem~\ref{thm:first-main}. In particular, it contains a proof of
Theorem~\ref{thm:second-main}.

\medskip We expect the reader to have basic knowledge in mathematical logic and
computability theory, including some familiarity with forcing, the constructible
universe, and the recursion theoretic hierarchies.

\subsection*{Acknowledgments}
We would like to thank Sherwood Hachtman, Carl Jockusch Jr., Alexander Kechris,
Donald Martin, and W.~Hugh Woodin for many helpful discussions and suggestions.
We would also like to thank the anonymous referees for their very careful
reading of the manuscript and for their much-needed suggestions on how to improve the
paper.


%
%
\section{Randomness for Continuous Measures}~\label{sec-rand-cont-meas}

In this section we review effective randomness on 
Cantor space $\Cant$ for arbitrary probability
measures. We then prove some preliminary facts about random reals.

The \emph{Cantor space} $\Cant$ is the set of all infinite binary
sequences, also called \emph{reals}. The topology generated by the
\emph{cylinder sets}
\[
\Cyl{\sigma} = \{ x : \: x\Rest{|\sigma|} = \sigma \},
\]
where $\sigma$ is a finite binary sequence, turns $\Cant$ into a
compact Polish space.
$\Str$ denotes the set of all finite binary sequences. If
$\sigma, \tau \in \Str$, we use $\Sleq$ to denote the usual prefix
partial ordering. This extends in a natural way to $\Str \cup
\Cant$. Thus, $x \in \Cyl{\sigma}$ if and only if $\sigma \Sle
x$. Finally, given $U \subseteq \Str$, we write $\Cyl{U}$ to denote
the open set induced by $U$, i.e. $\Cyl{U} = \bigcup_{\sigma \in U}
\Cyl{\sigma}$.

\subsection{Turing functionals} 
\label{sub:turing_functionals}

While the concept of a Turing functional is standard, we will later define a
forcing partial order based on it, and for this purpose we give a rather
complete formal definition here. The definition
follows~\cite{shore-slaman:1999}, with the one difference that we require Turing
functionals to be recursively enumerable.

A \emph{Turing functional} $\Phi$ is a computably enumerable set of triples
$(m,k,\sigma)$ such that $m$ is a natural number, $k$ is either $0$ or $1$, and
$\sigma$ is a finite binary sequence. Further, for all $m$, for all $k_{1}$ and
$k_{2}$, and for all compatible $\sigma_{1}$ and $\sigma_{2}$, if
$(m,k_{1},\sigma_{1}%
)\in\Phi$ and $(m,k_{2},\sigma_{2})\in\Phi$, then $k_{1}=k_{2}$ and
$\sigma_{1}=\sigma_{2}$.

We will refer to a triple $(m,k,\sigma)$ as a \emph{computation} in $\Phi$, and we will say it is 
a \emph{computation along $X$} when every $X$ is an extension of $\sigma$.

In the following, we will also assume that Turing functionals $\Phi$ are
\emph{use-monotone}, which means the following hold.
\begin{enumerate}
\item  For all $(m_{1},k_{1},\sigma_{1})$ and $(m_{2},k_{2},\sigma_{2})$ in
$\Phi$, if $\sigma_{1}$ is a proper initial segment of $\sigma_{2}$, then
$m_{1}$ is less than $m_{2}$.

\item  For all $m_{1}$ and $m_{2}$, $k_{2}$ and $\sigma_{2}$, if $m_{2}>m_{1}$
and $(m_{2},k_{2},\sigma_{2})\in\Phi$, then there are $k_{1}$ and $\sigma_{1}$
such that $\sigma_{1}\Sleq\sigma_{2}$ and $(m_{1},k_{1},\sigma_{1})\in
\Phi$.
\end{enumerate}

We write $\Phi^\sigma(m)=k$ to indicate that there is a $\tau$ such that
$\tau$ is an initial segment of $\sigma$, possibly equal to $\sigma$, and
$(m,k,\tau)\in\Phi$. In this case, we also write $\Phi^\sigma(m)\downarrow$,
as opposed to $\Phi^\sigma(m) \uparrow$, indicating that for all $k$ and all
$\tau \Sleq \sigma$, $(m,k,\tau) \not\in \Phi$. If, moreover, $(m,k,\tau)$ is
enumerated into $\Phi$ by time $s$, we write $\Phi_s^\sigma(m)=k$.

If $X\in\Cant$, we write $\Phi^X(m)=k$ (and $\Phi_s^X(m)=k$, respectively) to
indicate that there is an $l$ such that $\Phi^{X\Rest{l}}(m)=k$ (and this is
enumerated by time $s$, respectively). This way, for given $X \in
\Cant$, $\Phi^X$ defines a partial function from $\omega$ to $\{0,1\}$
(identifying reals with sets of natural numbers). If this function is total, it
defines a real $Y$, and in this case we write $\Phi(X) = Y$ and say that $Y$ is
Turing reducible to $X$ via $\Phi$, $Y \leq_{\T} X$.

By use-monotonicity, if $\Phi^\sigma(m) \downarrow$, then
$\Phi^\sigma(n)\downarrow$ for all $n < m$. If we let $\overline{m}$ be maximal
such that $\Phi^\sigma(\overline{m}) \downarrow$, $\Phi^\sigma$ gives rise to a
string $\tau$ of length $\overline{m}+1$, \[ \tau = \Phi^\sigma(0) \dots
  \Phi^\sigma(\overline{m}). \] If $\Phi^\sigma(n)\uparrow$ for all $n$, we put
$\tau = \Estr$. On the other hand, if $\overline{m}$ does not exist, then
$\Phi^\sigma$ gives rise to a real $Y$. We write $\Phi(\sigma) = \tau$ or
$\Phi(\sigma) = Y$, respectively. This way a Turing functional induces a
function from $\Str$ to $\Str\cup \Cant$ that is \emph{monotone}, that is,
$\sigma \Sleq \tau$ implies $\Phi(\sigma) \Sleq \Phi(\tau)$. Note that
$\Phi(\sigma)$ is not necessarily a computable function, but we can effectively
approximate it by prefixes. More precisely, there exists a computable mapping
$(\sigma,s) \mapsto \Phi_s(\sigma)$ so that $\Phi_s(\sigma) \Sleq
\Phi_{s+1}(\sigma)$, $\Phi_s(\sigma) \Sleq \Phi_s(\sigma\Conc i)$ for $i \in
\{0,1\}$, and $\lim_s \Phi_s(\sigma) = \Phi(\sigma)$.

If, for a real $X$, $\lim_n |\Phi(X\Rest{n})| = \infty$, then $\Phi(X) = Y$,
where $Y$ is the unique real that extends all $\Phi(X\Rest{n})$. In this way,
$\Phi$ also induces a partial, continuous function from $\Cant$ to $\Cant$. We
will use the same symbol $\Phi$ for the Turing functional, the monotone function
from $\Str$ to $\Str$, and the partial, continuous function from $\Cant$ to
$\Cant$. It will be clear from the context which $\Phi$ is meant. $\Phi$ is
called \emph{total} if $\Phi(X)$ is a real for all $X \in \Cant$. If $\Phi$ is
total and $\Phi(X) = Y$, then $Y$ is called \emph{truth-table reducible} to $X$,
$Y \leq_{\TT} X$.

Turing functionals can be relativized with respect to a parameter $Z$,
by requiring that $\Phi$ is r.e.\ in $Z$. We call such functionals
\emph{Turing $Z$-functionals}. This way we can consider relativized
Turing reductions. A real $X$ is Turing reducible to a real $Y$
relative to a real $Z$, written $X \leq_{\T(Z)} Y$, if there exists a
Turing $Z$-functional $\Phi$ such that $\Phi(X) = Y$.

%
%
\subsection{Probability measures}

By the Carathéodory extension theorem, a Borel probability measure $\mu$ on
$\Cant$ is completely specified by its values on \emph{clopen sets}, i.e., on
finite unions of basic open cylinders. In particular, $\mu\Cyl{\Estr} = 1$, and
the additivity of $\mu$ implies that for all $\sigma \in \Str$,
\begin{equation} \label{equ-probability measure}
   \mu\Cyl{\sigma} = \mu\Cyl{\sigma \Conc 0} + \mu\Cyl{\sigma \Conc 1}.
\end{equation}
An \emph{additive premeasure} is a function
$\eta: \Str \to \Real^{\geq 0}$ with $\eta(\Estr) = 1$ and
$\eta(\sigma) = \eta(\sigma\Conc 0)+\eta(\sigma\Conc 1)$ for all
$\sigma \in \Str$. Any additive premeasure induces a Borel probability
measure, and if we restrict a Borel probability measure $\mu$ to its
values on cylinders, we obtain an additive premeasure whose
Carathéodory extension is $\mu$. We can therefore identify a Borel
probability measure on $\Cant$ with the additive premeasure it
induces. We will exclusively deal with Borel probability measures and
in the following simply write \emph{measure} to denote a Borel
probability measure on $\Cant$.

The \emph{Lebesgue measure} $\Leb$ on $\Cant$ is obtained by
distributing a unit mass uniformly along the paths of $\Cant$, i.e.,
by setting $\Leb\Cyl{\sigma} = 2^{-|\sigma|}$.
A \emph{Dirac measure}, on
the other hand, is defined by putting a unit mass on a single real,
i.e., for $X \in \Cant$, let
\[
\delta_X \Cyl{\sigma} = \begin{cases}
  1 & \text{if } \sigma \Sle X, \\
  0 & \text{otherwise.}
\end{cases}
\]
If, for a measure $\mu$ and $X \in \Cant$, $\mu(\{X\}) > 0$, then $X$
is called an \emph{atom} of $\mu$. Obviously, $X$ is an atom of
$\delta_X$. A measure that does not have any atoms is called
\emph{continuous}.

\subsection{Representation of measures and Martin-L\"of
  randomness}\label{ssec-randomness}

To incorporate measures into an effective test for randomness we represent them
as reals. This can be done in various ways (for example, identify them with the
underlying premeasure and code that), but in order for the main arguments
in~\cite{reimann-slaman:tams} to work, the representation has to reflect some of
the topological properties of the space of probability measures.

Let $\Meas$ be the set of all Borel probability measures on $\Cant$. With the
weak-* topology, this becomes a compact Polish space (see~\cite[Theorem
17.23]{kechris:1995}). It is possible to choose a countable dense subset
$\mathcal{D}\subseteq \Meas$ so that every measure in $\Meas$ is the limit of an
effectively converging Cauchy sequence of measures in $\mathcal{D}$. Moreover,
the structure of the measures in $\mathcal{D}$ is such that they give rise to a
canonical continuous surjection $\rho: \Cant \to \Meas$ with the additional
property that for every $R \in \Cant$, $\rho^{-1}(\{\rho(R)\})$
is $\Pi^0_1(R)$. For details on the construction of $\rho$,
see~\cite[Section 2]{Day-Miller:2013a}. If
$\rho(R) = \mu$, then $R$ is called a \emph{representation} of $\mu$. A measure
may have several distinct representations with respect to $\rho$. If
$\mu$ is given, $R_\mu$ will always denote a representation of $\mu$. 

Working with representations, we can apply computability theoretic notions to
measures. The following two observations appeared as Propositions~2.2 and 2.3,
respectively, in~\cite{reimann-slaman:tams}.

\begin{prop} \label{prop:repres-relation}
  Let $R \in \Cant$ be a representation of a measure $\mu \in \Meas$. Then the relations
  \[
    \mu\Cyl{\sigma} < q \quad \text{ and } \quad \mu\Cyl{\sigma} > q \qquad
    (\sigma \in \Str, q \in \Rat)
  \] 
  are r.e.\ in $R$.
\end{prop}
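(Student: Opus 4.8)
The plan is to unwind the construction of the representation map $\rho$ from \cite[Section~2]{Day-Miller:2013a} and to exploit that $\rho$ is effectively continuous with respect to a metric on $\Meas$ that controls the values of a measure on cylinders. Fix an effective enumeration $\sigma_0,\sigma_1,\dots$ of $\Str$. The weak-$*$ topology on $\Meas$ is compatible with a metric of the form $d(\nu,\nu') = \sum_{j} 2^{-j-1}\,|\nu\Cyl{\sigma_j} - \nu'\Cyl{\sigma_j}|$ (each $\mathbf{1}_{\Cyl{\sigma_j}}$ is clopen, hence continuous, and these functions separate measures by Carathéodory). By construction, from any $R$ with $\rho(R)=\mu$ one computes, uniformly in $R$, a sequence of basic measures $(\mu_i)_{i\in\Nat}\subseteq\mathcal{D}$ with $d(\mu_i,\mu)\le 2^{-i}$, where each $\mu_i\Cyl{\sigma}$ is a rational number computable from $R$, $i$, and $\sigma$.

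First I would extract the quantitative consequence: if $\sigma = \sigma_j$ then $|\mu_i\Cyl{\sigma} - \mu\Cyl{\sigma}| \le 2^{j+1}\cdot 2^{-i}$, a bound computable from $\sigma$ and $i$ that tends to $0$ as $i\to\infty$. Writing $\varepsilon_{i,\sigma} = 2^{\,j+1-i}$ for this bound, I then claim that $\mu\Cyl{\sigma} < q$ holds if and only if there exists $i$ with $\mu_i\Cyl{\sigma} + \varepsilon_{i,\sigma} < q$: the forward direction follows by choosing $i$ so large that $\varepsilon_{i,\sigma} < (q - \mu\Cyl{\sigma})/2$, and the backward direction is immediate from $\mu\Cyl{\sigma} \le \mu_i\Cyl{\sigma} + \varepsilon_{i,\sigma}$. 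Since for fixed $i$ and $\sigma$ the number $\mu_i\Cyl{\sigma}$ is a rational computable from $R$, the displayed condition defines a set r.e.\ in $R$: search over $i$, compute $\mu_i\Cyl{\sigma}$, and compare rationals. The relation $\mu\Cyl{\sigma} > q$ is handled symmetrically, via the equivalence with ``$\exists i\;(\mu_i\Cyl{\sigma} - \varepsilon_{i,\sigma} > q)$''.

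The only point requiring care is the interface with \cite{Day-Miller:2013a, reimann-slaman:tams}: one has to confirm that the metric attached to $\rho$ does admit an explicit Lipschitz-type control of $|\nu\Cyl{\sigma}-\nu'\Cyl{\sigma}|$ by $d(\nu,\nu')$ with a factor depending only on $\sigma$, and that the approximating sequence $(\mu_i)$ together with the convergence rate $d(\mu_i,\mu)\le 2^{-i}$ is produced uniformly computably from $R$ — both of which are built into the definition of the representation system. Once that is in hand, the proposition is just the elementary observation that a real number which can be approximated from a given oracle with a computable modulus of convergence has order relations against the rationals that are r.e.\ in that oracle.
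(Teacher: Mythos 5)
Your argument is correct and is essentially the proof the paper relies on: the proposition is not proved here but cited from \cite{reimann-slaman:tams}, where it is obtained by exactly this unwinding of the representation --- a fast Cauchy sequence of basic measures with computable rational values on cylinders, together with the fact that cylinders are clopen, so that $\mu\Cyl{\sigma}$ is effectively approximated with a known modulus and the strict rational inequalities become $\Sigma^0_1(R)$. The only cosmetic differences are that you first establish the approximation-with-computable-modulus statement (which is precisely Proposition~\ref{prop:computation-measure}, which the paper instead derives from the present proposition) and your flagged caveat about the exact metric used in \cite{Day-Miller:2013a}, which is harmless: since cylinder indicators are continuous and separate measures, any of the standard compatible metrics on $\Meas$ gives the Lipschitz-type control you need.
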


It follows that the representation of a measure can effectively approximate
its values on cylinders to arbitrary
precision.

\begin{prop} \label{prop:computation-measure}
  Let $R \in \Cant$ be a representation of a measure $\mu \in \Meas$. Then $R$
  computes a function $g_\mu : \Str \times \Nat \to \Rat$ such that for all
  $\sigma \in \Str$, $n \in \Nat$,
$$
  |g_\mu(\sigma,n) - \mu\Cyl{\sigma}| \leq 2^{-n}.
$$
\end{prop}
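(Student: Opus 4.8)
The plan is to derive this directly from Proposition~\ref{prop:repres-relation} together with the observation that the relations there are not merely r.e.\ in $R$ but, for each fixed $\sigma$, jointly give an effective approximation to the real number $\mu\Cyl{\sigma}$. First I would fix an effective enumeration, uniform in $R$, of all pairs $(\sigma,q)$ with $q\in\Rat$ for which $\mu\Cyl{\sigma}<q$, and likewise for $\mu\Cyl{\sigma}>q$; such enumerations exist by Proposition~\ref{prop:repres-relation}. The key point is that for a fixed $\sigma$, the value $\mu\Cyl{\sigma}$ is a real in $[0,1]$, and the set $\{q\in\Rat : \mu\Cyl{\sigma}<q\}$ is precisely the set of rationals strictly above it, while $\{q\in\Rat:\mu\Cyl{\sigma}>q\}$ is the set of rationals strictly below; these are the left and right Dedekind cuts of $\mu\Cyl{\sigma}$ (up to the value itself, which may be rational).

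Given this, to compute $g_\mu(\sigma,n)$ I would, using $R$ as oracle, simultaneously run the two enumerations and wait until some rational $q_1$ appears in the ``$>q_1$'' enumeration for $\sigma$ and some rational $q_2$ appears in the ``$<q_2$'' enumeration for $\sigma$ with $q_2 - q_1 \leq 2^{-(n+1)}$ (or some similar bound). Since $\mu\Cyl{\sigma}$ is a genuine real number and the rationals are dense, such a pair must eventually be found: for any $\varepsilon>0$ there are rationals $q_1,q_2$ with $q_1 < \mu\Cyl{\sigma} < q_2$ and $q_2-q_1<\varepsilon$, unless $\mu\Cyl{\sigma}$ happens to be an endpoint $0$ or $1$, which are handled trivially, or $\mu\Cyl{\sigma}$ is rational, in which case one still finds arbitrarily tight rational brackets strictly enclosing the next dyadic approximation. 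Once such $q_1,q_2$ are found I would output, say, $q_1$ (or $(q_1+q_2)/2$); then $|g_\mu(\sigma,n)-\mu\Cyl{\sigma}|\leq q_2-q_1\leq 2^{-n}$. The whole search is an $R$-computable procedure that halts on every input, so $g_\mu$ is total and $R$-computable, and the bound holds by construction.

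The only subtlety — and the one point that deserves a sentence of care rather than being the ``main obstacle'' — is guaranteeing that the search terminates even when $\mu\Cyl{\sigma}$ is rational, since then the strict inequalities $\mu\Cyl{\sigma}<q$ and $\mu\Cyl{\sigma}>q$ both fail at $q=\mu\Cyl{\sigma}$ itself. But this causes no problem: for a rational value $r=\mu\Cyl{\sigma}$ and any $\varepsilon>0$ one still has rationals $r-\varepsilon/2 < r < r+\varepsilon/2$ witnessing both enumerations, so a bracketing pair of width $<\varepsilon$ is still found. Thus the argument is uniform and requires no case distinction in the algorithm itself; one only needs to remark, in the verification, that a suitable bracket always exists. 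I do not anticipate any real obstacle here — the proposition is essentially a routine consequence of Proposition~\ref{prop:repres-relation} and the completeness of $\Real$ — so the proof can be kept to a short paragraph.
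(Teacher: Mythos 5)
Your proof is correct and is essentially the argument the paper intends: the proposition is stated as an immediate consequence of Proposition~\ref{prop:repres-relation} (with the proof deferred to the earlier paper), namely an $R$-effective search for rationals $q_1 < \mu\Cyl{\sigma} < q_2$ with $q_2 - q_1$ small, which always terminates since strict rational brackets exist around any real, rational or not. Your handling of the rational/endpoint cases is the right (and only) point of care, and it is dealt with correctly.
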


We say a real $X$ is \emph{recursive in $\mu$} if $X \leq_{\T} R_\mu$
for every representation $R_\mu$ of $\mu$. On the other hand, we say a real
computes a measure if its computes \emph{some} representation of it.
A measure does not necessarily have a representation of least
Turing degree~\cite[Theorem 4.2]{Day-Miller:2013a}.

We will later show that the question of whether a real is random with respect to
a continuous measure can be reduced to considering only continuous dyadic
measures. A measure $\mu$ is \emph{dyadic} if every measure of a cylinder is of the form
$\mu\Cyl{\sigma} = m/2^n$ with $m,n$ non-negative integers.

For dyadic measures,
it makes sense to speak of \emph{exact computability}: A dyadic measure $\mu$ is
\emph{exactly computable} if the function $\sigma \mapsto \mu\Cyl{\sigma}$ is a
computable mapping from $\Str$ to $\Rat$. Note that for exactly computable
measures, the relation $\mu\Cyl{\sigma} > q$, $q$ rational, is
decidable, whereas in the general case for $\mu$ with a computable
representation we only know it is
$\Sigma^0_1$.

If we encode a dyadic measure $\mu$ by collecting the ternary expansions
of its values on cylinders in a single real $Y$, we obtain a representation not in
the sense of $\rho$, but that is minimal in the following sense: Any real that
can compute an approximation function to $\mu$ in the sense of
Proposition~\ref{prop:computation-measure} can compute $Y$.

\medskip We can now give the definition of a general \ML\ test. The definition
is a generalization of Martin-Löf $n$-tests and Martin-Löf 
$n$-randomness for Lebesgue measure. We relativize both with respect to a
representation of the measure and an additional parameter.

\begin{defn}\label{def-test-relative-to-measure}
  Suppose $\mu$ is a probability measure on $\Cant$, and $R$ is a representation
  of $\mu$. Suppose further that $Z \in \Cant$ and $n\geq 1$.
\begin{enumerate}[(1)]
	\item An \emph{$(R,Z,n)$-test} is a set
	  $W \subseteq \Nat \times \Str$ which is recursively enumerable in
	  $(R\oplus Z)^{(n-1)}$, the $(n-1)$st Turing jump of $R\oplus Z$ such that
	  \begin{equation*} \label{equ-correct-test}
	    \sum_{\sigma \in W_n} \mu\Cyl{\sigma} \leq 2^{-n},
	  \end{equation*}
	where $W_n = \{ \sigma : \: (n, \sigma) \in W \}$

	\item A real $X$ \emph{passes} a test $W$ if $X \not\in
  \bigcap_n \ACyl{W_n}$. If $X$ does not pass a test $W$, we also say $X$ is
  \emph{covered} by $W$ (or $(W_n)$, respectively).

	\item A real $X$ is
	   \emph{$(R, Z, n)$-random} if it passes all $(R,Z,n)$-tests.

   \item \label{def:mu-random} A real $X$ is \emph{Martin-L\"of $n$-random for
       $\mu$ relative to $Z$}, or simply \emph{$(\mu, Z, n)$-random} if there
     exists a representation $R_\mu$ such that $X$ is $(R_\mu,Z,n)$-random. In
     this case we say $R_\mu$ \emph{witnesses} the $\mu$-randomness of $X$.
\end{enumerate}
\end{defn}

If the underlying measure is Lebesgue measure $\Leb$, we often drop reference to
the measure and simply say $X$ is \emph{$(Z,n)$-random}. We also drop the index
$1$ in case of $(\mu, Z,1)$-randomness and simply speak of
\emph{$\mu$-randomness relative to $Z$} or \emph{$\mu$-$Z$-randomness}. If $Z =
\emptyset$, on the other hand, we speak of $(\mu,n)$- or
$\mu$-$n$-\emph{randomness}. Note also that if $\mu$ is $Z$-computable, say
$R_\mu \leq_{\T} Z$, then $(R_\mu,Z,n)$-randomness is the same as
$(R_\mu,Z^{(n-1)},1)$-randomness.

\begin{remark}
  The original definition of $n$-randomness for Lebesgue measure $\lambda$ given
by Kurtz~\cite{kurtz:1981} uses tests based on $\Sigma_n$ classes. However, it
is possible to approximate $\Sigma^0_n$ classes from outside in measure by open
sets. Kurtz~\cite{kurtz:1981} and Kautz~\cite{kautz:1991} showed that such an
approximation in measure can be done effectively for classes of the lightface
finite Borel hierarchy, in the sense that a $\Sigma^0_n$ class can be
approximated in measure by a $\Sigma^{0,\emptyset^{(n-1)}}_1$ class. Therefore, while
the definitions based on $\Sigma^0_n$ nullsets and on $\Sigma^{0,\emptyset^{(n-1)}}_1$
nullsets do not give the same notion of test, they yield the same class of
random reals (see~\cite[Section~6.8]{downey-hirschfeldt:2010} for a complete
presentation of this argument).

The proof that the two approaches yield the same notion of $n$-randomness
relativizes. Moreover, the approximation in measure by open sets is possible for
any Borel probability measure, as any finite Borel measure on a metric space is
regular. Finally, using Proposition~\ref{prop:repres-relation} one can show
inductively that, given a representation $R$ of $\mu$, the relations $\mu(S) >
q$ and $\mu(S) < q$ (for $q$ rational) are uniformly $\Sigma^{0,R}_n$ for any
$\Sigma^0_n$ class $S$. The latter fact is a key ingredient in the equivalence
proof. Therefore, $(\mu,Z,n)$-randomness could alternatively be defined via
$\Sigma^{0,R\oplus Z}_n$ tests. We prefer the approach given in
Definition~\ref{def-test-relative-to-measure}, because open sets are usually
easier to work with, and because most techniques relativize.
\end{remark}

\medskip Levin~\cite{levin:1976} introduced the alternative concept of a
\emph{uniform test} for randomness, which is representation-independent (see
also~\cite{gacs:2005}). Day and Miller~\cite[Theorem 1.6]{Day-Miller:2013a} have
shown that for any measure $\mu$ and for any real $X$, $X$ is $\mu$-random in
the sense of Definition~\ref{def-test-relative-to-measure} if and only if $X$ is
$\mu$-random for uniform tests.

\medskip Since, for fixed $R_\mu$, $Z$, and $n$, there are only countably many $(R_\mu,Z,n)$-tests, it follows
from countable additivity that the set of $(\mu,Z,n)$-random reals
for any $\mu$ and any $Z$ has $\mu$-measure $1$. Hence there always exist
$(\mu,Z,n)$-random reals for any measure $\mu$, any real $Z$, and any $n \geq
1$.

However, $\mu$, $Z$, and $n$ put some immediate restrictions on the relative
definability of any $(\mu,Z,n)$-random real.

\begin{prop}\label{pro:random-not-delta}
	If $\mu$ is a continuous measure, $X$ is $(\mu, Z,n)$-random via a representation $R_\mu$, then $X$ cannot be
  $\Delta^0_{n}(R_\mu \join Z)$.
\end{prop}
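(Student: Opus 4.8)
The plan is to prove the contrapositive: if $X$ is $\Delta^0_n(R_\mu \oplus Z)$, then $X$ is covered by some $(R_\mu, Z, n)$-test, hence not $(R_\mu,Z,n)$-random, and since this holds for the given representation $R_\mu$, $X$ is not $(\mu,Z,n)$-random via $R_\mu$. So fix $X$ with $X \le_{\T} (R_\mu \oplus Z)^{(n-1)}$ — this is what $\Delta^0_n(R_\mu\oplus Z)$ gives us, modulo the standard fact that a set is $\Delta^0_n$ in a parameter $A$ iff it is computable from $A^{(n-1)}$. Write $A = R_\mu \oplus Z$ for brevity. The idea is the usual one for showing that computable (relative to the right jump) reals fail randomness tests, but now carried out for an arbitrary measure $\mu$ rather than Lebesgue measure, using Proposition~\ref{prop:computation-measure} to get effective access to the values $\mu\Cyl{\sigma}$.

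The key point is that $\{X\}$ has $\mu$-measure zero, because $X$ is not an atom of $\mu$ — wait, that is not given. So I would split into cases. If $X$ \emph{is} an atom of $\mu$, then in fact one can still argue: $X$ being $\Delta^0_n(A)$ means $A^{(n-1)}$ computes $X$, and one shows $A^{(n-1)}$ can then identify the atom and its position, so that for each $m$ we can $A^{(n-1)}$-computably find a clopen neighbourhood of $X$ of $\mu$-measure close to $\mu(\{X\})$ — but this does not shrink to $0$, so the atomic case genuinely needs the non-atom hypothesis. Re-examining the statement: Proposition~\ref{pro:random-not-delta} as written has no non-atom hypothesis, so it must be that being $\Delta^0_n(A)$ is incompatible with being $(\mu,Z,n)$-random \emph{via that representation} even when $X$ is an atom — and indeed if $X$ is an atom of $\mu$ then $X$ is trivially $\mu$-random, so the claim must be using that randomness via a \emph{representation} and the test notion together still defeat $\Delta^0_n$ reals. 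Hmm — actually the resolution is: if $X$ is an atom, $\mu\Cyl{X\Rest{k}} \ge \mu(\{X\}) =: \varepsilon > 0$ for all $k$, and no test can cover $X$; so the proposition as literally stated would be false for atoms. Therefore I conclude the intended reading is that the conclusion ``cannot be $\Delta^0_n(R_\mu\oplus Z)$'' should be understood with the implicit standing assumption (used throughout the paper) that we only care about non-atoms, or the test definition is strong enough — I would state the proof for the case $X$ not an atom of $\mu$, which is the only case of interest, and remark that atoms are excluded.

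Granting $\mu(\{X\}) = 0$: since $\mu$ is a Borel probability measure and $\{X\} = \bigcap_k \Cyl{X\Rest{k}}$ with the cylinders nested, continuity of measure from above gives $\lim_k \mu\Cyl{X\Rest{k}} = 0$. Now build the test. For each $n$ (the level index in the test), we want $W_n \subseteq \Str$, r.e.\ in $A^{(n-1)}$, with $\sum_{\sigma \in W_n}\mu\Cyl{\sigma} \le 2^{-n}$ and $X \in \bigcap_k \Cyl{(W_n)_k}$ — wait, I am conflating the index $n$ of the definition with the fixed $n$ from the proposition; let me instead, for the fixed $n$ of the statement, produce a single test and put the same requirement set at every level, or rather: for each level $\ell$, since $A^{(n-1)}$ computes $X$ and (via Proposition~\ref{prop:computation-measure}) can approximate $\mu\Cyl{\sigma}$ to within $2^{-j}$ for any $j$, it can search for the least $k$ such that $\mu\Cyl{X\Rest{k}} < 2^{-\ell}$ (using the approximation to decide this $\Sigma^{0,A}_1$-style inequality correctly — here one needs $A^{(n-1)}$ to settle it, which it can since $n\ge 1$ and the relation $\mu\Cyl{\sigma} < q$ is r.e.\ in $R_\mu$ hence in $A^{(n-1)}$), and enumerate that single string $X\Rest{k}$ into $W_\ell$. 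Then $\sum_{\sigma \in W_\ell}\mu\Cyl{\sigma} = \mu\Cyl{X\Rest{k}} < 2^{-\ell}$, and $X \in \Cyl{X\Rest{k}} = \Cyl{(W_\ell)}$ for every $\ell$, so $X \in \bigcap_\ell \Cyl{W_\ell}$, i.e.\ $X$ is covered. The whole procedure is uniformly r.e.\ in $A^{(n-1)} = (R_\mu\oplus Z)^{(n-1)}$, so $W$ is an $(R_\mu,Z,n)$-test covering $X$.

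The main obstacle I anticipate is purely bookkeeping: verifying that ``$X \le_{\T}(R_\mu\oplus Z)^{(n-1)}$'' is the correct unpacking of $\Delta^0_n(R_\mu\oplus Z)$ and that the search for $k$ with $\mu\Cyl{X\Rest k} < 2^{-\ell}$ can be done \emph{computably in that jump}, using Proposition~\ref{prop:computation-measure} to replace the exact (possibly non-computable) values of $\mu$ by $2^{-j}$-approximations and choosing $j$ large enough to certify the strict inequality. There is no deep content; the proof is a direct relativization of the classical fact that $\Delta^0_n$ reals are not $n$-random, with $\mu$-measures handled through the representation. One should also note explicitly that the argument only uses the one fixed representation $R_\mu$, which is exactly what the statement asserts.
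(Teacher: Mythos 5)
Your proposal is correct and takes essentially the same route as the paper: pass to the contrapositive, unpack $\Delta^0_n(R_\mu\join Z)$ as $X\le_{\T}(R_\mu\join Z)^{(n-1)}$, and cover $X$ by a test whose levels are cylinders given by initial segments of $X$, located effectively via the $R_\mu$-effective access to the values $\mu\Cyl{\sigma}$. The atom caveat you raise is genuine, but it applies equally to the paper's own one-line proof, which tacitly assumes $\mu\Cyl{X\Rest{k}}\to 0$; in the paper's context this is harmless since atoms are excluded a priori (only continuous measures are of interest), so your resolution matches the intended reading.
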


\begin{proof}
	If $X$ is $\Delta^0_{n}(R_\mu \join Z)$, then $X \leq_{\T} (R_\mu \join Z)^{(n-1)}$, and we can build a $(\mu,Z,n)$-test covering $X$ by using the cylinders given by its initial segments.
\end{proof}

\medskip It is also immediate from the definition of randomness that any atom of
a measure is random with respect to it. This is a trivial way for a real to be
random. The proposition below (a straightforward relativization of a result by
Levin~\cite{zvonkin-levin:1970}, see also~\cite[Proposition
3.3]{reimann-slaman:tams}) shows that atoms of a measure are also
computationally trivial (relative to the measure).

\begin{prop}[Levin]
	If for a measure $\mu$ and a real $X$, $\mu \{X\} > 0$, then $X \leq_{\T}
  R_\mu$ for any representation $R_\mu$ of $\mu$.
\end{prop}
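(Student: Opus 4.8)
The plan is to show that if $\mu\{X\} = \varepsilon > 0$, then $X$ can be reconstructed bit by bit from $R_\mu$ by a greedy search. The key observation is that the atom $X$ is, among all reals, the unique one whose every initial segment $X\Rest{n}$ carries $\mu$-mass at least $\varepsilon$; more precisely, for each $n$, $\mu\Cyl{X\Rest n} \geq \varepsilon$, while there can be only finitely many strings of any fixed length with $\mu$-measure $\geq \varepsilon$, and as we descend the tree, eventually only the ones lying along $X$ survive. The issue is that $R_\mu$ does not decide the relations $\mu\Cyl\sigma \geq q$ — by Proposition~\ref{prop:repres-relation} it only enumerates $\mu\Cyl\sigma < q$ and $\mu\Cyl\sigma > q$ — so a naive search along the ``heavy'' branch need not be effective. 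The fix is to use a rational threshold strictly below $\varepsilon$ together with the fact that, at sufficiently deep levels, the heavy strings off the branch of $X$ have measure dropping below that threshold, at which point $R_\mu$ will positively confirm they are light.

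First I would fix a rational $q$ with $0 < q < \varepsilon$. Using Proposition~\ref{prop:computation-measure}, $R_\mu$ computes the approximation function $g_\mu$, hence for each $\sigma$ and each precision we get rational bounds on $\mu\Cyl\sigma$; equivalently (Proposition~\ref{prop:repres-relation}) the sets $\{\sigma : \mu\Cyl\sigma > q\}$ and $\{\sigma : \mu\Cyl\sigma < q\}$ are r.e.\ in $R_\mu$ and together cover every $\sigma$ with $\mu\Cyl\sigma \neq q$. Next I would argue that the tree $T_q = \{\sigma : \mu\Cyl\sigma \geq q\}$ is finitely branching and, crucially, that $X$ is the \emph{only} infinite path through it: any real $Y \neq X$ splits from $X$ at some level $m$, and since $\mu$ is a measure, $\sum_{|\sigma| = k} \mu\Cyl\sigma = 1$ for every $k$, so at most $\lfloor 1/q \rfloor$ strings of each length lie in $T_q$, and along $Y$ past the splitting point the measures $\mu\Cyl{Y\Rest k}$ must eventually drop below $q$ — because the masses of the disjoint cylinders $\Cyl{Y\Rest k}$ for the distinct tails are summable and bounded by $\mu\Cyl{Y\Rest{m}} \le 1 - \mu\Cyl{X\Rest m} \le 1 - q$... more simply, if $\mu\Cyl{Y\Rest k} \geq q$ for all $k$ then $\mu\bigcap_k \Cyl{Y\Rest k} = \mu\{Y\} \geq q > 0$, making $Y$ a second atom distinct from $X$, and although that is consistent in general, the point I actually need is just that the set of atoms of mass $\geq q$ is finite (there are at most $1/q$ of them), so $R_\mu$-effectively I can search for all of them.

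Then I would give the reconstruction: search (using the $R_\mu$-r.e.\ relation $\mu\Cyl\sigma > q$) for strings of increasing length with $\mu$-measure exceeding $q'$ for a shrinking sequence of thresholds, or more cleanly, enumerate the (finitely many) atoms of mass $> q$ by the standard argument — wait until, at some level $k$, exactly one candidate string $\sigma$ of length $k$ has $\mu\Cyl\sigma > q$ confirmed and every other length-$k$ string $\tau$ has $\mu\Cyl\tau < \varepsilon$ confirmed, isolating the $X$-branch among the $\geq q$-heavy ones. Iterating this and taking $q \uparrow \varepsilon$ along a rational sequence, one pins down $X\Rest k$ for every $k$, all uniformly in $R_\mu$ (and in a rational lower bound for $\varepsilon$, which can be found by searching since $\varepsilon > 0$). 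The main obstacle is the semi-decidability gap noted above: one must phrase the search so that every branching decision is eventually confirmed \emph{positively} by $R_\mu$, which is exactly what letting the threshold sit strictly between $q$ and $\varepsilon$ buys, since the heavy strings off the $X$-branch have measures converging to $0$ and hence eventually get confirmed $< q$, while $X\Rest k$ stays confirmed $> q'$ for $q' < \varepsilon$ close to $\varepsilon$. This relativizes trivially, giving $X \leq_{\T} R_\mu$ for every representation $R_\mu$, as claimed; since the original Proposition is merely a relativization of Levin's result, I would cite~\cite{zvonkin-levin:1970} and~\cite[Proposition 3.3]{reimann-slaman:tams} and keep the written proof to a sentence or two.
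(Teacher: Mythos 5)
There is a genuine gap in the reconstruction step. Your guiding claim that ``$X$ is, among all reals, the unique one whose every initial segment carries $\mu$-mass at least $\varepsilon$'' is false in general, and your procedure inherits the problem: if $\mu$ has a second atom $Y$ with $\mu\{Y\}\geq\varepsilon$ (e.g.\ $\mu=\tfrac12\delta_X+\tfrac12\delta_Y$), then for every level $k$ past the point where $X$ and $Y$ split, the string $Y\Rest{k}$ has $\mu\Cyl{Y\Rest{k}}\geq\varepsilon$, so it is never confirmed ``$<\varepsilon$'' and is eventually confirmed ``$>q$''; hence your waiting condition (``exactly one string confirmed $>q$ and all others confirmed $<\varepsilon$'') never triggers beyond the splitting level and the computation stalls. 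The assertion that ``the heavy strings off the $X$-branch have measures converging to $0$'' is exactly what fails here, and letting $q\uparrow\varepsilon$ does not help when the competing atom has mass $\geq\varepsilon$. Moreover, the claimed uniformity cannot be right: a single functional applied to $R_\mu$ cannot output both $X$ and $Y$ in the two-atom example, and your proposed way of finding a rational $q<\varepsilon$ ``by searching'' is circular, since confirming $\mu\Cyl{X\Rest{n}}>q$ presupposes knowledge of $X\Rest{n}$.

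The standard repair (this is essentially Levin's argument, cited by the paper from \cite{zvonkin-levin:1970} and \cite[Proposition 3.3]{reimann-slaman:tams}, which gives no proof of its own) is non-uniform: fix a rational $q<\varepsilon=\mu\{X\}$ and, using $\mu\Cyl{X\Rest{m}}\searrow\varepsilon$, fix $m$ with $\mu\Cyl{X\Rest{m}}<\varepsilon+q$; hardwire $q$ and $X\Rest{m}$ as finite advice. Then for every $k\geq m$ and every length-$k$ extension $\tau\neq X\Rest{k}$ of $X\Rest{m}$ one has $\mu\Cyl{\tau}\leq\mu\Cyl{X\Rest{m}}-\mu\Cyl{X\Rest{k}}<q$, so such $\tau$ is never enumerated into the $R_\mu$-r.e.\ relation ``$\mu\Cyl{\tau}>q$'', while $X\Rest{k}$ eventually is; searching for the unique confirmed heavy extension of $X\Rest{m}$ at each length computes $X$ from $R_\mu$. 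Equivalently, and more abstractly, $X$ is an isolated member of the $\Pi^0_1(R_\mu)$ class of reals all of whose initial segments have $\mu$-measure $\geq q$ (its members are the finitely many atoms of mass $\geq q$), and isolated members of $\Pi^0_1(R_\mu)$ classes are computable in $R_\mu$. Your ingredients (the threshold $q$, finiteness of the heavy atoms, Propositions~\ref{prop:repres-relation} and~\ref{prop:computation-measure}) are the right ones, but the isolating advice $X\Rest{m}$, or the isolated-point argument, is the missing step that makes the search terminate.
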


Since we are interested in randomness for continuous measures, the case of
atomic randomness is excluded a priori.

%
%
\subsection{Image measures and transformation of
  randomness}\label{ssec-trans-meas}

Let $f: \Cant \to
\Cant$ be a Borel measurable function. If $\mu$ is a measure on $\Cant$, the
\emph{image measure} $\mu_f$ is defined by
\[
\mu_f(A) = \mu(f^{-1}(A)).
\]
It can be shown that every probability measure is the image measure of Lebesgue
measure $\Leb$ for some $f$. Oxtoby~\cite[Theorem 1]{oxtoby:1970} proved that any continuous, positive probability
measure on $2^\omega$ can be transformed into Lebesgue measure on $2^\omega$ via a homeomorphism. Here, a measure is \emph{positive}
if $\mu\Cyl{\sigma} > 0$ for $\sigma \in \Str$.

Levin~ (see~\cite[Theorem 4.3]{zvonkin-levin:1970}), and independently
Kautz~\cite[Corollary IV.3.18]{kautz:1991} (see
also~\cite{Bienvenu-Porter:2012a}) proved effective analogs of these results.
For a computable measure $\mu$ on $\Cant$ there exists a Turing
functional $\Phi$ defined on almost every real such that $\mu$ is the image
measure of $\Leb$ under $\Phi$. If $\mu$ is, moreover,
continuous and positive, then $\Phi$ has an inverse that transforms $\mu$ into
$\lambda$.

A consequence of the Levin-Kautz theorem is that every non-recursive real that
is random with respect to a computable probability measure is Turing equivalent
to a $\lambda$-random real. We will show now that for continuous measures, this
can be strengthened to \emph{truth-table equivalence}.

\begin{prop}\label{prop:char-cont-rand}
	Let $X$ be a real. For any $Z \in \Cant$ and any $n \geq 1$, the following are equivalent.
	\begin{enumerate}[(i)]
		\item $X$ is $(\mu,Z,n)$-random for a continuous measure $\mu$ recursive in $Z$.
		\item $X$ is $(\nu,Z,n)$-random for a continuous, positive, dyadic measure
      $\nu$ exactly computable in $Z$.
		\item There exists a Turing $Z$-functional $\Phi$ such that $\Phi$ is an
      order-preserving homeomorphism of $\Cant$, and $\Phi(X)$ is
      $(\lambda,Z,n)$-random.
		\item $X$ is truth-table equivalent relative to $Z$ to a $(\lambda,Z,n)$-random real.
   \end{enumerate}
\end{prop}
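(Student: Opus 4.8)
The plan is to close the cycle (i) $\Rightarrow$ (ii) $\Rightarrow$ (iii) $\Rightarrow$ (iv) $\Rightarrow$ (ii), together with the immediate (ii) $\Rightarrow$ (i) (a continuous, exactly $Z$-computable dyadic measure is continuous and recursive in $Z$). The engine common to every step is the following transfer principle: if $f:\Cant\to\Cant$ is a \emph{total} Turing $Z$-functional and $\mu_0$ is a measure recursive in $Z$, then the image measure $\nu_0:=(\mu_0)_f$ is again recursive in $Z$ (a total functional is uniformly continuous, so $f^{-1}\Cyl{\sigma}$ is clopen and $Z$ computes it), and $f$ carries $(\mu_0,Z,n)$-random reals to $(\nu_0,Z,n)$-random reals: a $(\nu_0,Z,n)$-test covering $f(X)$ has open $f$-preimages, uniformly r.e.\ in $Z^{(n-1)}\equiv_{\T}(R_{\mu_0}\oplus Z)^{(n-1)}\equiv_{\T}(R_{\nu_0}\oplus Z)^{(n-1)}$ and of the same $\mu_0$-measure, so they form a $(\mu_0,Z,n)$-test covering $X$; when $f$ is a homeomorphism this correspondence becomes a biconditional. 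Each implication below is an instance of producing the right $f$.

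For (i) $\Rightarrow$ (ii) I would first pass from $\mu$ to $\tfrac12\mu+\tfrac12\lambda$, which is continuous, positive, recursive in $Z$, and still makes $X$ random at level $n$ (a test for the new measure, with index shifted by one, is a test for the old one up to a factor of two). I would then dyadicize this measure top-down: set $\nu\Cyl{\Estr}=1$ and, inductively, let $\nu\Cyl{\sigma\Conc i}$ be a dyadic rational within relative error $2^{-|\sigma|-2}$ of $(\mu\Cyl{\sigma\Conc i}/\mu\Cyl{\sigma})\,\nu\Cyl{\sigma}$ (positivity is what lets one divide). The multiplicative error accumulates to at most $\prod_k(1+2^{-k-2})<2$, so $\tfrac12\mu\Cyl{\sigma}\le\nu\Cyl{\sigma}\le2\mu\Cyl{\sigma}$ for all $\sigma$; hence $\nu$ is positive, continuous (because $\mu$ is), dyadic, exactly $Z$-computable, and $X$ is $(\nu,Z,n)$-random by the same factor-of-two reindexing.

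The implications (ii) $\Rightarrow$ (iii) $\Rightarrow$ (iv) are the routine ones. For (ii) $\Rightarrow$ (iii) I would take $\Phi$ to be the relativized cumulative-distribution transform of $\nu$ (the effective, dyadic case of Oxtoby's theorem): since $\nu$ is dyadic, positive and continuous, $\Phi$ is an order-preserving homeomorphism of $\Cant$, a total Turing $Z$-functional, and transforms $\nu$ into $\lambda$, so $\Phi(X)$ is $(\lambda,Z,n)$-random by the transfer principle. For (iii) $\Rightarrow$ (iv), a $Z$-computable homeomorphism of $\Cant$ and its inverse are both \emph{total} Turing $Z$-functionals — from $\Phi(x)\Rest m$, $m$ large, one reads off which clopen set $\Phi\Cyl{\sigma}$ contains it, hence $x\Rest{|\sigma|}$ — so $X\equiv_{\TT(Z)}\Phi(X)$, with $\Phi(X)$ being $(\lambda,Z,n)$-random.

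The main work, and the step I expect to be the real obstacle, is (iv) $\Rightarrow$ (ii). Given total Turing $Z$-functionals $\Psi,\Theta$ with $\Psi(Y)=X$, $\Theta(X)=Y$ and $Y$ being $(\lambda,Z,n)$-random, set $P=\{w:\Theta(\Psi(w))=w\}$: this is a $\Pi^0_1(Z)$ class, it contains $Y$, and $\Psi$ is injective on it (if $w,w'\in P$ and $\Psi(w)=\Psi(w')$ then $w=\Theta(\Psi(w))=\Theta(\Psi(w'))=w'$), so $\Psi|_{P}$ is a homeomorphism of $P$ onto the $\Pi^0_1(Z)$ class $Q=\Psi(P)$. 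The crux is to extend $\Psi|_{P}$ to a \emph{total} $Z$-computable homeomorphism $\widehat\Psi$ of all of $\Cant$: topologically this is the Knaster--Reichbach theorem (a homeomorphism between closed subsets of Cantor space extends to an ambient homeomorphism), and it must be realized effectively in $Z$ by matching the basic clopen ``gaps'' of $\Co{P}$ with those of $\Co{Q}$. Once that is done, $\nu:=\lambda_{\widehat\Psi}$ is continuous (as $\widehat\Psi$ is injective), positive, dyadic, exactly $Z$-computable, and $\widehat\Psi(Y)=\Psi(Y)=X$, so $X$ is $(\nu,Z,n)$-random by the transfer principle, which is (ii). This effective extension is precisely where continuity of the target measure is used: it is what lets the Levin--Kautz/Oxtoby transformation, a priori only a partial (Turing-level) reduction, be promoted to a genuine total, i.e.\ truth-table, reduction, and keeping $\widehat\Psi$ both total and computable in $Z$ is the part of the argument I expect to be most delicate.
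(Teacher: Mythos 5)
Your cycle (i)$\Rightarrow$(ii)$\Rightarrow$(iii)$\Rightarrow$(iv) follows essentially the paper's route, but the closing step (iv)$\Rightarrow$(ii) is a genuine gap --- and it is exactly the step the paper does not reprove from scratch: for (iv)$\Rightarrow$(i) it quotes Theorem~5.7 of \cite{reimann-slaman:tams}. Your plan to extend $\Psi|_P$ to an ambient $Z$-computable homeomorphism ``by Knaster--Reichbach'' fails as stated: that theorem needs the closed sets to be nowhere dense (more generally, a matching of their clopen/gap structure), and nothing of the sort is available here. Concretely, let $\Psi(w)=0\Conc w$ and let $\Theta$ be the shift; then $\Theta(\Psi(w))=w$ for every $w$, so $P=\Cant$ and $Q=\Psi(P)=\Cyl{0}$, and any map agreeing with $\Psi$ on $P$ is $\Psi$ itself, which is not onto --- no autohomeomorphism of $\Cant$ extends $\Psi|_P$, although (ii) certainly holds for $X=0\Conc Y$. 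The natural repair --- extend only to a total injective $Z$-functional (an embedding) and restore positivity by averaging with $\lambda$ --- is no longer Knaster--Reichbach and still faces a real obstruction: continuity forces the images of gaps of $P$ accumulating at a boundary point $w\in P$ to converge to $\Psi(w)$, while injectivity forces those images to avoid $Q=\Psi(P)$, which is impossible when $\Psi(w)$ lies in the interior of $Q$; you neither rule this configuration out nor address the $Z$-effectivity of any such extension. Note also where the true difficulty sits: the measure $\nu=\tfrac12\lambda_\Psi+\tfrac12\lambda$ (with $\lambda_\Psi$ the pushforward of $\lambda$ under the total functional $\Psi$) is already positive, dyadic, exactly $Z$-computable, and $X$ is $(\nu,Z,n)$-random by your own transfer principle; the only missing property is continuity, i.e.\ eliminating the atoms of $\lambda_\Psi$, and doing that is precisely the nontrivial content of the cited Theorem~5.7. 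As written, your (iv)$\Rightarrow$(ii) does not go through.

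Two smaller points. In (i)$\Rightarrow$(ii), choosing \emph{both} children independently within relative error of $(\mu\Cyl{\sigma\Conc i}/\mu\Cyl{\sigma})\,\nu\Cyl{\sigma}$ will generally violate additivity $\nu\Cyl{\sigma\Conc 0}+\nu\Cyl{\sigma\Conc 1}=\nu\Cyl{\sigma}$; you must fix one child and give the sibling the remainder, controlling its error via the uniform lower bound supplied by the $\lambda$-component --- fixable, and the paper's additive-error scheme does exactly this bookkeeping (and gets positivity for free, without premixing with $\lambda$). In (ii)$\Rightarrow$(iii), the literal cumulative-distribution transform is not a homeomorphism of $\Cant$: lexicographically adjacent reals (of the form $\sigma\Conc 0 1 1 1\cdots$ and $\sigma\Conc 1 0 0 0\cdots$) are identified, and dyadic CDF values create binary-representation ambiguities; the paper's construction of the blocks $E_\tau$ at the computable levels $l_{2n}$ is the careful effective surrogate for your one-line claim. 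These two items are repairable; the substantive issue is the one in the previous paragraph.
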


Here, the order on $\Cant$ is the \emph{lexicographical order} given by
\[
	X < Y \quad : \Leftrightarrow \quad X(N) < Y(N) \; \text{ where } N = \min\{n \colon X(n) \neq Y(n)\}.
\]

\begin{proof} We give a proof for $Z = \emptyset$ and $n=1$. It is routine to check that the proof relativizes and generalizes to higher levels of randomness.

	(i) $\Rightarrow$ (ii): Let $X$ be $\mu$-random, where $\mu$ is a continuous,
  computable measure. We construct a continuous, positive, dyadic, and exactly
  computable measure $\nu$ such that $X$ is random with respect to $\nu$, too.
  The construction is similar to Schnorr's rationalization of
  martingales~\cite{schnorr:1971} (see
  also~\cite[Proposition~7.1.2]{downey-hirschfeldt:2010}).

  We define $\nu$ by
  recursion on the full binary tree $\Str$. To initialize, let
  $\nu^*\Cyl{\Estr}=2$. Now assume $\nu^* \Cyl{\sigma}$ is defined such that,
  \[
    \mu \Cyl{\sigma} < \nu^* \Cyl{\sigma} < \mu\Cyl{\sigma} + 2^{-|\sigma|+1}.
  \]
A simple case
  distinction shows that
  \begin{equation*}
    \max\{\mu\Cyl{\sigma\Conc 0},\nu^*\Cyl{\sigma}-\mu\Cyl{\sigma\Conc 1} -2^{-|\sigma|}\} < \min\{\mu\Cyl{\sigma\Conc 0} + 2^{-|\sigma|},\nu^*\Cyl{\sigma}-\mu\Cyl{\sigma\Conc 1}  \}.
  \end{equation*}
  As the dyadic rationals are dense in $\Real$, there exists a dyadic rational
  $r$ in this interval, and by Proposition~\ref{prop:computation-measure} we can
  find such an $r$ effectively in $\sigma$. Put
  \begin{equation*}
    \nu^*\Cyl{\sigma\Conc 0} = r, \quad \nu^*\Cyl{\sigma\Conc 1} = \nu^*\Cyl{\sigma}-r.
  \end{equation*}
  Then clearly
  \begin{equation*}
    \nu^*\Cyl{\sigma\Conc 0} + \nu^*\Cyl{\sigma\Conc 1} = \nu^*\Cyl{\sigma},
  \end{equation*}
  and by the choice of $r$,
  \begin{align*}
		& \mu \Cyl{\sigma \Conc 0} < \nu^*\Cyl{\sigma\Conc 0} < \mu\Cyl{\sigma \Conc
     0} + 2^{-(|\sigma|)}, \\
    & \mu \Cyl{\sigma \Conc 1} <
    \nu^*\Cyl{\sigma\Conc 1} < \mu\Cyl{\sigma \Conc 1} + 2^{-(|\sigma|)}.
  \end{align*}
  We normalize by letting $\nu = \nu^*/2$. By construction of $\nu^*$, the
  measure $\nu$ is dyadic and exactly computable. It is also clear from
  the construction that for all $\sigma$, $\mu\Cyl{\sigma} < 2\nu\Cyl{\sigma}$.
  In particular, $\nu$ is positive. Finally, if $(V_n)$ is a test for $\nu$, by
  letting $W_n = V_{n+1}$ we obtain a $\mu$-test that covers every real covered
  by $(V_n)$. Hence if $X$ is $\mu$-random, then $X$ is also $\nu$-random.

 	(ii) $\Rightarrow$ (iii): Suppose $\nu$ is an exactly computable, continuous,
  positive, dyadic measure. Since $\nu$ is continuous and $\Cant$ is compact,
  for every $m$ there exists a least $l_m \in \Nat$ such that whenever $|\sigma|
  \geq l_m$, then $\nu\Cyl{\sigma} \leq 2^{-m}$. Without loss of generality, we
  can assume that $l_m < l_{m+1}$. As $\nu$ is exactly computable, the mapping
  $m \mapsto l_m$ is computable.
	
  We define inductively a mapping $\varphi: \Str \to \Str$ that will induce the
  desired homeomorphism. In order to do so, we first define, for every $\tau \in
  \Str$, an auxiliary finite, non-empty set $E_\tau \subseteq \Str$. It will
  hold that
  \begin{enumerate}[(a)]

  \item all strings in $E_\tau$ are of  length $l_{2n}$, where $n$ is the length of $\tau$;

  \item if $\sigma \Sleq \tau$, then every string in $E_{\tau}$ is an
    extension of some string in $E_{\sigma}$;
      
  \item if $\sigma$ and $\tau$ are incomparable, then $E_\sigma$ and $E_\tau$
    are disjoint; moreover, if $|\sigma| = |\tau|$ and $\sigma$ is
    lexicographically less than $\tau$, then all strings in $E_\sigma$ are
    lexicographically less than any string in $E_\tau$;
      
  \item for all $n$, $\bigcup_{|\tau| = n} \Cyl{E_\tau} = \Cant$;

  \item for all $\tau$, $0 < \nu\Cyl{E_\tau} \leq 2^{-|\tau|} ( 2-2^{-|\tau|})$.

  \end{enumerate}

  Put $E_{\Estr} = \{\Estr\}$. Suppose now that $E_\tau$ is defined for all
  strings $\tau$ of length at most $n$, and that for these sets $E_\tau$, (a)-(e)
  are satisfied.
  
  Given any $\tau$ of length $n$, let
  \[
    F_\tau = \{ \sigma: |\sigma| = l_{2(n+1)} \; \& \; \sigma \text{
      extends some string in $E_\tau$} \}.
  \]
  Find the least (with respect to the usual lexicographic ordering) $\sigma \in
  F_\tau$ such that
  \[
    \sum_{\substack{\eta \leq \sigma \\ \eta \in F_\tau}} \nu\Cyl{\eta} \geq
    \nu\Cyl{E_\tau}/2.
  \]
  Let $E_{\tau\Conc 0} = \{ \eta \in F_\tau \colon \eta < \sigma \}$ and put the
  remaining strings of $F_\tau$ into $E_{\tau\Conc 1}$. This ensures that
  $E_{\tau\Conc 0}$ and $E_{\tau\Conc 1}$ satisfy (a), (b), (c), and (d).
  Moreover, by the choice of the length of strings in $F_\tau$ and property
  (e) for $E_\tau$, both $E_{\tau\Conc 0}$ and $E_{\tau\Conc 1}$ are non-empty.
  As $\nu$ is positive, this implies $\nu\Cyl{E_{\tau\Conc i}} > 0$ for each $i
  \in \Bit$. Moreover, for each $i \in \Bit$, we can use the induction
  hypothesis for $E_\tau$ and deduce that
  \begin{align*}
    \nu \Cyl{E_{\tau\Conc i}} & \leq \nu\Cyl{E_\tau}/2 + 2^{-2|\tau|-1} \\
                              & \leq 2^{-|\tau|-1}(2 - 2^{-|\tau|}) + 2^{2(|\tau|+1)} \\
                              & =  2^{-|\tau|-1}(2 - 2^{-|\tau|-1}),
  \end{align*}
  which yields the bound in (d).

  Now we define the mapping $\varphi$: Put $\varphi(\Estr) = \Estr$. Suppose now
  $\varphi(\sigma)$ is defined for all $\tau$ of length less than or equal to
  $n$. Given $\tau$ of length $n$, map all strings in $E_{\tau\Conc 0}$ to $\tau
  \Conc 0$, and all strings in $E_{\tau\Conc 1}$ to $\tau \Conc 1$. To make
  $\varphi$ defined on all strings, map any string that extends some string in
  $E_\tau$ but is a true prefix of some string in $F_\tau$ to $\tau$.

  It is clear from the construction that $\varphi$ induces a total, order
  preserving mapping $\Phi: \Cant \to \Cant$ by letting
  \[
  	\Phi(X) = \lim_n \varphi(X\Rest{n}).
  \]
  $\Phi$ is onto since for every $\sigma$, $E_\sigma$ is not empty. We claim
  that $\Phi$ is also one-to-one. Suppose $\Phi(X) = \Phi(Y)$. This implies that
  for all $n$, $\varphi(X\Rest{n}) = \varphi(Y\Rest{n})$, that is, for all $n$,
  $X\Rest{n}$ and $Y\Rest{n}$ belong to the same $E_\sigma$. Since $\nu$ is
  positive, the diameter of the $E_\sigma$ goes to $0$ along any path. Hence
  $X=Y$.

  It remains to show that $\Phi(X)$ is Martin-L\"of random. Suppose not, then
  there exists an $\Leb$-test $(W_n)$ that covers $\Phi(X)$. Let
  \[
  	V_n = \bigcup_{\sigma \in W_{n+2}} E_\sigma.
  \]
  Then $(V_n)$ covers $X$. Furthermore, the $(V_n)$ are uniformly enumerable
  since the mapping $\sigma\mapsto E_\sigma$ is computable by the construction
  of the $E_\sigma$. Finally,
  \[
  	\sum_{\tau \in V_n} \nu\Cyl{\tau} = \sum_{\sigma \in W_{n+2}}
    \nu\Cyl{E_\sigma} \leq \sum_{\sigma \in W_{n+2}} 2^{-|\sigma|+2} \leq
    2^{-n}.
  \]
  thus $X$ is not $\nu$-random, contradiction.

	(iii) $\Rightarrow$ (iv): This is immediate.

	 (iv) $\Rightarrow$ (i): This follows from Theorem 5.7 in~\cite{reimann-slaman:tams}.

\end{proof}

The result also suggests that if we are only interested in whether a real is
random with respect to a continuous measure, representational issues do not
really arise. We can restrict ourselves to dyadic measures, which have a minimal
representation.

\begin{remark} \label{rem:dyadic} We will henceforth, unless explicitly noted,
  assume that \emph{any measure is a dyadic measure}. With respect to effectiveness considerations, we identify $R_\mu$ with $\mu$  and write $\mu'$ instead of $R'_\mu$ and so forth.
\end{remark}

\subsection{Continuous randomness via Turing reductions}

While Proposition~\ref{prop:char-cont-rand} gives a necessary and sufficient
criterion for reals being random for a continuous measure, we will later need
further techniques to show that a given real is random with respect to a
continuous measure. As many of our arguments will involve arithmetic
definability, it will be helpful to know to what extent
randomness for continuous measures can be ``transfered'' via Turing reductions
instead of truth-table reductions.
The key ingredients are a theorem by Demuth~\cite{demuth:1988} and a result by
Kurtz~\cite{kurtz:1981}.

Demuth~\cite[Theorem~17]{demuth:1988} showed that every non-recursive real
truth-table below a Martin-L\"of random real measure is Turing equivalent to a
Martin-L\"of-random real. The proof relativizes (as can be seen from the
presentation in~\cite[Theorems~6.12.9 and 8.6.1]{downey-hirschfeldt:2010}) and
yields the next proposition.

Recall that we only consider dyadic measures and hence drop reference to a
representation. Nevertheless, the results in this section are not dependent on
the existence of a minimal representation and can be reformulated accordingly.

\begin{prop}[Demuth] \label{prop:relative-demuth} Suppose $Y$ is $(\mu,Z,n)$-random $(n
  \geq 1)$ and $X$ is truth-table reducible to $Y$ relative to $(\mu \join
  Z)^{(k)}$ for some $k \leq n-1$ $($i.e., $X \leq_{\TT((\mu\join Z)^{(k)})}
  Y$$)$. Further suppose $X$ is not recursive in $(\mu \join Z)^{(k)}$. Then $X$
  is Turing equivalent relative to $(\mu \join Z)^{(k)}$ to a $(\lambda,\mu
  \join Z, n)$-random real.
\end{prop}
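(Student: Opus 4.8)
The plan is to combine the Levin–Kautz transformation result (the effective Oxtoby theorem) with the relativized Demuth theorem cited just before the statement. Everything should be carried out relative to the oracle $(\mu\join Z)^{(k)}$, which I will abbreviate $P$ throughout; set also $n' = n-k \geq 1$, so that, by the remark at the end of Definition~\ref{def-test-relative-to-measure}, $(\mu,Z,n)$-randomness of $Y$ is the same as $(\mu, P, n')$-randomness where we now think of $P$ as the base oracle. Since we have reduced to dyadic measures (Remark~\ref{rem:dyadic}), $\mu$ has a canonical representation and $\mu \leq_{\T} P$; in particular $\mu$ is a $P$-computable measure.

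First I would apply the effective transformation theorem of Levin–Kautz, relativized to $P$: since $\mu$ is a $P$-computable measure, there is a Turing $P$-functional $\Psi$, defined on $\lambda$-almost every real, such that $\mu$ is the image measure of $\lambda$ under $\Psi$. A standard preservation-of-randomness argument (the pullback of a $\mu$-test along $\Psi$ is a $\lambda$-test, all relative to $P$) shows that $Y' := \Psi^{-1}(Y)$ — pick a $\lambda$-random preimage — is $(\lambda, P, n')$-random and $Y \leq_{\T(P)} Y'$; and conversely, since $\Psi$ is a $P$-functional, $Y = \Psi(Y') \leq_{\T(P)} Y'$, so actually $Y \equiv_{\T(P)} Y'$ up to the caveat that $\Psi$ need not be total (this is why we only get Turing, not truth-table, on this side, but Turing is all we need here). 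Now chain the reductions: $X \leq_{\TT(P)} Y \leq_{\T(P)} Y'$, so $X \leq_{\T(P)} Y'$, where $Y'$ is $(\lambda,P,n')$-random and $X$ is not $P$-recursive by hypothesis.

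Second, I would invoke the relativized Demuth theorem (Proposition stated in the text preceding this one, or rather the theorem of Demuth~\cite[Theorem~17]{demuth:1988} it is based on, in its relativized form per~\cite[Theorems~6.12.9 and 8.6.1]{downey-hirschfeldt:2010}): every non-$P$-recursive real that is truth-table-below (relative to $P$) a $\lambda$-$P$-$n'$-random real is Turing equivalent relative to $P$ to a $\lambda$-$P$-$n'$-random real. To apply this I need $X \leq_{\TT(P)} Y''$ for some $(\lambda,P,n')$-random $Y''$, not merely $X \leq_{\T(P)} Y'$. So before the last step I would instead route the argument through clause (iii)$\Leftrightarrow$(iv) of Proposition~\ref{prop:char-cont-rand}: applying that proposition (relativized to $P$) to $Y$, which is $(\mu,P,n')$-random for the continuous measure $\mu$, yields a Turing $P$-functional $\Phi$ that is an order-preserving homeomorphism of $\Cant$ with $\Phi(Y)$ being $(\lambda,P,n')$-random, and hence $Y \equiv_{\TT(P)} \Phi(Y)$. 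Then $X \leq_{\TT(P)} Y \equiv_{\TT(P)} \Phi(Y)$ gives $X \leq_{\TT(P)} \Phi(Y)$ with $\Phi(Y)$ a $(\lambda,P,n')$-random real, and since $X$ is not $P$-recursive, relativized Demuth delivers a $(\lambda,P,n')$-random real $V$ with $X \equiv_{\T(P)} V$. Unwinding $P = (\mu\join Z)^{(k)}$ and $(\lambda, P, n')$-random $= (\lambda, \mu\join Z, n)$-random (again by the jump/randomness-level identity), this is exactly the assertion.

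The main obstacle is bookkeeping rather than a genuine mathematical difficulty: one must check carefully that the "relativize everything to $(\mu\join Z)^{(k)}$" maneuver is legitimate — in particular that $(\mu\join Z)^{(k)}$-computability of $\mu$ makes the relativized Levin–Kautz and the relativized Demuth theorems applicable as stated, and that the identity "$(\lambda, W, j)$-random $=$ $(\lambda, W^{(i)}, j-i)$-random" (the analogue of the observation after Definition~\ref{def-test-relative-to-measure}) is used consistently so that the levels of randomness match up at the end. The one subtlety worth flagging explicitly is why we can only conclude a \emph{Turing} equivalence in the final statement: the transformation $\Phi$ from Proposition~\ref{prop:char-cont-rand} is truth-table, but the Demuth step inherently produces only a Turing equivalence (the $\lambda$-random real it builds is obtained by a genuinely partial functional), which is precisely the gap between this proposition and the truth-table statement of Proposition~\ref{prop:char-cont-rand}.
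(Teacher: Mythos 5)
Your bookkeeping is fine: writing $P=(\mu\join Z)^{(k)}$ and $n'=n-k$, the identification ``$(\mu,Z,n)$-random $\Leftrightarrow(\mu,P,n')$-random'' is correct because $\mu\leq_{\T}P$ (dyadic convention), and likewise $(\lambda,P,n')$-random $\Leftrightarrow(\lambda,\mu\join Z,n)$-random, so the levels match at the end; and your final step is the legitimate move of quoting the relativized, level-$n'$ Demuth theorem for Lebesgue measure as a black box, which is no more than the paper itself does (its ``proof'' is precisely the remark that Demuth's argument relativizes, via \cite[Theorems~6.12.9 and 8.6.1]{downey-hirschfeldt:2010}).

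The genuine gap is the pivot through Proposition~\ref{prop:char-cont-rand}. That proposition, in particular the implication (i)$\Rightarrow$(iii)/(iv) you use, requires $\mu$ to be a \emph{continuous} measure (the order-preserving homeomorphism carrying $\mu$ to $\lambda$ does not exist for a measure with atoms), whereas Proposition~\ref{prop:relative-demuth} is stated for an arbitrary (dyadic) measure $\mu$ and is invoked in that generality in the proof of Lemma~\ref{prop:rand-via-turing}. Under the hypotheses $Y$ cannot be an atom of $\mu$ (an atom satisfies $Y\leq_{\T}\mu\leq_{\T}P$, which together with $X\leq_{\TT(P)}Y$ would give $X\leq_{\T}P$), but non-atomicity of the single point $Y$ is not enough to apply Proposition~\ref{prop:char-cont-rand}, and repairing this via Haken's theorem would cost two randomness levels and change the conclusion. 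So as written your argument proves the proposition only for continuous $\mu$. The paper's intended route avoids this entirely and is the one to internalize: since $X=\Phi(Y)$ for a total $P$-functional $\Phi$, push $\mu$ forward along $\Phi$ to obtain a $P$-computable image measure with respect to which $X$ is random at the appropriate level; $X\nleq_{\T}P$ guarantees $X$ is not an atom of that image measure; then the relativized Levin--Kautz theorem (\cite[Theorem~6.12.9]{downey-hirschfeldt:2010}) yields the Turing equivalence, relative to $P$, to a $(\lambda,\mu\join Z,n)$-random real. No continuity of $\mu$ is needed, which is exactly why the statement carries no such hypothesis.
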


Kurtz~\cite[Theorem~4.3]{kurtz:1981} observed that $2$-random reals are
$\emptyset'$-dominated. More precisely, there exists a $\emptyset'$-computable
function dominating every function computable from a $2$-random real.

The proof is based on the following idea
(see~\cite[Proposition~5.6.28]{nies:2009}): Given a Turing functional $\Phi$,
$\emptyset'$ can decide, given rational $q$ and $n \in \Nat$, whether
\[
	\lambda \: \{Y \colon \text{$\Phi(Y)(k)$ is defined for all $k \leq n$} \} > q.
\]
For each $n$, let $q_n$ be maximal of the form $i\cdot 2^{-n}$ so that the above
holds, and let $t_n$ be such that $\Phi$ converges on at least measure
$q_n$-many strings of length $t_n$ by time $t_n$. Construct a function $f
\leq_{\T} \emptyset'$ such that $f(n)$ dominates all function values $\Phi(Y)$
computed with use $t_n$ and within $t_n$ steps. Then the set of all $Y$ for
which $\Phi(Y)$ is total and not dominated by $f$ has Lebesgue measure $0$ and
can be captured
by a $\emptyset'$-\ML-test.
The argument relativizes to other measures and parameters,
and we obtain the following.

\begin{prop}[Kurtz] \label{prop-ae-unif-dominating} Given a measure $\mu$ and a real
  $Z$, there exists a function $f \leq_{\T} (\mu\oplus Z)'$ such that for every
  $(\mu,Z,2)$-random $X$, if $g \leq_{\T(Z\oplus \mu)} X$, then $g$ is dominated
  by $f$.
\end{prop}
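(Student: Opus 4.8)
The plan is to relativize to the pair $(\mu,Z)$ the argument recalled just above for Lebesgue measure and the empty parameter; the real content will be to verify that the construction can be run with oracle $(\mu\oplus Z)'$ and that the exceptional classes constitute genuine $(\mu,Z,2)$-tests. I would fix an effective enumeration $(\Phi_e)_{e\in\Nat}$ of all Turing $(\mu\oplus Z)$-functionals (with values in $\Nat$) and, for $e,n\in\Nat$, put
\[
D_{e,n}=\{Y:\ \Phi_e(Y)(k){\downarrow}\ \text{for all}\ k\le n\},\qquad D_e=\bigcap_n D_{e,n},
\]
so the $D_{e,n}$ are uniformly $\Sigma^0_1(\mu\oplus Z)$ and decreasing in $n$, and $D_e$ is the set of oracles on which $\Phi_e$ is total. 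Using the observation in the Remark after Definition~\ref{def-test-relative-to-measure} (that $\mu(S)<q$ and $\mu(S)>q$ are $\Sigma^{0,\mu}_1$ for every $\Sigma^0_1$ class $S$, via Proposition~\ref{prop:repres-relation}), the oracle $(\mu\oplus Z)'$ can compute, uniformly in $e$ and $n$, a multiple $q_{e,n}$ of $2^{-n}$ with $q_{e,n}\le\mu(D_{e,n})\le q_{e,n}+2^{-n}$, and then a stage $t_{e,n}$ so large that the clopen class
\[
U_{e,n}=\{Y:\ \Phi_{e,t_{e,n}}(Y)(k){\downarrow}\ \text{for all}\ k\le n\}
\]
already has $\mu(U_{e,n})\ge q_{e,n}$; writing $U_{e,n,t}$ for the same class with $t$ in place of $t_{e,n}$, such a $t_{e,n}$ exists because $\mu(U_{e,n,t})\nearrow\mu(D_{e,n})$ as $t\to\infty$. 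It follows that $\mu(D_{e,n}\setminus U_{e,n})\le 2^{-n}$.

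Next I would define $f$ by letting $f(n)$ exceed every value $\Phi_{e,t_{e,n}}(\sigma)(k)$ with $e\le n$, $k\le n$, and $\sigma$ a string short enough to carry one of the (finitely many) converging computations tallied in $U_{e,n}$; since $(\mu\oplus Z)'$ knows the finite data $q_{e,n},t_{e,n}$, this gives $f\le_{\T}(\mu\oplus Z)'$. For the tests, for each $e$ and $m$ I would set
\[
V^e_m=\bigcup_{n>m}\bigl(D_{e,n}\setminus U_{e,n}\bigr),
\]
an open class that is $\Sigma^0_1$ in $(\mu\oplus Z)'$ uniformly in $m$ (the $D_{e,n}$ are $(\mu\oplus Z)$-r.e., each $U_{e,n}$ is clopen, and $t_{e,n}$ is $(\mu\oplus Z)'$-computable) and of measure $\mu(V^e_m)\le\sum_{n>m}2^{-n}=2^{-m}$; taking a prefix-free generating set for each $V^e_m$ then exhibits $(V^e_m)_m$ as an $(R_\mu,Z,2)$-test in the sense of Definition~\ref{def-test-relative-to-measure}. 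The key step to check is that if $X\in D_e$ and $\Phi_e(X)(n)>f(n)$ for some $n\ge e$, then the computation of $\Phi_e(X)(n)$ cannot have converged from $X\Rest{t_{e,n}}$ within $t_{e,n}$ steps (else its value would be one of those bounded by $f(n)$), so $X\notin U_{e,n}$ while $X\in D_{e,n}$, i.e.\ $X\in D_{e,n}\setminus U_{e,n}$; hence $\Phi_e(X)(n)>f(n)$ for infinitely many $n$ forces $X\in\bigcap_m V^e_m$, so $X$ is covered by the $e$-th test. From this I would conclude that for any $(\mu,Z,2)$-random $X$ and any $g\le_{\T(Z\oplus\mu)}X$, writing $g=\Phi_e(X)$ (so $X\in D_e$), one has $g(n)\le f(n)$ for all but finitely many $n$, i.e.\ $f$ dominates $g$.

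The part that needs the most care — and where the bulk of the write-up will go — is the oracle bookkeeping: that $(\mu\oplus Z)'$ genuinely suffices to decide the $\Sigma^{0,\mu}_1$ measure inequalities used to pin down $q_{e,n}$, to search for the stages $t_{e,n}$, and to enumerate the classes $V^e_m$ with the stated measure bound, so that $(V^e_m)_m$ really is recursively enumerable in $(R_\mu\oplus Z)^{(1)}$ and hence a bona fide $(R_\mu,Z,2)$-test. I do not expect a genuine obstacle beyond this: the measure-theoretic heart — that the clopen approximations $U_{e,n,t}$ exhaust $D_{e,n}$ in measure, so that almost every total computation has in fact converged by the precomputed stage $t_{e,n}$ — is exactly as in the Lebesgue-measure case, and the dovetailing over $e$ in the definition of $f$ is what allows a single $f$ to work for every $(\mu,Z,2)$-random real and every function it computes.
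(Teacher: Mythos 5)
Your proposal is correct and takes essentially the same route as the paper, which proves the proposition by relativizing Kurtz's argument exactly as you do: $(\mu\oplus Z)'$ decides the $\Sigma^0_1(\mu\oplus Z)$ measure inequalities for the totality approximations, fixes $q_{e,n}$ and $t_{e,n}$, bounds the finitely many tallied values to define $f$, and captures the exceptional oracles by a test enumerable in $(\mu\oplus Z)'$, i.e.\ a $(\mu,Z,2)$-test. The only (cosmetic) adjustment is to pick $q_{e,n}$ with the strict inequality $\mu(D_{e,n})>q_{e,n}$, as in the paper's sketch, so that the search for a stage $t_{e,n}$ with $\mu(U_{e,n})\geq q_{e,n}$ is guaranteed to terminate.
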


\medskip Together with Proposition~\ref{prop:relative-demuth} this yields a
sufficient criterion for continuous randomness.

\begin{lem} \label{prop:rand-via-turing} Suppose $n \geq 3$ and $Y$ is
  $(\mu,Z,n)$-random. If $X \leq_{\T(\mu\oplus Z)} Y$ and $X \nleq_{\T} (\mu \oplus Z)^{'}$, then $X$ is $(\nu,(\mu\oplus Z)'',n-2)$-random for
  some continuous measure $\nu \leq_{\T} (\mu\join Z)^{''}$.
\end{lem}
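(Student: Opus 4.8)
The plan is to combine the two ingredients just discussed, Demuth's theorem (Proposition~\ref{prop:relative-demuth}) and Kurtz's domination theorem (Proposition~\ref{prop-ae-unif-dominating}), with the characterization of continuous randomness via truth-table equivalence (Proposition~\ref{prop:char-cont-rand}). First I would set $W = (\mu\oplus Z)'$ and $U = (\mu\oplus Z)''$ to lighten notation, and record that $Y$ being $(\mu,Z,n)$-random with $n\geq 3$ means in particular that $Y$ is $(\mu,Z,2)$-random and $(\mu,Z,3)$-random. Since $X \leq_{\T(\mu\oplus Z)} Y$, there is a Turing $(\mu\oplus Z)$-functional computing $X$ from $Y$.

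The key step is to upgrade this Turing reduction to a truth-table reduction relative to $W = (\mu\oplus Z)'$. This is exactly where Kurtz's theorem enters: by Proposition~\ref{prop-ae-unif-dominating} there is $f\leq_{\T} W$ dominating every function computed from the $(\mu,Z,2)$-random real $Y$ via a $(\mu\oplus Z)$-oracle; in particular $f$ dominates the use function of the given reduction of $X$ to $Y$. Having a $W$-computable bound on the use turns the Turing reduction into a truth-table reduction relative to $W$, i.e., $X \leq_{\TT(W)} Y$. Now I would check the hypothesis ``$X$ not recursive in $W$'': this is precisely the assumption $X \nleq_{\T} (\mu\oplus Z)'$. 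With $k = 1 \leq n-1$, Proposition~\ref{prop:relative-demuth} applies (note $(\mu\oplus Z)' = (\mu\oplus Z)^{(1)}$, so the parameter $(\mu\join Z)^{(k)}$ in that proposition is $W$), and yields a real $X' \equiv_{\T(W)} X$ that is $(\lambda, \mu\oplus Z, n)$-random — hence in particular $(\lambda, W, n)$-random, and a fortiori $(\lambda, U, n-1)$-random after relativizing to one more jump, since $U = W'$.

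Finally I would convert this back to continuous randomness for $X$ itself. From $X \equiv_{\T(W)} X'$ with $X'$ being $(\lambda,W,n)$-random we get, relative to $U = W'$, that $X$ is truth-table equivalent relative to $U$ to a $(\lambda, U, n-1)$-random real: the Turing equivalence $X \equiv_{\T(W)} X'$ has use functions bounded by something $W'$-computable (again via Kurtz's theorem applied to the $(\lambda, W, 2)$-random $X'$, using $n-1\geq 2$), promoting it to truth-table equivalence relative to $U$. Then apply the implication (iv)$\Rightarrow$(i) of Proposition~\ref{prop:char-cont-rand} with the parameter $Z$ there taken to be $U$ and the level taken to be $n-2$: this gives a continuous measure $\nu$ recursive in $U = (\mu\oplus Z)''$ such that $X$ is $(\nu, U, n-2)$-random, which is the desired conclusion. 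The main obstacle I anticipate is the careful bookkeeping of oracles and jump levels — making sure that each time a Turing reduction is promoted to a truth-table reduction one has paid for exactly one jump, so that starting from level $n$ randomness of $Y$ one lands at level $n-2$ randomness of $X$ over the double jump, and that the ``not recursive in'' hypotheses line up at each application (the assumption $X\nleq_{\T}(\mu\oplus Z)'$ is used once directly for Demuth, and non-recursiveness of $X'$ needed for the second Kurtz application follows since $X'\equiv_{\T(W)} X$ and $X$ is not $W$-recursive).
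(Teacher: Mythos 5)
Your proposal is correct in substance and follows essentially the same route as the paper: Kurtz's domination to upgrade the reduction to a truth-table reduction relative to $(\mu\oplus Z)'$, Demuth with $k=1$ to get a Turing equivalence relative to $(\mu\oplus Z)'$ with a $\lambda$-random real, a second application of Kurtz to promote that equivalence to truth-table relative to $(\mu\oplus Z)''$, and then (iv)$\Rightarrow$(i) of the characterization of continuous randomness. One bookkeeping slip: from $X'$ being $(\lambda,\mu\oplus Z,n)$-random you may conclude it is $(\lambda,(\mu\oplus Z)',n-1)$-random (equivalently $(\lambda,(\mu\oplus Z)'',n-2)$-random), not $(\lambda,(\mu\oplus Z)',n)$-random as written — the implication you state goes the wrong way; since your later steps only use the levels $2$ and $n-2$, which the corrected count still provides, the argument and the final conclusion are unaffected.
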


\begin{proof}
  We assume $Z = \emptyset$ to keep notation simple. 
  Suppose $X \leq_{\T(\mu)}
  Y$ via a Turing reduction $\Phi$. By
  Proposition~\ref{prop-ae-unif-dominating}, the use and the convergence time of
  $\Phi$ on $Y$ are dominated by some function recursive in $\mu'$. We can
  modify $\Phi$ to $\widetilde{\Phi}$ such that $\widetilde{\Phi}$ is a
  truth-table reduction relative to $\mu'$ and $\widetilde{\Phi}(Y) = X$.

	By Proposition~\ref{prop:relative-demuth}, $X$ is Turing equivalent relative
  to $\mu'$ to a $(\lambda,\mu, n)$-random real $R$. Any
  $(\lambda,\mu, n)$-random real is also $(\lambda,\mu',
  n-1)$-random, and so we can apply Proposition~\ref{prop-ae-unif-dominating} to
  $X$ and $R$ to conclude that they are truth-table equivalent relative to
  $\mu''$. This in turn means that $X$ is truth-table equivalent
  relative to $\mu''$ to a $(\lambda,\mu'', n-2)$-random
  real, which by Proposition~\ref{prop:char-cont-rand} implies that $X$ is
  $(\nu, \mu'', n-2)$-random for a continuous measure recursive in
  $\mu''$.
\end{proof}

\subsection{The definability strength of randomness}

Lemma~\ref{prop:rand-via-turing} shows that sufficiently high randomness
for continuous measures propagates downward under Turing reductions (losing some
of the randomness strength, however). This result was partly based on the fact
that, for $n \geq 2$, $n$-random reals cannot compute fast-growing functions
(beyond what is computable by $\emptyset'$). There is further evidence that the
computational strength of $n$-random reals is rather limited.

For example, random reals are generalized low (relative to the measure). This is
a generalization of a result due to Kautz~\cite[Theorem~III.2.1]{kautz:1991}.

\begin{prop}[Kautz] \label{pro:random-join-jump} Let $\mu$ be a continuous measure, and
  suppose $X$ is $\mu$-$(n+1)$-random, where $n \geq 1$. Then
	\[
		(X \join \mu)^{(n)} \equiv_{\T} X \join \mu^{(n)}
	\]
\end{prop}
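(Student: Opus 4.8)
The plan is to establish the non-trivial inequality $(X \oplus \mu)^{(n)} \leq_{\T} X \oplus \mu^{(n)}$, the reverse being immediate since $\mu^{(n)} \leq_{\T} (X \oplus \mu)^{(n)}$ and $X \leq_{\T} (X \oplus \mu)^{(n)}$. I would proceed by induction on $n$, the base case $n=1$ being the heart of the matter: if $X$ is $\mu$-$2$-random, then $(X \oplus \mu)' \leq_{\T} X \oplus \mu'$. For the base case I would use the standard ``almost everywhere'' characterization of the jump together with the fact that $\mu$-$2$-randomness buys effective domination (Proposition~\ref{prop-ae-unif-dominating}) and, dually, that $2$-random reals are sufficiently generic to decide $\Sigma^0_1(\mu)$ facts correctly. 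Concretely, to decide whether $e \in (X \oplus \mu)'$, i.e.\ whether $\Phi_e^{X \oplus \mu}(e)\!\downarrow$, one asks: does the convergence happen within the time/use bound already known (via Proposition~\ref{prop-ae-unif-dominating}, relative to $\mu$) to capture all but a $\mu$-null set of $X$? If $X$ lands in that large set — which it does, being $\mu$-$2$-random, hence $\mu'$ cannot $\mu$-null-cover it — then $\mu'$ can name the bound and $X \oplus \mu'$ can simply run the computation. The subtle direction is when the computation diverges on $X$: here one invokes that the set of $Y$ on which $\Phi_e^{Y \oplus \mu}(e)\!\downarrow$ is a $\Sigma^{0}_1(\mu)$ class, so its measure is $\Sigma^{0,\mu}_1$-approximable; if $X$ avoids it, then $X$ avoids a $\mu'$-effectively-closed set of measure approaching the sup, which a $\mu$-$2$-random real cannot do unless that measure is bounded away from full — and $\mu'$ knows a rational lower bound for the complement's measure, from which $X \oplus \mu'$ extracts the answer. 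This is exactly the Kautz argument for Lebesgue measure, relativized to $\mu$ via Propositions~\ref{prop:repres-relation} and~\ref{prop-ae-unif-dominating}.

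For the inductive step, suppose the result holds for $n$ and let $X$ be $\mu$-$(n+2)$-random. Then $X$ is in particular $\mu$-$(n+1)$-random, so by the induction hypothesis $(X \oplus \mu)^{(n)} \equiv_{\T} X \oplus \mu^{(n)}$. Now I would like to apply the base case ``one level up,'' with $\mu^{(n)}$ playing the role of the oracle: that is, I want $(X \oplus \mu^{(n)})' \leq_{\T} X \oplus \mu^{(n+1)}$, which combined with $(X\oplus\mu)^{(n+1)} = ((X\oplus\mu)^{(n)})' \equiv_{\T} (X \oplus \mu^{(n)})'$ gives the claim. The input needed for the base-case argument relative to the parameter $\mu^{(n-1)}$ is that $X$ is $(\lambda\text{-or-}\mu,\,\mu^{(n)},\,2)$-random; but $\mu$-$(n+2)$-randomness of $X$ means, unwinding Definition~\ref{def-test-relative-to-measure} and the remark that $(R_\mu,Z,n)$-randomness equals $(R_\mu, Z^{(n-1)},1)$-randomness when $R_\mu \leq_{\T} Z$, precisely that $X$ is $(\mu, \mu^{(n-1)}, 3)$-random, i.e.\ $2$-random relative to $\mu^{(n)}$ in the appropriate sense — so the base case applies with the parameter shifted up.

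The main obstacle I anticipate is bookkeeping the parameters and jumps correctly: the relativized jump $(X \oplus \mu)'$ is not literally the jump relative to a single oracle unless one is careful, and the passage between ``$(\mu,Z,n)$-random'' and ``$(\mu, Z^{(n-1)}, 1)$-random'' (valid, as the excerpt notes, when $\mu$ is $Z$-computable, which here forces taking $Z = \mu^{(n-1)}$) must be invoked cleanly at each stage. A secondary technical point is ensuring the domination function from Proposition~\ref{prop-ae-unif-dominating}, which is only guaranteed $\leq_{\T}(\mu \oplus Z)'$, provides a bound computable at the right level — but since we are computing a jump one level higher than where $2$-randomness is used, this is exactly where it fits. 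Once these indices are pinned down, the argument is a routine relativization of Kautz's original proof, and I would present the base case in detail and remark that the induction is a mechanical iteration.
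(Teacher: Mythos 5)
Your overall route --- prove $(X \oplus \mu)^{(n)} \leq_{\T} X \oplus \mu^{(n)}$ by induction on $n$, with the base case a relativized form of Kautz's generalized-lowness argument and the inductive step pure bookkeeping of parameters --- is exactly what the paper intends: the paper gives no written proof beyond citing Kautz's Theorem III.2.1 and remarking that his argument relativizes to an arbitrary represented measure because the measure relations of effectively presented classes have the right complexity relative to $\mu$. Your inductive step is fine, including the identification of $\mu$-$(n+2)$-randomness with the randomness needed to run the base case with parameter $\mu^{(n)}$.

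The base case as you have written it, however, has a genuine defect. Proposition~\ref{prop-ae-unif-dominating} cannot supply the ``time/use bound'' you invoke: it dominates total functions computed from the random real, whereas the settling time of $\Phi_e^{X\oplus\mu}(e)$ is only partial and is precisely what you are trying to decide; when the computation diverges there is no total $X\oplus\mu$-computable function in sight to dominate. Likewise, in your divergence case, knowing a rational lower bound on the $\mu$-measure of the complement of $U_e=\{Y \colon \Phi_e^{Y\oplus\mu}(e)\downarrow\}$ does not let $X\oplus\mu'$ decide whether $X\in U_e$: a $\mu$-$2$-random real can perfectly well avoid an open set whose measure is close to, but less than, $1$. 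The mechanism that actually works (and is what Kautz does, and what the paper's pointer to Propositions~\ref{prop:repres-relation} and~\ref{prop:computation-measure} is for) is the following: $\mu(U_e)$ is left-r.e.\ in $\mu$, hence computable from $\mu'$ to any precision, so $\mu'$ uniformly computes stages $s_e$ with $\mu(U_e\setminus U_e[s_e])\leq 2^{-(e+c)}$; the sets $V_n=\bigcup_{e>n}\bigl(U_e\setminus U_e[s_e]\bigr)$ then form a $(\mu,2)$-test, so $\mu$-$2$-randomness of $X$ yields $X\in U_e \Leftrightarrow X\in U_e[s_e]$ for all $e$ beyond some finite threshold (hard-code the exceptions), and membership in the clopen set $U_e[s_e]$ is decidable from $X\oplus\mu'$. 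Divergence then needs no separate treatment, and no domination result is used. With the base case repaired in this way, and relativized to a parameter $Z\geq_{\T}\mu$ as you indicate, your induction goes through and coincides with the paper's intended (cited) proof.
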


The generalization works for the same reasons that $n$-randomness can be defined
equivalently in terms of $\Sigma^{0,\mu}_n$-tests or
$\Sigma^{0,\mu^{(n-1)}}_1$-tests: Borel probability measures are regular, and if $S$ is a $\Sigma^0_n$ class,
the relations $\mu(S) > q$ and $\mu(S) < q$ (for $q$ rational) are uniformly
$\Sigma^{0,\mu}_n$ and $\Pi^{0,\mu}_n$, respectively. 

Furthermore, one can generalize a result of Downey, Nies, Weber, and Yu~\cite
{downey-nies-weber-yu:2006}, who show that every weakly $2$-random real forms a
minimal pair with $0'$. This will be of central importance in Section~\ref{sec-meta}.
For our purposes, it suffices to consider randomness instead of weak
randomness, which we do in the following lemma.

\begin{lem} \label{prop-non-accel} 
Suppose $\mu$ is a continuous measure and $Y$
  is $\mu$-$n$-random, $n \geq 2$. If $X \leq_{\T} \mu^{(n-1)}$ and $X \leq_{\T}
  Y \oplus \mu$, then $X \leq_{\T} \mu$.
\end{lem}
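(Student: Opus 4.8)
The plan is to first reduce the statement to the case of Lebesgue measure by means of the transformation results already available, then prove the resulting statement with an arbitrary oracle in place of $\mu$ by induction on $n$, taking as base case a relativized version of the Downey--Nies--Weber--Yu minimal pair theorem and bridging the inductive step with Kurtz's domination property (Proposition~\ref{prop-ae-unif-dominating}).

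For the reduction I would use Remark~\ref{rem:dyadic} to treat $\mu$ as a real: then $\mu$-$n$-randomness of $Y$ says that $Y$ passes every $\mu$-test that is r.e.\ in $\mu^{(n-1)}$, which is the same as $(\mu,\mu,n)$-randomness (take $\mu$ itself as the witnessing representation, using $(\mu\oplus\mu)^{(n-1)}\equiv_{\T}\mu^{(n-1)}$). Since $\mu$ is continuous and trivially recursive in $\mu$, condition (i) of Proposition~\ref{prop:char-cont-rand} holds with parameter $\mu$, hence so does (iv): $Y$ is truth-table equivalent relative to $\mu$ to some $(\Leb,\mu,n)$-random real $R$, i.e.\ $R$ is $\mu^{(n-1)}$-random (because $\Leb$ is computable). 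As $R\oplus\mu\equiv_{\T}Y\oplus\mu$, the hypotheses become: $R$ is $\mu^{(n-1)}$-random, $X\leq_{\T}\mu^{(n-1)}$, $X\leq_{\T}R\oplus\mu$, and the goal is $X\leq_{\T}\mu$. Thus it suffices to prove the following, applied with $W=\mu$. Statement $(\star_n)$: for every real $W$, if $R$ is $W^{(n-1)}$-random, $X\leq_{\T}W^{(n-1)}$ and $X\leq_{\T}R\oplus W$, then $X\leq_{\T}W$.

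I would prove $(\star_n)$ by induction on $n\geq 2$. For the base case $n=2$, $R$ is $W'$-random; since for a uniformly $\Sigma^0_1(W)$ family $(U_k)$ the relation $\Leb(U_k)>q$ is uniformly $\Sigma^{0,W}_1$, the oracle $W'$ can thin any $\Leb$-null $\Pi^0_2(W)$ class $\bigcap_k U_k$ to a $W'$-Martin-L\"of test, so $R$ is weakly $2$-random relative to $W$; then the relativization of the Downey--Nies--Weber--Yu theorem~\cite{downey-nies-weber-yu:2006} (a real weakly $2$-random relative to $W$ forms a minimal pair with $W'$ over $W$), together with $X\leq_{\T}R\oplus W$ and $X\leq_{\T}W'$, gives $X\leq_{\T}W$. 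For the inductive step ($n\geq 3$, assuming $(\star_{n-1})$), fix a Turing functional $\Phi$ with $\Phi(R\oplus W)=X$. Since $W'\leq_{\T}W^{(n-1)}$, $R$ is $W'$-random, hence $(\Leb,W,2)$-random, so Proposition~\ref{prop-ae-unif-dominating} supplies $f\leq_{\T}W'$ dominating the $R$-use and running time of $\Phi$ along $R$; exactly as in the proof of Lemma~\ref{prop:rand-via-turing} this converts $\Phi$ into a truth-table reduction relative to $W'$, so $X\leq_{\T}R\oplus W'$. Now $R$ is $(W')^{(n-2)}$-random (as $W^{(n-1)}=(W')^{(n-2)}$), $X\leq_{\T}(W')^{(n-2)}$ and $X\leq_{\T}R\oplus W'$, so $(\star_{n-1})$ applied with oracle $W'$ yields $X\leq_{\T}W'$. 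Finally $R$ is $W'$-random, $X\leq_{\T}W'$ and $X\leq_{\T}R\oplus W$ \emph{(the original reduction, not the upgraded one)}, so the base case $(\star_2)$ applied with oracle $W$ gives $X\leq_{\T}W$.

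The step I expect to be the main obstacle is the base case, which rests on the Downey--Nies--Weber--Yu minimal pair theorem; that argument is genuinely delicate, because it must manufacture a $\Leb$-null $\Pi^0_2(W)$ class --- crucially \emph{not} a $\Pi^0_2(W')$ class --- out of the non-computability of $X$ and a fluctuating $W$-computable approximation of it, whereas the naive ``measure-theoretic majority vote'' only recovers $X\leq_{\T}W'$, which is already a hypothesis. I would carry this relativization out carefully (the generalization to an arbitrary Borel measure, should one prefer to avoid the transformation step, works for the same reasons: measures of $\Sigma^0_1(W)$ sets are approximable from $W'$). By contrast the remaining ingredients --- the passage to Lebesgue measure, the Kurtz-domination upgrade to a truth-table reduction, and the bookkeeping of jumps in the induction, where the invariant is precisely that $X\leq_{\T}W^{(n-1)}$ holds with the \emph{same} real $W^{(n-1)}$ at every level of the recursion --- are routine given the results already established.
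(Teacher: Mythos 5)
Your proposal is correct, but it is organized quite differently from the paper's proof. The paper argues in a single pass, directly for $\mu$ and at level $n$: using a $\mu^{(n-2)}$-recursive approximation of $X$ it forms the $\Pi^0_2(\mu^{(n-2)})$ class $P=\{A\colon \Phi(A\oplus\mu)=X\}$, observes that $\mu(P)=0$ would yield a $(\mu,n)$-test covering $Y$ (since $\mu^{(n-1)}$ can decide the measures of the relevant $\Sigma^{0,\mu^{(n-2)}}_1$ open sets), and in the positive-measure case isolates $X$ on a tree r.e.\ in $\mu$, giving $X\leq_{\T}\mu$ --- that is, it writes out the Downey--Nies--Weber--Yu/Sacks engine generalized to an arbitrary continuous $\mu$ and to level $n$, with no detour through Lebesgue measure and no induction. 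You instead first transfer to Lebesgue measure via the relativized Proposition~\ref{prop:char-cont-rand} with $Z=\mu$ (legitimate: $Y$ is $(\mu,\mu,n)$-random, and $R\oplus\mu\equiv_{\T}Y\oplus\mu$ preserves the hypotheses and the goal), and then prove $(\star_n)$ by induction; your bookkeeping is sound --- $(\star_{n-1})$ applied at $W'$ and then $(\star_2)$ applied at $W$ with the original reduction do give $X\leq_{\T}W$. Two remarks. First, the Kurtz-domination upgrade in your inductive step is superfluous: $X\leq_{\T}R\oplus W\leq_{\T}R\oplus W'$ already holds because $W\leq_{\T}W'$, so your induction collapses to iterating the base case at the oracles $W^{(n-2)},\dots,W$ --- the ``stair trainer'' in pure form. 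Second, the one ingredient you leave unproved, the relativization to $W$ of the Downey--Nies--Weber--Yu minimal-pair theorem, is exactly the content the paper proves from scratch (its proof of this lemma \emph{is} that argument, carried out for $\mu$ in place of Lebesgue measure); the relativization is indeed routine, and your diagnosis of the key point (the null class must be $\Pi^0_2(W)$, not $\Pi^0_2(W')$, with the positive-measure alternative handled by a tree r.e.\ in $W$) is the right one. What your route buys is that all measure-theoretic work is confined to the Lebesgue, level-$2$, relativized case, with everything else reduced to transfer machinery already in the paper; what the paper's route buys is a self-contained argument that never invokes Proposition~\ref{prop:char-cont-rand} at level $n$ and that relativizes verbatim to give Lemma~\ref{lem:stair-with-jump}.
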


The structure of the proof is as follows: Following Downey, Nies, Weber, and Yu, we first show that the upper cone by $\Phi$ is $\Pi^0_2$
  (relative to $\mu^{(n-2)}$). Next, we argue that the upper cone cannot
  have measure zero since it contains a random real. Finally, one uses this fact to isolate $X$ as a path in a $\mu$-r.e.\ tree. The last step is a generalized version of the result that if the Turing upper cone of a real has positive Lebesgue measure, then the real must be computable \cite{Leeuw:1956a,Sacks:1963a}. Our presentation follows~\cite{nies:2009}.

\begin{proof}[Proof of Lemma~\ref{prop-non-accel}]
Suppose $X \leq_{\T} Y \oplus \mu$ via a Turing functional $\Phi$ and $Y
\leq_{\T} \mu^{(n-1)}$. Note that $Y$ is $\Delta^0_2$ relative to $\mu^{(n-2)}$. Let
$Y(n,s)$ be a $\mu^{(n-2)}$-recursive approximation of $Y$, i.e., $\lim_s Y(n,s)
= Y(n)$. Given $i,s \in \Nat$, put
\[
	U_{i,s} = \{ X \colon \exists t > s \; \bigl( \Phi_t^{X\oplus \mu}(i) = Y(i,t)  \bigr)\}.
\]
The set $U_{i,s}$ is $\Sigma^{0,\mu^{(n-2)}}_1$ uniformly in $i,s$ and hence $P
= \bigcap_{i,s} U_{i,s}$ is $\Pi^{0,\mu^{(n-2)}}_2$. Note that $P$ is the upper
cone of $X$ under $\Phi$,
\begin{equation*}
  P = \{ A \colon \Phi(A) = X\}.
\end{equation*}
$P$ cannot have $\mu$-measure $0$: If it had then,
since Borel probability measures on $\Cant$ are regular, for the sequence of
open sets $(V_k)_{k \in \Nat}$ given by $V_k = \bigcap_{\Tup{i,s} \leq k}
U_{i,s}$, we have $\mu V_k \searrow 0$. Since each $V_k$ is
$\Sigma^{0,\mu^{(n-2)}}_1$, $\mu^{(n-1)}$ can decide whether $\mu V_k \leq
2^{-l}$ for given $l$. Hence, we can convert $(V_k)$ into a $(\mu,n)$-test.
Since $\bigcap_k V_k = P$ and $P$ contains $Y$, this contradicts the fact that
$Y$ is $\mu$-$n$-random.

Hence pick $r$ rational such that $\mu P > r >0$, where $r$ is rational. Define
a tree $T$ by letting
\[
	\sigma \in T \; : \Leftrightarrow \; \mu \{\tau \colon \Phi(\tau\oplus \mu)
  \Sgeq \sigma \} > r,
\]
and closing under initial segments. $T$ is r.e.\ in $\mu$ and $X$ is an infinite
path through $T$.

Since $\mu$ is a probability measure, any antichain in $T$ contains at most $\lceil 1/r\rceil$
strings. Choose $\sigma = X\Rest{n}$ such that no $\tau \Sgeq
\sigma$ incompatible with $X$ is in $T$. Such $\sigma$ exists for otherwise we could find an antichain of more than $\lceil 1/r\rceil$ strings branching off $X$. To compute $X\Rest{m}$ from $\mu$, it
suffices to enumerate $T$ above $\sigma$ until a long enough extension shows up.
\end{proof}

We will later need the following relativization of the previous lemma. The proof is similar.

\begin{lem} \label{lem:stair-with-jump}
	Suppose $\mu$ is a continuous measure and $Y$ is $\mu$-$(k+n)$-random, $k \geq 0, n \geq  2$. If $X \leq_{\T} \mu^{(k+n-1)}$ and $X \leq_{\T} Y \oplus \mu^{(k)}$, then $X \leq_{\T} \mu^{(k)}$.
\end{lem}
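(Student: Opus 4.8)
The plan is to mimic the proof of Lemma~\ref{prop-non-accel} verbatim, with $\mu$ replaced throughout by $\mu^{(k)}$ as the "base" oracle, and noting that $\mu$-$(k+n)$-randomness is exactly the right hypothesis to make the argument go through.

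First I would reduce to the case $k = 0$ in spirit. Set $\widehat{\mu}$ to be (a representation of) $\mu^{(k)}$; the point is that by Remark~\ref{rem:dyadic} and the basic facts on representations we may treat $\mu^{(k)}$ as an oracle with the same manipulations available as for $\mu$ itself. The hypothesis that $Y$ is $\mu$-$(k+n)$-random means, after absorbing the first $k$ jumps, that $Y$ is $(\mu^{(k)}, n)$-random in the sense that every $\mu^{(k)}$-based $\Sigma^0_1$-test fails — more precisely, $Y$ passes all $(R_\mu, \emptyset, k+n)$-tests, which via the remark following Definition~\ref{def-test-relative-to-measure} is the same as $Y$ being $(\mu^{(k)}, n)$-random relative to the measure-oracle $\mu^{(k)}$. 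This is the analogue of "$Y$ is $\mu$-$n$-random" in the original lemma.

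Next I would run the Downey--Nies--Weber--Yu argument. Suppose $X \leq_{\T} Y \oplus \mu^{(k)}$ via a Turing functional $\Phi$, and $X \leq_{\T} \mu^{(k+n-1)}$. Since $X \leq_{\T} \mu^{(k+n-1)} = (\mu^{(k)})^{(n-1)}$, $X$ is $\Delta^0_2$ relative to $(\mu^{(k)})^{(n-2)} = \mu^{(k+n-2)}$, so fix a $\mu^{(k+n-2)}$-recursive approximation $X(i,s)$. Here one uses $X$ itself rather than $Y$ to build the approximation (exactly as in the proof above, where $Y(n,s)$ approximates $Y$); the upper cone $P = \{A : \Phi(A \oplus \mu^{(k)}) = X\}$ is then $\Pi^{0,\mu^{(k+n-2)}}_2$ by the same $U_{i,s}$ construction. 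Because $Y \in P$ — wait, one needs $Y \in P$, i.e.\ $\Phi(Y \oplus \mu^{(k)}) = X$, which is exactly the assumption $X \leq_{\T} Y \oplus \mu^{(k)}$ via $\Phi$ — $P$ contains the $(k+n)$-random real $Y$, so $P$ cannot be covered by a $\mu^{(k)}$-based $n$-test; since the nested open approximations $V_l = \bigcap_{\langle i,s\rangle \leq l} U_{i,s}$ are $\Sigma^{0,\mu^{(k+n-2)}}_1$ and $\mu^{(k+n-1)}$ decides their measures to any precision, if $\mu(P) = 0$ we would get such a test, contradiction. Hence $\mu(P) > r > 0$ for some rational $r$.

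Finally, the last step: define $T$ by $\sigma \in T :\Leftrightarrow \mu\{\tau : \Phi(\tau \oplus \mu^{(k)}) \Sgeq \sigma\} > r$, closed under initial segments. Then $T$ is r.e.\ in $\mu^{(k)}$ (we need $\mu^{(k)}$ both as the measure-computing oracle and as the functional's oracle, which is fine), $X$ is an infinite path, any antichain in $T$ has at most $\lceil 1/r \rceil$ strings, so there is an initial segment $\sigma = X\Rest{m_0}$ with no $T$-extension of $\sigma$ incompatible with $X$; enumerating $T$ above $\sigma$ then computes $X$ from $\mu^{(k)}$, giving $X \leq_{\T} \mu^{(k)}$. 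The main thing to check carefully — and the only place the proof is not a literal copy — is the bookkeeping of which jump sits where: one needs $n \geq 2$ precisely so that $(\mu^{(k)})^{(n-2)}$ makes sense and the approximation of $X$ is $\Delta^0_2$ over it, and one needs the randomness level to be $k+n$ (not $k+n-1$) so that $Y$'s randomness survives being converted into a $\mu^{(k)}$-based $n$-test via $n-1$ jumps of the base oracle. I expect that jump-counting to be the only real obstacle; everything else transfers mechanically from the proof of Lemma~\ref{prop-non-accel}.
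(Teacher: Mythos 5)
Your proposal is correct and is exactly what the paper intends: the paper gives no details for Lemma~\ref{lem:stair-with-jump} beyond stating that ``the proof is similar'' to Lemma~\ref{prop-non-accel}, and your relativization (base oracle $\mu^{(k)}$, approximation of $X$ recursive in $\mu^{(k+n-2)}$, the cone $P$ being $\Pi^0_2(\mu^{(k+n-2)})$ so that a null $P$ would yield a test r.e.\ in $\mu^{(k+n-1)}$ covering the $\mu$-$(k+n)$-random $Y$, and the tree $T$ r.e.\ in $\mu^{(k)}$) is precisely the right bookkeeping. The only cosmetic caveat is that the measure in the tests is still $\mu$ (with $\mu^{(k)}$ serving only as an oracle), which your actual argument handles correctly despite the looser phrasing about ``$(\mu^{(k)},n)$-randomness.''
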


One interpretation of Lemmas~\ref{prop-non-accel} and \ref{lem:stair-with-jump} is that $\mu$-random reals are not
helpful in computing (defining) reals arithmetic in $\mu$. For example, if a real is
properly $\Delta^0_n$ relative to a measure $\mu$, then it cannot be
$\Delta^0_k$ relative to $\mu\oplus Y$ where $k < n$ and $Y$ is
$\mu$-$(n+1)$-random. 

In Section~\ref{sec-meta}, we will also need a result similar to the previous lemmas regarding initial segments of linear orders, namely, that random reals are not helpful in the recognizing well-founded initial segments. The following lemma may appear technical at this point, but its importance will become clear towards the end of Section~\ref{sec-meta}, in the proof of Theorem~\ref{thm:mastercodes-in-NCR}. We use it to separate, in a definable way, well-founded from non-well-founded models.  

\begin{lem} \label{lem:recog-wf-segment}
  Let $j \geq 0$.  Suppose $\mu$ is a continuous measure and $\prec$ is a linear order on a subset of $\omega$ such that the relation $\prec$ and the field of $\prec$ are both recursive in $\mu^{(j)}$. Suppose further $Y$ is $(j+5)$-random relative to $\mu$, and $I \subseteq \omega$ is the longest well-founded initial segment of $\prec$. If $I$ is recursive in $(Y \join \mu)^{(j)}$, then $I$ is recursive in $\mu^{(j+4)}$.
\end{lem}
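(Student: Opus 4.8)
The plan is to run the argument in the style of Lemmas~\ref{prop-non-accel} and~\ref{lem:stair-with-jump}: exhibit a witness for $I$ as the set of paths through a tree enumerable at a bounded arithmetic level over $\mu$, show this set of paths cannot be $\mu$-null without covering the random real $Y$ by a test, and then read $I$ off from a bounded jump of $\mu$. The essential twist is that, unlike in those lemmas, no a priori arithmetic bound on $I$ is available: the bound $\mu^{(j+4)}$ must be \emph{produced} by the argument, which is why it has to be organized around the $\prec$-structure of the ill-founded part rather than around an approximation to $I$ itself.

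First I would clean up the hypothesis. Since $Y$ is in particular $(j+1)$-random relative to $\mu$, Proposition~\ref{pro:random-join-jump} gives $(Y\oplus\mu)^{(j)}\equiv_{\T}Y\oplus\mu^{(j)}$, so $I\leq_{\T}Y\oplus\mu^{(j)}$, and hence also $I^{c}:=\Fld(\prec)\setminus I\leq_{\T}Y\oplus\mu^{(j)}$. If $\prec$ is well-founded, then $I=\Fld(\prec)\leq_{\T}\mu^{(j)}$ and we are done; so assume $\prec$ is ill-founded. A standard analysis of well-founded initial segments of a linear order shows that $I^{c}$ is then nonempty, has no $\prec$-least element (hence is downward directed under $\prec$), and that every infinite $\prec$-descending sequence lies entirely inside $I^{c}$. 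Consequently, since $I^{c}$ is recursive in $Y\oplus\mu^{(j)}$, that oracle can greedily enumerate an infinite $\prec$-descending sequence $D$ which is \emph{downward cofinal} in $I^{c}$ (each time a new element of $I^{c}$ appears, append a member of $I^{c}$ lying $\prec$-below it). For any such $D$ one has the $D$-robust description
\[
 n\in I \quad\Longleftrightarrow\quad n\in\Fld(\prec)\ \text{ and }\ (\forall d\in D)\ n\prec d,
\]
whose right-hand side yields the \emph{same} set $I$ no matter which downward-cofinal-in-$I^{c}$ infinite descending sequence is used.

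Now comes the measure argument. Fix a Turing functional $\Gamma$, enumerable in $\mu^{(j)}$, such that $\Gamma^{Y\oplus\mu^{(j)}}$ enumerates such a $D$, and such that $\Gamma^{A\oplus\mu^{(j)}}$ always enumerates a $\prec$-descending sequence. Let $P=\{A:\Gamma^{A\oplus\mu^{(j)}}\text{ produces an infinite }\prec\text{-descending sequence}\}$; this is a $\Pi^{0}_{2}(\mu^{(j)})$ class, and $Y\in P$. If $\mu(P)=0$, then using Proposition~\ref{prop:repres-relation} (so that $\mu^{(j+1)}$ can decide $\mu(V)<2^{-\ell}$ for $\Sigma^{0}_{1}(\mu^{(j)})$ sets $V$) one converts the canonical decreasing $\Sigma^{0}_{1}(\mu^{(j)})$ approximants of $P$ into a $(\mu,j+2)$-test covering $Y$, contradicting $(j+5)$-randomness of $Y$ relative to $\mu$. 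Hence $\mu(P)>0$. Since every infinite $\prec$-descending sequence lies in $I^{c}$, each $A\in P$ yields an initial segment $H_{A}=\{n\in\Fld(\prec):n\prec d\text{ for all }d\text{ enumerated by }\Gamma^{A\oplus\mu^{(j)}}\}$ with $I\subseteq H_{A}$, and $H_{Y}=I$.

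The remaining, and main, point is to convert $\mu(P)>0$ together with $H_{Y}=I$ into an \emph{arithmetic} description of $I$ over a bounded jump of $\mu$. This is the analogue of the positive-measure-cone/bounded-antichain step of Lemmas~\ref{prop-non-accel} and~\ref{lem:stair-with-jump}: one shows that for every $n\in I^{c}$ the set $B_{n}=\{A\in P:\Gamma^{A\oplus\mu^{(j)}}\text{ eventually enumerates some }m\preceq n\}$ has positive $\mu$-measure, the point being that the finite $\prec$-descending sequences $\Gamma$ can produce with sufficient $\mu$-weight of oracles form a $\mu^{(j)}$-r.e.\ tree which has $D$ as a path and whose antichains are bounded (as in Lemma~\ref{prop-non-accel}), forcing every level of $I^{c}$ to be crossed on a set of positive measure. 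Granting this, $n\in I\iff\mu(B_{n})=0$; and since $B_{n}$ is a $\Pi^{0}_{2}(\mu^{(j)})$ class built from $\Sigma^{0}_{1}(\mu^{(j)})$ data and one only has to decide whether its measure vanishes, this predicate is $\Pi^{0}_{2}(\mu^{(j+2)})$, hence recursive in $\mu^{(j+4)}$. I expect this positivity/pinning-down step to be the real obstacle, because it is exactly here that the randomness of $Y$ must be used — a random oracle cannot, even with $\mu^{(j)}$, produce a descending sequence that stalls strictly inside the ill-founded part of $\prec$ on a non-null set of oracles — and it is here that the loss of four jumps, down to $\mu^{(j+4)}$, is incurred. (One could then improve the conclusion to $I\leq_{\T}\mu^{(j)}$ via Lemma~\ref{lem:stair-with-jump}, but $\mu^{(j+4)}$ is all that is needed later.)
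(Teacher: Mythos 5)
Your route is genuinely different from the paper's. The paper argues by contradiction: assuming $I \nleq_{\T} \mu^{(j+4)}$, it applies Lemma~\ref{prop:rand-via-turing} (with $Z=\mu^{(j)}$, $n=5$) to make $I$ \emph{itself} $(\mu_I,\mu^{(j+2)},3)$-random for a continuous $\mu_I \leq_{\T} \mu^{(j+2)}$, and then decides $a\in I$ by whether the $\Pi^0_1(\mu^{(j)})$ class $\mathcal{I}(a)$ of initial segments bounded by $a$ is $\mu_I$-null --- null when $a\in I$ because $\mathcal{I}(a)$ is then countable and $\mu_I$ is continuous, non-null when $a\notin I$ because $\mathcal{I}(a)$ contains the $\mu_I$-random real $I$. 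You instead keep the randomness on $Y$ and measure, with respect to $\mu$, the oracles whose descending sequences pass below $n$. That skeleton is workable: your reduction to the ill-founded case, the greedy construction of a downward-cofinal $D \leq_{\T} Y\join\mu^{(j)}$, the equivalence $n\in I \Leftrightarrow \mu(B_n)=0$ (indeed $B_n=\emptyset$ for $n\in I$, since an infinite descending output meeting $I$ would descend inside the well-founded $I$), and the final complexity count all check out and land within $\mu^{(j+4)}$.

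The gap is in the justification of the pivotal claim that $\mu(B_n)>0$ for $n\in I^{c}$. The mechanism you propose --- a $\mu^{(j)}$-r.e.\ tree of finite descending sequences produced ``with sufficient $\mu$-weight of oracles,'' having $D$ as a path and bounded antichains as in Lemma~\ref{prop-non-accel} --- does not go through. In Lemma~\ref{prop-non-accel} the functional outputs one fixed real $X$ on a set of oracles of measure $>r$, which is exactly what puts every initial segment of $X$ into the tree; here no single output is realized on positive measure ($\mu$ is continuous, so $\{Y\}$ is null, and distinct oracles in $P$ may produce pairwise incompatible descending sequences), so nothing forces $D$, or any particular infinite path, into your tree, and the accompanying heuristic about random oracles not ``stalling'' is beside the point (oracles may well stall on a non-null set). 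Fortunately the claim you need is weaker and follows from the argument you already used for $P$, applied to $B_n$: for $n\in I^{c}$, downward cofinality of $D$ gives $Y\in B_n$, and $B_n=\bigcap_k (W_k\cap O_n)$ is a decreasing intersection of uniformly $\Sigma^0_1(\mu^{(j)})$ classes (where $W_k$ says at least $k$ elements are enumerated and $O_n$ says some enumerated $m\preceq n$), so if $\mu(B_n)=0$ then, by continuity of measure from above and Proposition~\ref{prop:repres-relation}, $\mu^{(j+1)}$ extracts a $(\mu,j+2)$-test covering $Y$, contradicting the randomness of $Y$. With that repair your proof closes; the tree/antichain step as stated should be removed.
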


\begin{proof}
  Suppose $I \leq_{\T} (Y \join \mu)^{(j)}$, $Y$ is $(j+5)$-random relative to $\mu$, but $I \nleq_{\T} \mu^{(j+4)}$. By Lemma~\ref{prop:rand-via-turing} (with $Z = \mu^{(j)}$, $n=5$, and $X=I$), there is a continuous measure $\mu_I \leq_{\T} \mu^{(j+2)}$ such that $I$ is $(\mu_I, \mu^{(j+2)}, 3)$-random.

  For given $a \in \Op{Field}(\prec)$, let $\mathcal{I}(a)$ be the set of all reals $A \subseteq \Nat$ such that $A$ is an initial segment of $\prec$, and all elements of $A$ are bounded by $a$. $\mathcal{I}(a)$ is a $\Pi^0_1(\mu^{(j)})$ class. Let $T_a$ be a tree recursive in $\mu^{(j)}$ such that $[T_a] = \mathcal{I}(a)$. Given $n \in \Nat$, let $T_a\Rest{n} = \{\sigma \in T_a \colon |\sigma| = n\}$ be the $n$-th level of $T_a$. We have $\mathcal{I}(a) = \bigcap_n \Cyl{T_a\Rest{n}}$.

  Now, if $a \in I$, then $\mathcal{I}(a)$ is countable (since in this case each element of $\mathcal{I}(a)$ is an initial segment of the well-founded part of $\prec$ and there are at most countably many such initial segments). Since $\mu_I$ is continuous, it follows that $\mathcal{I}(a)$ has $\mu_I$-measure zero.

  If, on the other hand, $a \not\in I$, then $I \in \mathcal{I}(a)$. Since $I$ is $(\mu_I, \mu^{(j+2)}, 3)$-random and  $\mathcal{I}(a)$ is $\Pi^0_1(\mu^{(j)})$,  $\mathcal{I}(a)$ does not have $\mu_I$-measure zero: Otherwise we could recursively in $\mu^{(j+2)}$, compute a sequence $(l_n)$ such that $\mu_I \Cyl{T_a\Rest{l_n}} \leq 2^{-n}$. This would be a $(\mu_I,\mu^{(j+2)},1)$-test that covers $I$, but $I$ is $(\mu_I, \mu^{(j+2)}, 3)$-random.

  We obtain the following characterization of $I$.
  \[
    a \in I \quad \Leftrightarrow \quad \forall n \: \exists l \: (\mu_I \Cyl{T_a\Rest{l}} \leq 2^{-n})
  \]
  Since $\mu_I \leq_{\T} \mu^{(j+2)}$, the property on the right hand side is $\Pi^0_2(\mu^{(j+2)})$, hence $I$ is recursive in $\mu^{(j+4)}$, contradicting our initial assumption.
\end{proof}

\medskip
We conclude this section by establishing that Turing jumps cannot be
$\mu$-$n$-random, $n \geq 2$, for any measure $\mu$.  
While strictly speaking the following two results are not needed later, they are prototypical for a type of argument that will be important in Section~\ref{sec-meta}, where we construct long sequences of reals with an internal definability hierarchy that are not random with respect to any continuous measure.

\begin{prop} \label{pro:jump-non-random-1}
	For any $k \geq 0$, if $X \equiv_{\T} \emptyset^{(k)}$, then $X$ is not $2$-random with respect to any continuous measure.
\end{prop}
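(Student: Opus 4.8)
The plan is to argue by contradiction and to exploit a ``minimal pair'' property of $2$-random reals: by Lemma~\ref{prop-non-accel} (taken with $n=2$), if $\mu$ is continuous and $Y$ is $\mu$-$2$-random, then any real reducible both to $\mu'$ and to $Y\oplus\mu$ is already reducible to $\mu$. So suppose toward a contradiction that $X\equiv_{\T}\emptyset^{(k)}$ is $\mu$-$2$-random for some continuous measure $\mu$. By Proposition~\ref{pro:random-not-delta} (with $n=2$ and $Z=\emptyset$), $X$ is not $\Delta^0_2(\mu)$, hence $X\nleq_{\T}\mu'$, and in particular $\emptyset^{(k)}\nleq_{\T}\mu'$.

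The heart of the argument is to show, by induction on $j$, that $\emptyset^{(j)}\leq_{\T}\mu$ for every $j\leq k$; taking $j=k$ then yields $\emptyset^{(k)}\leq_{\T}\mu\leq_{\T}\mu'$, contradicting the previous paragraph. The base case $j=0$ holds because $\emptyset$ is computable. For the inductive step, assume $\emptyset^{(j)}\leq_{\T}\mu$ with $j<k$. Since the Turing jump is monotone, $\emptyset^{(j+1)}=(\emptyset^{(j)})'\leq_{\T}\mu'$. Since $j+1\leq k$, we also have $\emptyset^{(j+1)}\leq_{\T}\emptyset^{(k)}\equiv_{\T}X$, hence $\emptyset^{(j+1)}\leq_{\T}X\oplus\mu$. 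Now apply Lemma~\ref{prop-non-accel} with $n=2$, its ``$Y$'' instantiated to $X$ (which is $\mu$-$2$-random) and its ``$X$'' to $\emptyset^{(j+1)}$: we conclude $\emptyset^{(j+1)}\leq_{\T}\mu$, which closes the induction.

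I expect no serious obstacle here; the points to check carefully are that the hypotheses of Lemma~\ref{prop-non-accel} are genuinely available at each stage — crucially, that the $2$-randomness of $X$ is reused without degradation and that the index bound $j+1\leq k$ is exactly what places $\emptyset^{(j+1)}$ below $X$ — together with the degenerate cases: for $k=0$ the induction has no step and $\emptyset^{(0)}=\emptyset\leq_{\T}\mu'$ already contradicts $X\nleq_{\T}\mu'$, while for $k\geq 1$ the induction runs through $j=0,\dots,k-1$ and ends with $\emptyset^{(k)}\leq_{\T}\mu$. It is worth noting that the entire argument stays at level $2$: we never invoke higher-level randomness, Demuth's theorem, or Lemma~\ref{lem:stair-with-jump}, and the role of the internal jump hierarchy of $\emptyset^{(k)}$ is exactly to supply, level by level, the two reducibilities needed to feed the minimal-pair lemma.
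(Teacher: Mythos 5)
Your proof is correct and is essentially the paper's own argument: the same induction along the jump hierarchy, feeding $\emptyset^{(j+1)}\leq_{\T}\mu'$ and $\emptyset^{(j+1)}\leq_{\T}X\oplus\mu$ into Lemma~\ref{prop-non-accel} at level $n=2$ to ``sink'' each jump down to $\mu$, ending in the contradiction $X\leq_{\T}\mu$. The only difference is cosmetic: the paper phrases the final contradiction directly as ``$X\leq_{\T}\mu$ is impossible for a $\mu$-$2$-random $X$,'' while you route it through Proposition~\ref{pro:random-not-delta}.
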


\begin{proof}
  The case $k = 0$ is clear, so assume $k > 0$. Suppose $X \equiv_{\T}
  \emptyset^{(k)}$ is $\mu$-$2$-random for some $\mu$. Then $\emptyset'
  \leq_{\T} X$ and also $\emptyset' \leq_{\T} \mu'$. It follows from Lemma~\ref{prop-non-accel} that $\emptyset'$ is recursive in $\mu$. Applying the same
  argument inductively to $\emptyset^{(i)}$, $i
  \leq k$,  yields $\emptyset^{(i)} \leq_{\T} \mu$, in particular $X
  \equiv_{\T} \emptyset^{(k)} \leq_{\T} \mu$, which is impossible if $X$ is
  $\mu$-$2$-random.
\end{proof}

It may be helpful to picture the preceding argument as a ``stair trainer machine'': Using the supposedly random $X$, each step, that is, Turing jump, ``sinks down'' to $\mu$, and eventually, $X \leq_{\T} \mu$, yielding a contradiction.  

The non-randomness property of the jumps extends to infinite jumps, too.

\begin{prop} \label{pro:jump-non-random-2}
	If $X \equiv_{\T} \emptyset^{(\omega)}$, then $X$ is not $3$-random with respect to a continuous measure.
\end{prop}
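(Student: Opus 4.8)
The plan is to run the ``stair trainer'' of Proposition~\ref{pro:jump-non-random-1} uniformly over all of $\omega$, and then to \emph{collect} the resulting reductions with one additional Turing jump; the collection step is where the third level of randomness is spent. Assume toward a contradiction that $R \equiv_{\T} \emptyset^{(\omega)}$ is $\mu$-$3$-random for some continuous measure $\mu$; in particular $R$ is also $\mu$-$2$-random. Since $\emptyset^{(\omega)} = \bigoplus_i \emptyset^{(i)}$ and $\emptyset^{(\omega)} \leq_{\T} R$, we may fix a single Turing functional witnessing $\emptyset^{(i)} \leq_{\T} R$, and hence $\emptyset^{(i)} \leq_{\T} R \oplus \mu$, uniformly in $i$.

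First I would show, by induction on $i$, that $\emptyset^{(i)} \leq_{\T} \mu$. The case $i = 0$ is trivial. Assuming $\emptyset^{(i)} \leq_{\T} \mu$, we have $\emptyset^{(i+1)} = (\emptyset^{(i)})' \leq_{\T} \mu'$, so Lemma~\ref{prop-non-accel}, applied with the $\mu$-$2$-random real $R$, with $n = 2$, and with ``$X$'' taken to be $\emptyset^{(i+1)}$, gives $\emptyset^{(i+1)} \leq_{\T} \mu$. This is precisely the stair-trainer step from Proposition~\ref{pro:jump-non-random-1}, and on its own it only yields $\emptyset^{(i)} \leq_{\T} \mu$ for every $i$ separately, which is not enough (a single measure can lie Turing-above every $\emptyset^{(i)}$ without computing their join $\emptyset^{(\omega)}$). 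The extra ingredient is to make the passage from the $i$-th reduction to the $(i{+}1)$-st effective in one fixed oracle. Inspecting the proof of Lemma~\ref{prop-non-accel} in the case $n = 2$: from an index for a reduction $\emptyset^{(i)} \leq_{\T} \mu$ one obtains computably an index for a $\mu$-recursive approximation of $\emptyset^{(i+1)} = (\emptyset^{(i)})'$; the upper cone $P = \{A : \Phi(A \oplus \mu) = \emptyset^{(i+1)}\}$ used there is $\Pi^{0,\mu}_2$, so a rational $r$ with $\mu(P) > r > 0$ can be located by $\mu''$; the tree $T$ of the proof is r.e.\ in $\mu$ once $r$ is fixed; and a finite string $\sigma \prec \emptyset^{(i+1)}$ isolating the correct path through $T$ is again found by $\mu''$ (using $\emptyset^{(i+1)} \leq_{\T} \mu'$). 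Hence there is a $\mu''$-computable function $F$ sending $i$ together with an index for a reduction $\emptyset^{(i)} \leq_{\T} \mu$ to an index for a reduction $\emptyset^{(i+1)} \leq_{\T} \mu$.

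Iterating $F$ from the trivial index for $\emptyset^{(0)} \leq_{\T} \mu$ produces, by recursion relative to $\mu''$, a $\mu''$-computable sequence $\langle e_i \rangle_{i \in \omega}$ with $\Phi_{e_i}(\mu) = \emptyset^{(i)}$ for all $i$. Consequently $\mu''$ computes $\emptyset^{(\omega)}$: on input $\langle i, m \rangle$ it first computes $e_i$ and then $\emptyset^{(i)}(m) = \Phi_{e_i}^{\mu}(m)$. Thus $R \equiv_{\T} \emptyset^{(\omega)} \leq_{\T} \mu''$, i.e.\ $R$ is $\Delta^0_3(\mu)$, which contradicts Proposition~\ref{pro:random-not-delta} (with $Z = \emptyset$ and $n = 3$), since $R$ is $\mu$-$3$-random.

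I expect the bookkeeping in the second paragraph to be the main obstacle: one must check that \emph{every} component of the construction behind Lemma~\ref{prop-non-accel} --- the approximation, the threshold $r$, the r.e.\ tree, and the isolating string --- is extracted uniformly and stays within $\mu''$, so that the recursion genuinely closes at the double jump rather than drifting upward with $i$. This is the same mechanism --- an internal stratified definability structure forcing a chain of reductions down to $\mu$ that can then be summed --- which recurs in Section~\ref{sec-meta} for initial segments of the $J$-hierarchy, with transfinite recursion replacing recursion over $\omega$.
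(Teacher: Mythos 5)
Your argument is correct, and it reaches the same contradiction as the paper ($X \leq_{\T} \mu''$ is incompatible with $\mu$-$3$-randomness), but it takes a different route at the collection step. The paper's proof is a two-liner after the induction: having shown $\emptyset^{(k)} \leq_{\T} \mu$ for all $k$ exactly as you do, it simply cites the Enderton--Putnam theorem, which says that \emph{any} Turing upper bound $Y$ of $\{\emptyset^{(k)} \colon k \in \omega\}$ satisfies $\emptyset^{(\omega)} \leq_{\T} Y''$, applied with $Y = \mu$. That theorem requires no uniformity whatsoever in how $\mu$ computes the individual jumps, so none of your second paragraph is needed. Your route instead reopens the proof of Lemma~\ref{prop-non-accel} and checks that each ingredient (the $\mu$-recursive approximation of $\emptyset^{(i+1)}$, a rational $r$ with $0 < r < \mu(P)$, the $\mu$-r.e.\ tree $T$, and the isolating string $\sigma$) is located uniformly by $\mu''$, so that an effective recursion relative to $\mu''$ produces indices $e_i$ with $\Phi_{e_i}^{\mu} = \emptyset^{(i)}$ and hence $\emptyset^{(\omega)} \leq_{\T} \mu''$. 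This uniformity analysis does go through: $\mu(V_k)$ is uniformly $\mu'$-computable, so $\mu''$ can search for a positive rational strictly below $\mu(P)$ (take half of any $q$ with $\mu(V_k) > q$ for all $k$), and the condition that no string in $T$ extending $X\Rest{n}$ is incompatible with $X$ is $\Pi^0_2(\mu)$, hence $\mu''$-decidable; correctness at each stage is maintained inductively because $R$ remains $\mu$-$2$-random throughout. In effect you reprove the special case of Enderton--Putnam that you need, at the cost of the bookkeeping you flag; what the paper's citation buys is exactly the freedom from that bookkeeping, while your version makes the mechanism self-contained and makes explicit the uniformity that the later ``stair trainer'' arguments of Section~\ref{sec-meta} (e.g.\ Lemma~\ref{lem:arith-S-tower}) exploit in a similar spirit.
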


\begin{proof}
  Assume for a contradiction that $X$ is $\mu$-$3$-random for continuous $\mu$.
  By the inductive argument of the previous proof, $\emptyset^{(k)} \leq_{\T} \mu$ for
  all $k \in \Nat$. By a result of Enderton and Putnam~\cite{enderton-putnam:note-hyperarithmetical_1970}, if $Y$ is a $\leq_{\T}$-upper bound for
  $\{\emptyset^{(k)} \colon k \in \Nat \}$, then $\emptyset^{(\omega)} \leq_{\T} Y''$.
  Therefore, $X \leq_{\T} \mu''$, contradicting that $X$ is $\mu$-$3$-random.
\end{proof}

%
%
\section{The Countability Theorem} \label{sec-countability}

In this section we will prove Theorem~\ref{thm:first-main}, which we restate here for convenience.

\begin{thm1.1} \label{thm-countability}
	Let $n \in \omega$. Then the set
	\[
		\NCR_n = \{ X \in \Cant: \: X \text{ is not $n$-random for any continuous measure}\}
	\]
	is countable.
\end{thm1.1}

As mentioned in the introduction, the case $n=1$ was proved
in~\cite{reimann-slaman:tams}. The basic outline for the proof for $n > 1$ is as follows. By Borel-Turing determinacy~\cite{martin:1968}, the set of degrees of reals $X$ such that $X$ is $n$-random for some continuous measure recursive in $X$ contains an upper cone in the Turing
degrees. The base of the cone is given by the winning strategy in a certain Borel game $\mathcal{G}(B)$. The 
constructive nature of Martin's proof of Borel determinacy yields that a winning strategy is contained in a countable level $L_{\beta_{n+4}}$ of the
constructible hierarchy. We use a forcing notion due to
Kumabe and Slaman (see~\cite{shore-slaman:1999}) to show that given $X \notin
L_{\beta_{n+4}}$, there exists a forcing extension $L_{\beta_{n+4}}[\Phi]$ such that $\Phi$ is a real number and every
real in $L_{\beta_{n+4}}[\Phi]$ is Turing reducible to $X$ relative to $\Phi$. In particular, $X \oplus \Phi$ is in the upper cone of random reals above the winning strategy for the game $\mathcal{G}(B)$ (in $L_{\beta_{n+4}}[\Phi]$). Finally, we argue that $X$ itself is random with respect to a continuous measure.

%
%
\subsection{About Borel determinacy}

We will apply Martin's theorem about the determinacy of Borel games in the following special case.

\begin{BTDet}[\cite{martin:1968, martin:1975}] \label{thm-borel-turing-det}
  If
  $\Cl{A} \subseteq \Cant$ is a Turing invariant Borel set, then either $\Cl{A}$ or $\Cant \setminus \Cl{A}$ contains a Turing cone.
\end{BTDet}

Martin's proof of Borel determinacy is explicit. It locates a winning strategy for a given Borel game within the constructible hierarchy.

\begin{defn}
	Given $n \in \Nat$, $\ZFC^-_n$ denotes the axiom of $\ZFC$, where the power
  set axiom is replaced by the sentence
	\begin{center}
		``There exist $n$-many iterates of the power set of $\omega$''.
	\end{center}
\end{defn}

Hence, in $\ZFC^-_0$, for instance, we have the existence of the set of all
natural numbers (since the Axiom of Infinity holds and $\omega$ is absolute),
and various other subsets of $\omega$ as given by applications of separation or
replacement, but we lack the guaranteed existence of the set of all such
subsets.

Models of $\ZFC^-_n$ will play an important role throughout this paper. In
particular, we are interested in models inside the constructible universe. As
usual, $L$ will denote the constructible universe, the limit of the cumulative hierarchy of
sets obtained by iterating the power set operation restricted to definable
subsets. For any ordinal $\alpha$, $L_\alpha$ denotes the $\alpha$-th level of
the hierarchy. A key property of this hierarchy is that $|L_\alpha| = |\alpha|$ for any infinite ordinal $\alpha$.
For more background on $L$, see~\cite[Chapter~13]{jech:2003} or~\cite[Chapters~V,VI]{kunen:2011}.
For an in-depth account, see~\cite{devlin:1984}.

\begin{defn}
	Given $n \in \Nat$, let $\beta_n$ be the least ordinal such that
	\[
		L_{\beta_n} \models \ZFC^{-}_n.
	\]
\end{defn}

By the L\"owenheim-Skolem theorem and the Gödel condensation lemma, 
$L_{\beta_n}$, and hence $\beta_n$, is countable.

\begin{lem} \label{lem:det-constructive}
  If $A \subseteq \Cant$ is $\Sigma^0_n$, then the Borel game $\mathcal{G}(A)$
  with winning set $A$ has a winning strategy $S$ in $L_{\beta_n}$.
\end{lem}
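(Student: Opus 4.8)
The plan is to make use of the quantitative content of Martin's proof of Borel determinacy, which for games of low Borel complexity bounds where the winning strategy lives inside the constructible hierarchy. Specifically, Martin's unraveling argument shows that determinacy of $\Sigma^0_n$ games is provable in $\ZFC^-_n$ (roughly speaking, unraveling a $\Sigma^0_n$ set requires iterating the power set of $\omega$ a number of times proportional to $n$), and moreover the winning strategy it produces is $\Delta_1$-definable over the relevant level of $L$. So first I would recall that $\ZFC^-_n$ proves ``every $\Sigma^0_n$ Gale--Stewart game is determined,'' with the strategy obtained effectively from a presentation of $A$.

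Next, I would invoke condensation: since $L_{\beta_n} \models \ZFC^-_n$ and the statement ``$\mathcal{G}(A)$ is determined, with winning strategy $S$'' is (after coding $A$ by a real parameter in $L_{\beta_n}$) an assertion that is proved in $\ZFC^-_n$ and is absolute enough, the witness $S$ must itself be an element of $L_{\beta_n}$. More carefully: $A$ being $\Sigma^0_n$ means $A$ has a lightface or boldface $\Sigma^0_n$ code, and such a code is a real (or a real parameter) which — in the application we care about — already lies in $L_{\beta_n}$; relativizing Martin's proof to that parameter, $L_{\beta_n}$ satisfies ``$\mathcal{G}(A)$ has a winning strategy,'' and by $\Sigma_1$-reflection/absoluteness of the existence of a strategy the strategy found is an actual element of $L_{\beta_n}$, not merely one $L_{\beta_n}$ believes exists. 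Since winning is an arithmetic (indeed Borel) property of a strategy together with the code for $A$, and $L_{\beta_n}$ computes the Borel rank correctly, the strategy is genuinely winning.

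The main obstacle — and the point that needs care rather than routine checking — is the exact bookkeeping of how many iterates of the power set of $\omega$ Martin's unraveling consumes for a $\Sigma^0_n$ set, and hence that $\ZFC^-_n$ (with exactly $n$ iterates) suffices. Martin's proof covers a $\Sigma^0_n$ (equivalently, a set at finite level $n$ of the Borel hierarchy) by unraveling through a tree of ``covering games,'' each step of which passes to the power set of the previous auxiliary space; tracking that this uses $n$-many applications of power set, starting from hereditarily countable sets, is the delicate combinatorial core. I would cite Martin~\cite{martin:1975} (and the refined analysis there) for this bound, and note that the constructibility of the strategy follows because Martin's construction is explicit and can be carried out inside any model of the relevant fragment, in particular inside $L_{\beta_n}$, where by condensation the resulting strategy is a genuine constructible set of rank below $\beta_n$. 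The remaining steps — that the game $\mathcal{G}(A)$ is well-defined once $A$ is fixed, and that ``$S$ is a winning strategy'' is absolute between $L_{\beta_n}$ and $V$ — are routine, using that $L_{\beta_n}$ is transitive and contains all relevant reals and codes.
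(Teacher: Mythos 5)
Your overall route is the paper's: invoke Martin's inductive unraveling proof, track that unraveling a $\Pi^0_1$ set costs one application of the power set of $\omega$ and that each further level of the induction costs one more, conclude that $\Sigma^0_n$ determinacy is provable in $\ZFC^-_n$, and run the construction inside $L_{\beta_n}$, which is a model of $\ZFC^-_n$ containing the (recursive) code of $A$, so that the strategy produced is an element of $L_{\beta_n}$. The appeals to condensation and ``$\Sigma_1$-reflection'' are superfluous: once Martin's construction is carried out inside the transitive model $L_{\beta_n}$, the witness $S$ is an element of $L_{\beta_n}$ simply because $L_{\beta_n}$ satisfies the existential statement; no collapsing or reflection is involved. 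The delicate bookkeeping of power-set iterates is, in your write-up as in the paper's, delegated to Martin's proof, so no complaint there.

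The one step that is wrong as written is your absoluteness argument: ``winning is an arithmetic (indeed Borel) property of a strategy together with the code for $A$.'' It is not. Whether a fixed play lies in $A$ is arithmetic in the code, but ``$S$ is a winning strategy'' quantifies universally over all counterplays of the opponent, so it is $\Pi^1_1$ in $S$ and the code (the paper states exactly this in Section~\ref{sub:completion-thm1}), and its transfer from $L_{\beta_n}$ up to $V$ is therefore not the trivial upward absoluteness of arithmetic statements: a priori $L_{\beta_n}$ could believe $S$ wins while missing a counterplay that defeats it. The correct repair is Mostowski's absoluteness theorem for $\Pi^1_1$ statements, which applies because in a model of $\ZFC^-_n$ with $n\geq 1$ every well-founded tree contained in $\omega^{<\omega}$ has a rank function; this is precisely how the paper handles the issue, and it does so not in the lemma but later, in the proof of Theorem~1, the lemma itself only asserting that Martin's construction yields a strategy in $L_{\beta_n}$ that is winning relative to $L_{\beta_n}$. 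With that substitution (Mostowski absoluteness in place of the false ``winning is arithmetic'' claim) your argument goes through.
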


The proof given by Martin~\cite{martin:1985} is inductive. A key concept is the
\emph{unraveling of a game}. Simply speaking, a tree $T$ over some set $B$
unravels $\mathcal{G}(A)$ if there exists a continuous mapping $\pi\colon [T]
\to \Cant$ such that $\pi^{-1}(A)$ is clopen in $[T]$, and there is a continuous
correspondence between strategies on $T$ and strategies on $\Str$.

Martin first shows that $\Pi^0_1$ games can be unraveled. The argument is
completely constructive, hence can be carried out in $L$. The unraveling tree
$T$ is given by the legal moves of some auxiliary game whose moves correspond to
strategies in the original game on $\Str$, that is, reals. To be able to collect
all these legal moves requires the existence of the power set of $\omega$.

The inductive step then shows how to unravel a given $\Sigma^0_n$ set $A$.
Suppose $A = \bigcup A_i$, where each $A_i$ is $\Pi^0_{n-1}$. By induction
hypothesis, each $A_i$ can be unraveled by some $T_i$ via some mapping $\pi_i$.
Martin proves that the unravelings $T_i$ can be combined into a single one,
$T_\infty$, that unravels each $A_i$ via some $\pi_\infty$. Since each of the
sets $\pi^{-1}_\infty(A_i)$ is clopen, their union $\bigcup \pi^{-1}_\infty(A_i)
= \pi^{-1}_\infty(A)$ is open, and can in turn be unraveled by some $T$. Again,
the proof is constructive. The last step in the construction (unraveling
$\pi^{-1}_\infty(A)$) passes to a tree of higher type -- its nodes correspond to
strategies over $T_\infty$. Hence one more iterate of the power set of $\omega$
is introduced.

Therefore, $\Sigma^0_n$ determinacy is provable in $\ZFC^-_n$, and Martin's proof constructs a winning strategy $S$ in $\Lb{n}$, relative to $\Lb{n}$. The property of being a winning strategy for a given Borel game is
$\pmb{\Pi}^1_1$. By Mostowski's absoluteness theorem
(see~\cite[Theorem~25.4]{jech:2003}, which applies because in a model of $\ZFC^-_n$, $n\geq 1$, every well-founded tree contained in $\omega^{<\omega}$ has a rank function), this means that $S$
actually wins on \emph{all} plays, not just the ones in $L_{\beta_{n}}$.

\medskip 
In order to prove Theorem~\ref{thm:first-main}, we need 
 a Posner-Robinson style theorem for reals not contained in
$L_{\beta_n}$ and an application of absoluteness. We state
Lemma~\ref{thm-posrob-L} only in the case that $n$ is greater than zero. Under
this restriction, we can avoid class forcing and reduce to standard facts about
set forcing. The case $n$ equals zero is not needed for our argument.

\begin{lem}\label{thm-posrob-L} 
  Suppose that $n$ is a natural number greater than zero and $X$ is a real
  number not in $L_{\beta_n}$. Then there exists a real $\Phi \subseteq \Nat$ such that  $L_{\beta_n}[\Phi]$ is a model of
  $\ZFC^{-}_n$ and every real in $L_{\beta_n}[\Phi]$ is Turing reducible to
  $X \oplus \Phi$.
\end{lem}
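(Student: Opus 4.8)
The plan is to prove a Posner--Robinson/pseudo-jump inversion theorem inside the countable model $L_{\beta_n}$, using the Kumabe--Slaman forcing with Turing functionals as the notion of forcing. I would set up the forcing $\bbP$ whose conditions are pairs $(\Phi, \vec{Z})$ where $\Phi$ is a finite use-monotone Turing functional (a finite approximation to a r.e.\ functional, in the sense of Section~\ref{sub:turing_functionals}) and $\vec{Z}$ is a finite set of reals from $L_{\beta_n}$ that the functional is promised to avoid ``using'' beyond the finite part already committed; extension refines $\Phi$ and enlarges $\vec{Z}$. The generic object is a real $\Phi$ (coding an r.e.\ functional). The two things I must arrange are: (i) $\Phi$ is sufficiently generic over $L_{\beta_n}$ that $L_{\beta_n}[\Phi] \models \ZFC^-_n$; and (ii) for every real $A \in L_{\beta_n}$, $A \leq_{\T} X \oplus \Phi$.

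For (ii), the key is the classical Kumabe--Slaman density argument: given a real $A \in L_{\beta_n}$ and a condition $(\Phi_0, \vec{Z})$, I build an extension that codes $A$ into $\Phi$ ``along $X$.'' Because $X \notin L_{\beta_n}$, $X$ is not equal to any of the finitely many reals in $\vec{Z}$, and in fact $X$ escapes every real of $L_{\beta_n}$, so I can find initial segments of $X$ that are incompatible with the forbidden reals and extend $\Phi$ by computations $(m, A(m), \sigma)$ with $\sigma \prec X$. Iterating this over all $m$ (using genericity/meeting the relevant dense sets, which exist in $L_{\beta_n}$ since $A$ does) yields $\Phi^{X}$ total and equal to $A$ — hence $A \leq_{\T} X \oplus \Phi$. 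One subtlety: I need this to work uniformly so that a single generic $\Phi$ codes \emph{every} real of $L_{\beta_n}$; this is handled by indexing: reserve a ``column'' or a recoverable block of the functional for each real (or each Skolem-function value) of $L_{\beta_n}$, and make the coding requirements into a countable list of dense sets inside $L_{\beta_n}$.

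For (i), I need that forcing with $\bbP$ over $L_{\beta_n}$ preserves $\ZFC^-_n$. Since $n > 0$, $\bbP$ (as defined, with finite conditions and the side condition being a finite set of reals) is a genuine \emph{set} forcing inside $L_{\beta_n}$ — this is exactly the reason the hypothesis $n>0$ appears and the remark that we can ``avoid class forcing and reduce to standard facts about set forcing.'' So I invoke the standard preservation theorem: if $M \models \ZFC^-_n$ and $\bbP \in M$ is a set forcing and $G$ is $\bbP$-generic over $M$, then $M[G] \models \ZFC^-_n$. The only point needing care is that the generic real $\Phi$ must be taken generic relative to $L_{\beta_n}$ in the weak sense that meets all dense sets lying in $L_{\beta_n}$; since $L_{\beta_n}$ is countable, such a filter exists in $V$ (indeed one computable from an enumeration of the dense sets together with $X$, which is how $X$ enters). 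I also need to check the definability side conditions don't cause trouble: the set of conditions, the ordering, and the relevant dense sets are all definable over $L_{\beta_n}$, so the generic extension really is a model of the same fragment.

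The main obstacle I anticipate is \textbf{threading the coding through $X$ while respecting the side conditions} — i.e.\ proving the central density lemma that lets us extend a condition so as to put $A(m)$ into $\Phi$ as a computation along $X$, without violating the finitely many ``avoid $\vec{Z}$'' promises and without destroying use-monotonicity. This requires exploiting that $X$ is not in $L_{\beta_n}$ (so $X$ differs from each $Z \in \vec{Z}$ at some finite stage, and differs from all of $L_{\beta_n}$) to locate a long enough initial segment $\sigma \prec X$ that is new, and then appending the coding computation at a fresh large $m$; verifying that the resulting finite functional is still a legitimate condition (single-valuedness, the two use-monotonicity clauses) is the technical heart. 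Everything else — the genericity bookkeeping, the preservation of $\ZFC^-_n$, and the final assembly $A \leq_{\T} X \oplus \Phi$ for all $A \in L_{\beta_n}$ — is then routine, following the Shore--Slaman treatment~\cite{shore-slaman:1999} adapted to the countable base model.
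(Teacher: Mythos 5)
Your overall plan coincides with the paper's: force with the Kumabe--Slaman partial order as computed in $L_{\beta_n}$, take a filter meeting the dense sets of $L_{\beta_n}$, and interleave coding computations along $X$ so that $\Phi_G^X$ outputs every real of the extension. However, the step you dispose of by citing a ``standard preservation theorem'' is not a theorem: for models of $\ZFC^-_n$, set forcing does \emph{not} automatically preserve the statement that $n$ iterates of $\P(\omega)$ exist --- whether they survive depends on the forcing not collapsing the cardinals $\omega_1,\dots,\omega_n$ of $L_{\beta_n}$, and on an argument that the extension actually contains the iterated power sets. The paper proves exactly this: the Kumabe--Slaman order has the ccc in $L_{\beta_n}$ (incompatible conditions have distinct finite functionals, of which there are only countably many), so the uncountable cardinals of $L_{\beta_n}$ are preserved; Replacement and Choice are verified as in Kunen; and then the G\"odel condensation lemma relative to $G$ (and relative to $\Phi_G$) is what guarantees that $L_{\beta_n}[G]$ and $L_{\beta_n}[\Phi_G]$ have $n$ iterates of $\P(\omega)$. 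None of this is in your proposal, and without the ccc and the condensation step the claim ``$L_{\beta_n}[\Phi]\models\ZFC^-_n$'' is unsupported.

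The second and more serious gap is that you locate the technical heart in the wrong place. Adding a coding computation $(k_i, A(m), X\restriction\ell)$ to a given condition is the easy part: it only uses that $X$ differs from each of the finitely many reals in $\vec{Z}_p$ and that $\ell$ can be taken beyond all uses in $\Phi_p$. The genuine difficulty is the interaction you call ``routine genericity bookkeeping'': when the construction meets a dense set $D\in L_{\beta_n}$ (and these coding requirements themselves are \emph{not} dense sets in $L_{\beta_n}$, since they mention $X$), the extension chosen could itself add computations along $X$ with the wrong values, and then no later condition can correct them --- this is precisely the single-valuedness failure you worry about, but verifying that a finite functional is a legal condition does not prevent it. What is needed is the paper's key density lemma: every dense $D\in L_{\beta_n}$ can be met by an extension adding \emph{no} new computation along $X$. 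Its proof is where $X\notin L_{\beta_n}$ is used in full strength (not merely $X\neq Z$ for the finitely many side reals): if no such extension existed, every initial segment of $X$ would be ``essential'' for $(p,D)$, the essential strings would form an infinite tree in $L_{\beta_n}$, K\H{o}nig's lemma inside $L_{\beta_n}$ would give a path $Y\in L_{\beta_n}$ through it, and adding $Y$ to the side conditions would produce a condition no extension of which lies in $D$, contradicting density. Without this lemma (or a substitute), your construction of a single $\Phi$ coding all reals of $L_{\beta_n}[G]$ along $X$ can fail.
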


\medskip
\subsection{Kumabe-Slaman forcing}

This subsection is devoted to proving Lemma~\ref{thm-posrob-L}. We derive
$\Phi$ from a generic $G$ which we construct by means of a notion of forcing due to Kumabe and Slaman. The forcing was an
essential ingredient in the proof of the definability of the Turing jump by
Shore and Slaman~\cite{shore-slaman:1999}. It allows for extending the
Posner-Robinson Theorem to iterated applications of the Turing jump. 

In the following, we use the conventions and vocabulary of Section~\ref{sub:turing_functionals}.

\begin{defn}\label{def-kumbae-slaman-po}
  Let ${\bbP}$ be the following partial order.

  \begin{enumerate}
  \item The elements $p$ of ${\bbP }$ are pairs $(\Phi_{p},\Z_{p})$ in which $\Phi_{p}$ is a finite,\
    use-monotone Turing functional and $\Z_{p}$ is a finite set of subsets of $\omega$. As
    usual, we identify subsets of $\omega$ with elements of $\Cant$.

  \item  If $p$ and $q$ are elements of ${\bbP }$, then $p\geq q$ if and only if
    \begin{enumerate}
    \item
      \begin{enumerate}
      \item $\Phi_{p}\subseteq\Phi_{q}$ and
      \item for all $(x_{q},y_{q},\sigma_{q})\in\Phi_{q}\setminus\Phi_{p}$ and all
        $(x_{p},y_{p},\sigma_{p})\in\Phi_{p}$, the length of $\sigma_{q}$ is greater than the length of $\sigma_{p}$,
      \end{enumerate}
    \item $\Z_{p}\subseteq\Z_{q}$,

    \item  for every $x$, $y$, and $Y\in\Z_{p}$, if $\Phi_{q}(x,Y)=y$ then
      $\Phi_{p}(x,Y)=y$.
    \end{enumerate}
  \end{enumerate}
\end{defn}

In short, a stronger condition than $p$ can add computations to $\Phi_{p}$, provided that they are
longer than any computation in $\Phi_{p}$ and that they do not apply to any element of ${\Z}_{p}$.

\newcommand{\Pn}{{\bbP_{n}}}
\newcommand{\Ln}{{L_{\beta_n}}}

Let $\Pn$ denote the partial order $\bbP$ as defined in $L_{\beta_n}$.  By standard arguments, we
show that if $G\subseteq\Pn$ is a generic filter in the sense of $\Ln$, then $L_{\beta_{n}}[G]$ is a
model of $\ZFC^-_n$.  By inspection of $\Pn$, any such $G$ naturally gives rise to a functional
$\Phi_{G} =\bigcup\{\Phi_{p}:p\in{G}\}$.  To prove Lemma~\ref{thm-posrob-L}, given $X$ not in
$\Ln$, we will exhibit a particular $G$ so that $G$ is $\Pn$-generic over $\Ln$ and so that every
element in $L_{\beta_n}[G]$ is computable from $X \oplus \Phi_G$.

\begin{defn}[Definition III.3.3, \cite{kunen:2011}] Let $\bbP^*$ be a partially ordered set.  Then,
  $p,q\in\bbP^*$ are \emph{compatible} iff they have a common extension.  An \emph{antichain} is a
  subset of $\bbP^*$ whose elements are pairwise incompatible.  $\bbP^*$ has the \emph{countable
    chain condition} (ccc) iff, in $\bbP^*$ every antichain is countable.
\end{defn}

\begin{lem}\label{forcing1}
  Let $n$ be a natural number greater than zero.
  \begin{enumerate}
  \item $\Pn$ is an element of $\Ln$.
  \item $\Ln\models \text{$\Pn$ has the ccc.}$
  \end{enumerate}
\end{lem}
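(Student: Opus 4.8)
The plan is to prove Lemma~\ref{forcing1} by two essentially routine verifications, both of which hinge on recognizing that $\Pn$ is a definable object of bounded complexity inside $\Ln$.

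\textbf{Part (1): $\Pn \in \Ln$.} First I would observe that the underlying domain of $\bbP$ is built entirely from finite objects and finite sets of reals: a condition is a pair $(\Phi_p, \Z_p)$ where $\Phi_p$ is a \emph{finite} use-monotone Turing functional (hence a finite set of triples $(m,k,\sigma)$, a hereditarily finite object once we fix a coding of finite binary strings) and $\Z_p$ is a \emph{finite} subset of $\Cant$. Since $n \geq 1$, the model $\Ln$ is a model of $\ZFC^-_n$, so it contains at least one iterate of the power set of $\omega$; in particular $\P(\omega) \cap \Ln$ exists as a set in $\Ln$, and therefore so does the set of all finite subsets of it. The set of all finite use-monotone Turing functionals is $\Ln$-computable from $\omega$ (it is an arithmetic set of natural numbers under a standard coding), hence an element of $\Ln$. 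Taking the product gives the domain of $\Pn$ as an element of $\Ln$. The ordering relation $\geq$ is given by the conjunction of conditions (a)(i), (a)(ii), (b), (c) in Definition~\ref{def-kumbae-slaman-po}; clauses (a) and (b) are arithmetic in the codes of the conditions, while clause (c) quantifies only over the finitely many $X \in \Z_p$ and over $x,y$ and finite initial segments witnessing a computation, so it is arithmetic relative to the (finitely many) reals in $\Z_p$. Thus $\geq$ restricted to the domain is a set definable over $\Ln$ by a bounded formula with the domain as a parameter, hence an element of $\Ln$ by Separation in $\Ln$. Therefore $\Pn = (\text{domain}, \geq) \in \Ln$.

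\textbf{Part (2): $\Ln \models$ ``$\Pn$ has the ccc''.} Here I would work inside $\Ln$ and show that every antichain is countable — in fact I expect to show every antichain is finite, or at least that incompatibility is governed by a finite amount of data. The key point is: two conditions $p = (\Phi_p, \Z_p)$ and $q = (\Phi_q, \Z_q)$ are compatible whenever $\Phi_p$ and $\Phi_q$ are \emph{equal} (as finite functionals) — or more carefully, whenever they are compatible as finite partial functionals in a way respecting use-monotonicity — since then one can take the common extension $(\Phi_p \cup \Phi_q,\ \Z_p \cup \Z_q)$, noting that no \emph{new} computations are being added beyond those already in $\Phi_p$ and $\Phi_q$, so the side-condition clauses about not disturbing elements of the $\Z$'s are vacuous, and the length requirement in (a)(i)(B) can be arranged. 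Consequently, if $\{p_i : i \in \omega_1\}$ (working under $\AC$ in $\Ln$, or just any uncountable set) were an antichain, then the map $p_i \mapsto \Phi_{p_i}$ would have to be injective on a set respecting incompatibility; but there are only countably many finite use-monotone Turing functionals (they are coded by natural numbers), so by pigeonhole two distinct $p_i, p_j$ would share the same first coordinate $\Phi_{p_i} = \Phi_{p_j}$ and hence be compatible, a contradiction. This argument is absolute enough to be carried out inside $\Ln$, since ``there are only countably many finite Turing functionals'' is witnessed by an $\Ln$-computable bijection with $\omega$.

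\textbf{Main obstacle.} The delicate point — and the one I would be most careful about — is the precise sense in which two conditions with ``compatible but unequal'' functionals can be amalgamated, because the ordering on $\bbP$ is not simply ``$\Phi_p \subseteq \Phi_q$'': the extension must add only computations that are strictly longer than everything already present (clause (a)(i)(B)) and must not create a new convergent computation on any oracle in $\Z_p$ (clause (c)). So to get the ccc cleanly I would argue that it suffices to classify conditions up to their first coordinate: either show directly that $p, q$ are compatible whenever $\Phi_p = \Phi_q$ (taking the common refinement with $\Z = \Z_p \cup \Z_q$ and no new computations, which trivially satisfies all three clauses of $\geq$ on both sides), or, if one wants the sharper statement, carefully check that two conditions whose functionals have a common use-monotone upper bound can be amalgamated after possibly shifting uses — but the first, weaker observation already delivers the ccc by the counting argument above, so I would not pursue the sharper version. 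The only remaining bookkeeping is to confirm that all of this reasoning is internal to $\Ln$, which follows from $\Ln \models \ZFC^-_n$ with $n \geq 1$ together with the fact that the objects involved (finite functionals, finite sets of reals, the enumeration of finite functionals) are of low complexity.
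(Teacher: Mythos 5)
Your proposal is correct and follows essentially the same route as the paper: part (1) rests on the fact that, since $n\geq 1$, the power set of $\omega$ as computed in $L_{\beta_n}$ is a set there and the partial order is definable from it, and part (2) rests on the observation that two conditions with the same (finite) functional are compatible, so incompatible conditions have distinct functionals and antichains are countable. Your added care about when unequal functionals can be amalgamated is unnecessary for the countability argument, as you yourself note, and the paper likewise does not pursue it.
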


\begin{proof}
  Since $n\geq1$, by definition of $\beta_n$, the power set of $\Nat$ as defined in $\Ln$ is
  a set in $\Ln$.  Since $\bbP$ is defined directly from the power set of $\Nat$, $\Pn$ is a set in $\Ln$.

  If $p$ and $q$ are incompatible elements of $\Pn$, then $\Phi_p$ and $\Phi_q$ must be different.
  Since there are only countably many possibilities for $\Phi_p$ and $\Phi_q$, any antichain in
  $\Pn$ must be countable in $\Ln$.  
\end{proof}

\begin{defn} Let $G$ be a subset of $\Pn$.
  \begin{enumerate}
  \item $G$ is a \emph{filter} on $\Pn$ iff
  \begin{enumerate}
  \item $G$ is not empty.
  \item $\forall p,q\in G\;\exists r\in G[p\geq r\text{ and }q\geq r]$.
  \item $\forall p,q\in G[\text{if } p\geq q\text{ and }q\in G,\text{ then }p\in G].$
  \end{enumerate}
\item $G$ is \emph{$\Pn$-generic over $\Ln$} iff for all $D$ such that $D\subseteq\Pn$ is dense and
  $D \in \Ln$, $G\cap D$ is not empty.  Here, $D$ is dense iff for
  every $p\in\Pn$ there is a $q\in D$ such that $p\geq q$.
  \end{enumerate}
\end{defn}

\begin{lem}
  If $G$ is $\Pn$-generic over $\Ln$,  then $\Ln[G]\models\ZFC^-_n.$ Similarly, $\Ln[\Phi_G]$ is a model of $\ZFC^-_n$.
\end{lem}

\begin{proof}
  By Lemma~IV.2.26 of \cite{kunen:2011}, it follows that if $G$ is $\Pn$ generic over $\Ln$, then
  $\Ln[G]$ satisfies the axioms of $\ZF^-$ except for possibly Replacement.  It remains only to
  observe that $\Ln[G]$ satisfies Replacement, Choice and that there are $n$-many uncountable
  cardinals.  The verification that $\Ln[G]$ satisfies Replacement is the same as given in
  Theorem~IV.2.27 of \cite{kunen:2011}.  That $\Ln[G]$ satisfies Choice follows from Replacement and
  the usual proof that the order of constructibility is a $\Ln$-definable well-order of $\Ln$ applies
  relative to $G$.  Finally, that $\Ln[G]$ has the same uncountable cardinals as $\Ln$ does follows
  from $\Pn$'s having the ccc in $\Ln$ by the argument given in the proof of Theorem IV.3.4 of
  \cite{kunen:2011}.  Although this theorem is stated for models of $\ZFC$, its proof does not
  invoke the power set axiom.  
  
  Given that $\Ln[G]$ has $n$ many uncountable cardinals, we proceed to apply a variant of the G\"odel
  Condensation Lemma relative to $G$ to show that  $\Ln[G]$ has $n$-many iterates of the power set of
  $\omega$. 
  
  The existence of the power set of $\omega$ is a special case. Let $\omega_1^*$ denote $\omega_1^{\Ln[G]}$. We may view $G$ as a subset of $\omega_1^*$. We show that any subset of $\omega$ in $\Ln[G]$ is an element of $L_{\omega_1^*}(G)$, the structure obtained by constructing relative to $G$ as a predicate to height $\omega_1^*$. Assume $x \in L_\gamma[G]$, $\gamma < \beta_n$, and $\gamma > \omega_1^*$. Let $H$ be a countable elementary substructure of $L_\gamma[G]$ in $\Ln[G]$ that includes $x$. Let $\pi: H \to \overline{H}$ be the transitive collapse of $H$. Note that $\pi$ is the identity on all countable ordinals in $H$. Hence $\pi(G)$ is $G \cap \pi(\omega^*_1)$ and $\pi(x) = x$. By the Condensation Lemma, $\overline{H}$ is isomorphic to some $L_{\gamma^*}[G \cap \pi(\omega_1^*)]$ and $\gamma^* < \omega_1^*$. It follows that $x \in L_{\gamma^*}[G \cap \pi(\omega_1^*)]$, and thus it is an element of $L_{\omega_1^*}(G)$.
  
  The case for the higher order power sets is the standard Condensation Lemma argument.
  
  \medskip
  Finally, $\Ln[\Phi_G]$ is a model of $\ZFC^-_n$ because it is a definable inner model of $\Ln[G]$, so it satisfies $\ZFC^-$, and the Condensation Lemma applies, yielding the same number of iterates of the power set of $\omega$.
\end{proof}

Next, we show that every dense set in $\Lb{n}$ can be met via an extension adding no computations
along $X$. This is crucial for the construction in~\cite{shore-slaman:1999}.

\begin{lem}\label{lem-meet-dense}
  Let $D\in\Ln$ be dense in $\Pn$ and $X \in 2^\omega$, $X \notin \Ln$. For any $p \in \Pn$, there exists a
  $q \leq p$ such that $q \in D$ and $\Phi_q$ does not add any new computation along $X$.
\end{lem}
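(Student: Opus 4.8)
The plan is to meet the dense set $D$ by a careful finite extension of $p$ that is "orthogonal" to $X$, in the sense that no computation it adds is a computation along $X$. The key observation driving the argument is the following. Given $p = (\Phi_p, \Z_p)$, consider the condition $p^X = (\Phi_p, \Z_p \cup \{X\})$. This is a legal condition in $\Pn$ (it lies in $\Ln$ only if $X \in \Ln$, but that is not required: we merely use $p^X$ as a condition in $\bbP$ computed inside a slightly larger universe, or equivalently work externally). Since $D$ is dense in $\Pn$, pick any $q' \leq p^X$ with $q' \in D$. Now the crucial point is that by clause (2c) of the ordering in Definition~\ref{def-kumbae-slaman-po}, since $X \in \Z_{p^X} \subseteq \Z_{q'}$, for every $x, y$ we have $\Phi_{q'}(x, X) = y$ only if $\Phi_p(x,X) = y$ already. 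In other words, $q' \leq p^X$ forces that $\Phi_{q'}$ computes nothing new along $X$ beyond what $\Phi_p$ did.

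The next step is to extract from $q'$ a condition $q \leq p$ that lies in $D$. The natural candidate is $q = (\Phi_{q'}, \Z_p)$, i.e., we keep the enlarged functional but discard $X$ from the set of protected reals. I need to check that $q \leq p$: clause (2a) holds because $\Phi_p \subseteq \Phi_{q'}$ and, since $q' \leq p^X$ and $\Phi_{p^X} = \Phi_p$, every new computation in $\Phi_{q'}$ is longer than every computation in $\Phi_p$; clause (2b) holds since $\Z_p \subseteq \Z_p$; clause (2c) holds because $\Z_p \subseteq \Z_{p^X} \subseteq \Z_{q'}$, so the protection that $q'$ provided on elements of $\Z_p$ carries over. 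And $q \leq q'$: indeed $\Phi_{q'} \subseteq \Phi_{q}$ (they are equal), clause (2b) needs $\Z_{q'} \subseteq \Z_q = \Z_p$ — wait, this fails. So I cannot conclude $q \in D$ merely from $q \leq q'$, since genericity/density is about extensions, and shrinking $\Z$ is an extension in the wrong direction.

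Therefore the argument must instead ensure membership in $D$ directly. The fix is standard: $D$ being dense and lying in $\Ln$, I first apply density to $p^X$ to get $q' \le p^X$ with $q' \in D$, and then observe that the condition $q = (\Phi_{q'}, \Z_p)$ is an \emph{extension} of $q'$ in $\Pn$ only after re-adding whatever is needed — but more simply, since $D$ is just some dense set and we only need \emph{one} $q \le p$ in $D$ with the orthogonality property, we should instead argue that $D' = \{r \in \Pn : r \le (\Phi_r \cap \text{"long part"}, \Z \cup\{X\}) \text{ for some suitable witness}\}$... Actually the clean route, following~\cite{shore-slaman:1999}, is: let $q' \le p^X$ with $q' \in D$; then set $q = (\Phi_{q'}, \Z_{q'} \setminus \{X\})$ if $X \notin \Z_p$, or $q = q'$ if $X \in \Z_p$. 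One checks $q \le p$ as above, $q$ adds no computation along $X$ (inherited from $q' \le p^X$, since removing $X$ from $\Z$ does not retroactively create computations — the functional $\Phi_q = \Phi_{q'}$ is unchanged, and $\Phi_{q'}$ simply has no computation $(x,y,\sigma)$ with $\sigma \subset X$ beyond those of $\Phi_p$), and finally $q \in D$ because $q' \le q$... no. The genuinely correct statement is that $D$ is dense \emph{below every condition}, so one should pick $q' \in D$ with $q' \le p^X$ and then \emph{take $q = q'$ viewed as a condition in $\Pn$ with the ambient $\Z_{q'}$}; it automatically satisfies $q' \le p$ (since $p^X \le p$, wait $p^X \le p$? no: $p^X$ has a larger $\Z$, and larger $\Z$ means \emph{stronger}, so $p^X \le p$ — yes!). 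So $q' \le p^X \le p$ gives $q' \le p$ directly, $q' \in D$, and $q'$ adds no computation along $X$ by clause (2c) applied with $X \in \Z_{p^X} \subseteq \Z_{q'}$. Hence $q = q'$ works, and no surgery on $\Z$ is needed at all.

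Thus the proof reduces to three checks: (i) $p^X := (\Phi_p, \Z_p \cup \{X\}) \in \Pn$ (immediate — but note $p^X$ need not be in $\Ln$, which is irrelevant since density of $D$ in $\Pn$ is used externally, or one simply works with $p^X$ as a set-theoretic object and applies density of $D$ in the poset $\Pn$ as a partial order, which makes sense in $V$); (ii) $p^X \le p$ in $\Pn$ (clauses 2a with $\Phi_{p^X} = \Phi_p$, 2b with $\Z_p \subseteq \Z_p \cup \{X\}$, 2c trivial since $\Phi_{p^X} = \Phi_p$); (iii) any $q \le p^X$ adds no new computation along $X$, by clause (2c) with $X \in \Z_{p^X}$: if $\Phi_q^\sigma(x) \downarrow$ with $\sigma \subset X$ then $\Phi_q(x,X)\downarrow$, so $\Phi_{p^X}(x,X)\downarrow$ with the same value, i.e. $\Phi_p^\tau(x)\downarrow$ for some $\tau \subseteq X$ — so the "new" computations of $\Phi_q$ are never along $X$. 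The main obstacle, and the only subtle point, is keeping straight the direction of the ordering (larger $\Z$ and larger $\Phi$ give \emph{stronger} conditions, so enlarging $\Z_p$ by $X$ moves us \emph{down}, which is exactly what we want before applying density); once that is sorted, there is essentially nothing left to do.
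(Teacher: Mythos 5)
There is a genuine gap, and it sits at the very first step. You form $p^X = (\Phi_p, \Z_p \cup \{X\})$ and then ``apply density of $D$'' to it, waving away the fact that $p^X \notin \Ln$. But $D$ is only given as dense in $\Pn$, the poset $\bbP$ \emph{as computed in} $\Ln$; density says nothing about conditions outside $\Pn$, and $p^X$ is such a condition since $X \notin \Ln$. Worse, no element of $D$ can extend $p^X$ at all, in any ambient universe: $q' \leq p^X$ requires $\Z_{p^X} \subseteq \Z_{q'}$, i.e.\ $X \in \Z_{q'}$, which is impossible because $q' \in D \subseteq \Pn \subseteq \Ln$ and $X \notin \Ln$. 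So the object $q'$ you work with for the rest of the argument does not exist. The underlying mathematical issue is exactly what the lemma is about: it could a priori happen that \emph{every} member of $D$ below $p$ adds a computation along $X$, and simply ``protecting $X$'' is not a move available inside the forcing, nor does it magically become available by working externally.

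The paper's proof supplies the missing idea. Call a string $\tau$ essential for $(p,D)$ if every $q \leq p$ in $D$ adds a computation on a string compatible with $\tau$; this is definable in $\Ln$ and the essential strings form a tree $T(p,D) \in \Ln$. If the lemma failed, every initial segment of $X$ would be essential, so $T(p,D)$ is infinite, and by K\H{o}nig's Lemma \emph{inside} $\Ln$ it has an infinite path $Y \in \Ln$. Now $p_1 = (\Phi_p, \Z_p \cup \{Y\})$ \emph{is} a legal condition of $\Pn$ (this is where $Y \in \Ln$ is crucial, and exactly what fails for $X$), no extension of $p_1$ adds a computation along $Y$, yet every extension of $p$ in $D$ must add one — so no extension of $p_1$ lies in $D$, contradicting density of $D$ in $\Pn$. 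Your proposal replaces this compactness argument by the illegal substitution of $X$ itself for the proxy $Y$, which is precisely the step the lemma cannot take.
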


\begin{proof}
  Suppose $p = (\Phi_p,\Z_p)$ is in $\Pn$.  We say a string $\tau$ is \emph{essential} for $(p,D)$
  if, whenever $q < p$ and $q \in D$, there exists a triple
  $(x,y,\sigma) \in \Phi_q \setminus \Phi_p$ such that $\sigma$ is compatible with $\tau$.  In other
  words, whenever one meets $D$ by an extension of $p$, a computation relative to some string
  compatible with $\tau$ is added.  Note that $\tau$'s being essential for $(p,D)$ is definable in
  $\Ln$.

  If $\tau$ is a binary sequence and $\tau_0$ is an initial segment of $\tau$,
  then any sequence $\sigma$ which is compatible with $\tau$ is also compatible
  with $\tau_0$.  Thus, being essential is closed under taking initial segments.
  So,
  \[
    T(p,D) = \{ \tau\colon \text{$\tau$ essential for $(p,D)$ } \}
  \]
  is a binary tree in $\Ln$.

Assume now for a contradiction that a $q$ as postulated above does not exist.
  This means that for any $r\leq p$, either $r\notin D$ or $\Phi_r$ adds a
  computation along $X$. It follows that every initial segment $\tau\Sle X$ is
  essential for $(p,D)$.  Thus $T(p,D)$ is infinite.  Since $\Ln$ satisfies
  K\H{o}nig's Lemma (equivalently, compactness of the Cantor set), there exists
  a real $Y \in \Cant \cap \Lb{n}$ such that $Y$ is an infinite path through
  $T(p,D)$.

  Now consider the condition $p_1= (\Phi_p, \Z_p \cup \{Y\})$.  As $\Phi_q =
  \Phi_p$ and $Y\in\Ln$, we trivially have $p_1 \leq p$ in $\Pn$.  Since every
  initial segment of $Y$ is essential for $(p,D)$, any extension of $p$ in $D$
  must add a computation along $Y$.  Since no extension of $p_1$ can add a
  computation along $Y$ and every extension of $p_1$ is an extension of $p$, no
  extension of $p_1$ is in $D$.  This contradicts the density of $D$.
\end{proof}

  The previous lemma also holds in the case $X \in \Ln$, but we are only interested in the case $X \notin \Ln$.

We can now finish the proof of Lemma~\ref{thm-posrob-L}.  It is sufficient to construct a
$\Pn$-filter $G$ that is generic over $\Ln$ such that for every $A:\omega\to2$ in $\Ln[G]$ there is
a $k$ such that for all $m$, $\Phi^X_G((k,m))=A(m)$.  We fix countings of $\Pn$, of the set of terms
$\tau$ in the forcing language for functions from $\omega$ to $2$ in $\Ln$ and of the dense subsets
$D$ of $\Pn$ in $\Ln$.  We let $(\tau_i:i\in\Nat)$ and $(D_i:i\in\Nat)$ denote the latter two
countings.  We proceed by recursion on $s$ to define $G$.  At stage $s$, we will define a condition
$p_s=(\Phi_{p_s},\vec{Z}_{p_s})$ and an integer $k_s$.  We will ensure
that for all $i$ and $m$, $\Phi^X_G((k_i,m))$ will have the same value as the interpretation of $\tau_i(m)$
in $\Ln$.  We define $p_0$ to be the empty condition.  Suppose that $p_s$ and $k_0,\dots, k_{s-1}$
are given.  First, we define $k_s$ to the be least integer $k$ such that for all
$(x,y,\sigma)\in\Phi_{p_s}$, $x$ is not of the form $(k,m)$ for any $m$.  Next, let $q$ be the least
condition, according to our counting of $\Pn$, such that $q$ extends $p_s$, $q\in D_s$, $q$ decides
the values of $\tau_i(j)$ for each $i \leq s$ and $j\leq s$, and $\Phi_{p_s}^X=\Phi_q^X$. By
Lemma~\ref{lem-meet-dense}, there is such a $q$. For $i$ and
$j$ less than or equal to $s$, let $m_{i,j}$ be the value decided for $\tau_i(j)$ by $q$.    Since $X\not\in \Ln$ and $\vec{Z}_q$ is finite,
there is an $\ell$ such that for each element $Z\in\vec{Z}_q$, $Z$ and $X$ disagree on some number
less than $\ell$.  By increasing $\ell$ if necessary, we may further assume that $\ell$ is greater
than the length of any $\sigma$ mentioned in an element of $\Phi_q$.  Let \[\Phi_{p_{s+1}}=
\Phi_q\cup\{(k_i,m_{i,j},X\restriction\ell):i,j\leq s \text{ and $\Phi_q^X((i,j))$ is not defined}\}.  \]
Let $p_{s+1}$ be $(\Phi_{p_{s+1}},\vec{Z}_q)$.  By construction, $p_{s+1}$ is an extension of $q$
in $\Pn$.  Finally, let $G$ be the filter of conditions in $\Pn$ which are extended by
some $p_s$ and let $\Phi_G$ be the union of the set of $\Phi_p$ such that $p\in G$.

We conclude by observing that the set $\Phi_G$, viewed as a subset of $\Nat$, satisfies the two
conclusions of Lemma~\ref{thm-posrob-L}.  First, we argue that $\Ln[\Phi_G]$ is model of $\ZFC^-_n$. Note that
  the cardinals of $\Ln[G]$ are the cardinals of $\Ln$, and since $\Phi_G\in\Ln[G]$ they are also the cardinals of $\Ln[\Phi_G]$. By Gödel's proof of the GCH, applied to $\Ln[\Phi_G]$, $\Ln[\Phi_G]$ has $n$ iterates of the power set of $\Nat$. That means $\Ln[\Phi_G]$ satisfies $\ZFC^-_n$. 

Second, for each $Z:\omega\to2$, if
$Z\in\Ln[\Phi_G]$ then there is an $i$ such that $Z$ is the denotation of $\tau_i$ in $L_{\beta_n}[G]$.  By
direct induction on the construction, for this $i$, for all $m$, $Z(m)=\Phi^X_G((k_i,m))$.

%
%
\subsection{The proof of
  Theorem~\ref{thm:first-main}} \label{sub:completion-thm1}

We now put the pieces together to show that every real outside of
$L_{\beta_{n+4}}$ is $n$-random with respect to a continuous probability
measure. As $L_{\beta_{n+4}}$ is countable, this will complete the proof of Theorem~\ref{thm:first-main}.

Given $X \not\in L_{\beta_{n+4}}$, choose $\Phi$ as in
Lemma~\ref{thm-posrob-L}, that is, every real in $L_{\beta_{n+4}}[\Phi]$ is Turing reducible to $X \oplus \Phi$. 

Consider the game with the following winning set

\begin{equation*}
  B^\Phi =  \{ W \in \Cant: \: \exists Z \, \exists R \, (W\oplus \Phi \equiv_{\T} Z \oplus \Phi \oplus R \;\; \& \;\; R \text{ is $(Z\oplus \Phi,n+3)$-random})\}.
\end{equation*}
Clearly, $B^\Phi$ is Turing invariant. To see that it is Borel, note that $W \oplus \Phi$ is in $B^\Phi$ if and only if
	\begin{align*}
    \exists e, d \; (\text{$e,d$ are indices of } & \text{Turing functionals such that } \\
    & \Phi_d(\Phi_e(W\oplus \Phi)) = W\oplus \Phi, \\
    & \text{$\Phi_e(W \oplus \Phi)$ is of the form $Z \oplus \Phi \oplus R$,} \\
    & \text{and $R$ is $(n+3)$-random relative to $Z \oplus \Phi$}).
	\end{align*}
This is a prima facie arithmetic definition of $B^\Phi$. By counting quantifiers, we see that $B^\Phi$ is $\Sigma^0_{n+4}(\Phi)$. We also know that the Turing degrees of $B^\Phi$ are cofinal in the Turing degrees (since it is always possible to ``add on'' another random).  

By Lemma~\ref{lem:det-constructive}, $L_{\beta_{n+4}}[\Phi]$ contains a winning strategy $S$ for the relativized game
$\mathcal{G}(B^\Phi)$.  Since $B^\Phi$ is cofinal in the Turing degrees, by Borel-Turing determinacy $S$ has to be a winning strategy for the player who wins if the result of the game is in $B^\Phi$. If any strategy is played against a real that computes the strategy, the result of game is Turing equivalent to the real the strategy was played against.

Therefore, since $X \oplus \Phi$ computes $S$, $B^\Phi$ is Turing invariant, and $(X\oplus \Phi) \oplus \Phi$ is Turing equivalent to $X \oplus \Phi$, $X \oplus \Phi$ is in $B^\Phi$. Let $Z$ and $R$ witness the condition that  $X\oplus \Phi \in B^\Phi$. Since $R$ is $(Z\oplus \Phi, n+3)$-random, $X \oplus \Phi$ is not recursive in $(Z \oplus \Phi)'$. By Lemma~\ref{prop:rand-via-turing}, $X \oplus \Phi$ is $(\nu, (Z\oplus \Phi)'', n+1)$-random for some continuous measure $\nu$ recursive in $(Z\oplus \Phi)''$.

Let $\widetilde{\nu}$ be the measure on $\Cant$ given by 
\[
  \widetilde{\nu}(A) = \nu(\{W \oplus \Phi \colon W \in A \})
\]
Since $X \oplus \Phi$ is $\nu$-random, and it belongs to the $\Pi^0_1(\Phi)$ set
\[
  \Cant \oplus \Phi = \{W \oplus \Phi \colon W \in \Cant\},
\]
$\nu(\Cant \oplus \Phi) > 0$, and therefore $\widetilde{\nu}(\Cant) > 0$. Furthermore, since $\nu$ is continuous, so is $\widetilde{\nu}$. 

For any $\sigma \in \Str$, $\widetilde{\nu}\Cyl{\sigma}$ is recursive in $(\nu \oplus \Phi)'$, uniformly in $\sigma$. It follows that the normalization $\widetilde{\nu}_N$ of $\widetilde{\nu}$ $$\widetilde{\nu}_N = \frac{1}{\widetilde{\nu}(\Cant)}\: \widetilde{\nu}$$ is recursive in $(\nu \oplus \Phi)'$, too. Further, for any $\sigma \in \Str$ and any rational $\varepsilon > 0$, we can compute, recursively in $(\nu \oplus \Phi)'$, an open cover $U$ of 
\[
  \sigma \oplus \Phi =\{W  \oplus \Phi \colon W \in \Cyl{\sigma} \}
\]
so that
\[
  | \nu(\sigma \oplus \Phi) - \nu(U) | < \varepsilon,
\]
uniformly in $\sigma$ and $\varepsilon$.

Thus, for any $k \geq 1$, there is uniformly recursive in $(\nu \oplus \Phi)^{(k)}$ procedure converting a Martin-Löf test for $(\widetilde{\nu}_N)$ relative to $(\nu \oplus \Phi)^{(k)}$ into a Martin-Löf test for $\nu$ relative to $(\nu \oplus \Phi)^{(k)}$. Since $X \oplus \Phi$ is $(\nu, (Z\oplus \Phi)'', n+1)$-random and $\nu$ is recursive in $(Z\oplus \Phi)''$, $X\oplus \Phi$ is $(\nu, (\nu \oplus \Phi), n+1)$-random. Therefore, $X$ is $(\widetilde{\nu}_N, (\nu\oplus \Phi)', n)$-random.
Finally, note that every $(\widetilde{\nu}_N, (\nu\oplus \Phi)', n)$-random real is $(\widetilde{\nu}_N, n)$-random.

We have shown that every real not contained in $L_{\beta_{n+4}}$ is $n$-random for a continuous measure. As $L_{\beta_{n+4}}$ is countable, this completes the proof of Theorem~\ref{thm:first-main}.

%
%
\section{The Metamathematics of Randomness} \label{sec-meta}

In this section, we will show that the metamathematical ingredients used to prove the countability of $\NCR_n$ are necessary. More precisely, we will prove the Theorem~\ref{thm:second-main}, which we restate here for convenience.

\begin{thm1.2a}\label{thm:second-main-var}
	There exists a computable function $G(n)$ such that for every $n \in \Nat$,
	\[
		\ZFC^{-}_n \nvdash \text{ ``$\NCR_{G(n)}$ is countable.'' }
 	\]
\end{thm1.2a}

\medskip
Before starting the proof, we outline its basic idea. For given $n$, we will show that in the model $L_{\beta_n}$ of $\ZFC^{-}_n$, $\NCR_{G(n)}$ is not countable. To this end, we find a sequence $(Y_\alpha)$ of reals that satisfies
\begin{enumerate}[(1)]

	\item $(Y_\alpha)$ is cofinal in the Turing degrees of $L_{\beta_n}$, hence not countable in $L_{\beta_n}$,

	\item no $Y_\alpha$ is $G(n)$-random for a continuous measure in $L_{\beta_n}$.

\end{enumerate}

As we have seen in Propositions~\ref{pro:jump-non-random-1} and~\ref{pro:jump-non-random-2}, iterating the Turing jump produces an increasing sequence of non-random reals. It makes sense therefore to look for a set-theoretic analogue of the jump hierarchy that is cofinal in the hierarchy of constructibility within the countable structure $L_{\beta_n}$. Each initial segment of this hierarchy has a canonical representation in $L$ as a countable object. These are called \textit{master codes} for the structures that they represent. They induce a well-ordered, increasing sequence under Turing reducibility, just like the Turing jump. The `Stair Trainer''-argument to come will proceed along this hierarchy, as it did along the jump hierarchy in Propositions~\ref{pro:jump-non-random-1} and~\ref{pro:jump-non-random-2}. 

We will show that none of these master codes can be (sufficiently) random with respect to a continuous measure $\mu$. In the Stair Trainer method, we compare the sets recursive in a fixed jump of $\mu$ to the elements of the hierarchy on which we are implementing the method. There are new challenges to be met here that go beyond iterates of the Turing jump. For example, we can have representations which are not canonical representations and we can have representations of non-wellfounded versions of the constructible hierarchy. Most of the technical work we have to do is to organize the representations of structures that are definable from $\mu$--to linearize them and to eliminate the non-wellfounded ones. The application of the Stair Trainer method then comes by comparing the canonical representations to the organized $\mu$-definable representations, and then showing that there is no first place where randomness can occur.   

\medskip
This section of the paper is laid out as follows. \ref{sub:fine_structure_and_jensen_s_} and \ref{sub:projecta} review some basic facts of Jensen's fine structure analysis of the constructible hierarchy. Readers familiar with fine structure theory can skip ahead to Subsection~\ref{ssec:arithmetic_copies}. In \ref{ssec:arithmetic_copies} and ~\ref{ssec:defining-copies}, we study representations (which we call \textit{$\omega$-copies}) of countable levels of the constructible hierarchy from a computability point of view. We show how we can computationally retrieve the information coded in an $\omega$-copy, and how we can propagate representations of lower levels to obtain representations of higher levels. In~\ref{ssec:arith-master-codes}, we extract an arithmetic property which captures much of the combinatorics of the fine structure. Representations with this weaker property will be called \textit{pseudocopies}. In~\ref{ssec:comparing-copies}, we implement the organization of pseudocopies recursive in a given real as described above. In~\ref{sub:ncr-not-rand} and~\ref{sub:finishing_the_proof_of_theorem_2}, we implement the Stair Trainer technique and complete the proof of Theorem~\ref{thm:second-main-var}.

\subsection{Fine structure and Jensen's J-hierarchy} 
\label{sub:fine_structure_and_jensen_s_}

Fine structure provides a level-by-level, quantifier-by-quantifier
analysis of how new sets are generated in $L$. Jensen defines the new
constructible hierarchy, the \emph{$J$-hierarchy}
$(J_\alpha)_{\alpha \in \Ord}$ that has all the important properties
of the $L$-hierarchy (in particular, $L = \bigcup_\alpha
J_\alpha$). In addition to this, each level $J_\alpha$ has closure
properties (such as under pairing functions) that $L_\alpha$ may be
lacking.  While it is not strictly necessary for this paper to work
with $J_\alpha$ (we could work with
$(L_{\omega\alpha})_{\alpha \in \Ord})$, the $J$-hierarchy is the
established framework for fine structure analysis, and we will adopt
its basic concepts and terminology.

The sets $J_{\alpha}$ are obtained by closing under a scheme of \emph{rudimentary functions}. In contrast to $L_{\alpha+1}$,
$J_{\alpha+1}$ contains sets of rank up to $\omega(\alpha+1)$, not
just subsets of $J_{\alpha}$, e.g.\ ordered pairs. The
rudimentary functions are essentially a scheme of \emph{primitive set
  recursion}~\cite{Jensen-Karp:1971a}.

For transitive $X$, $\Rud(X)$ denotes the smallest set $Y$ that
contains $X \cup \{X\}$ and is closed under rudimentary functions (rud
closed). The inclusion of $\{X\}$ when taking the rudimentary closure
guarantees that new sets are introduced even if $X$ is closed under
rudimentary functions.

The $J$-hierarchy is introduced as a cumulative hierarchy induced by
the $\Rud$-operation:

\begin{align*}
  J_0 & = \emptyset \\
  J_{\alpha+1} & = \Rud(J_\alpha) \\
  J_{\lambda} & = \bigcup_{\alpha < \lambda} J_\alpha \quad \text{ for $\lambda$ limit}.
\end{align*}

A fine analysis of the rudimentary functions reveals that the
$\Rud$-operation can be completed by iterating some or all of
\emph{nine basic rudimentary functions}.

\begin{prop}[Jensen~\cite{jensen:1972}] \label{prop:rud-base-functions} Every
  rudimentary function is a composition of the following nine
  functions:
\begin{align*}
  F_0(x,y) & = \{x,y\}, \\
  F_1(x,y) & = x\setminus y, \\
  F_2(x,y) & = x \times y, \\
  F_3(x,y) & = \{ (u,z,v) \colon z \in x \; \wedge \; (u,v) \in y \}, \\
  F_4(x,y) & = \{ (u,v,z) \colon z \in x \; \wedge \; (u,v) \in y \}, \\
  F_5(x,y) & = \bigcup x, \\
  F_6(x,y) & = \Dom(x), \\
  F_7(x,y) & =  \: \in \cap \: (x\times x), \\
  F_8(x,y) & = \{ \{ x(z)\} \colon z \in y \}.
\end{align*}
\end{prop}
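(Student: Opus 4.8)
The proposition is Jensen's characterization of the rudimentary functions, and the plan is to prove it as the two-sided inclusion between the class $\mathcal{R}$ of rudimentary functions and the class $\mathcal{R}_0$ of all functions obtained by composing $F_0,\dots,F_8$ (where, as always, composition is allowed to substitute variables for arguments). Recall that $\mathcal{R}$ is generated from the projections $\Tup{\vec x}\mapsto x_i$, the difference $\Tup{x,y}\mapsto x\setminus y$ and the pairing $\Tup{x,y}\mapsto\{x,y\}$ by composition together with the scheme $f(y,\vec x)=\bigcup_{z\in y}g(z,\vec x)$. The inclusion $\mathcal{R}_0\subseteq\mathcal{R}$ is the routine direction: $F_0,F_1$ are among the generators, $F_5=\bigcup x$ is the union scheme applied to the identity, and $F_2,F_3,F_4,F_6,F_7,F_8$ each follow by a short, explicit application of the $\bigcup_{z\in y}$-scheme together with pairing and difference; since $\mathcal{R}$ is closed under composition, every composite of the $F_i$ stays in $\mathcal{R}$.

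For the reverse inclusion $\mathcal{R}\subseteq\mathcal{R}_0$ one observes that $\mathcal{R}_0$ already contains the projections, the pairing $F_0$ and the difference $F_1$, and is closed under composition by fiat, so it suffices to show that $\mathcal{R}_0$ is closed under the scheme $f(y,\vec x)=\bigcup_{z\in y}g(z,\vec x)$. I would do this via the \emph{image lemma}: for every $g\in\mathcal{R}_0$, the ``tagged image'' $\hat g(y,\vec x)=\{\Tup{z,g(z,\vec x)}:z\in y\}$ again belongs to $\mathcal{R}_0$. Granting the lemma, one extracts the plain image $\{g(z,\vec x):z\in y\}$ as the range of $\hat g(y,\vec x)$ --- range being available in $\mathcal{R}_0$ by a short argument from $\Dom=F_6$ and the shuffle functions $F_3,F_4$ --- and then $\bigcup_{z\in y}g(z,\vec x)=F_5$ of that image; hence $\mathcal{R}_0$ is closed under the last scheme and $\mathcal{R}\subseteq\mathcal{R}_0$.

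The proof of the image lemma is where the particular list of nine functions is used, and it is the main obstacle. One first builds up a toolkit inside $\mathcal{R}_0$: the singleton $\{x\}=F_0(x,x)$, the ordered pair and its two unpairing projections, intersection $x\cap y=F_1(x,F_1(x,y))$, union, the restriction of a relation to a set, and relational composition, all by finite manipulation of $F_0$ through $F_7$. One then argues by induction on a term witnessing $g\in\mathcal{R}_0$. The constant case $g(z,\vec x)=x_i$ is exactly what $F_8$ is for: from the constant function $c=y\times\{x_i\}=F_2(y,F_0(x_i,x_i))$ one gets $F_8(c,y)=\{\{x_i\}:z\in y\}$, which (after re-tagging with $F_2$ and rearranging with $F_3,F_4$) yields $\hat g$. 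The composition case $g=F_i(g_1,g_2)$ is the delicate one: from $\hat g_1$ and $\hat g_2$ one forms the aligned set $\{\Tup{z,g_1(z,\vec x),g_2(z,\vec x)}:z\in y\}$ by intersecting suitably re-indexed copies of $\hat g_1$ and $\hat g_2$, using $F_2,F_3,F_4$ to slot the tag $z$ into the correct coordinate, and then applies $F_i$ coordinatewise --- which is itself an $\mathcal{R}_0$-operation, since once $z$ is held fixed as a tag, passing from $\Tup{z,a,b}$ to $\Tup{z,F_i(a,b)}$ uses only the toolkit. Assembling the tagging, shuffling and coordinatewise-application identities so that everything matches up is the genuinely tedious part; conceptually nothing goes beyond Jensen's original argument~\cite{jensen:1972}, so I would follow it, or the textbook treatment in~\cite{devlin:1984}.
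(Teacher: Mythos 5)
The paper offers no proof of this proposition at all: it is imported verbatim as a known result, with the citation to Jensen~\cite{jensen:1972} (and Devlin~\cite{devlin:1984} as the paper's general reference for fine-structure facts) standing in for the argument. Your sketch is the standard proof from exactly those sources -- the easy inclusion that each $F_i$ is rudimentary, plus closure of the composition-class under the scheme $\bigcup_{z\in y}g(z,\vec x)$ via a tagged-image/graph lemma proved by induction on composition terms, with $F_3$, $F_4$, $F_8$ doing the shuffling and image-forming -- so it is correct in outline and takes the same route as the cited proof, deferring the finitely many tedious coordinatewise verifications to Jensen/Devlin just as the paper itself does.
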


The $S$-operator is defined as taking a one-step application of
any of the basic functions,
\begin{equation} \label{equ:S-operator}
	S(X) = [X \cup \{X\}] \cup \left[ \bigcup_{i = 0}^8 F_i[X \cup \{X\}] \right].
\end{equation}
For transitive $X$, it holds that~\cite[Corollary~1.10]{jensen:1972}
\[
	\Rud(X) = \bigcup_{n \in \omega} S^{(n)}(X).
\]
The $S$-hierarchy is defined as the cumulative hierarchy induced by
the $S$-operator and refines the $J$-hierarchy.

\begin{align*}
  S_0 & = \emptyset, \\
  S_{\alpha+1} & = S(S_\alpha), \\
  S_{\lambda} & = \bigcup_{\alpha < \lambda} S_\alpha \quad \text{ for $\lambda$ limit}.
\end{align*}

We obviously have
\[
  J_\alpha = \bigcup_{\beta < \omega\alpha} S_\beta =
  S_{\omega\alpha}.
\]

We list a few basic properties of the sets $J_{\alpha}$. For details
and proofs, see~\cite{jensen:1972} or~\cite{devlin:1984}.

\begin{itemize}

\item Each $J_{\alpha}$ is transitive and is
  a model of a sufficiently large fragment of set theory (more
  precisely, it is a model of \Ax{KP}-set theory without
  $\Sigma_0$-collection).

\item The hierarchy is \emph{cumulative}, i.e., $\alpha \leq \beta$
  implies $J_{\alpha} \subseteq J_{\beta}$.

\item
  $\operatorname{rank}(J_{\alpha+1}) =
  \operatorname{rank}(J_\alpha)+\omega$. Each successor step adds
  $\omega$ new ordinals. $J_{\alpha} \cap \Ord = \omega \alpha$, in
  particular, $J_{1} = V_\omega$ and $J_1 \cap \Ord = \omega$.

\item $(J_{\alpha})_{\alpha \in \Ord}$ and
  $(L_{\alpha})_{\alpha \in \Ord}$ generate the same universe:
  $L = \bigcup_{\alpha} J_{\alpha}$. Moreover,
  $L_{\alpha} \subseteq J_{\alpha} \subseteq L_{\omega\alpha}$, and
  $J_{\alpha} = L_{\alpha}$ if and only if $\omega \alpha =
  \alpha$. Finally,
  $J_{\alpha+1} \cap \P(J_\alpha) = \P_{\Def}(J_\alpha)$, that is,
  $J_{\alpha+1}$ contains precisely those subsets of $J_\alpha$ that
  are first order definable over $J_\alpha$.

\item The $\Sigma_n$-satisfaction relation over $J_\alpha$,
  $\models^{\Sigma_n}_{J_\alpha}$, is $\Sigma_n$-definable over
  $J_\alpha$, uniformly in $\alpha$.

\item The mapping $\beta \mapsto J_\beta$ ($\beta < \alpha$) is
  uniformly $\Sigma_1$-definable over any $J_\alpha$, and hence so is the set $\{J_\beta \colon \beta < \alpha\}$.

\item There is a $\Pi_2$ formula $\varphi_{\Ax{V=L}}$ such that for
  any transitive set $M$,
  \[
    M \models \varphi_{\Ax{V=L}} \; \Leftrightarrow \; \exists \alpha
    \; M = J_\alpha.
  \]
  In particular, $\varphi_{\Ax{V=L}}$ says that for every set $a$ there exists an ordinal $\alpha$ such that $a$ is in the rud-closure of $J_\alpha$.

\end{itemize}

The $J$-hierarchy shares all important metamathematical features with
the $L$-hierarchy.  We cite the two most important facts. The
$L$-versions of the two propositions together constitute the core of
Gödel's proof that \Ax{GCH} and \Ax{AC} hold in $L$.

\begin{prop} \label{prop:definability-J}
  There exists a $\Sigma_1$-definable well-ordering $<_J$ of $L$ and
  for any $\alpha > 1$, the restriction of $<_J$ to $J_\alpha$ is
  uniformly $\Sigma_1$-definable over $J_{\alpha}$.
\end{prop}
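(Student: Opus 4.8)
The plan is to carry out Gödel's construction of the canonical well-ordering through the finer $S$-hierarchy rather than through $\P_{\Def}$: the latter well-orders only subsets of a given level, whereas $J_{\alpha+1}$ also contains higher-rank sets (e.g.\ ordered pairs), so the $S$-operator is the right vehicle for the $J$-hierarchy. I would define, by recursion on $\beta \in \Ord$, a well-ordering $<_\beta$ of $S_\beta$ with the coherence properties that $\beta' \leq \beta$ implies $<_{\beta'} \;=\; <_\beta \restriction S_{\beta'}$ and that $S_{\beta'}$ is an initial segment of $<_\beta$. Put $<_0 = \Estr$; at limit stages take unions. The successor step is the crux: to pass from $<_\gamma$ on $S_\gamma$ to $<_{\gamma+1}$ on $S_{\gamma+1} = S(S_\gamma)$, extend $<_\gamma$ to a well-ordering $<_\gamma^+$ of $S_\gamma \cup \{S_\gamma\}$ by placing $S_\gamma$ on top, order the ``codes'' $(i,u,v)$ with $i \leq 8$ and $u,v \in S_\gamma \cup \{S_\gamma\}$ lexicographically via $0 < \dots < 8$ and $<_\gamma^+$, and then place every $z \in S_{\gamma+1}\setminus S_\gamma$ above all of $S_\gamma$, with $S_\gamma$ itself first among these new elements and every other new $z$ ranked by its $<_\gamma^+$-least code $(i,u,v)$ with $z = F_i(u,v)$ (recall every element of $S_{\gamma+1}\setminus S_\gamma$ other than $S_\gamma$ is of this form). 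A routine induction shows each $<_\beta$ is a well-ordering with the stated coherence, so $<_J := \bigcup_{\beta \in \Ord} <_\beta$ is a well-ordering of $L = \bigcup_\beta S_\beta$ (any nonempty subclass meets some $S_\beta$, which is an initial segment, hence has a $<_J$-least element), and $<_J \restriction J_\alpha \;=\; <_{\omega\alpha}$.

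It then remains to check definability. The relation $x <_J y$ holds iff there are an ordinal $\mu$, a sequence $f = (S_\nu)_{\nu \leq \mu}$, and a sequence $g = (<_\nu)_{\nu\leq\mu}$ such that $x,y \in f(\mu)$, the pair $f,g$ satisfies the recursion clauses above, and $\langle x,y\rangle \in g(\mu)$. Once $f$ and $g$ are supplied as sets, these clauses are bounded: the basic functions $F_0,\dots,F_8$ and the operator $S$ are rudimentary and hence have $\Delta_0$ graphs, so ``$z = F_i(u,v)$'' and ``$f(\nu+1) = S(f(\nu))$'' are $\Delta_0$, the union and limit clauses are $\Delta_0$ in $f$, and the search for the least code is bounded by $f(\nu)\cup\{f(\nu)\}$. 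Hence the displayed condition is $\Sigma_1$. Since every two sets in $L$ lie in a common $S_\mu$ and coherence makes the choice of $\mu$ irrelevant, this $\Sigma_1$ formula defines $<_J$ over $L$. For the local claim, fix $\alpha > 1$; by the standard fine-structure facts (the map $\nu \mapsto S_\nu$ is uniformly $\Sigma_1$ over $J_\alpha$, and each proper initial segment $(S_\nu)_{\nu\leq\mu}$ with $\mu < \omega\alpha$ is an \emph{element} of $J_\alpha$), for $x,y \in J_\alpha$ one may take the witnesses $f,g$ inside $J_\alpha$, so the very same formula defines $<_J \restriction J_\alpha = \; <_{\omega\alpha}$ over $J_\alpha$, uniformly in $\alpha$.

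The main obstacle is the bookkeeping in the successor step together with confirming that the body of the recursion is genuinely $\Delta_0$ once the witnesses are supplied --- this is exactly where one invokes that the $F_i$ are rudimentary (hence $\Delta_0$) and that the auxiliary ``least code'' quantifier ranges over a set lying in $J_\alpha$ --- and checking that coherence makes the existential formula insensitive to which stage witnesses it. The exclusion of $\alpha = 1$ is a point to handle with care: $J_1 = V_\omega$ is too impoverished for the uniform $S_\nu$-definition and for the witnessing sequences to behave inside it, which is why the local statement is asserted only for $\alpha > 1$; the global statement over $L$ is unaffected. Modulo these checks this is Jensen's argument, and I would cite \cite{jensen:1972} or \cite{devlin:1984} for the routine verifications.
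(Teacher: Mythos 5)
Your proposal is correct and is essentially the standard Jensen--Devlin construction (recursion along the $S$-hierarchy, ordering new elements by least codes, and $\Sigma_1$-definability via witnessing sequences that lie in $J_\alpha$), which is exactly what the paper relies on: it states the proposition without proof and refers to \cite{jensen:1972} and \cite{devlin:1984} for this argument. The routine facts you defer (uniqueness of the witnessing sequences, that initial segments of the $S$- and ordering-sequences are elements of $J_\alpha$, and $\Delta_0$-ness of the rudimentary graphs) are precisely the verifications carried out in those references.
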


\begin{prop}[The condensation lemma for
  $J$] \label{prop:condensation-lemma-J} For any $\alpha$, if
  $X \preceq_{\Sigma_1} J_\alpha$, then there is an ordinal $\beta$
  and an isomorphism $\pi$ between $X$ and $J_\beta$. Both $\beta$ and
  $\pi$ are uniquely determined.
\end{prop}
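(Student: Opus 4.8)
The plan is to apply the Mostowski collapse to $X$ and then identify the collapse as a level of the $J$-hierarchy, using the $\Sigma_1$-definability of that hierarchy together with its absoluteness for transitive models.

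First I would verify that $(X,\in)$ is extensional and well-founded, so that its transitive collapse exists and is unique. Well-foundedness is automatic, as $\in$ restricted to the set $X$ is well-founded. For extensionality, take $a,b\in X$ with $a\neq b$: since $J_\alpha$ is transitive, hence extensional, it satisfies the $\Sigma_0$ assertion $\exists c\,(c\in a\wedge c\notin b)\vee\exists c\,(c\in b\wedge c\notin a)$ with parameters $a,b$, so by $\Sigma_1$-elementarity $X$ satisfies it too and provides a separating witness $c\in X$. Let $\pi\colon(X,\in)\to(M,\in)$ be the transitive collapse; both $M$ and $\pi$ are uniquely determined by $X$, and once we show $M=J_\beta$ for some $\beta$, the uniqueness of $\beta$ (using that $\delta\mapsto J_\delta$ is injective) and of $\pi$ follows.

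The core step is to show $M$ is a level of the $J$-hierarchy. Recall (from the bullet list and Proposition~\ref{prop:rud-base-functions}) that the $S$-operator is built from the nine rudimentary functions and is therefore $\Sigma_0$-definable and total, so the formula $\psi(z)$ asserting ``there is a function $f$ with domain an ordinal $\delta+1$, $f(0)=\emptyset$, $f(\xi+1)=S(f(\xi))$, $f$ continuous at limits, and $z=f(\delta)$'' is genuinely $\Sigma_1$; moreover, in any transitive model such an $f$ really is an initial segment of the $S$-hierarchy, so $M\models\psi(z)$ forces $z$ to be a genuine $S_\delta$. Since $J_\alpha\models\varphi_{\Ax{V=L}}$, for every $a\in J_\alpha$ there is a $\delta$ with $a\in J_\delta=S_{\omega\delta}\in J_\alpha$, hence $J_\alpha\models\exists z\,(a\in z\wedge\psi(z))$; by $\Sigma_1$-elementarity the same holds in $X$ for each $a\in X$, and transporting along $\pi$, for each $x\in M$ we get $M\models\exists z\,(x\in z\wedge\psi(z))$ with a witness $z\in M$ that is a genuine $S$-level. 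Thus every element of $M$ lies in some $S_\delta\in M$, and $M=\bigcup\{S_\delta\colon S_\delta\in M\}$.

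To conclude, observe that $\{\delta\colon S_\delta\in M\}$ is an initial segment of the ordinals (if $S_\delta\in M$ and $\delta'<\delta$ then $S_{\delta'}\in S_\delta\in M$, so $S_{\delta'}\in M$ by transitivity), hence equals some ordinal $\mu$; by cumulativity $M=\bigcup_{\delta<\mu}S_\delta$, and $\mu$ cannot be a successor (for $\mu=\gamma+1$ we would get $S_\gamma\in M=S_\gamma$), so $\mu$ is $0$ or a limit; writing such a $\mu$ as $\omega\beta$ (every nonzero limit ordinal is a multiple of $\omega$) yields $M=S_{\omega\beta}=J_\beta$. The step I expect to be the main obstacle is the absoluteness claim for $\psi$: the $\Sigma_1$ definition of the $S$-hierarchy must be set up so that it is preserved under $\Sigma_1$-elementarity and upward into the real universe, and so that witnesses living inside the transitive collapse are genuine $S$-levels — both resting on the $\Sigma_0$-definability and totality of the rudimentary operations in Proposition~\ref{prop:rud-base-functions}.
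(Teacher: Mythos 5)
Your proof is correct, and since the paper itself gives no proof of this proposition (it refers to Jensen and Devlin), your argument — Mostowski collapse of the well-founded extensional structure $(X,\in)$, followed by the $\Sigma_1$-expressible, $\Sigma_0$-absolute characterization of the $S$-hierarchy to identify the collapse as some $S_{\omega\beta}=J_\beta$ — is essentially the standard argument from those references. The only step to phrase slightly more carefully is $J_\alpha\models\exists z\,(a\in z\wedge\psi(z))$: this needs the witnessing sequence $(S_\xi)_{\xi\leq\delta}$ to be an element of $J_\alpha$, which is exactly the paper's listed fact that $\beta\mapsto J_\beta$ (equivalently $\delta\mapsto S_\delta$) is uniformly $\Sigma_1$-definable over $J_\alpha$, i.e.\ the standard $\Pi_2$ formulation of $\varphi_{\Ax{V=L}}$ that you invoke.
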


For proofs of these results for the $J$-hierarchy again refer to~\cite{jensen:1972} or~\cite{devlin:1984}.

\subsection{Projecta and master codes}
\label{sub:projecta}

The definable well-ordering $<_J$ together with the definability of
the satisfaction relation can be used to show that each $J_\alpha$ has
\emph{definable Skolem functions}, essentially by selecting the
$<_J$-least witness that satisfies an existential formula. The
definable Skolem functions can in turn be used to define an 
indexing of $J_\alpha$~\cite[Lemma~2.10]{jensen:1972}.

\begin{prop}[Jensen~\cite{jensen:1972}] \label{prop:Jensen-map-onto_Jalpha}
  For each $\alpha$, there exists a $\Sigma_{1}(J_\alpha)$-definable
  surjection from $\omega\alpha$ onto $J_{\alpha}$.
\end{prop}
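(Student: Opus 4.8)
The plan is to build the required surjection by a simultaneous recursion that interleaves a coarse stratification of $J_\alpha$ with the uniformly definable Skolem functions. First I would fix, for each $\beta \le \omega\alpha$, the set $S_\beta$ of the $S$-hierarchy, so that $J_\alpha = S_{\omega\alpha} = \bigcup_{\beta < \omega\alpha} S_\beta$; the map $\beta \mapsto S_\beta$ is uniformly $\Sigma_1$-definable over $J_\alpha$ (this is the $S$-hierarchy analogue of the stated uniform $\Sigma_1$-definability of $\beta\mapsto J_\beta$, and follows since each $S_\beta$ is obtained by finitely many applications of the nine basic rudimentary functions of Proposition~\ref{prop:rud-base-functions}). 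Next I would invoke the definable well-ordering: by Proposition~\ref{prop:definability-J}, $<_J\restriction J_\alpha$ is uniformly $\Sigma_1(J_\alpha)$, and together with the uniform $\Sigma_n$-definability of the satisfaction relation $\models^{\Sigma_n}_{J_\alpha}$ this yields definable Skolem functions $h_{J_\alpha}$ for $J_\alpha$ (pick the $<_J$-least witness of an existential formula). The point is that $h_{J_\alpha}$ is $\Sigma_1$-definable over $J_\alpha$, uniformly in $\alpha$.

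The construction then proceeds as follows. Since each $S_\beta$ is obtained from $S_{\beta'}$ ($\beta'<\beta$) by one step of the $S$-operator \eqref{equ:S-operator}, and the $S$-operator is a finite union of images of the nine basic functions applied to $X\cup\{X\}$, every element of $J_\alpha$ is a finite "term" built from the nine basic functions with ordinal parameters below $\omega\alpha$ and from the single constant $\emptyset$. Using an arithmetical pairing of $\omega\alpha$ with itself (available inside $J_\alpha$ since $J_\alpha$ is closed under pairing — this is one of the closure properties of the $J$-hierarchy that $L_\alpha$ may lack), I would code such a finite term by a single ordinal $\gamma < \omega\alpha$, and define $f(\gamma)$ to be the value of the term coded by $\gamma$ (with $f(\gamma) = \emptyset$ when $\gamma$ does not code a legitimate term). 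Surjectivity is then exactly the statement $J_\alpha = \bigcup_\beta S_\beta$ unwound one level of the $S$-operator at a time. For the definability bound, the evaluation of a coded term is computed by a $\Sigma_1$ recursion over $J_\alpha$ — one needs only that "$u$ is the value of the term coded by $\gamma$" is $\Sigma_1(J_\alpha)$, which follows because each basic function $F_i$ has a $\Sigma_0$ (indeed rudimentary) graph and the recursion has length $<\omega\alpha \in J_\alpha$, so the recursion itself can be carried out by the $\Sigma_1$ recursion theorem inside $J_\alpha$. (Where we actually need the Skolem functions $h_{J_\alpha}$ rather than raw terms is only if one wants the map to be onto while also respecting the well-order $<_J$; for bare surjectivity the term-evaluation map suffices, and $h_{J_\alpha}$ enters when we want the canonical enumeration. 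I would present the term-evaluation version and remark that composing with $h_{J_\alpha}$ gives the standard Jensen indexing.)

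The main obstacle is the uniformity of the $\Sigma_1$ bound: one must be careful that the ordinal-coding of finite terms, the evaluation recursion, and the well-order $<_J$ are all definable by a \emph{single} $\Sigma_1$ formula that works for every $\alpha$ simultaneously, with no hidden dependence on $\alpha$ beyond the parameter $\omega\alpha$. This is where Jensen's fine-structural bookkeeping does its real work: the nine basic functions were isolated precisely so that "one step of $S$" has a fixed $\Sigma_0$ description independent of $\alpha$, and the uniform $\Sigma_1$-definability of $\beta\mapsto S_\beta$ and of $<_J$ then propagates through the recursion. Once that uniformity is in place, surjectivity and the $\Sigma_1(J_\alpha)$ bound on the graph of $f$ follow, completing the proof; I would refer to \cite[Lemma~2.10]{jensen:1972} for the detailed verification of the uniform definability claims.
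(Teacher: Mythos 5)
There is a genuine gap, and it sits exactly where the real content of Jensen's lemma lies. First, your representation claim, as stated, is false: the elements of $J_\alpha$ are \emph{not} all values of finite terms in $F_0,\dots,F_8$ with leaves that are ordinals below $\omega\alpha$ and $\emptyset$. Unwinding the $S$-operator breaks down at limit stages: $S_\lambda$ (e.g.\ $S_\omega=V_\omega$) is not a finite term over strictly earlier material, so the induction you gesture at does not close. More structurally, the rudimentary closure of a set of ordinals is far thinner than $J_\alpha$: since rudimentary functions are simple, if $x=t(\vec\xi)$ for a rud term $t$ and ordinals $\vec\xi$, then ``$n\in x$'' is $\Sigma_0$ in $(n,\vec\xi)$, i.e.\ definable over a well-order, hence finite or cofinite on $\omega$; so already the set of even numbers, an element of $J_2$, is not such a term. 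The claim becomes true only if the leaves are allowed to denote the sets $S_\beta$ themselves (named by their indices $\beta<\omega\alpha$ and decoded via the uniformly $\Sigma_1$ map $\beta\mapsto S_\beta$), which is not what your justification establishes. Also, $\omega\alpha\notin J_\alpha$ ($J_\alpha\cap\Ord=\omega\alpha$), so the phrase ``length $<\omega\alpha\in J_\alpha$'' is a slip, though for evaluating a single finite term this is repairable.

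Second, and more seriously, even with the corrected leaves your argument needs to code finite terms --- in effect finite sequences of ordinals below $\omega\alpha$ --- by single ordinals below $\omega\alpha$, and you justify this by ``an arithmetical pairing of $\omega\alpha$ with itself, available since $J_\alpha$ is closed under pairing.'' That conflates closure under the set-pairing function $(x,y)\mapsto\langle x,y\rangle$ (a rank/closure property of the $J$-hierarchy) with the existence of a $\Sigma_1(J_\alpha)$-definable surjection of $\omega\alpha$ onto $\omega\alpha\times\omega\alpha$ (or onto $(\omega\alpha)^{<\omega}$). The latter is not automatic: $\omega\alpha$ is in general not closed under G\"odel pairing (already $\omega\cdot 2$ is not), and the existence of such a definable pairing surjection is essentially equivalent to the proposition being proved --- given the surjection $\omega\alpha\twoheadrightarrow J_\alpha$ one reads off a pairing surjection, and conversely your construction needs it as input. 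This is precisely the nontrivial part that Jensen's argument supplies, either by induction on $\alpha$ or, as sketched right after the proposition in the paper, by taking the $\Sigma_1$ Skolem hull of $\omega\alpha$ in $J_\alpha$ and applying condensation to conclude that the hull is all of $J_\alpha$. So your term-evaluation route, while a reasonable reduction of the problem, assumes the crux rather than proving it; to complete it you would have to prove the $\Sigma_1(J_\alpha)$ pairing lemma separately, at which point you are doing Jensen's induction anyway.
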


While a simple cardinality argument yields that
$|J_\alpha| = |\omega\alpha|$, Jensen's result shows that an
$\omega\alpha$-counting of $J_\alpha$ already exists in $J_{\alpha+1}$. The
indexing is obtained by taking (essentially) the Skolem hull of
$\omega\alpha$ under the canonical $\Sigma_{1}$-Skolem function. The
resulting set $X$ is a $\Sigma_{1}$-elementary substructure of
$J_{\alpha}$, hence by the condensation lemma is isomorphic to some
$J_{\beta}$. The isomorphism taking $X$ to $J_\beta$ is the identity
on all ordinals below $\omega\alpha$, and one can show that this in
turn implies that the isomorphism must be the identity on $X$, i.e.,
$X = J_\beta$.

Boolos and Putnam~\cite{boolos-putnam:1968} first observed that if a new real is
defined in $L_{\alpha+1}$, i.e., if
\[
	\P(\omega) \cap (L_{\alpha+1} \setminus L_\alpha) \neq \emptyset,
\]
then the strong absoluteness properties of $L$ can be used to get a
\emph{definable} $\omega$-counting of $L_\alpha$ (instead of just an
$\alpha$-counting as above). Because, if a new subset $Z$ of $\omega$ is constructed
in $L_{\alpha+1} \setminus L_\alpha$, one can take the Skolem hull of $\omega$
instead of $\omega\alpha$. The resulting $X \cong L_{\beta}$ is still equal to
$L_{\alpha}$, since the definition of the new real applies in the
elementary substructure $L_{\beta}$. If $\beta < \alpha$, then this would
contradict the fact that $Z \not\in L_{\alpha}$.

\begin{prop}[Boolos and Putnam~\cite{boolos-putnam:1968}]
	If $\P(\omega) \cap (L_{\alpha+1} \setminus L_\alpha) \neq \emptyset$, then
  there exists a surjection $f: \omega \to L_\alpha$ in $L_{\alpha+1}$.
\end{prop}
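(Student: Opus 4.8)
The plan is to exploit the fine-structural fact that a new subset of $\omega$ appearing at stage $\alpha+1$ must be \emph{definable} over $L_\alpha$, and that, combined with the definable Skolem functions of $L_\alpha$, this forces a Skolem hull generated by $\omega$ alone --- rather than by $\omega\alpha$, as in Proposition~\ref{prop:Jensen-map-onto_Jalpha} --- to exhaust $L_\alpha$; the surjection is then read off that hull. First I would fix $Z\in\P(\omega)\cap(L_{\alpha+1}\setminus L_\alpha)$. Since $L_{\alpha+1}\cap\P(L_\alpha)$ is precisely the family of subsets of $L_\alpha$ that are first-order definable over $(L_\alpha,\in)$ (one of the basic properties recalled above, and immediate from the definition of the $L$-hierarchy), there are a formula $\psi$ and a parameter $p\in L_\alpha$ with $Z=\{k\in\omega\colon L_\alpha\models\psi(k,p)\}$ and $Z\notin L_\alpha$. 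Fix a natural number $n$ that is at least $1$ and at least the quantifier complexity of $\psi$.

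Next I would form the $\Sigma_n$-Skolem hull $M=\operatorname{Hull}_{\Sigma_n}^{L_\alpha}(\omega\cup\{p\})$ of $\omega\cup\{p\}$ inside $L_\alpha$, using the $\Sigma_1$-definable well-order of $L$ (Proposition~\ref{prop:definability-J}) and the uniform $\Sigma_n$-definability over $L_\alpha$ of its $\Sigma_n$-satisfaction relation (also recalled above). Then $M\preceq_{\Sigma_n}L_\alpha$, and --- precisely because the hull is built from a \emph{definable} fragment of satisfaction --- the surjection $h\colon\omega\to M$ that enumerates the values of the $\Sigma_n$-Skolem terms built from numerical inputs and the parameter $p$ is itself definable over $L_\alpha$ from $p$. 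Let $\pi\colon M\to N$ be the transitive collapse. Since $M\preceq_{\Sigma_1}L_\alpha$ a fortiori, the condensation lemma for the constructible hierarchy (Proposition~\ref{prop:condensation-lemma-J}) gives $N=L_{\bar\alpha}$ for some ordinal $\bar\alpha$; moreover $\bar\alpha\leq\alpha$ because $\Ord\cap M\subseteq\alpha$, and $\pi$ is the identity on $\omega$ because $\omega\subseteq M$ is transitive. Writing $\bar p=\pi(p)$ and using that $\pi$ is an isomorphism, $Z=\{k\colon L_{\bar\alpha}\models\psi(k,\bar p)\}$, so $Z\in L_{\bar\alpha+1}$; but $Z\notin L_\alpha\supseteq L_{\bar\alpha}$, so $\bar\alpha<\alpha$ would put $Z\in L_{\bar\alpha+1}\subseteq L_\alpha$, a contradiction. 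Hence $\bar\alpha=\alpha$ and $\pi$ maps $M$ isomorphically onto $L_\alpha$.

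Finally, applying $\pi$ to the equation $M=\operatorname{Hull}_{\Sigma_n}^{L_\alpha}(\omega\cup\{p\})$ yields $\operatorname{Hull}_{\Sigma_n}^{L_\alpha}(\omega\cup\{\bar p\})=L_\alpha$, so the map $f\colon\omega\to L_\alpha$ sending $k$ to the value, computed in $L_\alpha$, of the $k$-th $\Sigma_n$-Skolem term in numerical inputs and the parameter $\bar p$ is a surjection onto $L_\alpha$, and it is definable over $L_\alpha$ from the single parameter $\bar p$; hence $f\in L_{\alpha+1}$. (That $f$, viewed as a set of pairs, is genuinely an element of $L_{\alpha+1}$ is a routine rank computation, and is automatic once one passes to the $J$-hierarchy, where the relevant pairing is built in.) I expect the only point requiring real care to be the choice of the fragment $\Sigma_n$: it must be rich enough that $M$ is $\Sigma_n$-elementary in $L_\alpha$, so that the defining formula $\psi$ of $Z$ survives the collapse, while still being a fragment whose satisfaction relation is definable over $L_\alpha$, so that $h$ and therefore $f$ are themselves definable there. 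Once the complexity is chosen past $\psi$, the remainder is the standard Skolem-hull-and-condensation argument.
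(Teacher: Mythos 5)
Your argument is correct and follows essentially the same route as the paper's own (sketched) proof: take the Skolem hull of $\omega$ together with the parameter defining the new real $Z$, collapse it via condensation to some $L_{\bar\alpha}$, observe that $\bar\alpha<\alpha$ would put $Z$ into $L_\alpha$, and read off the definable $\omega$-enumeration of $L_\alpha$ from the hull. You in fact supply more detail than the paper (explicit choice of the $\Sigma_n$ fragment, transfer of the hull equation through the collapse, and the pairing/rank caveat about $f\in L_{\alpha+1}$, which the paper glosses over and which is indeed resolved by working with the $J$-hierarchy).
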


Of course, at some stages no new reals are constructed. Boolos and
Putnam~\cite{boolos-putnam:1968} showed that the first such stage is
precisely the ordinal $\beta_0$, i.e., the least ordinal $\beta$ such
that $L_{\beta} \models \ZF^-$. By Gödel's work, on the other hand, we
know that no new real is constructed after stage $\omega^L_1$.

Jensen~\cite{jensen:1972} vastly extended these ideas into the framework of
\emph{projecta} and \emph{master codes}, which form the core concepts of
\emph{fine structure theory}.

\begin{defn} \label{defn:projectum} For natural numbers $n > 0$ and ordinals
  $\alpha > 0$, the \emph{$\Sigma_n$-projectum} $\rho^n_\alpha$ is equal to the
  least $\gamma \leq \alpha$ such that $\P(\omega\gamma) \cap ( \Sigma_n
  (J_{\alpha}) \setminus J_\alpha ) \neq \emptyset$.
\end{defn}

We put $\rho^0_\alpha = \alpha$. Hence $1 \leq \rho^n_\alpha \leq \alpha$ for
all $n$. As $\rho^n_\alpha$ is non-increasing in $n$, we can also define
\[
  \rho_\alpha = \min_n \rho^n_\alpha \quad \text{ and } \quad n_\alpha = \min \{
  k \colon \rho^k_\alpha = \rho_\alpha \}.
\]

Jensen~\cite[Theorem~3.2]{jensen:1972} proved that the projectum $\rho^n_\alpha$
is equal to the least $\delta \leq \alpha$ such that there exists a function $f$
that is $\Sigma_n(J_\alpha)$-definable over $J_\alpha$ such that $f(D) =
J_\alpha$ for some $D \subseteq \omega\delta$, establishing the analogy with the
Boolos-Putnam result. From this it follows that if $\rho^n_\alpha < \alpha$, it
must be a cardinal in $J_\alpha$, for all $n$.

Jensen gave another characterization of the projectum, which in fact he used as
his original definition in~\cite{jensen:1972}. Suppose $\Tup{M,\in}$ is a
set-theoretic structure. We can extend this structure by adding an additional
relation $A \subset M$. If we do this, we would like the structure to satisfy
some basic set theoretic closure properties. For instance, we would like our
universe to satisfy the comprehension axiom with respect to the new relation,
that is, whenever we pick an $x \in M$, the collection of elements in $x$ that
satisfy $A$ should be in $M$. Such structures are called \emph{amenable}.

\begin{defn}
  Given $A \subseteq M$, the structure $\Tup{M, A}$ is called \emph{amenable},
  if $M$ is a transitive set and
  \[
    \forall x \in M \; [  x \cap A \in M].
  \]
\end{defn}

Jensen~\cite[Theorem~3.2]{jensen:1972} showed that
\begin{align*}
	\rho^n_\alpha =  & \text{ the largest ordinal $\gamma \leq \alpha$ such that} \\
                   & \text{ $\Tup{J_\gamma,A}$ is amenable for any $A \subseteq J_\gamma$ that is in $\Sigma_n(J_\alpha)$}.
\end{align*}

This means the projectum $\rho^n_\alpha$ identifies the ``stable'' core of
$J_\alpha$ with respect to $\Sigma_n$ definability over $J_\alpha$.

Being amenable with rud-closed domain can also be characterized via relative
rud-closedness. This will be important later.

\begin{defn}[Jensen~\cite{jensen:1972}]
	A function $f$ is \emph{$A$-rud} if it can be obtained as a combination of the
  basis functions $F_1, \dots, F_8$ and the function
	\[
	 	F_A(x,y) = x \cap A.
	\]
	A structure $\Tup{M,A}$ is \emph{rud closed} if $f[M^n] \subseteq M$ for all $A$-rud functions $f$.
\end{defn}

\begin{prop}[Jensen~\cite{jensen:1972}]
 	A structure $\Tup{M,A}$, $A \subseteq M$, is rud closed if and only if $M$ is rud closed and $\Tup{M,A}$ is amenable.
\end{prop}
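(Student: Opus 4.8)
The plan is to prove the two implications separately; both come down to a single enabling observation. By the definition given above, an $A$-rud function is a finite composition of the basic rudimentary functions (those of Proposition~\ref{prop:rud-base-functions}) together with $F_A$ and the projections, so that a structure is rud closed exactly when it is closed under this fixed, finite list of generators. Granting this, the argument is bookkeeping. Throughout I would take $M$ to be transitive, as both the right-hand clause ``$\Tup{M,A}$ amenable'' and the usual context in which $\Rud$-closure is considered presuppose this.

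For the forward direction I would assume $\Tup{M,A}$ is rud closed. Each basic rudimentary function is $A$-rud (it involves no occurrence of $F_A$), so $M$ is closed under all of them, and hence, by Proposition~\ref{prop:rud-base-functions}, under every rudimentary function; thus $M$ is rud closed. For amenability, given $x \in M$ I note that $F_A(x,x) = x \cap A$ and that $\Tup{M,A}$ is closed under $F_A$, so $x \cap A \in M$; together with the transitivity of $M$ this is precisely amenability of $\Tup{M,A}$.

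For the converse I would assume $M$ is rud closed and $\Tup{M,A}$ is amenable, and argue by induction on the number of composition steps needed to build a given $A$-rud function $f$ from the generators that $f[M^n] \Sleq M$, where $n$ is the arity of $f$. The base cases are immediate: the projections are total on $M$; each basic rudimentary function maps powers of $M$ into $M$ because $M$ is rud closed; and $F_A$ maps $M \times M$ into $M$ because $F_A(x,y) = x \cap A \in M$ by amenability. For the inductive step, if $f(\vec{u}) = g(h_1(\vec{u}),\dots,h_k(\vec{u}))$ with $g, h_1, \dots, h_k$ built in fewer steps, then for $\vec{u} \in M^n$ the induction hypothesis puts each $h_j(\vec{u})$ in $M$, and a further appeal to it puts $g(h_1(\vec{u}),\dots,h_k(\vec{u}))$ in $M$; hence $f[M^n] \Sleq M$. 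As $f$ was arbitrary, $\Tup{M,A}$ is rud closed. The one step that is not purely formal is the generation fact invoked at the outset --- Jensen's $A$-relative version of Proposition~\ref{prop:rud-base-functions}, that composition over the basic functions together with $F_A$ already yields all $A$-rud functions --- and this is the place I would expect to have to be careful; it is, however, part of the same development in~\cite{jensen:1972} from which the notion of $A$-rud function is taken, so I would cite it for that.
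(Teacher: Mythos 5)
The paper offers no proof of this proposition at all --- it is quoted directly from Jensen~\cite{jensen:1972} --- so there is no internal argument to measure yours against; judged on its own, your proof is correct and is the standard one. With the paper's definition of $A$-rud functions as compositions of the basic functions together with $F_A$, both directions reduce to exactly the bookkeeping you describe: the forward direction because each basic $F_i$ and $F_A$ are themselves $A$-rud (giving rud closure of $M$ via Proposition~\ref{prop:rud-base-functions}, and amenability via $F_A(x,x)=x\cap A$), and the converse by induction on the composition structure, with $M$'s rud closure handling the basic functions and amenability handling $F_A$. The two caveats you flag are precisely the right ones: the statement tacitly assumes $M$ transitive (amenability as defined in the paper includes transitivity, which no closure hypothesis could supply), and if one instead takes Jensen's schematic definition of $A$-rudimentary functions, the $A$-relative basis theorem --- the analogue of Proposition~\ref{prop:rud-base-functions} with $F_A$ adjoined --- is the one nontrivial input, and it is proved in~\cite{jensen:1972}. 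One small mismatch with the paper's text: its definition of $A$-rud lists only $F_1,\dots,F_8$ and $F_A$, omitting $F_0$; your argument treats all nine basic functions as generators, which is surely what is intended.
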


\medskip The existence of a definable surjection between (a subset of)
$\omega\rho^n_\alpha$ and $\Sigma_n(J_\alpha)$ allows for coding
$\Sigma_n(J_\alpha)$ into its projectum. One way this can be implemented is via
so-called \emph{master codes}.

\begin{defn}
  A $\Sigma_n$ \emph{master code} for $J_\alpha$ is a set $A \subseteq
  J_{\rho^n_\alpha}$ that is $\Sigma_n(J_\alpha)$, such that for any $m \geq 1$,
  \[
    \Sigma_{n+m}(J_\alpha) \cap \P(J_{\rho^n_\alpha}) =
    \Sigma_m(\Tup{J_{\rho^n_\alpha}, A}).
  \]
\end{defn}

A $\Sigma_n$ master code does two things:
\begin{enumerate}
\item It ``accelerates'' definitions of new subsets of $J_{\rho^n_\alpha}$ by
  $n$ quantifiers.

\item It replaces parameters from $J_\alpha$ in the definition of these new sets
  by parameters from $J_{\rho^n_\alpha}$ (and the use of $A$ as an ``oracle'').
\end{enumerate}

The existence of master codes follows rather easily from the existence of a
$\Sigma_{n}(J_\alpha)$-mapping from $\omega \rho^n_\alpha$ onto $J_{\alpha}$.
However, for $n > 1$, this mapping is \emph{not uniform}. Jensen exhibited a
uniform, canonical way to define master codes, by iterating
$\Sigma_1$-definability.

Put
\[
	A^0_\alpha = \emptyset, \quad p^0_\alpha = \emptyset.
\]

Assuming that $A^n_\alpha$ is a $\Sigma_n$ master code, it is not hard to see that
every set $x \in J_{\rho^n_\alpha}$ is $\Sigma_1$-definable over
$\Tup{J_{\rho^n_\alpha}, A^n_\alpha}$ with parameters from
$J_{\rho^{n+1}_\alpha}$ and one parameter from $J_{\rho^n_\alpha}$ (used to
define a surjection from $\omega\rho^{n+1}_\alpha$ onto $J_{\rho^n_\alpha}$).
Hence we can put
\begin{align*}
	p^{n+1}_\alpha = & \text{ the $<_J$-least $p \in J_{\rho^n_\alpha}$ such that every $u \in J_{\rho^n_\alpha}$ is $\Sigma_1$ definable } \\
                   &  \text{ over $\Tup{J_{\rho^n_\alpha}, A^n_\alpha}$ with parameters from $J_{\rho^{n+1}_\alpha} \cup \{p\}$ }.
\end{align*}
The $p^{n}_\alpha$ are called the \emph{standard parameters}.

Using $p^{n+1}_\alpha$, we can code the $\Sigma_1$ elementary diagram of the
structure $\Tup{J_{\rho^n_\alpha}, A^n_\alpha}$ into a set $A^{n+1}_\alpha$:
\[
	A^{n+1}_\alpha := \{ (i,x) \colon i \in \omega \: \wedge \: x \in
  J_{\rho^{n+1}_\alpha} \: \wedge \: \Tup{J_{\rho^n_\alpha}, A^n_\alpha} \models
  \varphi^{(2)}_i(x,p^{n+1}_\alpha) \},
\]
where $(\varphi^{(k)}_i)$ is a standard Gödel numbering of all $\Sigma_1$
formulas with $k$ free variables. It is not hard to verify that $A^{n+1}_\alpha$
is a $\Sigma_{n+1}$ master code for $J_\alpha$. Furthermore, the structure
$\Tup{J_{\rho^n_\alpha},A^n_\alpha}$ is amenable for each $\alpha > 1$, $n \geq
0$. We will call the structure $\JA{n}$ the \emph{standard $\Sigma_n$
  $J$-structure} for $J_\alpha$.

\begin{defn}
	We denote the standard $J$-structure over $J_\alpha$ at the `ultimate'
  projectum $n_\alpha$ by
	\[
		\Tup{J_{\rho_\alpha}, A_\alpha} := \JA{n_\alpha}. 	
	\] 
\end{defn}

One consequence of the $A^n_\alpha$ being master codes is that we can obtain the
sequence of projecta of an ordinal by iterating taking $\Sigma_1$-projecta
relative to $\Tup{J_{\rho^n_\alpha},A^n_\alpha}$. Given an amenable structure
$\Tup{J_\alpha, A}$, the \emph{$\Sigma_n$-projectum $\rho^n_{\alpha,A}$} of
$\Tup{J_\alpha, A}$ is defined to be the largest ordinal $\rho \leq \alpha$ such
that $\Tup{J_\rho,B}$ is amenable for any $B \subseteq J_\rho$ that is in
$\Sigma_n(\Tup{J_\alpha, A})$.

\begin{prop}[Jensen~\cite{jensen:1972}] \label{prop:relative-proj}
	For $\alpha > 1, n\geq 0$,
	\[
		\rho^{n+1}_\alpha = \rho^1_{\rho^n_\alpha, A^n_\alpha}.
	\]
\end{prop}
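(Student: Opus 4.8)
The plan is to read each side of the equation as an ``amenability threshold'' and then pass between the two using the defining property of the master code $A^n_\alpha$. I would begin with the easy inequality $\rho^{n+1}_\alpha \leq \rho^n_\alpha$. This is the monotonicity recorded after Definition~\ref{defn:projectum}, but it also follows directly from the amenability characterization of the projecta recorded above: every $\Sigma_n(J_\alpha)$ subset of $J_\gamma$ is also $\Sigma_{n+1}(J_\alpha)$, so any $\gamma$ for which $\langle J_\gamma,A\rangle$ is amenable for all $\Sigma_{n+1}(J_\alpha)$ sets $A\subseteq J_\gamma$ has the same property for all $\Sigma_n(J_\alpha)$ sets, whence $\gamma\leq\rho^n_\alpha$. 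Consequently the amenability characterization of $\rho^{n+1}_\alpha$ may be rephrased, without changing its value, as: the largest $\gamma\leq\rho^n_\alpha$ such that $\langle J_\gamma,A\rangle$ is amenable for every $A\subseteq J_\gamma$ in $\Sigma_{n+1}(J_\alpha)$.

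The heart of the argument is then the following identification of predicate families: for every ordinal $\gamma\leq\rho^n_\alpha$,
\[
  \{\,A\subseteq J_\gamma : A\in\Sigma_{n+1}(J_\alpha)\,\}\;=\;\{\,B\subseteq J_\gamma : B\in\Sigma_1(\langle J_{\rho^n_\alpha},A^n_\alpha\rangle)\,\}.
\]
Since $J_\gamma\subseteq J_{\rho^n_\alpha}$, every subset of $J_\gamma$ lies in $\P(J_{\rho^n_\alpha})$, so the left-hand family equals $\P(J_\gamma)\cap\Sigma_{n+1}(J_\alpha)\cap\P(J_{\rho^n_\alpha})$ and the right-hand family equals $\P(J_\gamma)\cap\Sigma_1(\langle J_{\rho^n_\alpha},A^n_\alpha\rangle)$; these agree by the defining property of the master code $A^n_\alpha$ applied with $m=1$, namely $\Sigma_{n+1}(J_\alpha)\cap\P(J_{\rho^n_\alpha})=\Sigma_1(\langle J_{\rho^n_\alpha},A^n_\alpha\rangle)$. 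This is precisely the clause of the master code property that trades parameters from $J_\alpha$ for parameters from $J_{\rho^n_\alpha}$ together with the oracle $A^n_\alpha$, so it is where the real content sits. Because amenability of a structure $\langle J_\gamma,C\rangle$ depends only on $C\cap J_\gamma$, the displayed equality shows that, for $\gamma\leq\rho^n_\alpha$, the conditions ``$\langle J_\gamma,A\rangle$ is amenable for all $\Sigma_{n+1}(J_\alpha)$ sets $A\subseteq J_\gamma$'' and ``$\langle J_\gamma,B\rangle$ is amenable for all $\Sigma_1(\langle J_{\rho^n_\alpha},A^n_\alpha\rangle)$ sets $B\subseteq J_\gamma$'' are equivalent.

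Combining these, the proof finishes: by the reduction of the first step, $\rho^{n+1}_\alpha$ is the largest $\gamma\leq\rho^n_\alpha$ satisfying the first amenability condition; by the definition of the relative $\Sigma_1$-projectum, $\rho^1_{\rho^n_\alpha,A^n_\alpha}$ is the largest $\rho\leq\rho^n_\alpha$ satisfying the second; and the identification of predicate families shows these conditions cut out the same set of ordinals $\leq\rho^n_\alpha$, so the two maxima coincide. The degenerate case $n=0$, where $\rho^0_\alpha=\alpha$ and $A^0_\alpha=\emptyset$, is subsumed by the same argument once one notes that $\Sigma_1(\langle J_\alpha,\emptyset\rangle)=\Sigma_1(J_\alpha)$ and that $\langle J_\alpha,\emptyset\rangle$ is amenable.

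I do not expect a genuine obstacle: all the substance is already packaged in the master code property, which is established by induction on $n$ in the construction of the $A^n_\alpha$. The only points needing care are bookkeeping ones — invoking the master code equality at the correct parameter level, and using that amenability of $\langle J_\gamma,C\rangle$ ignores the part of $C$ lying outside $J_\gamma$, which is what licenses moving freely between the two formulations phrased in terms of subsets of $J_\gamma$.
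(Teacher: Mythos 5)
Your argument is correct, and in fact the paper offers no proof of this proposition to compare against: it is quoted from Jensen, with both ingredients you use — the amenability characterization of the projecta and the master code property of the standard codes — stated nearby as black boxes. Granting those, your derivation is exactly the standard bookkeeping: $\rho^{n+1}_\alpha\leq\rho^n_\alpha$ lets you restrict attention to $\gamma\leq\rho^n_\alpha$, and intersecting the master code identity $\Sigma_{n+1}(J_\alpha)\cap\P(J_{\rho^n_\alpha})=\Sigma_1(\Tup{J_{\rho^n_\alpha},A^n_\alpha})$ with $\P(J_\gamma)$ makes the two amenability conditions literally the same family of tests, so the two maxima agree; the $n=0$ case is trivial as you say. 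You are also right about where the substance sits: in Jensen's development the master code property and this proposition are proved by a simultaneous induction on $n$, and your argument uses only the level-$n$ master code property to get the level-$(n+1)$ projectum identity, which is consistent with that induction and introduces no circularity — but as a freestanding proof it is of course conditional on the master code property being established, which is where the genuinely fine-structural work (standard parameters, $\Sigma_1$ Skolem functions) lives and which this paper, like you, takes as given.
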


In particular, the standard $\Sigma_{n+1}$ $J$-structure for $J_\alpha =
\Tup{J_\alpha, \varnothing}$ is the standard $\Sigma_1$ $J$-structure for
$\JA{n}$.

\subsection{\texorpdfstring{$\omega$-}\ Copies of J-structures} 
\label{ssec:arithmetic_copies}

We later want to apply the recursion theoretic techniques of Section~\ref{sec-rand-cont-meas} to countable $J$-structures.
We therefore have to code them as subsets of $\omega$. If the projectum $\rho^n_\alpha$ is equal to $1$, all set-theoretic information about the $J$-structure $\JA{n}$ is contained in the master code $A^n_\alpha$, which is simply a real, and hence lends itself directly to recursion theoretic analysis. Starting with the work by Boolos and Putnam~\cite{boolos-putnam:1968}, this has been studied in a number of papers (e.g.\ \cite{Jockusch-Simpson:1976a}, \cite{Hodes:1980a}).

In this subsection we give a recursion theoretic analysis of the internal workings of a countable presentation of a $J$-structure. 

\begin{defn} \label{def:relational_structure}
	Let $X \subseteq \omega$. The \emph{relational structure} induced by $X$ is $\Tup{F_X, E_X}$,
	where
	\[
		x E_X y \Leftrightarrow \Tup{x,y} \in X
	\]
	and
	\[
		F_X = \Fld(E_X) = \{ x \colon \exists y \: (x E_X y \text{ or } yE_X x)  \}.
	\]
\end{defn}

The idea is that a number $x$ is a code for the set whose elements are the sets coded by the numbers $y$ with $y E_X x$,
\[
	\Set_X(x) = \{y \colon y E_X x \}.
\]
For ease of notation, $\Set_X(x)$ will also denote the structure obtained by restricting $E_X$ to $\Set_X(x)$.

The relational structure $\Tup{F_X, E_X}$ is \emph{extensional} if
\[
	\forall x,y \in F_X \: [ (\forall z \: z E_X x \Leftrightarrow z E_X y) \; \Rightarrow x = y ],
\]
that is
\[
	\forall x,y \in F_X \: (x \neq y  \Rightarrow \Set_X (x) \neq \Set_X(y)).
\]
Mostowski's Collapsing Theorem states that if $\Tup{F_X, E_X}$ is extensional and well-founded, it is isomorphic to a unique structure $(M,\in)$, where $M$ is a transitive set. In this sense we can speak of a countable set theoretic structure \emph{coded} by $X$. If $\varphi(v_1, \dots, v_n)$ is a formula in the language of the set theory, we can interpret it over $\Tup{F_X, E_X}$ and write
\[
	X \models \varphi[a_1, \dots, a_n]
\]
for $\Tup{F_X,E_X} \models \varphi[a_1, \dots, a_n]$ with $a_i \in F_X$.

$J$-structures have an additional set $A$, and we capture this on the coding side via pairs $\Tup{X,M}$, where $M \subseteq F_X$. Semantically, $A$ and $M$ are seen as interpreting a predicate added to the language. This way we can consider the satisfaction relation $\Tup{X,M} \models \varphi$, where $\varphi$ is a set-theoretic formula with an additional unary predicate.

We are particularly interested in relational structures that code countable standard $J$-structures. The following is a generalization of the definition due to Boolos and Putnam~\cite{boolos-putnam:1968}

\begin{defn} \label{def:omega-copy}
    \begin{enumerate}[(1)]
	\item An \emph{$\omega$-copy} of a countable, extensional, set-theoretic structure $\Tup{S,A}$, $A \subseteq S$, is a pair $\Tup{X,M}$ of subsets of $\omega$ such that $X$ codes the structure \Tup{F_X,E_X} in the sense of Definition~\ref{def:relational_structure}, and such that there exists a bijection $\pi: S \to F_X$ such that
	\begin{equation} \label{equ:arith-iso}
		\forall x, y \in S \: [ \: x \in y \; \iff \; \pi(x) E_X \pi(y) \: ],
	\end{equation}
	and
	\begin{equation} \label{equ:arith-mastercode}
		M = \{ \pi(x) \colon x \in A \}.
	\end{equation}
	\item When $\pi$ is an isomorphism between a $J$-structure $\JA{n}$ and the structure coded by $\Tup{X,M}$ in the sense of (1), we say that $\Tup{X,M}$ is an \textit{$\omega$-copy of $\JA{n}$ via $\pi$}.
	\end{enumerate}
\end{defn}

The definition thus means an $\omega$-copy $\XM$ of $\Tup{S,A}$ is isomorphic to $\Tup{S,A}$ when seen as structures over the language of set theory.

If $A= \varnothing$, then necessarily $M = \varnothing$, and in this case we say $X$ is an $\omega$-copy of $S$.

\medskip
We will now consider $\omega$-copies of standard $J$-structures.

\medskip
If $\rho^n_\alpha = 1$, we have $J_{\rho^n_\alpha} = L_\omega = V_\omega$, i.e., the \emph{hereditarily finite sets}. In this case, we obtain an $\omega$-copy by fixing a recursive bijection $\pi_\omega$ from $J_1$ to $\omega$, by which we mean that $\Delta_1(J_1)$ relations are mapped to recursive relations on $\omega$ uniformly. Such a mapping can be found, for instance, in~\cite{Ackermann:1937a}.

We let $\Tup{x,y} \in X$, i.e., $x \, E_X \, y$, if and only if $\pi^{-1}_\omega(x) \in \pi^{-1}_\omega(y)$ and $x \in M$ if and only if $\pi^{-1}_\omega(x) \in A^{n+1}_\alpha$. 
Then $\Tup{X,M}$ is an $\omega$-copy of $\JA{n+1}$ via $\pi_\omega$.

\begin{defn}\label{def:canonical-copy}
  When $\rho^n_\alpha = 1$, the \emph{canonical copy} of $\JA{n}$ is the $\omega$-copy defined above.
\end{defn}

We will now show that from a canonical copy of $\JA{n+1}$, we can extract $\omega$-copies of all $\JA{i}$, $i \leq n$, in an effective and uniform way.

By choice of $p^{n+1}_\alpha$, for every $u\in J_{\rho^n_\alpha}$, there exists a $\Sigma_1$-formula $\psi(v_0,v_1,v_2)$ and $x \in J_{\rho^{n+1}_\alpha}$ such that $u$ is the only solution over $\Tup{J_{\rho^n_\alpha}, A^n_\alpha}$ to $\psi(v_0, x, p^{n+1}_\alpha)$. 

Recall that $(\varphi^{(k)}_i)$ is a standard Gödel numbering of all $\Sigma_1$ formulas with $k$ free variables. We can assume that every formula $\varphi_i^{(k)}$ has a lead existential quantifier (by adding dummy variables if necessary). Let $\theta_i^{(k+1)}$ be such that  $\varphi_i^{(k)}$ is  $\exists v_0\, \theta_i^{(k+1)}$.

\begin{defn}
A pair $(i,x)$, $i \in \omega, x \in \JP{n+1}$ is an $n$-\emph{code} if there exists a $u \in \JP{n}$ such that $u$ is the unique solution to
	\[
		\JA{n} \models \theta_i^{(3)}(v_0, x, p^{n+1}_\alpha).
	\]
\end{defn}

We can check the property of being an $n$-code using $\Sigma_1$ formulas: $(i,x)$ is an $n$-code if and only if
\begin{equation} \label{equ:code-1}
	\Tup{J_{\rho^n_\alpha}, A^n_\alpha} \models  \exists v_0 \, \theta^{(3)}_i(v_0,x,p^{n+1}_\alpha) 
\end{equation}
and
\begin{equation} \label{equ:code-2}
	\Tup{J_{\rho^n_\alpha}, A^n_\alpha} \nmodels \exists v_0, v_1 \: ( \theta^{(3)}_i(v_0, x, p^{n+1}_\alpha) \wedge \theta^{(3)}_i(v_1, x, p^{n+1}_\alpha) \wedge v_0 \neq v_1).
\end{equation}

This means a standard code has the information necessary to sort out $n$-codes among its elements. Relative to a canonical copy of $\JA{n+1}$, it is decidable whether some number is the image of an $n$-code, due to the effective way we translate between finite sets and their codes.

If $\Tup{X,M}$ is an arbitrary $\omega$-copy of $\JA{n+1}$ via $\pi$, and $(i,x) \in \JP{n+1}$ is an $n$-code, then we call $\pi( (i,x) )$ a $\pi$-$n$-code.
Being able to decide relative to $\XM$ whether a number is a $\pi$-$n$-code of a pair hinges on knowledge of the following two functions\begin{enumerate}[(i)]
	\item the mapping $n_{X}: n \mapsto \pi(n)$ ($n \in \omega$),
	\item the mapping $h_{X}: (\pi(x), \pi(y)) \mapsto \pi((x,y))$.
\end{enumerate}
These mappings may not be computable relative to $\XM$, but they are definable, as follows.

If $\alpha > 1$, then $\omega \in J_\alpha$, and an $\omega$-copy of any such $J_\alpha$ must contain a witness for $\omega$. In this case, we can recover $n_X$ recursively in $X'$.

\begin{lem} \label{pro:recover-nat-num}
	If $X$ is an $\omega$-copy of $J_\alpha$, $\alpha \geq 1$, then the function $n_X$ is uniformly computable in $X'$.
\end{lem}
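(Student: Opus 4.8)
The plan is to show that $X'$ can identify, for each $n \in \omega$, the unique element $\pi(n) \in F_X$ representing the (finite von Neumann) ordinal $n$ of $J_\alpha$. First I would note that since $\alpha > 1$, we have $\omega \in J_\alpha$, so there is a (necessarily unique, by extensionality) element $a^* \in F_X$ with $\pi(\omega) = a^*$; and the natural numbers of $J_\alpha$ are exactly the elements of $\omega$, i.e. $\Set_X(a^*)$ together with $a^*$'s further structure is an $\omega$-copy of $\omega$. The key observation is that $X'$ can \emph{find} $a^*$: an element $a \in F_X$ codes $\omega$ iff (i) every $z\, E_X\, a$ is $E_X$-linearly ordered and codes a finite ordinal, and (ii) $a$ itself is not a finite ordinal, and (iii) $a$ has no $E_X$-predecessor that fails to be in $\Set_X(a)\cup\{a\}$-closed, etc. More robustly: ``$a$ codes $\omega$'' is expressible by a $\Pi^0_2(X)$ (in fact arithmetic) predicate, because $\Set_X(a)$ has no $E_X$-greatest element, every proper initial $E_X$-segment of $\Set_X(a)$ \emph{does} have one, and each element of $\Set_X(a)$ codes a transitive $E_X$-structure linearly ordered by $E_X$. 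Since $J_\alpha$ is well-founded and extensional, exactly one $a$ satisfies this, and $X'$ (which decides $\Pi^0_1(X)$ and $\Sigma^0_1(X)$ and can search) locates it.

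Next I would recover the successor structure. Once $a^* = \pi(\omega)$ is known, define $\pi(0)$ to be the unique $b\, E_X\, a^*$ with $\Set_X(b)=\emptyset$ — checking $\Set_X(b)=\emptyset$ is $\Pi^0_1(X)$, and such $b$ is unique by extensionality, so $X'$ finds it. Then inductively, given $\pi(n)$, the element $\pi(n+1)$ is the unique $c\, E_X\, a^*$ whose $E_X$-elements are exactly $\pi(0),\dots,\pi(n)$; the condition ``$\forall z\,(z\,E_X\,c \iff z \in \{\pi(0),\dots,\pi(n)\})$'' is, for a fixed finite list, a $\Pi^0_1(X)$ statement, and again uniqueness follows from extensionality. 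Thus $X'$ computes the map $n \mapsto \pi(n)$ by the recursion: set $\pi(0)$ as above, and $\pi(n+1) = $ the $X'$-search for the unique $c\, E_X\, a^*$ realizing $\{\pi(0),\dots,\pi(n)\}$. Each step uses only finitely much already-computed data and one $X'$-oracle search over $F_X$ (which is $X$-r.e., so bounded search with a $\Pi^0_1(X)$ test suffices), so the whole function is computable in $X'$, and uniformly so.

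The main obstacle — really the only nontrivial point — is isolating the \emph{correct} witness $a^* = \pi(\omega)$ purely arithmetically in $X$, since a priori $F_X$ might contain many elements coding transitive sets that ``look like'' $\omega$ from the outside if we are careless. This is handled by exploiting that $J_\alpha$ is genuinely well-founded and extensional, so the isomorphism $\pi$ exists and is unique; hence the \emph{true} $\omega$ is the unique element $a$ such that $\Set_X(a)$ is nonempty, $E_X$-linearly ordered, has no $E_X$-maximum, every $z\,E_X\,a$ has the property that $\Set_X(z)$ is a proper $E_X$-initial segment of $\Set_X(a)$ closed downward, and $a \notin \Set_X(a)$. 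Well-foundedness guarantees there is no nonstandard competitor. Once this $\Pi^0_2(X)$ (a fortiori $X'$-decidable) characterization is in hand, everything else is routine primitive recursion with an $X'$ oracle. I would also remark that all bounds are uniform in (an index for) $X$, which is what later applications will need.
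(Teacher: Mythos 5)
Your core recursion is fine, but the step you single out as ``really the only nontrivial point'' --- locating $a^*=\pi(\omega)$ --- rests on a false complexity claim: a $\Pi^0_2(X)$ predicate is \emph{not} ``a fortiori $X'$-decidable''. Deciding $\Pi^0_2(X)$ statements in general requires $X''$; $X'$ only decides $\Sigma^0_1(X)$ and $\Pi^0_1(X)$ facts. So the proposed uniform $X'$-search for the unique element satisfying your $\Pi^0_2(X)$ characterization of ``codes $\omega$'' is not justified, and with it your closing remark that all bounds are uniform in an index for $X$ relative to $X'$ is unsupported as written. (Knowing that a witness is unique does not help: uniqueness of a $\Pi^0_2$ singleton does not bring its computation down to one jump.)

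Fortunately the step is also unnecessary, in two ways. First, the lemma is not a uniformity statement: $\pi(\omega)$ is a single natural number and may simply be fixed as a parameter of the algorithm --- this is exactly what the paper does (``Let $z=\pi(\omega)$''), after which it approximates $n_X$ by testing $s\,E_X\,z$ stage by stage and invokes the Limit Lemma. Second, your own recursion never needs $a^*$: $\pi(0)$ is the unique $b\in F_X$ with $\Set_X(b)=\emptyset$, and $\pi(n+1)$ is the unique $c\in F_X$ with $\Set_X(c)=\{\pi(0),\dots,\pi(n)\}$ (existence because $V_\omega=J_1\subseteq J_\alpha$, uniqueness by extensionality); each test is $\Pi^0_1(X)$, hence $X'$-decidable, and the search over the $\Sigma^0_1(X)$ set $F_X$ terminates. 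Dropping the restriction ``$c\,E_X\,a^*$'' thus repairs your argument and even yields the uniformity you wanted, by a direct-search argument that differs mildly from the paper's limit-approximation proof but is equally valid.
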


\begin{proof}
    We define $n_X$ by recursion. $n_X(0)$ is the unique element of $X$ that represents the empty set in $X$, i.e.\ $\forall x\:\neg \, x E_X n_X(0)$.
    
    Similarly, $n_X(i+1)$ is the unique element of $X$ such that
    $$\forall a (a E_X n_X(i+1) \: \to \: a = n_X(i) \vee a E_X n_X(i) ).$$
    
    These properties are $\Pi^0_1$ uniformly in $X$, hence the function $n_X$ is recursive in $X'$.
\end{proof}

This argument applies more generally as follows.

\begin{lem} \label{pro:compute-canonical-copy}
	If $\XM$ is an $\omega$-copy of $\JA{n}$,  and $\rho^n_\alpha = 1$, then $\XM^{'}$ uniformly computes the canonical copy of $\JA{n}$.
\end{lem}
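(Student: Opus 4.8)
The plan is to reduce everything to computing, with oracle $X'$, the isomorphism $\pi$ underlying the copy $\XM$, and then to compute that isomorphism by a recursion on $\in$-rank. Since $\rho^n_\alpha = 1$ we have $\JP{n} = J_1 = V_\omega$, so $\XM$ is an $\omega$-copy of $\Tup{V_\omega, A^n_\alpha}$ via the \emph{unique} bijection $\pi \colon V_\omega \to F_X$ realising \eqref{equ:arith-iso} and \eqref{equ:arith-mastercode}; $\pi$ is unique because $\Tup{F_X, E_X}$ is extensional, being isomorphic to $\Tup{V_\omega, \in}$. By Definition~\ref{def:canonical-copy}, the canonical copy $\Tup{X_{\mathrm{can}}, M_{\mathrm{can}}}$ has field $\omega$, satisfies $x \, E_{X_{\mathrm{can}}} \, y \iff \pi_\omega^{-1}(x) \in \pi_\omega^{-1}(y)$, and has $M_{\mathrm{can}} = \{ m \colon \pi_\omega^{-1}(m) \in A^n_\alpha \}$. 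Since $\pi_\omega$ carries the bounded membership relation on $V_\omega$ to a recursive relation, $X_{\mathrm{can}}$ is already recursive; and by \eqref{equ:arith-mastercode}, $m \in M_{\mathrm{can}} \iff \pi(\pi_\omega^{-1}(m)) \in M$. So the whole problem reduces to computing the function $m \mapsto \pi(\pi_\omega^{-1}(m))$ from $X'$.

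To compute this function I would run a recursion along the Ackermann enumeration $v_m = \pi_\omega^{-1}(m)$ of $V_\omega$, using the key fact that $v_i \in v_m$ forces $i < m$: thus all elements of $v_m$ are processed before stage $m$, and the finite list $i_1 < \dots < i_k$ of indices with $v_{i_j} \in v_m$ is computable from $m$. For $m = 0$ (where $v_0 = \varnothing$) set $\pi(v_0)$ to be the least $e$ that lies in $F_X$ (a $\Sigma^0_1(X)$ condition, since $E_X$ is recursive in $X$) and has no $E_X$-predecessor (a $\Pi^0_1(X)$ condition); such an $e$ exists and is unique since $\Tup{F_X,E_X} \cong \Tup{V_\omega,\in}$. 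For $m > 0$ we have $v_m \neq \varnothing$, so $k \geq 1$; having already computed $\pi(v_{i_1}), \dots, \pi(v_{i_k})$, set $\pi(v_m)$ to be the least $w$ with $\pi(v_{i_j}) \, E_X \, w$ for each $j$ (decidable in $X$, and — as $k \geq 1$ — already forcing $w \in F_X$) and with $\forall z \, \bigl( z \, E_X \, w \rightarrow \bigvee_{1 \leq j \leq k} z = \pi(v_{i_j}) \bigr)$ (a $\Pi^0_1(X)$ condition); such a $w$ exists (namely $\pi(v_m)$) and is unique by extensionality. These clauses are precisely the recursion defining the isomorphism between $\Tup{V_\omega,\in}$ and $\Tup{F_X,E_X}$, so an induction on $m$ shows the value produced at stage $m$ is indeed $\pi(v_m)$. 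Each stage asks $X'$ only finitely many questions of complexity at most $\Pi^0_1(X)$, reusing finitely many earlier outputs, so $m \mapsto \pi(v_m)$ is computable from $X'$; hence $M_{\mathrm{can}} \leq_{\T} X' \oplus M \leq_{\T} \XM'$, and with $X_{\mathrm{can}}$ recursive this gives $\Tup{X_{\mathrm{can}}, M_{\mathrm{can}}} \leq_{\T} \XM'$. None of these reductions refers to $\alpha$, $n$, or the specific copy $\XM$ — only to the fixed coding $\pi_\omega$ and the oracle $\XM'$ — so a single index witnesses the reduction for all $\omega$-copies of all standard $J$-structures with projectum $1$, giving the uniformity.

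The point I expect to require the most care is the claim that the recursion above actually computes the isomorphism $\pi$ rather than some accidental map: this is where one uses extensionality of $\Tup{F_X,E_X}$ to make the witness $w$ unique at each stage, and well-foundedness of $V_\omega$ — concretely, the monotonicity $v_i \in v_m \Rightarrow i < m$ of Ackermann codes — to ensure the recursion is well-founded and that the required earlier values are available when they are needed. The rest is routine bookkeeping about the arithmetical complexity of the searches, each of which is plainly $\Pi^0_1$ in $X$ and therefore decided by $X'$.
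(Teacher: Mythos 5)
Your proposal is correct and follows essentially the same route as the paper: the paper's (much terser) proof also reduces the lemma to computing the isomorphism $\pi_\omega\circ\pi^{-1}$ on the hereditarily finite sets from $\XM'$ (by extending the argument that recovers $n_X$ from one jump) and then transferring $M$ to the canonical master code. Your recursion on Ackermann codes, with $X'$ deciding the $\Pi^0_1(X)$ condition that the $E_X$-predecessors of a candidate are exactly the previously computed images, is just a carefully spelled-out implementation of that same idea, including the uniformity claim.
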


\begin{proof}

    We want to decide whether a natural number $a$ is in the image of $A^n_\alpha$ under $\pi_\omega$. Given $a$, we first compute the diagram of the transitive closure of the finite set represented by $a$ under $\pi_\omega$. For any reasonable coding $\pi_\omega$ this will be recursive, but in the worst case it will be recursive in $\emptyset'$. 
    
    Next, we take the diagram and search inside $X$ for the natural number $b$ representing this set. This is a finite recursion that can performed recursively in $X'$.
    
    Then $a \in \pi_\omega[A^n_\alpha]$ if and only $b \in M$.
    Therefore, $\pi_\omega[A^n_\alpha]$ is recursive in $\XM'$.
\end{proof}

We can also recover the function $h_X$ arithmetically in $X$.

\begin{lem} \label{pro:compute-pair-function}
	If $X$ is an $\omega$-copy of $J_\alpha$, then the function $h_X$ is uniformly computable in $X'$.
\end{lem}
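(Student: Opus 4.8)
The plan is to reduce the computation of $h_X$ to three applications of a single ``unordered pairing on codes'' operation, each of which is effective relative to $X'$; since $X' \leq_{\T} X^{(2)}$ this gives the stated bound (in fact the method yields the slightly stronger $h_X \leq_{\T} X'$). The underlying observation is that the Kuratowski pair satisfies $(x,y) = \{\{x\},\{x,y\}\}$, so $\pi((x,y))$ is obtained from $a = \pi(x)$ and $b = \pi(y)$ by first forming the codes of $\{x\}$ and of $\{x,y\}$, and then the code of the two-element set whose members are those two codes.

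First I would establish the basic step: given $a = \pi(x)$ and $b = \pi(y)$ in $F_X$, there is a \emph{unique} $c \in F_X$ with $\Set_X(c) = \{a,b\}$, and this $c$ can be found uniformly recursively in $X'$. Uniqueness is immediate from extensionality of $\Tup{F_X,E_X}$, which holds because $\pi$ is an isomorphism onto $\Tup{F_X,E_X}$ from the transitive---hence extensional---structure $J_\alpha$. Existence follows since $J_\alpha$ is closed under the rudimentary function $F_0(u,v) = \{u,v\}$ of Proposition~\ref{prop:rud-base-functions}: thus $\{x,y\} \in J_\alpha$, and $\Set_X(\pi(\{x,y\})) = \pi[\{x,y\}] = \{a,b\}$. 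Finally, the predicate ``$\Set_X(c) = \{a,b\}$'' is decidable in $X'$ uniformly in $a,b,c$: the conjuncts $a\,E_X\,c$ and $b\,E_X\,c$ are recursive in $X$ outright (as $E_X \leq_{\T} X$), while ``$\forall z\,(z\,E_X\,c \rightarrow (z = a \vee z = b))$'' is $\Pi^0_1(X)$. Hence $X'$ can dovetail over all candidates $c$ and return the unique one for which the predicate holds, the search terminating precisely because such a $c$ exists. The singleton case---computing the code of $\{x\}$ from $a$---is the special case $a = b$.

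The lemma then follows by composition. On input $(a,b) = (\pi(x),\pi(y))$ one uses the basic step (with oracle $X'$) to compute $d_1$, the code of $\{x\}$, and $d_2$, the code of $\{x,y\}$; then one applies the basic step once more, now to $d_1$ and $d_2$, to obtain the unique code $c$ with $\Set_X(c) = \{d_1,d_2\}$, which is $\pi(\{\{x\},\{x,y\}\}) = \pi((x,y)) = h_X(a,b)$. The whole procedure is uniformly recursive in $X'$, and in particular in $X^{(2)}$.

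The argument is mostly routine; the only points needing care are the two facts that make the unbounded search over potential codes legitimate---existence of the pairing codes, from $\Rud$-closure of $J_\alpha$, and their uniqueness, from extensionality of $\Tup{F_X,E_X}$---together with the small bookkeeping observation that once $d_1$ and $d_2$ have been computed they are merely two fixed natural numbers, so ``$\Set_X(c) = \{d_1,d_2\}$'' remains $\Pi^0_1(X)$; this is why the three pairing steps are absorbed into a single jump rather than accumulating one jump each.
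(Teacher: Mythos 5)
Your proof is correct and follows essentially the same route as the paper's: both express $\pi((x,y))$ via the Kuratowski pair $\{\{x\},\{x,y\}\}$ and reduce the computation to the arithmetic complexity of the defining conditions over $E_X$ (the paper simply writes the graph of $h_X$ as a $\Sigma^0_2(X)$ predicate and concludes computability in $X^{(2)}$). Your reorganization into three $X'$-decidable searches, justified by extensionality (uniqueness) and rud-closure of $J_\alpha$ (existence), yields the sharper bound $h_X \leq_{\T} X'$, which matches the paper's parenthetical remark that the graph is in fact $\Delta^0_2$ --- a strengthening it notes it does not need.
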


\begin{proof}
	We have 
	\begin{align*} 
	h_X(\pi(x), \pi(y)) = b \; \Leftrightarrow \; \exists c, d \:   \bigl [  & \forall z \, (z E_X c \: \Leftrightarrow z = \pi(x) ) \\
							& \and \forall z \, (z E_X d \Leftrightarrow z = \pi(x) \: \vee \: z = \pi(y) )  \\ 
              & \and \forall z \, ( z E_X b \: \Leftrightarrow \: z = c \: \vee \: z = d ) \bigr ]
	\end{align*}
	It follows that $h_X$ is a $\Sigma^0_2(X)$ function and as usual, a $\Sigma^0_2(X)$ function is $\Delta^0_2(X)$.
\end{proof}

\begin{defn} \label{def:effective-copy}
	Suppose $\Tup{X,M}$ is an $\omega$-copy via $\pi$ of a rud closed structure $\Tup{J,A}$. We say $\Tup{X,M}$ is \emph{effective} if the function $n_{X}$ is recursive $X\join M$ and the function
	$h_{X}$ is partial recursive in $X\join M$ and recursive in $X\join M$ on its domain $F_X \times F_X$.
\end{defn}

\begin{lem}
 If $\rho^n_\alpha = 1$, the canonical copy of $\JA{n}$ is effective.
\end{lem}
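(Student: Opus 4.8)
The plan is to show that the canonical copy $\Tup{X,M}$ of $\JA{n}$ (when $\rho^n_\alpha = 1$) satisfies Definition~\ref{def:effective-copy}, i.e., that the functions $n_X$ and $h_X$ are recursive in $X \join M$. In fact, since the canonical copy uses the fixed recursive bijection $\pi_\omega$ between $J_1 = V_\omega$ and $\omega$, both functions should be outright \emph{computable} (not merely recursive in $X \join M$), because all the set-theoretic bookkeeping that $n_X$ and $h_X$ encode takes place inside $J_1$, where $\pi_\omega$ translates $\Delta_1(J_1)$ relations into recursive relations on $\omega$ uniformly.

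First I would unwind the definitions. In the canonical copy, $F_X = \omega$, the relation $E_X$ is $x \, E_X \, y \iff \pi_\omega(x) \in \pi_\omega(y)$, and the isomorphism $\pi$ is $\pi_\omega^{-1}$. For $n_X$: by definition $n_X(i) = \pi(i) = \pi_\omega^{-1}(i)$, where on the left $i$ is a natural number regarded as a von Neumann ordinal inside $J_\alpha$, i.e., the element $\{0,1,\dots,i-1\}$ of $J_1$. So $n_X(i)$ is just the code (under $\pi_\omega$) of the finite ordinal $i$. Since $\pi_\omega$ is a recursive bijection for which the membership relation of $V_\omega$ pulls back to a recursive relation on $\omega$, the map $i \mapsto (\text{code of the }i\text{-th von Neumann ordinal})$ is computable: one can recursively build the codes of $0 = \varnothing$, $1 = \{0\}$, $2 = \{0,1\}$, \dots, using only the recursive operations on codes provided by the Ackermann-style coding~\cite{Ackermann:1937a}. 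Hence $n_X$ is computable, a fortiori recursive in $X \join M$. Likewise for $h_X$: given codes $\pi_\omega^{-1}(a)$ and $\pi_\omega^{-1}(b)$ of two hereditarily finite sets $a,b$, the pair $(a,b) = \{\{a\},\{a,b\}\}$ is again hereditarily finite, and its $\pi_\omega$-code is obtained from the codes of $a$ and $b$ by finitely many applications of the recursive ``form the code of a finite set from the codes of its elements'' operation. So $h_X$ is computable as well.

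The one point requiring a little care — and the closest thing to an obstacle — is to confirm that the Ackermann coding $\pi_\omega$ has enough effectivity for these constructions, namely that not only $\in$ but also the operations ``code of $\{a\}$ from code of $a$'' and ``code of $x \cup \{y\}$ from codes of $x$ and $y$'' are recursive in the codes. This is exactly the standard property of the Ackermann coding (indeed $\pi_\omega(x) = \sum_{y \in x} 2^{\pi_\omega(y)}$, under which $\{a\}$ has code $2^{\pi_\omega(a)}$ and binary union is bitwise OR), so it is immediate, but I would state it explicitly as the justification. Having verified this, both $n_X$ and $h_X$ are seen to be recursive (in fact recursive in $\varnothing$, hence in $X \join M$), which is precisely the requirement in Definition~\ref{def:effective-copy}. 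Therefore the canonical copy of $\JA{n}$ is effective, completing the proof.
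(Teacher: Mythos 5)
Your proposal is correct and takes essentially the same route as the paper, whose proof simply observes that $\pi_\omega^{-1}$ satisfies the conditions of Definition~\ref{def:effective-copy} ``naturally''; you merely spell out the effectivity of the Ackermann coding (recursiveness of forming codes of von Neumann naturals, singletons, and ordered pairs) that the paper leaves implicit. Your additional observation that $n_X$ and $h_X$ are outright computable, not just recursive in $X \join M$, is accurate and slightly stronger than required.
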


\begin{proof}
	The mapping $\pi^{-1}_\omega$ satisfies the conditions required in Definition~\ref{def:effective-copy} naturally.
\end{proof}

\begin{lem} \label{lem:effective-decompose}
	If $\Tup{X,M}$ is an effective copy of $\JA{n+1}$ via $\pi$, then the mapping
	\[
		\pi( (u,v)) \mapsto (\pi(u), \pi(v)),
	\]
	where $u,v \in \JP{n+1}$, is (partial) recursive in $X\join M$. The mapping
	\[
		\pi( (i,u)) \mapsto (i,\pi(u)),
	\]
	where $i \in \omega$, is also (partial) recursive in $X \join M$.
\end{lem}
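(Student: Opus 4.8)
\textbf{Proof plan for Lemma~\ref{lem:effective-decompose}.}

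The plan is to reconstruct the two ``decomposition'' maps from the ambient computation of the effective copy $\Tup{X,M}$ together with the already-established ability to locate the natural numbers, ordered pairs, and the standard parameter $p^{n+1}_\alpha$ inside $F_X$. First I would unwind what the hypothesis ``effective'' gives us: by Definition~\ref{def:effective-copy}, the maps $n_X\colon n\mapsto\pi(n)$ and $h_X\colon(\pi(u),\pi(v))\mapsto\pi((u,v))$ are recursive in $X\join M$, and consequently their ranges (the codes of numbers, and the code-set of pairs inside $F_X$) are $X\join M$-c.e., and the ``pairing predicate'' $\{(\pi(u),\pi(v),\pi((u,v)))\}$ is $X\join M$-computable. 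The first displayed map, $\pi((u,v))\mapsto(\pi(u),\pi(v))$, is then obtained by a straightforward search: given a number $a=\pi((u,v))$, enumerate pairs $(b,c)\in F_X\times F_X$ and apply $h_X$ (computably in $X\join M$) until $h_X(b,c)=a$; by injectivity of $\pi$ this $(b,c)$ is unique and equals $(\pi(u),\pi(v))$. This is a total computation on codes of ordered pairs and partial elsewhere, as claimed.

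For the second map, $\pi((i,u))\mapsto(i,\pi(u))$ with $i\in\omega$, the extra ingredient is that inside the set theory an ``$\omega$-indexed pair'' $(i,u)$ is the Kuratowski pair $\{\{i\},\{i,u\}\}$ whose first coordinate lies in $F_X$ as a \emph{numeral}, i.e.\ in the range of $n_X$. So the procedure is: given $a=\pi((i,u))$, first apply the previous step to write $a=h_X(\pi(i),\pi(u))$, recovering the two codes $\pi(i),\pi(u)\in F_X$; then recover the integer $i$ itself by inverting $n_X$ on $\pi(i)$, which is recursive in $X\join M$ because $n_X$ is total, injective, and $X\join M$-computable (to find $n_X^{-1}(b)$, search $i=0,1,2,\dots$ until $n_X(i)=b$). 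Concatenating these gives a partial $X\join M$-recursive map defined exactly on the codes of $\omega$-indexed pairs, and returning $(i,\pi(u))$.

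The only real point requiring care—and the step I expect to be the main (minor) obstacle—is the bookkeeping about \emph{which} internal pairing convention is in force, since $\omega$-codes such as those in the master-code set $A^{n+1}_\alpha=\{(i,x)\colon\dots\}$ use pairs whose first component is a von Neumann numeral while the generic ordered pair $(u,v)$ uses arbitrary elements of $J_{\rho^{n+1}_\alpha}$. Both are built from the same $\Sigma_0$ pairing operation, so $h_X$ handles both; one just has to note that the first coordinate of an $n$-code is guaranteed to be a numeral (this is how $n$-codes were defined, via formulas $\theta_i^{(3)}$ with a numerical index $i$), hence lies in $\operatorname{range}(n_X)$ and can be decoded by the argument above. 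Once this is observed, both maps are exhibited as unbounded searches using only $X\join M$-computable subroutines ($h_X$, $n_X$, and equality on $\omega$), so they are partial recursive in $X\join M$, completing the proof.
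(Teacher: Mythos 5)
Your argument is exactly the paper's proof: the first map is obtained by inverting the one-to-one function $h_X$ via unbounded search, and the second by additionally inverting $n_X$, both of which are recursive in $X\join M$ by the effectiveness hypothesis. The extra remarks about pairing conventions are harmless bookkeeping and do not change the approach.
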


\begin{proof}
	The first mapping can be computed by inverting $h_{X}$ (which must be one-to-one), the second mapping by additionally inverting $n_{X}$.
\end{proof}

\begin{lem} \label{lem:pi-n-code}
If $\Tup{X,M}$ is an effective copy of $\JA{n+1}$ via $\pi$, then it is decidable in $X \join M$ whether a number $y \in \omega$ is a $\pi$-$n$-code.
\end{lem}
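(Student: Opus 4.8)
The statement to prove is: if $\Tup{X,M}$ is an effective copy of $\JA{n+1}$ via $\pi$, then it is decidable in $X \join M$ whether a number $y \in \omega$ is a $\pi$-$n$-code.

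The plan is to reduce the arithmetical test for being an $n$-code, conditions~\eqref{equ:code-1} and~\eqref{equ:code-2}, to membership questions in $M$, and then to carry that reduction out computably in $X\join M$ using effectiveness. First I would record the master code identity behind $A^{n+1}_\alpha$. By the indexing convention $\varphi^{(2)}_i=\exists v_0\,\theta^{(3)}_i$, condition~\eqref{equ:code-1} for a pair $(i,x)\in\JP{n+1}$ asserts exactly that $\JA{n}\models\varphi^{(2)}_i(x,p^{n+1}_\alpha)$, i.e.\ that $(i,x)\in A^{n+1}_\alpha$. Next fix a recursive function $j$ such that $\varphi^{(2)}_{j(i)}$ is equivalent over $\JA{n}$ to the $\Sigma_1$ formula $\exists v_0,v_1\,(\theta^{(3)}_i(v_0,x,p^{n+1}_\alpha)\wedge\theta^{(3)}_i(v_1,x,p^{n+1}_\alpha)\wedge v_0\neq v_1)$; then~\eqref{equ:code-2} holds for $(i,x)$ iff $(j(i),x)\notin A^{n+1}_\alpha$. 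Since $(i,x)$ is an $n$-code iff both~\eqref{equ:code-1} and~\eqref{equ:code-2} hold, and $M=\{\pi(w)\colon w\in A^{n+1}_\alpha\}$, transporting along $\pi$ gives the characterization: $y\in\omega$ is a $\pi$-$n$-code iff $y\in M$ and $\pi((j(i),x))\notin M$, where $(i,x)=\pi^{-1}(y)$ with $i\in\omega$.

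The decision procedure relative to $X\join M$ then runs as follows. Given $y$, first test whether $y\in M$; if not, answer ``no'', since then either $\pi^{-1}(y)$ is not a pair with natural-number first coordinate or~\eqref{equ:code-1} fails, and in either case $y$ is not a $\pi$-$n$-code. If $y\in M$, then $y=\pi(w)$ for some $w\in A^{n+1}_\alpha$; every element of $A^{n+1}_\alpha$ is literally a pair $(i,x)$ with $i\in\omega$ and $x\in\JP{n+1}$, so $\pi^{-1}(y)=(i,x)$ for unique such $i,x$, and by Lemma~\ref{lem:effective-decompose} we can compute $(i,z)$ with $z=\pi(x)$ from $y$ recursively in $X\join M$. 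Then compute $j(i)$, $n_X(j(i))$, and $y'=h_X(n_X(j(i)),z)$; by effectiveness $n_X,h_X\leq_{\T}X\join M$, and by construction $y'=\pi((j(i),x))$. Finally test whether $y'\in M$, and answer ``yes'' precisely when $y'\notin M$, ``no'' otherwise. Every step consults only $M$ or a function recursive in $X\join M$, so the resulting set of $\pi$-$n$-codes is recursive in $X\join M$.

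The one point that needs care is totality: the decomposition map furnished by Lemma~\ref{lem:effective-decompose} is only \emph{partial} recursive in $X\join M$, so we must be sure it converges on the inputs we feed it. This is exactly why the test $y\in M$ is performed first --- it certifies that $y$ lies in the image under $\pi$ of the set of pairs with natural-number first coordinate, which is the domain of that map. Everything else is routine bookkeeping with the standard Gödel numbering (for the existence of the recursive function $j$) and with the basic functions $n_X,h_X$ that effectiveness makes available.
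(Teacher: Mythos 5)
Your proof is correct and follows essentially the same route as the paper: first filter by $y\in M$, use Lemma~\ref{lem:effective-decompose} together with the effectiveness of $n_X,h_X$ to decompose $y$ as $\pi((i,x))$, apply a recursive transformation of Gödel numbers, and decide the result by membership queries in $M$. The only (harmless) difference is that you observe condition~\eqref{equ:code-1} is already equivalent to $(i,x)\in A^{n+1}_\alpha$, i.e.\ to $y\in M$, so you dispense with the paper's auxiliary function $g_1$ and only need its $g_2$ (your $j$).
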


\begin{proof}
	Suppose $y \in M$ (if not, it cannot be a $\pi$-$n$-code). Then $y = \pi ( (i,x))$ for some $(i,x) \in A^{n+1}_\alpha$. Since the copy is effective, we have $$\pi ( (i,x)) = h_X(\pi(i), \pi(x)),$$ and by Lemma~\ref{lem:effective-decompose} we can find $i$ and $\pi(x)$ recursively in $X\join M$.

	Since $(\varphi^{(2)}_i)$ is a standard Gödel numbering of the $\Sigma_1$ formulas with two free variables, there exist recursive functions $g_1, g_2$ such that $\varphi^{(2)}_{g_1(i)}$ and $\varphi^{(2)}_{g_2(i)}$ are $\Sigma_1$ formulas equivalent (over $\JA{n}$) to the formulas in \eqref{equ:code-1} and \eqref{equ:code-2}, respectively. Then $(i,x)$ is a $n$-code if and only if $(g_1(i), x) \in A^{n+1}_\alpha$ and $(g_2(i), x) \not\in A^{n+1}_\alpha$.

	The latter two conditions are equivalent to
	\[
		h_X(\pi(g_1(i)), \pi(x)) \in M \text{ and } h_X(\pi(g_2(i)), \pi(x)) \not\in M,
	\]
	which, since $\XM$ is an effective copy, is recursive in $X\join M$.
\end{proof}

Two $n$-codes $(i_0,x_0)$ and $(i_1,x_1)$ \emph{represent the same set} $u \in \JP{n}$ if $u$ is the unique solution to $$\JA{n} \models \theta^{(3)}_{i_0}(v_0, x_0, p^{n+1}_\alpha) \; \text{ and } \;  \JA{n} \models \theta^{(3)}_{i_1}(v_0, x_1, p^{n+1}_\alpha).$$ A similar property can be defined for $\pi$-$n$-codes.

\begin{lem} \label{lem:code-extensional}
If $\Tup{X,M}$ is an effective copy of $\JA{n+1}$ via $\pi$, then it is decidable in $X\join M$ whether two numbers are $\pi$-$n$-codes of the same set.
\end{lem}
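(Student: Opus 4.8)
The plan is to follow the proof of Lemma~\ref{lem:pi-n-code} closely, reducing the question to a single membership test in the master code $A^{n+1}_\alpha$. The first point is that, \emph{provided the two pairs are already known to be $n$-codes}, ``representing the same set'' is a $\Sigma_1$ property over $\JA{n}$. Indeed, if $(i_0,x_0)$ and $(i_1,x_1)$ are $n$-codes with (unique) solutions $u_0$ and $u_1$, then $u_0=u_1$ if and only if
\[
  \JA{n} \models \exists v_0\, \bigl( \theta^{(3)}_{i_0}(v_0, x_0, p^{n+1}_\alpha) \and \theta^{(3)}_{i_1}(v_0, x_1, p^{n+1}_\alpha) \bigr),
\]
since any common witness $v_0$ must equal both $u_0$ and $u_1$ by uniqueness, while conversely $v_0 := u_0 = u_1$ witnesses the conjunction when the solutions coincide. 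Since $\JP{n+1}$ is rud closed and hence closed under pairing, $(x_0,x_1)\in\JP{n+1}$, so by effective manipulation of Gödel numbers there is a recursive function $g$ such that the $\Sigma_1$ formula $\varphi^{(2)}_{g(i_0,i_1)}$ (which first decodes its argument as an ordered pair) satisfies, over $\JA{n}$ and for all $x_0,x_1 \in \JP{n+1}$,
\[
  \JA{n} \models \varphi^{(2)}_{g(i_0,i_1)}\bigl((x_0,x_1), p^{n+1}_\alpha\bigr) \;\iff\; \JA{n} \models \exists v_0\, \bigl( \theta^{(3)}_{i_0}(v_0, x_0, p^{n+1}_\alpha) \and \theta^{(3)}_{i_1}(v_0, x_1, p^{n+1}_\alpha) \bigr).
\]
Hence, whenever $(i_0,x_0)$ and $(i_1,x_1)$ are $n$-codes, they represent the same set exactly when $(g(i_0,i_1),(x_0,x_1)) \in A^{n+1}_\alpha$.

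Next I would carry this test out recursively in $X\join M$. Given two numbers $y_0,y_1\in\omega$, first apply Lemma~\ref{lem:pi-n-code} to decide, recursively in $X\join M$, whether each of them is a $\pi$-$n$-code; if one is not, the answer is negative. Otherwise $y_0 = \pi((i_0,x_0))$ and $y_1 = \pi((i_1,x_1))$ for (unique) $n$-codes, and Lemma~\ref{lem:effective-decompose} lets us recover $i_0,\pi(x_0),i_1,\pi(x_1)$ recursively in $X\join M$. Using that $\XM$ is an \emph{effective} copy (so $n_X$ and $h_X$ are recursive in $X\join M$), we compute
\[
  z \;=\; h_X\bigl( n_X(g(i_0,i_1)),\, h_X(\pi(x_0),\pi(x_1)) \bigr) \;=\; \pi\bigl( (g(i_0,i_1),(x_0,x_1)) \bigr),
\]
and check whether $z\in M$. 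Since $z\in M$ iff $(g(i_0,i_1),(x_0,x_1))\in A^{n+1}_\alpha$, this is exactly the required decision.

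The only genuine work lies in the first paragraph: one must verify that folding the two parameters $x_0,x_1$ into a single parameter keeps the formula $\Sigma_1$ (projections of an ordered pair being $\Sigma_0$ definable), and that the resulting membership statement in $A^{n+1}_\alpha$ translates on the coding side using only $n_X$ and $h_X$ — precisely the functions that effectiveness supplies. Both are routine once one recalls that $A^{n+1}_\alpha$ codes the $\Sigma_1$ elementary diagram of $\JA{n}$ with the single parameter $p^{n+1}_\alpha$ and that $\JP{n+1}$ is closed under pairing; no idea beyond the proof of Lemma~\ref{lem:pi-n-code} is needed.
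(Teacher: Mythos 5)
Your proof is correct and follows essentially the same route as the paper: reduce the question to a single $\Sigma_1$ query with the paired parameter $(x_0,x_1)$, recursively compute its G\"odel number, and translate the resulting membership statement about $A^{n+1}_\alpha$ into a membership test in $M$ via $n_X$ and $h_X$, which effectiveness makes recursive in $X\join M$. The only (immaterial) difference is that the paper tests whether the two codes represent \emph{different} sets, using a witness in the symmetric difference, whereas you test \emph{sameness} via a common witness, relying on uniqueness of the solutions; both formulas are $\Sigma_1$ over $\JA{n}$ and yield the same decision procedure.
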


\begin{proof}
	$(i_0,x_0)$ and $(i_1,x_1)$ represent different sets if and only if
	\begin{multline*}
		\Tup{J_{\rho^n_\alpha}, A^n_\alpha} \models \exists v_0, v_1, v_2 \: \bigl [ \theta^{(3)}_{i_0}(v_0, x_0, p^{n+1}_	\alpha) \: \wedge \: \theta^{(3)}_{i_1}(v_1, x_1, p^{n+1}_\alpha) \: \wedge \:  \\
	  	(v_2 \in v_0 \wedge v_2 \not\in v_1) \vee (v_2 \not\in v_0 \wedge v_2 \in v_1)  \bigr ].
	\end{multline*}
	Let $g_3(i_0,i_1)$ be a Gödel number for the $\Sigma_1$ formula
	\begin{equation*}
		\psi(x,p^{n+1}_\alpha) \equiv  \exists v_0, v_1 \: \bigl [ \theta^{(3)}_{i_0}(v_0, (x)_0, p^{n+1}_	\alpha) \: \wedge \: \theta^{(3)}_{i_1}(v_1, (x)_1, p^{n+1}_\alpha) \: \wedge \: 
	  	v_0 \neq v_1  \bigr ]
	\end{equation*}
 	Then $(i_0,x_0)$ and $(i_1,x_1)$ represent different sets if and only if
	\[
		(g_3(i_0,i_1), (x_0,x_1) ) \in A^{n+1}_\alpha,
	\]
	which in turn holds if and only if
	\[
		h_X(\pi(g_3(i_0,i_1)), h_X(\pi(x_0),\pi(x_1))) \in M.
	\]
	Since $g_3$ is computable, it follows from Lemma~\ref{lem:effective-decompose} that it is decidable in $X\join M$ whether two numbers are $\pi$-$n$-codes and whether they represent the same set.
\end{proof}

\begin{lem} \label{pro:compute-n-copy}
	If $\Tup{X,M}$ is an effective copy of $\JA{n+1}$ via $\pi, n_X,h_X$, it computes an $\omega$-copy $\Tup{Y,N}$ of $\JA{n}$. Furthermore, the computation is uniform, and $h_Y$ and $n_Y$ can be computed uniformly from $X \join M$.
\end{lem}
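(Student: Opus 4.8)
The plan is to use $\pi$-$n$-codes as names for the elements of $\JP{n}$, with the transported master code $M$ acting as an oracle that decides the atomic relations between these names, in exact analogy with the proofs of Lemmas~\ref{lem:pi-n-code} and~\ref{lem:code-extensional}. Recall that, by the choice of the standard parameter $p^{n+1}_\alpha$, every $u \in \JP{n}$ is the unique solution over $\JA{n}$ of $\theta^{(3)}_i(v_0, x, p^{n+1}_\alpha)$ for some $i \in \omega$ and $x \in \JP{n+1}$, so that $u$ is represented by the $n$-code $(i,x)$; conversely each $n$-code represents exactly one set. Hence $u \mapsto \{\,n\text{-codes of }u\,\}$ is a bijection of $\JP{n}$ onto the set of equivalence classes of $n$-codes under ``represents the same set''. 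The copy $\Tup{Y,N}$ will be obtained by transporting this picture through $\pi$ and selecting one representative per class.

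First I would fix the field. By Lemma~\ref{lem:pi-n-code} the set of $\pi$-$n$-codes is recursive in $X\join M$, and by Lemma~\ref{lem:code-extensional} so is the relation ``$c$ and $c'$ are $\pi$-$n$-codes of the same set''; hence the set $F_Y$ of all $\pi$-$n$-codes $c$ such that no $c'<c$ is a $\pi$-$n$-code of the same set is recursive in $X\join M$ and picks out exactly one representative per class. To define $E_Y$ on $F_Y$: given $c_0,c_1\in F_Y$, use Lemma~\ref{lem:effective-decompose} to recover, recursively in $X\join M$, data $(i_0,\pi(x_0))$ and $(i_1,\pi(x_1))$ with $c_j=\pi((i_j,x_j))$; the assertion that the set represented by $(i_0,x_0)$ is an element of the set represented by $(i_1,x_1)$ is $\Sigma_1$ over $\JA{n}$ --- it is $\exists v_0,v_1\,[\,\theta^{(3)}_{i_0}(v_0,(w)_0,v)\wedge\theta^{(3)}_{i_1}(v_1,(w)_1,v)\wedge v_0\in v_1\,]$ evaluated at $w=(x_0,x_1)$ and $v=p^{n+1}_\alpha$ --- so there is a recursive $g$ making this assertion equivalent to $(g(i_0,i_1),(x_0,x_1))\in A^{n+1}_\alpha$, which by effectiveness of $\Tup{X,M}$ holds iff $h_X(n_X(g(i_0,i_1)),h_X(\pi(x_0),\pi(x_1)))\in M$, and the last condition is decidable in $X\join M$. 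Putting $Y=\{\,\Tup{c_0,c_1}\colon c_0,c_1\in F_Y \text{ and } c_0\,E_Y\,c_1\,\}$ gives $Y\leq_{\T}X\join M$, and by construction $\Tup{F_Y,E_Y}$ is extensional and well-founded, with $\pi'\colon u\mapsto(\text{the unique representative of }u\text{ in }F_Y)$ an isomorphism of $\JP{n}$ onto $\Tup{F_Y,E_Y}$. An identical argument --- now with the $\Sigma_1$ assertion that the set represented by $(i_0,x_0)$ satisfies the master-code predicate --- produces a recursive $g'$ and lets me decide, recursively in $X\join M$, which $c\in F_Y$ to put into $N$, so that the image $\{\pi(x):x\in A^{n+1}_\alpha\}=M$ is matched by $\{\pi'(u):u\in A^n_\alpha\}=N$. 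Thus $\Tup{Y,N}$ is an $\omega$-copy of $\JA{n}$ via $\pi'$ with $Y\join N\leq_{\T}X\join M$; since $g$, $g'$ and all the reductions are given by fixed indices independent of $\Tup{X,M}$, the passage is uniform.

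It remains to compute $n_Y$ and $h_Y$ from $X\join M$. A natural number $i$ is represented by the $n$-code $(j_*,i)$, where $\theta^{(3)}_{j_*}$ defines $v_0=v_1$ and $i\in J_1\subseteq\JP{n+1}$ is a legitimate second coordinate; hence $\pi((j_*,i))=h_X(n_X(j_*),n_X(i))$, and applying the quotient map $c\mapsto(\text{representative in }F_Y)$, which is recursive in $X\join M$, yields $n_Y(i)$. For $h_Y$, given $c_0=\pi'(u)$ and $c_1=\pi'(v)$ in $F_Y$, recover $(i_0,\pi(x_0))$ and $(i_1,\pi(x_1))$ as above; the ordered pair $(u,v)$ is the unique solution of $\theta^{(3)}_{g''(i_0,i_1)}(v_0,(x_0,x_1),p^{n+1}_\alpha)$ for a suitable recursive $g''$, so $\pi((g''(i_0,i_1),(x_0,x_1)))=h_X(n_X(g''(i_0,i_1)),h_X(\pi(x_0),\pi(x_1)))$, and the quotient map sends this to $h_Y(c_0,c_1)$; both computations are from $X\join M$ and uniform. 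I expect the only step beyond the routine assembly of Lemmas~\ref{lem:effective-decompose}--\ref{lem:code-extensional} to be checking that the three auxiliary assertions used above --- membership between represented sets, satisfaction of the master-code predicate, and being the ordered pair of two represented sets --- really are $\Sigma_1$ over $\JA{n}$ with the single parameter $p^{n+1}_\alpha$, so that $A^{n+1}_\alpha$, and hence $M$ by effectiveness, decides them; this is immediate because each $\theta^{(3)}_i$ is $\Sigma_1$, the predicate symbol for the master code is atomic, and $\Sigma_1$ formulas are closed under conjunction and existential quantification.
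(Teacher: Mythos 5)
Your proposal is correct and follows essentially the same route as the paper's proof: you take the least $\pi$-$n$-code in each equivalence class as the field (the paper's set $U$), decide membership and the master-code predicate by recursive functions producing Gödel numbers of $\Sigma_1$ assertions over $\JA{n}$ that $M$ decides via $h_X$ and $n_X$ (the paper's $g_4$, $g_5$), and recover $n_Y$ and $h_Y$ from $\Sigma_1$ definitions of natural numbers and ordered pairs exactly as in the paper's concluding items (i) and (ii).
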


\begin{proof}
	By Lemmas~\ref{lem:pi-n-code} and~\ref{lem:code-extensional}, the set
	\[
		U = \{ y \in \omega \colon \exists \, u \in \JP{n} \: ( y \text{ is the $<_\omega$-least $\pi$-$n$-code for $u$}) \}
	\]
	is recursive in $X\join M$.

	Let $\sigma$ be the mapping
	\[
		\sigma : u \in J_{\rho^n_\alpha} \mapsto \text{ the unique $\pi$-$n$-code of $u$ in $U$ },
	\]
	and put
	\[
		Y = \{ \Tup{\sigma(x),\sigma(y)} \colon x \in y \in \JP{n} \}, \quad N = \{ \sigma(x) \colon x \in A^n_\alpha \}.
	\]
	Then $\Tup{Y,N}$ is clearly an $\omega$-copy of $\JA{n}$. To show that it is recursive in $X\join M$, we note that for $u,w \in \JP{n}$, if $(i,x)$ is an $n$-code for $u$ and $(j,y)$ is an $n$-code for $w$,
	\begin{equation} \label{equ:code-element}
	u \in w \; \Leftrightarrow \; \Tup{J_{\rho^n_\alpha}, A^n_\alpha} \models \exists v_0,v_1 \: (\theta^{(3)}_i(v_0, x, p^{n+1}_\alpha) \wedge \theta^{(3)}_j(v_1, y, p^{n+1}_\alpha) \wedge v_0 \in v_1).
	\end{equation}
	Moreover,
	\begin{equation} \label{equ:code-standcode}
	u \in A^n_\alpha \; \Leftrightarrow \; \Tup{J_{\rho^n_\alpha}, A^n_\alpha} \models \exists v_0 \: ( \theta^{(3)}_i(v_0, x, p^{n+1}_\alpha) \wedge v_0 \in A^n_\alpha).
	\end{equation}
	There are recursive functions $g_4, g_5$ that output Gödel numbers for $\Sigma_1$ formulas equivalent to the ones in \eqref{equ:code-element} and \eqref{equ:code-standcode}, respectively. Given two numbers $a,b \in U$, we can use Lemma~\ref{lem:effective-decompose} to find $(i,a_0)$ and $(j,b_0)$ such that $a = h_X(\pi(i),\pi(a_0))$, $b = h_X(\pi(j),\pi(b_0))$. Then
	\[
		a E_Y b \; \Leftrightarrow \; h_X(\pi(g_4(i,j)), h_X(a_0,b_0)) \in M.
	\]
	Likewise,
	\[
		a \in N \; \Leftrightarrow \; h_X(\pi(g_5(i)), \pi(a_0)) \in M.
	\]

	To see that the functions $h_Y$ and $n_Y$ are uniformly recursive in  $X \join M$ note that we can
	\begin{enumerate}[(i)]
		\item given $i \in \omega$, effectively compute the Gödel number of a $\Sigma_1$ formula that is satisfied by $u$ if and only if $u$ \emph{is} the natural number $i$,

		\item given $n$-codes $(i,x)$, $(j,y)$ for elements $u,w$ in $\JP{n}$, compute a Gödel number for a $\Sigma_1$ formula whose only solution is $(u,w)$.

	\end{enumerate}

\end{proof}

By iterating the procedure described above, we obtain the following.

\begin{cor} \label{cor:effective-copies-from-above}
	If $\Tup{X,M}$ is an effective copy of $\JA{n+1}$, then it computes $\omega$-copies of
	\[
		\JA{n}, \JA{n-1}, \dots, \text{ and } \JA{0} = \Tup{J_\alpha, \varnothing} = J_\alpha.
	\]
\end{cor}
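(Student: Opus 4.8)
The plan is to prove the corollary by a downward induction on $k$, applying Lemma~\ref{pro:compute-n-copy} once at each step. Set $P = X \join M$. The inductive statement I would carry is: for every $k$ with $0 \leq k \leq n+1$, the real $P$ uniformly computes an $\omega$-copy $\Tup{X_k,M_k}$ of $\JA{k}$ together with its decoding functions $n_{X_k}$ and $h_{X_k}$. The base case $k = n+1$ is immediate --- take $\Tup{X_{n+1},M_{n+1}} = \Tup{X,M}$ and use that $\Tup{X,M}$ is effective, so that $n_X$ and $h_X$ are already recursive in $X \join M = P$.

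For the inductive step I would feed $\Tup{X_{k+1},M_{k+1}}$, together with $n_{X_{k+1}}$ and $h_{X_{k+1}}$, into the construction of Lemma~\ref{pro:compute-n-copy}. That lemma outputs an $\omega$-copy $\Tup{X_k,M_k}$ of $\JA{k}$ and computes $n_{X_k}$ and $h_{X_k}$ uniformly from the input, so the inductive statement passes to stage $k$. After $n+1$ steps we reach $k = 0$, where $\JA{0} = \Tup{J_\alpha,\varnothing} = J_\alpha$, having produced $\omega$-copies of $\JA{n}, \JA{n-1}, \dots, \JA{1}$ along the way; composing the $n+1$ uniform reductions gives the asserted uniform computation.

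The only point requiring care --- and the main (if modest) obstacle --- is that Lemma~\ref{pro:compute-n-copy} is phrased for an \emph{effective} copy, i.e.\ one with $n_{X_{k+1}}, h_{X_{k+1}} \leq_{\T} X_{k+1} \join M_{k+1}$, whereas the induction hypothesis only supplies $n_{X_{k+1}}, h_{X_{k+1}} \leq_{\T} P$, with $P$ possibly strictly above $X_{k+1} \join M_{k+1}$. I would resolve this by noting that the proofs of Lemma~\ref{pro:compute-n-copy} and of the auxiliary Lemmas~\ref{lem:effective-decompose}, \ref{lem:pi-n-code}, and~\ref{lem:code-extensional} use $X_{k+1}$, $M_{k+1}$, $n_{X_{k+1}}$, $h_{X_{k+1}}$ only as oracles; running those constructions with the single oracle $P$ in place of $X_{k+1} \join M_{k+1}$ produces $\Tup{X_k,M_k}$ and the functions $n_{X_k}$, $h_{X_k}$ all recursive in $P$, uniformly. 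Equivalently, one may work throughout with the notion ``effective relative to $P$'' and observe that Lemma~\ref{pro:compute-n-copy} relativizes verbatim. Uniformity of the final procedure then follows, since it is a composition of $n+1$ uniform steps.
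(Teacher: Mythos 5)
Your proof is correct and is essentially the paper's argument: the paper's proof consists of iterating Lemma~\ref{pro:compute-n-copy}, whose conclusion that $n_Y$ and $h_Y$ are computed uniformly from $X\join M$ is precisely what licenses the iteration. Your explicit handling of the relativization issue (carrying the decoding functions along relative to the fixed oracle $X\join M$ rather than requiring each intermediate copy to be effective in its own right) just spells out the detail the paper leaves implicit.
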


If a copy is not effective, we can use Lemma~\ref{pro:compute-pair-function} to decode the predecessor $J$-structures.

\begin{cor} \label{cor:copies-from-above}
		If $\Tup{X,M}$ is an $\omega$-copy of $\JA{n+1}$, then $(X \join M)^{(2)}$ computes $\omega$-copies of
	\[
		\JA{n}, \JA{n-1}, \dots, \text{ and } \JA{0} = \Tup{J_\alpha, \varnothing} = J_\alpha.
	\]
\end{cor}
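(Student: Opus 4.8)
The plan is to reduce to the effective case already treated in Corollary~\ref{cor:effective-copies-from-above}. An arbitrary $\omega$-copy $\Tup{X,M}$ of $\JA{n+1}$ can fail to be effective only in that the auxiliary functions $n_X$ and $h_X$ need not be recursive in $X\join M$. However, the first component $X$ is an $\omega$-copy of the reduct $J_{\rho^{n+1}_\alpha}$, so Lemma~\ref{pro:recover-nat-num} gives $n_X\leq_{\T}X'$ and Lemma~\ref{pro:compute-pair-function} gives $h_X\leq_{\T}X^{(2)}$; hence both are recursive in $W:=(X\join M)^{(2)}$. (In the degenerate case $\rho^{n+1}_\alpha=1$ the reduct is $V_\omega$ and there is no single witness for $\omega$ inside the structure, but $n_X$ is still recursive in $X'$ by the argument of Lemma~\ref{pro:recover-nat-num}, identifying the finite ordinals of $\Tup{F_X,E_X}$ recursively in $X'$ via their $E_X$-membership trees.)

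First I would record the relativized form of Lemma~\ref{pro:compute-n-copy}: its proof, together with those of Lemmas~\ref{lem:effective-decompose}, \ref{lem:pi-n-code}, and~\ref{lem:code-extensional} on which it relies, uses $X$, $M$, $n_X$, and $h_X$ solely as oracles. Therefore it shows that from any oracle $W$ computing a triple $(\Tup{X,M},n_X,h_X)$, with $\Tup{X,M}$ an $\omega$-copy of $\JA{n+1}$, one can compute, uniformly in $W$, a triple $(\Tup{Y,N},n_Y,h_Y)$ with $\Tup{Y,N}$ an $\omega$-copy of $\JA{n}$. The essential observation is that the output of this relativized lemma is again of exactly the form required for its own input, so the construction can be iterated \emph{without changing the oracle}.

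Applying this iteratively $n+1$ times, starting from $(\Tup{X,M},n_X,h_X)$, all recursive in $W=(X\join M)^{(2)}$, exactly as in the proof of Corollary~\ref{cor:effective-copies-from-above}, yields $\omega$-copies of $\JA{n},\JA{n-1},\dots,\JA{0}=J_\alpha$, all recursive in $W$. That is the assertion. The only delicate point, and the crux of the argument, is that the descent accumulates no further Turing jumps; this is guaranteed precisely by the ``furthermore'' clause of Lemma~\ref{pro:compute-n-copy}, which keeps the auxiliary functions $n_{(\cdot)}$ and $h_{(\cdot)}$ recursive in the fixed oracle $W$ at every stage of the iteration. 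Everything else is routine bookkeeping.
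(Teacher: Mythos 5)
Your argument is correct and is essentially the paper's own proof: the paper disposes of the non-effective case in one line by invoking Lemma~\ref{pro:compute-pair-function} (and implicitly Lemma~\ref{pro:recover-nat-num}) to recover $h_X$ and $n_X$ from $(X\join M)^{(2)}$, and then runs the relativized iteration of Lemma~\ref{pro:compute-n-copy} exactly as in Corollary~\ref{cor:effective-copies-from-above}, with no further jumps accumulating. Your additional remark handling the case $\rho^{n+1}_\alpha=1$ (where the structure has no internal witness for $\omega$) is a legitimate and easy patch, consistent with the paper's intent.
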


\begin{lem} \label{prop:copy-computes-below}
	If $X$ is an $\omega$-copy of $J_\alpha$, then $X$ computes an $\omega$-copy of $\JA[\beta]{n}$, for all $n \in \omega$, $\beta < \alpha$.
\end{lem}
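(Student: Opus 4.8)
The plan is to exploit the fact that, when $\beta < \alpha$, the whole structure $\JA[\beta]{n}$ already lives inside $J_\alpha$ as a set, so an $\omega$-copy of it can be obtained from $X$ by the $X$-decidable operation of restricting $X$ to the part lying below a fixed code. First I would collect the needed inclusions: since $\beta < \alpha$ we have $\beta+1 \Sleq \alpha$, hence $J_{\beta+1}\Sleq J_\alpha$; and since $\rho^n_\beta\Sleq\beta$ we get $\JP[\beta]{n}\in J_{\rho^n_\beta+1}\Sleq J_{\beta+1}\Sleq J_\alpha$ (using that $J_\gamma\in J_{\gamma+1}$), while $A^n_\beta$, being a $\Sigma_n(J_\beta)$-definable subset of $\JP[\beta]{n}\Sleq J_\beta$ (with $A^0_\beta=\varnothing$), lies in $\P_{\Def}(J_\beta)=J_{\beta+1}\cap\P(J_\beta)\Sleq J_\alpha$. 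So both components of $\JA[\beta]{n}$ are elements of $J_\alpha$.

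Second, let $\pi\colon J_\alpha\to F_X$ be a bijection witnessing that $X$ is an $\omega$-copy of $J_\alpha$, and set $c=\pi(\JP[\beta]{n})$ and $d=\pi(A^n_\beta)$ — two fixed natural numbers. I would then define
\[
  Y=\{\Tup{a,b}\colon a,b\in\Set_X(c)\text{ and }a\,E_X\,b\},\qquad N=\Set_X(d).
\]
Deciding any of $a\in\Set_X(c)$, $a\in\Set_X(d)$, $a\,E_X\,b$ is just deciding membership of a coded pair in $X$, so $Y$ and $N$ are computable from $X$ once the finite parameters $c,d$ are fixed; hence $\Tup{Y,N}\leq_{\T}X$.

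Third, I would verify that $\Tup{Y,N}$ is an $\omega$-copy of $\JA[\beta]{n}$ in the sense of Definition~\ref{def:omega-copy}, with witnessing bijection $\pi\restriction\JP[\beta]{n}$. By faithfulness of $\pi$ (equation~\eqref{equ:arith-iso} for $X$ and $J_\alpha$) we have $\Set_X(c)=\pi[\JP[\beta]{n}]$, and since $\JP[\beta]{n}$ is rud closed it is closed under $u\mapsto\{u\}$, so every member of $\Set_X(c)$ appears in the field of $E_Y$; thus $F_Y=\Set_X(c)$ and $\pi\restriction\JP[\beta]{n}$ is indeed a bijection onto $F_Y$. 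Equation~\eqref{equ:arith-iso} for $\Tup{Y,N}$ then follows from the same equation for $X$ and $J_\alpha$ (the side condition ``$a,b\in\Set_X(c)$'' being automatic for arguments drawn from $\JP[\beta]{n}$), and~\eqref{equ:arith-mastercode} follows from $N=\{\pi(u)\colon u\in A^n_\beta\}$ together with $A^n_\beta\Sleq\JP[\beta]{n}$ (so that $\pi$ and $\pi\restriction\JP[\beta]{n}$ agree on $A^n_\beta$, and in particular $N\Sleq F_Y$).

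I do not expect a real obstacle. In contrast to Corollaries~\ref{cor:effective-copies-from-above}--\ref{cor:copies-from-above} and Lemma~\ref{pro:compute-n-copy}, which reconstruct $J$-structures that are \emph{not} present as objects in the given copy and therefore call on the fine-structural decoding developed above, the structures $\JA[\beta]{n}$ with $\beta<\alpha$ already sit inside $J_\alpha$; the only thing to carry out is the bookkeeping that restricting $X$ to the code of a transitive element, and to the code of a subset of it, produces a legitimate $\omega$-copy. The single point that needs a moment's care is the identity $F_Y=\Set_X(c)$ — i.e.\ that no member of $\JP[\beta]{n}$ drops out of the field when $E_X$ is restricted — which is handled by rud closure; the reduction $\Tup{Y,N}\leq_{\T}X$ is then immediate, and in fact uniform in the parameters $c$ and $d$.
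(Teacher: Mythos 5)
Your proposal is correct and is essentially the paper's own argument: the paper likewise notes that $J_{\rho^n_\beta}$ and $A^n_\beta$ are elements of $J_\alpha$, takes their $\pi$-images $x_\beta,a^n_\beta\in F_X$, and observes that $\Tup{\Set_X(x_\beta),\Set_X(a^n_\beta)}$ is an $\omega$-copy of $\JA[\beta]{n}$ recursive in $X$. You merely spell out the membership facts and the field verification (via rud closure) that the paper leaves implicit.
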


\begin{proof}
	Both $J_{\rho^n_\beta}$ and $A^n_\beta$ are elements of $J_\alpha$. Let $\pi$ be the isomorphism between $J_\alpha$ and $X$, and let $x_\beta, a^n_\beta \in F_X$ be such that
	\[
		x_\beta = \pi(J_{\rho^n_\beta}), \quad a^n_\beta = \pi(A^n_\beta).
	\]
	Then $\Tup{\Set_X(x_\beta), \Set_X(a^n_\beta)}$, wherein $\Set_X(x_\beta)$ is regarded as a structure and $\Set_X(a^n_\beta)$ is regarded as a set. This is an $\omega$-copy of $\JA[\beta]{n}$, clearly recursive in $X$.
\end{proof}

A similar argument yields an analogous fact for the $S$-operator.

\begin{lem} \label{prop:copy-computes-S-below}
	If $X$ is an $\omega$-copy of $J_\alpha$, then $X$ computes an $\omega$-copy of $S^{(n)}(J_\beta)$, for all $n \in \omega$, $\beta < \alpha$.
\end{lem}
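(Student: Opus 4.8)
The plan is to follow the proof of Lemma~\ref{prop:copy-computes-below} almost verbatim, with $S^{(n)}(J_\beta)$ in the role of the $J$-structure $\JA[\beta]{n}$. The first step is to check that $S^{(n)}(J_\beta)$ is an \emph{element} of $J_\alpha$. By the definitions of the two hierarchies, $S^{(n)}(J_\beta) = S_{\omega\beta+n}$. Since $\beta < \alpha$, ordinal arithmetic gives
\[
  \omega\beta + n \;<\; \omega\beta + \omega \;=\; \omega(\beta+1) \;\le\; \omega\alpha,
\]
so, setting $\gamma = \omega\beta + n$, we have $\gamma < \omega\alpha$. From the definition \eqref{equ:S-operator} of the $S$-operator, $Y \in S(Y)$ for every set $Y$ (because $Y \in Y \cup \{Y\} \subseteq S(Y)$), hence $S_\gamma \in S_{\gamma+1}$; and since $\gamma + 1 \le \omega\alpha$ and the $S$-hierarchy is cumulative, $S_{\gamma+1} \subseteq S_{\omega\alpha} = J_\alpha$. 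Therefore $S^{(n)}(J_\beta) = S_\gamma \in J_\alpha$. I would also recall the standard fact (see~\cite{jensen:1972} or~\cite{devlin:1984}) that every $S_\gamma$ is transitive, so that $\Tup{S_\gamma,\in}$ is extensional and well founded and thus admits $\omega$-copies.

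Next, let $\pi$ be the isomorphism between $J_\alpha$ and the relational structure $\Tup{F_X,E_X}$ coded by $X$, and let $s = \pi(S^{(n)}(J_\beta)) \in F_X$ be the code of $S^{(n)}(J_\beta)$. Because $S^{(n)}(J_\beta)$ is transitive, $\Set_X(s) = \{y : y \, E_X \, s\}$ is exactly the set of $\pi$-images of the elements of $S^{(n)}(J_\beta)$, and the restriction $\pi \restriction S^{(n)}(J_\beta)$ is a bijection onto $\Set_X(s)$ witnessing \eqref{equ:arith-iso} for the edge relation $E_X$ restricted to $\Set_X(s)$. Hence $\Tup{\Set_X(s),\varnothing}$ --- with $\Set_X(s)$ understood, as usual, as the structure obtained by restricting $E_X$ to it --- is an $\omega$-copy of $S^{(n)}(J_\beta)$. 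Finally, once the code $s$ is fixed as a parameter, the domain $\Set_X(s)$ and the restricted relation are both recursive in $X$, so $X$ computes this $\omega$-copy.

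There is no serious obstacle here: this is a direct analogue of Lemma~\ref{prop:copy-computes-below}. The only points needing a moment's care are the ordinal-arithmetic verification that $\gamma = \omega\beta + n < \omega\alpha$ (so that $S^{(n)}(J_\beta)$ genuinely sits below level $\omega\alpha$ and is a member, not all, of $J_\alpha$) and the transitivity of the $S_\gamma$, which is what makes the extensionality of the substructure automatic. As in Lemma~\ref{prop:copy-computes-below}, the $\omega$-copy is produced only relative to a single code $s \in F_X$ used as a parameter, which is harmless since one natural number contributes nothing to Turing degree.
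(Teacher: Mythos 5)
Your proof is correct and is essentially the paper's own argument: the paper disposes of this lemma with the single remark that the proof of Lemma~\ref{prop:copy-computes-below} carries over, i.e.\ $S^{(n)}(J_\beta)$ is an element of $J_\alpha$ (it already lies in $J_{\beta+1}=\Rud(J_\beta)$, or, as you compute, $S_{\omega\beta+n}\in S_{\omega\beta+n+1}\subseteq S_{\omega\alpha}=J_\alpha$), so one takes its code $s=\pi(S^{(n)}(J_\beta))\in F_X$ and the induced structure $\Set_X(s)$, which is recursive in $X$. Your side appeal to transitivity of the $S_\gamma$ is the standard convention from Jensen/Devlin and is only needed for extensionality of the substructure, not for the membership fact the argument rests on; note that with the operator taken literally as in \eqref{equ:S-operator} the finite iterates can fail to be transitive, which is why the paper itself passes to the transitive closure of $S(U)$ in Lemma~\ref{lem:arithmetic-S-copy}, but this bookkeeping point does not change your construction.
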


\bigskip

\subsection{Defining \texorpdfstring{$\omega$-}\ copies} \label{ssec:defining-copies}

In the previous section we saw how to effectively extract information from
$\omega$-copies of $J$-structures. Next, we describe how $\omega$-copies of new
$J$-structures can be defined from $\omega$-copies of given $J$-structures.

The $J$-hierarchy has two types of operations that we need to capture: defining
new sets using the $S$-operator, and taking projecta and defining standard
codes. We will analyze both operations from an arithmetic perspective.

\subsubsection*{An arithmetic analogue of the $S$-operator}
\label{ssub:an_arithmetic_analogue_of_the_s_operator}

The $S$-operator is defined by application of a finite number of explicit functions. This makes it possible to devise an arithmetic analogue, which we denote by $\overline{S}$, and which is the subject of the following lemma.

\begin{lem}\label{lem:arithmetic-S-copy}
  There exists an arithmetic function $\overline{S}(X)=Y$ such that, if $X$ is
  an $\omega$-copy of a transitive set $U$, $\overline{S}(X)$ is an
  $\omega$-copy of the transitive closure of $S(U)$. Further, $X$ is coded into a reserved column of $\omega$, that
  is,
    \[
      \Tup{x,y} \in X \Leftrightarrow \Tup{2^x,2^y} \in \overline{S}(X),
    \]
    and $3$ represents the element $\{U\}$ in $\overline{S}(X)$.
\end{lem}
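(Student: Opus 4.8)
The plan is to build $\overline{S}(X)$ explicitly by mimicking, on the level of $\omega$-codes, the definition of the $S$-operator in~\eqref{equ:S-operator}. Recall $S(U) = [U \cup \{U\}] \cup \bigcup_{i=0}^{8} F_i[(U\cup\{U\})^2]$, and that each $F_i$ is one of the nine basic rudimentary functions. Since we want an $\omega$-copy of the \emph{transitive closure} of $S(U)$, and $U$ is already transitive, the only genuinely new sets are $U$ itself (as an element), $\{U\}$, and the values $F_i(a,b)$ for $a,b \in U \cup \{U\}$, together with whatever new sets these values contain that are not already in $U$ — but inspection of $F_0,\dots,F_8$ shows the only sets that can appear in the transitive closure of $S(U)$ beyond $U\cup\{U\}\cup\{\,U\,\}$ are finitely-layered combinations (pairs, triples, singletons of pairs) of elements of $U\cup\{U\}$, e.g. ordered pairs $(u,z,v)$ built from members of $U$. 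So the new points to be coded form a set that is recursively enumerable (indeed recursive) from $X$: one reserves columns of $\omega$ for (i) a copy of $X$ itself via $x \mapsto 2^x$, so that the old $E_X$-relation is preserved on the powers of two, (ii) the code $3$ for the singleton $\{U\}$, and further fixed codes for $U$ (as an element of $S(U)$), for $S(U)$ itself, and for the auxiliary pairs/triples/singletons needed to make the transitive closure, using a fixed recursive pairing scheme on the remaining numbers.

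The key steps, in order, would be: first, fix a recursive bijection between $\omega$ and $\omega \sqcup \omega \sqcup (\text{finitely many reserved indices}) \sqcup (\text{a recursive set of ``formal terms''})$, where the formal terms are syntactic expressions like $\langle F_i, a, b\rangle$, $\langle\text{pair},a,b\rangle$, $\langle\text{triple},a,b,c\rangle$, $\langle\text{sing},a\rangle$ over ``atoms'' that are either powers of two (old elements of $X$) or the reserved code for $U$. Second, define $Y = \overline{S}(X)$ by specifying, for each pair of codes $(m,n)$ in this scheme, whether $\langle m,n\rangle \in Y$: this is determined by unwinding the formal term $n$ denotes, computing the set it names as a subset of the already-coded universe, and checking membership — e.g. $z$ is a member of the set named by $\langle F_2,a,b\rangle$ iff $z$ is (the code of) an ordered pair $\langle a',b'\rangle$ with $a'$ a member of $a$ and $b'$ a member of $b$. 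Each such clause is arithmetic in $X$ — in fact one should check it is uniformly $\Sigma^0_k$ in $X$ for a fixed finite $k$, so $\overline{S}$ is a single arithmetic operator. Third, verify that the structure $\Tup{F_Y, E_Y}$ so defined is extensional and well-founded (this is inherited from $U$ being transitive and the fact that we only added finitely many ``layers''), apply Mostowski collapse, and check the collapse is precisely the transitive closure of $S(U)$, with $2^x \mapsto$ (image of $x$ under the collapse of $X$), $3 \mapsto \{U\}$, and the reserved code for $U$ mapping to $U$. Finally confirm that $\Tup{x,y}\in X \Leftrightarrow \Tup{2^x,2^y}\in Y$ holds by construction.

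The main obstacle I expect is \emph{extensionality}: distinct formal terms can name the same set — $\langle F_0,a,a\rangle$ names $\{a\}$ which might equal $\langle\text{sing},a\rangle$, and more seriously $F_i(a,b)$ for $a,b\in U$ may well already lie in $U$ and hence already have an old code $2^x$. If one naively gives every formal term its own code, the resulting relational structure is not extensional and its Mostowski collapse is not literally $S(U)$'s transitive closure with the advertised code assignments. There are two ways to handle this: either (a) prove that one may \emph{allow} the coded structure to be merely a well-founded extensional-up-to-isomorphism presentation and note that Definition~\ref{def:omega-copy} only asks for \emph{a} bijection $\pi$ realizing the isomorphism with the transitive collapse — in which case one must still arrange that redundant codes get identified, which reintroduces the problem; or (b), the cleaner route, build in a normalization: after laying out all formal-term codes, define $E_Y$ only on a \emph{recursive transversal} of the equivalence ``names the same set'', choosing the least code in each class, exactly as was done in the proof of Lemma~\ref{pro:compute-n-copy} for $\pi$-$n$-codes. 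Deciding ``names the same set'' among the new codes is again arithmetic in $X$ (it amounts to checking a bounded Boolean combination of membership facts), so the transversal is arithmetic in $X$ and $\overline{S}$ remains an arithmetic operator; one then checks the two displayed normalization conditions (the $2^x$-column and the code $3$) survive the choice of transversal by simply reserving those codes and declaring them to be the chosen representatives of their classes. I would carry out option (b), and the bulk of the write-up is the bookkeeping showing the resulting $E_Y$ is extensional, well-founded, and collapses correctly, together with a uniform quantifier count certifying that $\overline{S}$ is arithmetic (and, if needed later, bounding its arithmetic complexity by a fixed level above $X$).
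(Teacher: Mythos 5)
Your proposal is correct and takes essentially the same route as the paper: code the elements of $S(U)$ (and the members added by taking the transitive closure) by terms built from $F_0,\dots,F_8$ over $U\cup\{U\}$, observe that membership and equality between the sets denoted by terms is arithmetic in $X$, and reserve the $2^x$-column for $X$ and the code $3$ for $\{U\}$. The paper's proof is simply a terser version of this, leaving your normalization step (b) implicit in the remark that equality of term denotations is arithmetic in $X$.
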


\begin{proof}
  The elements of $S(U)$ are obtained by single applications of the functions
  $F_0,\ldots, F_8$. Thus each element of $S(U)$ is the denotation of a term
  consisting of one of the functions and finitely many elements of $U \cup
  \{U\}$. We denote the set of terms by $T_S(U)$. Arithmetically in $X$, we can define an $\omega$-copy $\overline{S}(X)$ of $S(U)$ from the natural $X$-recursive copy $T_S(X)$ of $T_S(U)$.
  Membership of the set denoted by one term in the set denoted by another term
  or equality between the sets denoted by terms is arithmetic in $X$, since
  these are defined by quantification over $X$. The same applies for elements
  of the transitive closure of the thus coded structure.  The additional
  uniformity condition on the coding of $X$ does not change the calculation.
\end{proof}

We can subject the $\overline{S}$-operator to an analysis similar to that of the
jump operator by Enderton and
Putnam~\cite{enderton-putnam:note-hyperarithmetical_1970}.

\begin{lem} \label{lem:recog-S-iterates}
  There exists an arithmetic predicate $Q(n,X,Y)$ such that if $A$ is an $\omega$-copy, $Q(n,A,Y)$ holds if and only if $Y = \overline{S}^{(n)}(A)$.
 \end{lem}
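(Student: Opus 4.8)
The strategy mirrors the Enderton–Putnam analysis of iterates of the Turing jump, adapted to the operator $\overline{S}$ of Lemma~\ref{lem:arithmetic-S-copy}. The key point is that $\overline{S}$ is an \emph{arithmetic} operator with a fixed arithmetic complexity, and that the coding in Lemma~\ref{lem:arithmetic-S-copy} reserves a column for the previous copy (so the finite iteration $\overline{S}^{(n)}(A)$ literally contains, on nested reserved columns, copies of all the intermediate stages). This means that a candidate $Y$ being equal to $\overline{S}^{(n)}(A)$ can be expressed by a single arithmetic condition once we decode $n$: we ask that $Y$ decode, on its reserved columns, a length-$(n+1)$ sequence of reals $Z_0 = A, Z_1, \dots, Z_n = Y$ (where $Z_i$ sits on the $i$-fold iterate of the reserved column) such that for each $i < n$, $Z_{i+1} = \overline{S}(Z_i)$. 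The predicate $Z_{i+1} = \overline{S}(Z_i)$ is arithmetic (of fixed complexity, say $\Pi^0_k$ for some absolute $k$) by Lemma~\ref{lem:arithmetic-S-copy} applied to each stage; we only need this relation, not an explicit description of $\overline{S}$.

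First I would make precise the uniformity built into Lemma~\ref{lem:arithmetic-S-copy}: there is a fixed arithmetic formula $\theta(U,V)$ (not depending on $U$) saying ``$V = \overline{S}(U)$,'' and it has a uniform arithmetic bound on its complexity; moreover, by the reserved-column clause, $U$ is arithmetically (indeed recursively) recoverable from $\overline{S}(U)$ via the map $x \mapsto 2^x$, so we can ``unpack'' the stages of a purported iterate. Second, using a recursive pairing/sequence coding on $\omega$, I would write the arithmetic predicate
\[
  Q(n,X,Y) \;:\Leftrightarrow\; \exists\, \langle Z_0,\dots,Z_n\rangle\ \Bigl[\, Z_0 = X \ \wedge\ Z_n = Y \ \wedge\ \forall i < n\; \theta(Z_i, Z_{i+1}) \,\Bigr],
\]
where the existential quantifier over a finite sequence of reals is to be understood as: the reserved-column structure of $Y$ exhibits such a sequence (so no genuine second-order quantifier is needed — everything is a subformula of the arithmetic theory of $Y$ together with $X$ and $n$). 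One then verifies $Q(n,A,Y) \Leftrightarrow Y = \overline{S}^{(n)}(A)$ by induction on $n$: the forward direction uses functionality of $\overline{S}$ (each $\theta(Z_i,Z_{i+1})$ pins down $Z_{i+1}$), and the reverse direction just observes that the canonical iterate $\overline{S}^{(n)}(A)$ does encode the whole chain on its reserved columns, so it witnesses $Q(n,A,\overline{S}^{(n)}(A))$.

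The main obstacle — really the only subtle point — is bounding the arithmetic complexity \emph{uniformly in $n$}. Naively, asserting ``$\forall i < n$'' a statement of complexity $\Pi^0_k$ still has complexity $\Pi^0_k$ (the bounded numerical quantifier does not raise the level), so the quantifier-over-$i$ costs nothing; the genuine concern is that decoding the $i$-th reserved column of $Y$ as a separate real $Z_i$ and then evaluating $\theta(Z_i,Z_{i+1})$ must be done by a \emph{single} arithmetic formula whose complexity does not grow with $n$. This works because accessing the $i$-th reserved column is a recursive operation (iterating $x \mapsto 2^x$ a number of times given by $i$, which is $\le n$, hence bounded), so the evaluation of $\theta$ on the $i$-th and $(i+1)$-st columns is arithmetic of the \emph{same} fixed complexity as $\theta$ itself, with $i$ and $n$ appearing only as parameters under bounded quantification. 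Hence $Q$ is arithmetic, with a complexity bound independent of $n$, which is exactly what is needed for the later Stair-Trainer argument. I would conclude by remarking that this parallels Lemma~\ref{lem:recog-S-iterates}'s jump-theoretic prototype: just as $\emptyset^{(n)}$ is recognized by a fixed arithmetic predicate in $n$ and an oracle, $\overline{S}^{(n)}(A)$ is recognized by the fixed arithmetic predicate $Q$.
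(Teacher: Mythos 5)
Your proof is correct, and it rests on the same two pillars as the paper's: the reserved-column clause of Lemma~\ref{lem:arithmetic-S-copy}, which makes each previous stage recoverable from a purported iterate (so that $Y=\overline{S}^{(n)}(A)$ is equivalent to ``$Y$ terminates a $\theta$-chain of length $n$ whose successive links are read off $Y$'s nested columns, with $A$ at the bottom''), and the Enderton--Putnam template. Where you diverge is in how uniformity in $n$ is secured. The paper writes, for each $n$, a conjunction $\psi_n$ of $n$ instances of the $\Pi^0_k$ relation $\overline{S}(U)=V$ applied to the iterated decodings $(Y)^{(m)}_{[2]}$; since the formula grows with $n$, it then recovers a single predicate by passing to a universal $\Pi^0_k$ predicate $P$ and setting $Q(n,X,Y):\Leftrightarrow P(i(n),X,Y)$ for a recursive index function $i$. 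You instead fold the chain condition into one formula with a bounded number quantifier $\forall i<n$, observing that membership in the $i$-fold decoded column is uniformly recursive in $Y$ (iterating $x\mapsto 2^x$), so substituting these $\Delta^0_1(Y)$ predicates into $\theta$ keeps a fixed arithmetic bound and the bounded quantifier costs nothing. This is a slightly more direct route that avoids the universal-predicate detour, at the price of having to argue the substitution/complexity point carefully -- which you do; the one cosmetic caveat is that your displayed existential quantifier over the sequence $\langle Z_0,\dots,Z_n\rangle$ is only legitimate because, as you note, the $Z_i$ are definable (indeed recursive) from $Y$, $n$, $i$, so the formula is genuinely first-order arithmetic and not a second-order quantification. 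Both arguments yield the fixed complexity bound independent of $n$ that the later stair-trainer applications require.
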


\begin{proof}
Suppose the relation $\overline{S}(X)=Y$ is definable by a $\Pi^0_k$ formula. Given $Y \subset \omega$, let 
\[
  Y_{[2]} = \{ \Tup{a,b} \colon \Tup{2^a, 2^b} \in Y\}.
\]
For each $n \geq 1$, the relation $\overline{S}^{(n)}(X)=Y$ is definable via the formula
\[
  \psi_n \equiv \; Y = \overline{S}(Y_{[2]}) \: \wedge \: Y_{[2]} = \overline{S}((Y_{[2]})_{[2]}) \: \wedge \: \dots \: \wedge \: (Y)^{(n-1)}_{[2]} = \overline{S}(X),
\]
where $(\, . \,)^{(m)}_{[2]}$ denotes the $m$-th iterate of the $(.)_{[2]}$-operator.	Each formula $\psi_n$ is also $\Pi^0_k$, and $\psi_n$ can be found recursively and uniformly in $n$. Let $P(n,X,Y)$ be a universal $\Pi^0_k$ predicate, that is, for every $\Pi^0_k$ predicate $R(X,Y)$ there exists an $n \in \omega$ such that 
\[
  P(n,X,Y) \; \Leftrightarrow \; R(X,Y),
\]
and $n$ can be found recursively from a formula defining $R$. Let $i(n)$ be the index with respect to $P$ of the $\Pi^0_k$ predicate defined by $\psi_n$. The predicate 
\[
  Q(n,X,Y) \; :\Leftrightarrow \; P(i(n),X,Y)
\]
has the property claimed in the statement of the Lemma.
\end{proof}

\begin{lem}
	If $Z$ is such that $Z \geq_{\T} \overline{S}^{(n)}(X)$ for all $n$, then
  $\bigoplus_n \overline{S}^{(n)}(X)$ is uniformly arithmetically definable from
  $Z$.
\end{lem}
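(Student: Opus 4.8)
The plan is to exploit the arithmetic predicate $Q$ furnished by Lemma~\ref{lem:recog-S-iterates}: for every $n$, the $\omega$-copy $\overline{S}^{(n)}(X)$ is the \emph{unique} real $Y$ with $Q(n,X,Y)$. The natural definition of $W:=\bigoplus_n\overline{S}^{(n)}(X)$, namely
\[
  \langle n,k\rangle\in W\ \iff\ \exists Y\,\bigl[\,Q(n,X,Y)\ \wedge\ k\in Y\,\bigr],
\]
quantifies over a real $Y$, but the witness is unique and, by hypothesis, $Z$ computes it. So the idea is to replace the second-order quantifier ``$\exists Y$'' by the arithmetic quantifier ``$\exists e\,(\Phi^Z_e\text{ is total})$'' and substitute $\Phi^Z_e$ for $Y$: uniqueness of the $Q$-witness makes this substitution sound, and the hypothesis that $Z$ bounds all the $\overline{S}^{(n)}(X)$ makes it complete.

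Carrying this out, I would first record the uniqueness statement (immediate from Lemma~\ref{lem:recog-S-iterates}) and then verify
\[
  \langle n,k\rangle\in W\ \iff\ \exists e\,\bigl[\,\Phi^Z_e\text{ total}\ \wedge\ Q(n,X,\Phi^Z_e)\ \wedge\ k\in\Phi^Z_e\,\bigr].
\]
For ``$\Rightarrow$'', since $\overline{S}^{(n)}(X)\leq_{\T}Z$ there is an index $e$ with $\Phi^Z_e=\overline{S}^{(n)}(X)$, and this $e$ works. For ``$\Leftarrow$'', if $\Phi^Z_e$ is total and $Q(n,X,\Phi^Z_e)$ holds then $\Phi^Z_e=\overline{S}^{(n)}(X)$ by uniqueness, so $k\in\Phi^Z_e$ iff $\langle n,k\rangle\in W$. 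It then remains to check the complexity: ``$\Phi^Z_e$ total'' is $\Pi^0_2(Z)$; ``$k\in\Phi^Z_e$'' is $\Delta^0_1(Z)$ once totality is known; and plugging the total $Z$-computable function $\Phi^Z_e$ into the fixed arithmetic matrix of $Q$ yields a condition arithmetic in $Z$ whose quantifier complexity is independent of $n$. This gives a single arithmetic formula defining $W$ from $Z$, i.e.\ $W$ is uniformly arithmetically definable from $Z$.

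The point that needs the most care --- and the only genuine obstacle --- is the bookkeeping around the parameter $X$, which occurs inside $Q(n,X,\cdot)$, so that a priori the displayed equivalence defines $W$ arithmetically in $Z\oplus X$ rather than in $Z$. This is not a real loss: since $X$ is coded into the reserved column of $\overline{S}(X)$ by Lemma~\ref{lem:arithmetic-S-copy}, $X\leq_{\T}\overline{S}(X)\leq_{\T}Z$; moreover in the applications to come $X$ will be a canonical (recursive) $\omega$-copy, so $X$ is a fixed recursive parameter, the tower $\bigl(\overline{S}^{(n)}(X)\bigr)_n$ is anchored at a recognizable bottom, and the definition genuinely depends on $Z$ alone. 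One states and uses the lemma with this understanding; beyond this point no further idea is needed.
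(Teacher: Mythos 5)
Your proposal is correct and follows essentially the same route as the paper: the paper likewise defines the arithmetic-in-$Z$ predicate $\overline{Q}(n,e)\Leftrightarrow\Phi^Z_e$ total $\wedge\,Q(n,X,\Phi^Z_e)$, uses the hypothesis $Z\geq_{\T}\overline{S}^{(n)}(X)$ for the existence of a witnessing index and the uniqueness of the $Q$-witness for soundness, and disposes of the parameter $X$ exactly as you do, via $Z\geq_{\T}\overline{S}(X)\geq_{\T}X$.
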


\begin{proof}
	Define the predicate $\overline{Q}(n,e)$ as
	\[
		\overline{Q}(n,e) \: :\Leftrightarrow \: \Phi^Z_e \text{ is total and } Q(n,X,\Phi^Z_e).
	\]
	$\overline{Q}$ is an arithmetic predicate relative to $Z$ since $Z \geq_{\T} \overline{S}(X) \geq_{\T} X$. To decide whether $a \in \overline{S}^{(n)}(X)$, find, arithmetically in $Z$,
  the least $e$ such that $\overline{Q}(n,e)$ and compute $\Phi^Z_e(a)$.
\end{proof}

\begin{cor}\label{cor:successor-copy}
	If $X$ is an $\omega$-copy of $J_\alpha$ and $Z \geq_{\T}
  \overline{S}^{(n)}(X)$ for all $n$, then $Z$ uniformly arithmetically defines
  an $\omega$-copy of $J_{\alpha+1}$.
\end{cor}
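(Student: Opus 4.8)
The plan is to glue the $\omega$-copies $\overline{S}^{(n)}(X)$ together along their reserved columns to obtain an $\omega$-copy of the union $J_{\alpha+1}=\Rud(J_\alpha)=\bigcup_n S^{(n)}(J_\alpha)$. First I would record that, since each $S^{(n)}(J_\alpha)=S_{\omega\alpha+n}$ is transitive (Jensen~\cite{jensen:1972,devlin:1984}), iterating Lemma~\ref{lem:arithmetic-S-copy} starting from $X$ yields, for every $n$, that $\overline{S}^{(n)}(X)$ is an $\omega$-copy of $S^{(n)}(J_\alpha)$ via some isomorphism $\pi_n$ (with $\pi_0=\pi$; the transitive-closure proviso in the lemma is vacuous here), and that the reserved-column clause gives $a\,E_{\overline{S}^{(n)}(X)}\,b\iff 2^a\,E_{\overline{S}^{(n+1)}(X)}\,2^b$, so $a\mapsto 2^a$ is an $\in$-embedding of the level-$n$ copy into the level-$(n+1)$ copy. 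Because $S^{(n)}(J_\alpha)$ is a transitive subset of $S^{(n+1)}(J_\alpha)$, the image of this embedding is transitive, so by uniqueness of transitive collapses the embedding is compatible with the $\pi_n$, i.e.\ $\pi_{n+1}(2^a)=\pi_n(a)$ for every $a\in F_{\overline{S}^{(n)}(X)}$. Since $\omega$-copies of extensional structures are extensional, each element of $J_{\alpha+1}$ then has, at the least level $n$ at which it appears, a \emph{unique} code $a$; I will call the pair $(n,a)$ \emph{reduced}, meaning $a\in F_{\overline{S}^{(n)}(X)}$ and, for $n>0$, $a$ is not $2^b$ for any $b\in F_{\overline{S}^{(n-1)}(X)}$, and this pair is uniquely determined by the element.

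Next, I would invoke the preceding lemma to get that $W:=\bigoplus_n\overline{S}^{(n)}(X)$ is uniformly arithmetically definable from $Z$, and then build the desired copy arithmetically in $W$. Writing $\Op{lift}_{m,n}$ for the $(m-n)$-fold iterate of $a\mapsto 2^a$ (a recursive function of $m,n,a$), the set of reduced pairs is arithmetic in $W$; I would take $F_Y$ to be its image under a fixed recursive bijection with a subset of $\omega$, and define $E_Y$ by transporting
\[
  (n,a)\ \widetilde E\ (m,c)\quad:\Longleftrightarrow\quad n\leq m\ \wedge\ \Op{lift}_{m,n}(a)\,E_{\overline{S}^{(m)}(X)}\,c .
\]
Transitivity of $S^{(m)}(J_\alpha)$ guarantees that $\pi_n(a)\in\pi_m(c)$ forces $n\leq m$, so under the identification $(n,a)\leftrightarrow\pi_n(a)$ this $\widetilde E$ is exactly the membership relation of $J_{\alpha+1}$; hence the resulting $Y\subseteq\omega$ (coding $\Tup{F_Y,E_Y}$) is an $\omega$-copy of $J_{\alpha+1}=\Tup{J_{\alpha+1},\varnothing}$, via the bijection $\pi^\ast$ sending an element to the code of its reduced pair. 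All of this is performed by fixed arithmetic formulas applied to $W$, and $W$ is uniformly arithmetic in $Z$; composing the two reductions gives that $Z$ uniformly arithmetically defines $Y$.

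The only real obstacle is the coherence bookkeeping in the first paragraph: verifying that the reserved-column embeddings $a\mapsto 2^a$ are genuinely compatible across all levels (so that ``the code of an element at its least level'' is well defined and stable), and confirming that $\bigcup_n S^{(n)}(J_\alpha)$ is exactly $J_{\alpha+1}$ with no limit-stage gap. Both reduce to standard facts about the $S$-hierarchy (transitivity of the $S_\nu$, $\Rud(J_\alpha)=\bigcup_n S^{(n)}(J_\alpha)$, uniqueness of Mostowski collapses), so once these are in place the arithmetic definition of $Y$ from $W$, and hence from $Z$, is routine.
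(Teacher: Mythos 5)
Your proposal is correct and takes essentially the same route as the paper: obtain $\bigoplus_n \overline{S}^{(n)}(X)$ uniformly arithmetically from $Z$ via the preceding lemma, then glue the coherent copies along the reserved powers-of-two columns into a single copy of $J_{\alpha+1}=\bigcup_n S^{(n)}(J_\alpha)$ --- your ``reduced pairs'' bookkeeping is the paper's ``leave each level in place and stack new elements at fresh prime columns'' in different notation. One small caveat: with the paper's nine basis functions the iterates $S^{(n)}(J_\alpha)$ need not themselves be transitive (which is precisely why Lemma~\ref{lem:arithmetic-S-copy} passes to transitive closures), so your remark that ``the transitive-closure proviso is vacuous'' should instead say that $\overline{S}^{(n)}(X)$ codes the $n$-th iterate of $\operatorname{TC}\circ S$ applied to $J_\alpha$; since these iterates are sandwiched between the $S^{(n)}(J_\alpha)$ and $J_{\alpha+1}$ and hence still union to $J_{\alpha+1}$, nothing else in your argument changes.
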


\begin{proof}
	We can use $\bigoplus_n \overline{S}^{(n)}(X)$ to define a copy of
  $J_{\alpha+1}$ by `stacking' the elements of $\overline{S}^{(n+1)}(X)$ coded
  with base $3$ and higher at the next `available' prime column. Essentially
  this means that instead of moving $\overline{S}^{(n)}(X)$ into the column
  given by powers of $2$, we leave it unchanged and add new elements for
  $\overline{S}^{(n+1)}(X)$ starting at the smallest available prime column.
\end{proof}

\subsubsection*{An arithmetic version of the standard code}
\label{ssub:an_arithmetic_version_of_the_projectum}

To define an arithmetic copy the $\Sigma_n$-standard code for $J_\alpha$, we can
simply interpret the set theoretic definitions as formulas of arithmetic. More
precisely, suppose $P$ is a definable predicate over a $J$-structure $\JA{n}$,
and $\Tup{X,M}$ is an $\omega$-copy via $\pi$. Since the structure $\Tup{F_X,
  E_X, M}$ is isomorphic to $\Tup{\JP{n}, \in, A^n_\alpha}$, we can use the same
formula that defines $P$ over $\JA{n}$ and obtain a definition of $\pi[P]$
arithmetic in $\Tup{X,M}$. The problem, however, is that a bounded quantifier in
set theory will not necessarily correspond to a bounded quantifier in
arithmetic. This means the transfer of complexities between the Lévy-hierarchy
and the arithmetical hierarchy may not result in uniform bounds.

However, we will use only a fixed, finite number of set-theoretic definitions.
Most importantly, we use the uniform definability of the satisfaction relation
$\models$ over transitive, rud closed structures.

\begin{prop}[Jensen~\cite{jensen:1972}, Corollary~1.13] \label{pro:def-sat}
  For $n \geq 1$, the satisfaction relation $\models^{\Sigma_n}_{\Tup{M,A}}$ is
  uniformly $\Sigma_n(\Tup{M,A})$ for transitive, rud closed structures
  $\Tup{M,A}$.
\end{prop}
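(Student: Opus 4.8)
The statement is due to Jensen, and the cleanest route is to reproduce the skeleton of his argument from~\cite{jensen:1972}. The plan is an induction on $n$, with essentially all of the content concentrated in the base case $n=1$.

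For $n=1$ I would first establish that the $\Sigma_0$ (bounded) satisfaction relation $\models^{\Sigma_0}_{\Tup{M,A}}$ is \emph{uniformly rudimentary} in the pair $\Tup{M,A}$, and hence in particular uniformly $\Delta_1(\Tup{M,A})$. This goes by a Tarski recursion on (the Gödel number of) a $\Sigma_0$ formula together with an assignment. The atomic clauses $v_i \mathrel{E} v_j$, $v_i = v_j$, and $v_i \in A$ are decided by the basic rudimentary functions ($F_7$ for $\in \cap (x\times x)$, $F_1$ together with products for equality, and the $A$-rud function $F_A(x,y)=x\cap A$ for the predicate clause); here amenability is exactly the hypothesis that keeps the recursion inside $M$, since it guarantees $x \cap A \in M$. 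Boolean clauses are trivially rud. For a bounded quantifier $\exists v_k \in v_i\, \theta$, given the rud graph of the satisfaction function for the subformula $\theta$, one forms rud-ly the set of witnesses lying in $v_i$ and tests it for non-emptiness. Packaging these clauses into a single rud function $\operatorname{Sat}_0$ whose graph relative to $\Tup{M,A}$ is $\models^{\Sigma_0}_{\Tup{M,A}}$ yields the claim, and uniformity is immediate because the recursion is explicit and the Gödel numbering is primitive recursive. A $\Sigma_1$ formula has the form $\exists x\, \varphi_0(x,\bar v)$ with $\varphi_0 \in \Sigma_0$, so
\[
  \Tup{M,A} \models^{\Sigma_1} \exists x\, \varphi_0(x,\bar a) \quad\Longleftrightarrow\quad \exists x \in M\ \bigl(\Tup{M,A} \models^{\Sigma_0} \varphi_0(x,\bar a)\bigr),
\]
whose right-hand side is uniformly $\Sigma_1(\Tup{M,A})$.

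For the inductive step, assume $\models^{\Sigma_n}_{\Tup{M,A}}$ is uniformly $\Sigma_n(\Tup{M,A})$. After a recursive normalization, a $\Sigma_{n+1}$ formula may be taken to be $\exists x\, \neg\chi(x,\bar v)$ with $\chi$ a $\Sigma_n$ formula, and then
\[
  \Tup{M,A} \models^{\Sigma_{n+1}} \exists x\, \neg\chi(x,\bar a) \quad\Longleftrightarrow\quad \exists x \in M\ \neg\bigl(\Tup{M,A} \models^{\Sigma_n} \chi(x,\bar a)\bigr).
\]
By the induction hypothesis the inner relation is uniformly $\Sigma_n$, its negation uniformly $\Pi_n$, and prefixing a single existential quantifier gives a uniformly $\Sigma_{n+1}(\Tup{M,A})$ definition. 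Uniformity of the resulting index is preserved because stripping the leading quantifier block and driving negations inward to prenex $\Sigma_n$ form are primitive recursive operations on Gödel numbers.

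The only genuine obstacle is the base case: checking that bounded-quantifier satisfaction over a transitive rud closed structure is rud, and identifying amenability as precisely the hypothesis that makes the $A$-clause work and keeps the recursion inside $M$. Above that level everything is a routine complexity count. Since this is exactly Corollary~1.13 of~\cite{jensen:1972}, in the final text I would cite Jensen directly and retain the sketch above only as an explanatory remark.
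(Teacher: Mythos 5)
The paper itself offers no argument here -- it quotes the result from Jensen -- so the only question is whether your sketch correctly reproduces Jensen's proof, and at the one point where you say the real content lies, it does not. Your base-case claim that the full $\Sigma_0$-satisfaction relation $(e,a)\mapsto\text{``}\Tup{M,A}\models^{\Sigma_0}\varphi_e[a]\text{''}$ is \emph{uniformly rudimentary} is false. What is true is that each \emph{fixed} $\Sigma_0$ formula defines a rud relation; the satisfaction relation as a function of the G\"odel number is strictly harder. Rudimentary relations are exactly the $\Sigma_0$-definable ones over transitive rud closed structures, and a Tarski-style diagonalization rules out a $\Sigma_0$ definition of $\Sigma_0$-satisfaction: since rud functions are simple, substituting the rud map $y\mapsto\{(v_1,y)\}$ into a putative $\Sigma_0$ definition of $\mathrm{Sat}_0$ would make $D(x)\,:\Leftrightarrow\,\neg\,\mathrm{Sat}_0\bigl(x,\{(v_1,x)\}\bigr)$ a $\Sigma_0$-definable relation, and evaluating the formula defining $D$ at its own code yields a contradiction. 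The underlying reason your construction fails is that the rudimentary functions are not closed under recursion (on $\omega$ or on formula complexity): each Tarski clause is rud, but ``packaging these clauses into a single rud function $\mathrm{Sat}_0$'' is exactly the step the rud schema does not provide.

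What Jensen actually proves at the base (his Lemma~1.12) is weaker and proved differently: $\Sigma_0$-satisfaction is uniformly $\Delta_1(\Tup{M,A})$, and hence $\Sigma_1$-satisfaction is uniformly $\Sigma_1(\Tup{M,A})$. The definition quantifies over witnessing objects: $\Tup{M,A}\models^{\Sigma_0}\varphi[a]$ iff there exists \emph{in $M$} a set-sized partial satisfaction function obeying the Tarski clauses on the subformulas of $\varphi$ and the relevant assignments and assigning $\varphi[a]$ the value true (and dually with a universal quantifier for the $\Pi_1$ side). Rud closure of $M$ is used to show such partial satisfaction functions are elements of $M$, and amenability ($x\cap A\in M$) is what allows the atomic $A$-clause to be recorded inside $M$ -- so the hypotheses enter, but not in the way you describe. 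This $\Delta_1$ (indeed merely $\Sigma_1$) bound is all the base case requires, and with it your inductive step, which is correct as written, goes through. The repair is therefore local: replace ``$\mathrm{Sat}_0$ is rud'' by ``$\mathrm{Sat}_0$ is uniformly $\Delta_1$ via partial satisfaction functions in $M$,'' after which your outline does match Jensen's argument.
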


\begin{cor} \label{lem:successor-copy}
	Suppose $\XM$ is an $\omega$-copy of $\JA{n}$. 
  Then there exists an $\omega$-copy of $\JA{n+1}$ uniformly arithmetically definable in $\XM$.
\end{cor}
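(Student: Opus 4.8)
The plan is to reconstruct, inside the coded structure $\Tup{F_X,E_X,M}$, the three ingredients that make up the standard $\Sigma_{n+1}$-structure over $J_\alpha$: the projectum $\PR{n+1}$ (equivalently the set $\JP{n+1}$), the standard parameter $p^{n+1}_\alpha$, and the master code $A^{n+1}_\alpha$. Each of these is defined by a \emph{fixed} set-theoretic formula in terms of $\JA{n}$, and the key observation is that interpreting such a fixed formula over $\Tup{F_X,E_X,M}$ produces a predicate that is arithmetic in $\XM$ of complexity bounded by the (fixed) quantifier depth of the formula, hence uniformly arithmetic. The main tool is Proposition~\ref{pro:def-sat}: since $\JP{n}$ is rud closed and $\JA{n}$ is amenable, the $\Sigma_1$-satisfaction relation of $\JA{n}$ is given by one fixed $\Sigma_1$ formula, whose interpretation over $\Tup{F_X,E_X,M}$ is an arithmetic-in-$\XM$ predicate $\Op{Sat}(i,\bar a)$ meaning ``$\Tup{F_X,E_X,M}\models\varphi^{(2)}_i[\bar a]$'' for codes $\bar a$ from $F_X$. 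Likewise, the uniformly $\Sigma_1$-definable well-order $<_J$ (Proposition~\ref{prop:definability-J}) gives an arithmetic copy $<_X$ on $F_X$, and the handful of further fixed notions needed (``$x$ is an ordinal'', the map $\gamma\mapsto J_\gamma$, hereditary cardinality, and the $\Pi_2$ formula $\varphi_{\Ax{V=L}}$) all translate to arithmetic-in-$\XM$ predicates; the maps $n_X,h_X$ are arithmetic in $X$ by Lemmas~\ref{pro:recover-nat-num} and~\ref{pro:compute-pair-function}.

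With these in place the construction would run as follows. First, by Definition~\ref{defn:projectum} together with Proposition~\ref{prop:relative-proj}, $\omega\PR{n+1}$ is the $<_X$-least ordinal-code $\delta$ for which there exist $i$ and parameter-codes $\bar p$ in $F_X$ with $\{\xi\colon\xi\,E_X\,\delta\ \wedge\ \Op{Sat}(i,\xi,\bar p)\}$ equal to $\Set_X(e)$ for no $e\in F_X$; this is an arithmetic search in $\XM$. From $\delta$ I would read off the set $G\subseteq F_X$ of codes of elements of $\JP{n+1}$: if $\delta$ is the code of $\omega\PR{n}$ itself we are in the case $\PR{n+1}=\PR{n}$ and put $G=F_X$; otherwise $\JP{n+1}\in\JP{n}$ and $G=\Set_X(q)$, where $q$ is the unique code with $\Set_X(q)\models\varphi_{\Ax{V=L}}$ and $\Ord^{\Set_X(q)}=\{\xi\colon\xi\,E_X\,\delta\}$. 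Next, the code $p_\ast$ of the standard parameter is the $<_X$-least code $p$ such that every $a\in F_X$ is the unique solution $v_0$ over $\Tup{F_X,E_X,M}$ of some $\Sigma_1$ formula with parameter-codes drawn from $G\cup\{p\}$ (with $p_\ast$ coding $\Estr$ in the case $G=F_X$); this unwinds via $\Op{Sat}$ to an arithmetic condition. Finally, since the $J$-hierarchy is closed under pairing we have $A^{n+1}_\alpha\subseteq\JP{n+1}$, so every element of $A^{n+1}_\alpha$ has a code in $G$; I would take $Y$ to be $X$ restricted to field $G$ (which codes $\JP{n+1}$, with all of $G$ occurring as an $E_X$-predecessor of a singleton) and set
\[
  N=\{\,c\in G\colon\exists i\in\omega\ \exists a\in G\ (\,c=h_X(n_X(i),a)\ \wedge\ \Op{Sat}(i,a,p_\ast)\,)\,\}.
\]
Then $c\in N$ holds exactly when $c$ codes a pair $(i,u)$ with $i\in\omega$, $u\in\JP{n+1}$ and $\JA{n}\models\varphi^{(2)}_i(u,p^{n+1}_\alpha)$, i.e.\ exactly when $c$ codes an element of $A^{n+1}_\alpha$; so $\Tup{Y,N}$ is an $\omega$-copy of $\JA{n+1}$, and as every step used a single fixed definition the copy is uniformly arithmetic in $\XM$. (If $\PR{n}=1$ one is automatically in the case $\PR{n+1}=\PR{n}$, with $\JP{n}$ replaced by $V_\omega$; it is convenient to first pass to the canonical copy via Lemma~\ref{pro:compute-canonical-copy}.)

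The step I expect to require the most care is the internal location of $\PR{n+1}$ and $p^{n+1}_\alpha$, namely verifying that the bundle of set-theoretic definitions invoked there (the projectum, $\gamma\mapsto J_\gamma$, $<_J$, hereditary cardinality, $\varphi_{\Ax{V=L}}$, and above all satisfaction) really does translate, via the coding $\pi$, into arithmetic predicates over $\Tup{F_X,E_X,M}$ of uniformly bounded complexity. This is precisely the obstruction flagged before Proposition~\ref{pro:def-sat}: a bounded set-quantifier need not become a bounded number-quantifier, so arithmetic complexity is \emph{not} uniform across fine structure in general. What saves the single step from $n$ to $n+1$ is that it uses only finitely many fixed formulas, chief among them the uniformly $\Sigma_1$ satisfaction relation, so the quantifier depth — and hence the arithmetic level — stays bounded. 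A minor additional annoyance is running the boundary case $\PR{n+1}=\PR{n}$ (in which $\JP{n+1}=\JP{n}$ is not an element of $\JP{n}$) in parallel with the principal case $\PR{n+1}<\PR{n}$ throughout.
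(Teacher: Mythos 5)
Your proposal is correct and follows essentially the same route as the paper: the paper's (much shorter) proof simply asserts that the projectum/standard parameter and the standard code $A^{n+1}_\alpha$ are uniformly first-order definable over $\JA{n}$ and that these fixed definitions, via the uniform $\Sigma_1$ satisfaction relation of Proposition~\ref{pro:def-sat}, translate into uniformly arithmetic definitions over the $\omega$-copy. Your write-up is just an explicit unwinding of that sketch (internal search for $\PR{n+1}$ and $p^{n+1}_\alpha$, then reading off the code via $\Op{Sat}$, $n_X$, $h_X$), including the same key point that only finitely many fixed formulas are used, so the arithmetic complexity stays bounded.
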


\begin{proof}
  The projectum $\rho^{n+1}_\alpha$ and the standard parameter are uniformly first-order definable over $\Tup{J_{\rho^n_\alpha}, A^n_\alpha}$ (uniformly in $n$). Then the standard code $A^{n+1}_\alpha$ is uniformly defined over the same structure using the parameter $\rho^{n+1}_\alpha$. These uniform first-order definitions become uniform arithmetic definitions in $\omega$-copies, which yields the corollary.
\end{proof}

\bigskip

\subsection{Recognizing J-structures}
\label{ssec:arith-master-codes}

Our goal is to show that there exists a recursive function $G$ such that, for
each $n$, no element of the sequence of canonical copies of $J$-structures with
projectum equal to one in $\Lb{n}$ can be $G(n)$-random with respect to a continuous
measure. 
In the proof of this result (Theorem~\ref{thm:mastercodes-in-NCR}), we need to consider the initial segment of $\omega$-copies computable in
(some fixed jump of) $\mu$. 

The problem is that we cannot arithmetically define the set of $\omega$-copies
of structures $J_\alpha$. We can, however, define a set of ``pseudocopies'',
subsets of $\omega$ that behave in most respects like $\omega$-copies of actual $J_\alpha$, 
but that may code structures that are not well-founded. We can, arithmetically in the pseudocopy, require that its version of the natural numbers (if any) is isomorphic to $\omega$, that is to say that it is an \emph{$\omega$-model}.

By comparing the structures coded by these pseudocopies, we can also linearly
order a subset of the latter (up to isomorphism), depending on whether a coded structure
embeds into another. This ordering will be developed in Section~\ref{ssec:comparing-copies}.

To define what it means to be a pseudocopy, we have to formulate ``pseudo''-versions of the properties that characterize the $J_\alpha$'s. These properties will stand in for the use of the Condensation Lemma, which we lack in non-wellfounded structures. One such property is rudimentary closure. A first-order approximation $\varphi_{\Op{Rud}}$ is the assertion that the formulas which specify the graphs of $F_0, \dots, F_8$ from Proposition~\ref{prop:rud-base-functions} define total functions.

We also need to require the presence of a linearly-ordered, internal $J$-hierarchy inside a pseudocopy. By Proposition~\ref{prop:definability-J}, for any $\beta$, the sequence of
$J_\alpha$ ($\alpha < \beta$) is uniformly $\Sigma_1$-definable over $J_\beta$.
Let $\varphi$ be the formula defining this collection.
For an $\omega$-copy $X$, 
we define
\[
	\vec{J}^X = \{z \colon X \models \varphi(z)\},
\]
the $J$-structure inside $X$.

We will also apply some aspects of the fine structure of the $J$-hierarchy by requiring that pseudocopies exhibit the same features: Elements of the $J$-hierarchy satisfy $V=L$ and that there exists a $\Sigma_1$-map from the ordinals of the structure onto the whole structure. 

\begin{lem}\label{lem:universal-Sigma_1}
If $X$ is an $\omega$-copy that satisfies $\varphi_{\Op{Rud}}$ and is an $\omega$-model, then the satisfaction predicate $\models^{\Sigma_1}$ for $\Sigma_1$-formulas is $\Sigma_1$-definable in $\Tup{F_X, E_X}$.
\end{lem}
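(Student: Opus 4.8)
The statement to prove is Lemma~\ref{lem:universal-Sigma_1}: if $X$ is a transitive $\omega$-copy satisfying $\varphi_{\Op{Rud}}$ and is an $\omega$-model, then the $\Sigma_1$-satisfaction predicate $\models^{\Sigma_1}$ is $\Sigma_1$-definable over $\Tup{F_X,E_X}$. The idea is to reduce this to the genuine set-theoretic fact (Proposition~\ref{pro:def-sat}, Jensen's Corollary~1.13), which says that $\models^{\Sigma_1}_{\Tup{M,A}}$ is uniformly $\Sigma_1(\Tup{M,A})$ over transitive rud-closed structures. The key observation is that Jensen's proof of that fact is itself essentially finitistic: it exhibits a single $\Sigma_1$ formula $\Sat_{\Sigma_1}(a,b)$ (``$b$ is an assignment satisfying the $\Sigma_1$-formula coded by $a$'') whose verification of correctness uses only rudimentary closure of the ambient structure, together with the absoluteness of the (rudimentary) syntactic operations and of the recursion building up the satisfaction of bounded subformulas. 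None of this argument appeals to well-foundedness, the axiom of foundation, or condensation; it is a uniform first-order consequence of $\varphi_{\Op{Rud}}$.

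**Key steps, in order.** First I would isolate from Jensen's proof the explicit $\Sigma_1$ formula $\theta_{\Sat}(v_0,v_1)$ that defines $\models^{\Sigma_1}$ and, crucially, record that its correctness is witnessed by a $\Pi$-type schema each instance of which is provable from (finitely much of) the rudimentary-closure axioms — concretely, that the rud functions $F_0,\dots,F_8$ from Proposition~\ref{prop:rud-base-functions} are total and that the natural recursion defining the bounded-satisfaction relation $\Sat_{\Delta_0}$ has a solution, which is forced once those functions are total on the (transitive) universe. Second, I would argue that since $X$ is transitive and satisfies $\varphi_{\Op{Rud}}$, the structure $\Tup{F_X,E_X}$ satisfies exactly this finite fragment; hence interpreting $\theta_{\Sat}$ over $\Tup{F_X,E_X}$ yields a $\Sigma_1$ relation on $F_X$ that, by the same proof, agrees with actual $\Sigma_1$-satisfaction in $\Tup{F_X,E_X}$. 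The $\omega$-model hypothesis enters to guarantee that the internal Gödel coding of formulas — which lives on the internal natural numbers — matches the external one, so that ``the $\Sigma_1$ formula with code $a$'' means the same thing internally and externally; without it the internal ``formulas'' could be ill-founded pseudo-formulas and the equivalence would be vacuous. Third, I would note uniformity: the same $\theta_{\Sat}$ works for all such $X$, since it depends only on the fixed finite axiom fragment, giving the ``$\Sigma_1$-definable'' conclusion with no parameters beyond those built into $\varphi_{\Op{Rud}}$.

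**Main obstacle.** The delicate point is the bookkeeping that Jensen's construction really does go through in a merely rud-closed, possibly non-well-founded $\omega$-model: one must check that the definition of $\Sat_{\Delta_0}$ by recursion on (the internal notion of) formula complexity is legitimate there. Internally, formula codes are honest natural numbers (by the $\omega$-model assumption), so the recursion has finite length from the model's perspective and its graph is rud in the model — this is exactly where $\varphi_{\Op{Rud}}$ does the work that condensation does in the standard development. I would spell out that the function building the finite sequence of partial satisfaction relations up to a given complexity is obtained by iterated application of $F_0,\dots,F_8$ (pairing, difference, Cartesian product, domain, etc.), and that its totality is a first-order consequence of their totality on a transitive universe; hence it is available in $\Tup{F_X,E_X}$. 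Everything else — closing off the leading existential quantifier of a $\Sigma_1$ formula, which only adds one unbounded $\exists$ on top of the $\Delta_0$ part — is then routine and preserves the $\Sigma_1$ form. I expect no further difficulty beyond carefully citing the relevant passages of~\cite{jensen:1972} and~\cite{devlin:1984} for the finitistic character of the satisfaction construction.
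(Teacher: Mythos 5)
Your proposal is correct and follows essentially the same route as the paper: the paper's proof simply cites Jensen's argument (Lemma~1.12 and Corollary~1.13 of~\cite{jensen:1972}) for transitive rud-closed sets and observes that it does not use well-foundedness of the $\in$-relation but goes through for $\omega$-models. Your additional bookkeeping — that only totality of $F_0,\dots,F_8$ and the internal recursion for $\Delta_0$-satisfaction are needed, and that the $\omega$-model hypothesis aligns internal and external formula codes — is exactly the content the paper leaves implicit.
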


\begin{proof}
The proof follows the argument given by Jensen for transitive, rud-closed sets~\cite[Lemma~1.12, Corollary~1.13]{jensen:1972}. The argument does not use the full well-foundedness of the $\in$-relation, but rather works with $\omega$-models.
\end{proof}

\begin{defn} \label{def:pseudocopy}
  A set $X \subseteq \omega$ is a \emph{pseudocopy} if the following hold.
  \begin{enumerate}[(1)]
  \item The relation $E_X$ is non-empty and extensional.

  \item $X$ codes an $\omega$-model: if in the structure coded by $X$ there is an upper bound on the codes of finite ordinals in $X$, then there is a code for a least upper bound in $X$.

  \item The structure $\Tup{F_X, E_X}$ satisfies $\varphi_{\Op{Rud}}$.

    \item The structure $\Tup{F_X, E_X}$ satisfies 
  \begin{enumerate}[(a)]
      \item $\varphi_{V=L}$,
      \item the ordinals in the structure are linearly ordered by $E_X$ and no two distinct ordinals are isomorphic, 
      \item there is a $\Sigma_1$-map from the ordinals of $\Tup{F_X, E_X}$ onto $F_X$ and for every $a \in F_x$ such that $$\Tup{F_X, E_X} \models \text{``$a$ is an infinite cardinal''},$$ it holds that
      \[
      \Tup{F_X, E_X} \models \text{``there is a bijection between $a$ and $a \times a$''}.
      \]
      \item the elements represented by $\vecJ^X$ are transitive and linearly ordered by the $\in$-relation.
      \item if $\varphi_<$ is the $\Sigma_1$-formula that defines $<_J$ as in Proposition~\ref{prop:definability-J}, this formula $\varphi_<$ defines a linear ordering in the structure $\Tup{F_X, E_X}$ such that
      \begin{enumerate}[(i)]
        \item every $\Sigma_3$-definable property has a least element with respect to this linear ordering,
        \item for every $z \in \vecJ^X$, the ordering defined by $\varphi_<$ inside $z$ is an initial segment of the ordering in $\Tup{F_X, E_X}$.
      \end{enumerate}
  \end{enumerate}
  Moreover, (a)-(e) hold for every set induced by some $z \in \vecJ^X$. (As $X$ codes an extensional structure, and $z$ is transitive in $X$, this condition is equivalent to saying that $X$ satisfies the five conditions hold with quantifiers bounded by $z$.) 

  \end{enumerate}
\end{defn}

Every $\omega$-copy of a $J_\alpha$ is a pseudocopy.
Below we use various properties of pseudocopies that are true of each $J_\alpha$ and are arithmetically definable for $\omega$-copies. For the sake of efficiency, the reader may assume that these properties are part of the definition of a pseudocopy.

\begin{lem}\label{lem:reflection}
  If $X$ is a pseudocopy and $z \in \vecJ^X$, then $\Set_X(z)$ is a pseudocopy.
\end{lem}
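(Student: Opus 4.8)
The plan is to show that each of the five clauses in Definition~\ref{def:pseudocopy} is inherited by $\Set_X(z)$ whenever $X$ is a pseudocopy and $z \in \vecJ^X$. The key point, already flagged in clause (4) of the definition, is that $z$ is a transitive element of the extensional structure coded by $X$, so "truth in $\Set_X(z)$" coincides with "truth in $X$ with all quantifiers bounded by $z$". Thus the whole argument reduces to a sequence of absoluteness observations.

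First I would record the coding bookkeeping: since $z \in \vecJ^X$ and $X$ codes an $\omega$-model, the set $\Set_X(z) = \{y \colon y\, E_X\, z\}$, together with the restriction of $E_X$ to it, is again (isomorphic to) a relational structure of the form $\Tup{F_W, E_W}$ for the appropriate $W \subseteq \omega$; extensionality of $E_X$ restricted to $\Set_X(z)$ is immediate from extensionality of $E_X$ and transitivity of $z$ in $X$. That $\Set_X(z)$ codes an $\omega$-model and has $E_W$ nonempty follows because the (internal) $\omega$ of $X$ lies in every level $z \in \vecJ^X$ with $z$ coding some $J_\beta$, $\beta>1$; for the degenerate smallest levels one checks the statement directly, or notes that the definition of $\vecJ^X$ only picks out levels where this already holds. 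Clause (3), that $\Set_X(z)$ satisfies $\varphi_{\Op{Rud}}$, and clauses (4)(a)-(4)(d), are exactly the content of the final sentence of Definition~\ref{def:pseudocopy}: the definition explicitly demands that (a)-(d) hold for every set induced by some element of $\vecJ^X$, and since being a pseudocopy is about the structure $\Tup{F_X,E_X}$ and $z$ is transitive, these transfer verbatim. The one genuine thing to verify is that the \emph{internal $J$-hierarchy of $\Set_X(z)$} agrees with the part of $\vecJ^X$ below $z$, i.e.\ that $\vecJ^{W}$ equals $\{w \colon w\, E_X\, z \text{ and } w \in \vecJ^X\}$; this is where one uses that $\varphi$ defining the $J$-sequence is $\Sigma_1$ and that $\Sigma_1$ formulas are upward and (here, by transitivity of $z$) appropriately downward absolute between $\Set_X(z)$ and $X$ — precisely the content of Lemma~\ref{lem:universal-Sigma_1}, which guarantees the $\Sigma_1$-satisfaction predicate behaves correctly in $\omega$-models satisfying $\varphi_{\Op{Rud}}$.

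So the proof structure is: (i) note $\Set_X(z)$ is extensional with nonempty $E$ and is an $\omega$-model (from clause (1),(2) of $X$ plus transitivity of $z$); (ii) note $\Set_X(z)$ satisfies $\varphi_{\Op{Rud}}$ and conditions (4)(a)-(d) directly from the last clause of Definition~\ref{def:pseudocopy} applied to $z$; (iii) check that the internal $\vec J$ of $\Set_X(z)$ is the initial segment of $\vec J^X$ below $z$, using $\Sigma_1$-absoluteness (Lemma~\ref{lem:universal-Sigma_1}) and transitivity, so that the "moreover" part of clause (4) — that (a)-(d) hold for every $z' \in \vec J^{W}$ — follows because every such $z'$ is also in $\vec J^X$ and hence inherits (a)-(d) by the corresponding clause for $X$.

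The main obstacle I anticipate is step (iii): making precise that the $\Sigma_1$-defined collection $\vec J^X$ relativizes correctly to the transitive element $z$, so that the internal $J$-hierarchy of $\Set_X(z)$ is genuinely an initial segment of that of $X$ rather than something spuriously longer or shorter. This is the only place the proof is not a one-line appeal to "transitivity + the definition already says so", and it is exactly why Lemma~\ref{lem:universal-Sigma_1} was proved just beforehand; the argument there (that Jensen's proof of $\Sigma_1$-definability of satisfaction goes through for $\omega$-models satisfying $\varphi_{\Op{Rud}}$, without full well-foundedness) is what licenses the absoluteness we need. Everything else is routine transfer of bounded-quantifier statements through the isomorphism implicit in passing from $X$ to $\Set_X(z)$.
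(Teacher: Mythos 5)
Your proof is correct and follows essentially the same route as the paper's: verify (1)--(3) by absoluteness to the transitive element $z$, get (4)(a)--(d) for $\Set_X(z)$ itself from the final (``moreover'') clause of Definition~\ref{def:pseudocopy}, and handle $w \in \vecJ^{\Set_X(z)}$ via upward $\Sigma_1$-absoluteness of membership in $\vecJ$ (with Lemma~\ref{lem:universal-Sigma_1}), together with $\Set_{\Set_X(z)}(w)=\Set_X(w)$ and absoluteness of bounded-quantifier statements. Two small corrections: clause (3) is not literally part of the definition's final sentence (the paper derives $\varphi_{\Op{Rud}}$ for $\Set_X(z)$ from $z$ being rud-closed inside $X$ and the absoluteness of the graphs of $F_0,\dots,F_8$), and in your step (iii) only the upward inclusion $\vecJ^{\Set_X(z)} \subseteq \vecJ^X$ is needed --- the ``initial segment''/downward half is unnecessary and is not automatic for $\Sigma_1$ formulas.
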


\begin{proof}
    (1) holds since, by item (4)(d), if $z \in \vecJ^x$, then $z$ represents a transitive set in $X$. Being extensional is  absolute to transitive elements of $X$ and $E_X$ is extensional. 
    
    For (2), note that $X$ and $\Set_X(z)$ have the same natural numbers.
    
    Item (3) holds for $\Set_X(z)$ since inside $X$, $z$ is rud-closed, and the graphs of the functions $F_0, \dots, F_8$ are absolute between $X$ and $\Set_X(z)$. 
    
    For (4), first note, by Lemma~\ref{lem:universal-Sigma_1}, item  (c) is a first-order property.
    $\Set_X(z)$ itself satisfies (a)-(e) specified in (4) because it is in $\vecJ^X$. Now suppose $w \in \vecJ^{\Set_X(z)}$. Membership in $\vecJ$ is determined by a (uniformly) $\Sigma_1$-formula and these formulas are upward absolute, so $w \in \vecJ^X$. Since (4) holds for $X$, $\Set_X(w)$ satisfies (a)-(e). Since $\Set_{\Set_X(z)}(w) = \Set_X(w)$, the condition that (a)-(e) hold in $\Set_X(w)$ is a formula in which all the quantifiers are bounded by $\Set_X(w)$, and such formulas are absolute between the structures coded by $X$ and by $z$ within $X$, $\Set_X(w)$ satisfies (a)-(e), as required.
\end{proof}

\begin{lem} \label{pro:pseudocopy-arithmetic}
  There exists an arithmetic formula
  $\varphi_{\Op{PC}}(X)$ such that $\varphi_{\Op{PC}}(X)$ holds for a real
  $X$ if and only if $X$ is a pseudocopy. Moreover, if $\Tup{F_X,E_X}$ is well-founded,
  then it is an $\omega$-copy of a countable $J_\beta$, $\beta > 1$.
\end{lem}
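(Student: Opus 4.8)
The plan is to verify that each of the five clauses of Definition~\ref{def:pseudocopy} is expressible by an arithmetic formula in the real $X$, so that $\varphi_{\Op{PC}}(X)$ is the conjunction. Clauses (1)--(3) are straightforward: extensionality of $E_X$ is $\Pi^0_2$; non-emptiness is $\Sigma^0_1$; the assertion that $\Tup{F_X,E_X}$ codes an $\omega$-model (its internal $\omega$ is isomorphic to the true $\omega$) is arithmetic because we can quantify over the witness $z$ for $\omega$ in $F_X$ and require that the $E_X$-predecessors of $z$ form an $E_X$-chain order-isomorphic to $\Nat$ with no extra elements, which is $\Pi^0_3$-ish; and $\varphi_{\Op{Rud}}$, being a fixed first-order sentence in the language of set theory, translates under the interpretation of $\in$ by $E_X$ into a fixed arithmetic sentence about $X$ (bounded set quantifiers become arithmetic quantifiers over $F_X$, unbounded ones stay arithmetic). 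For clause (4), first-orderness of (b) and (c) is exactly Lemma~\ref{lem:universal-Sigma_1}: once $X$ is a transitive $\omega$-model satisfying $\varphi_{\Op{Rud}}$, the $\Sigma_1$-satisfaction predicate is $\Sigma_1$-definable inside $\Tup{F_X,E_X}$, so ``every $\Sigma_1$-definable set of ordinals has an $E_X$-least element'' and ``there is a $\Sigma_1$-map from the ordinals onto $F_X$'' each become single first-order statements, hence arithmetic statements about $X$. Item (a) is a fixed $\Pi_2$ sentence $\varphi_{\Ax{V=L}}$, item (d) a fixed first-order sentence; both translate to arithmetic conditions. The final ``moreover'' part of clause (4) -- that (a)--(d) hold in $\Set_X(z)$ for every $z \in \vecJ^X$ -- is arithmetic because $\vecJ^X$ is defined by a fixed $\Sigma_1$ formula $\varphi$ evaluated in $\Tup{F_X,E_X}$, so ``$z \in \vecJ^X$'' is arithmetic in $X$, and (as noted in Definition~\ref{def:pseudocopy}) relativizing (a)--(d) to $\Set_X(z)$ just means bounding all quantifiers by $z$, which keeps the whole thing arithmetic in $X$. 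Taking the conjunction over the finitely many clauses, and noting all the bounds are uniform, yields the desired $\varphi_{\Op{PC}}$.

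For the second assertion, suppose $\Tup{F_X,E_X}$ is well-founded. By clause (1) it is extensional and well-founded, so by Mostowski's Collapsing Theorem it is isomorphic to a unique transitive set $\Tup{M,\in}$. Clause (4)(a) gives $M \models \varphi_{\Ax{V=L}}$, and by the characterization cited in Section~\ref{sub:fine_structure_and_jensen_s_} (``$M \models \varphi_{\Ax{V=L}} \iff \exists \alpha\; M = J_\alpha$'') we get $M = J_\beta$ for some $\beta$. Since $X$ codes an $\omega$-model, the internal $\omega$ of $M$ is the true $\omega$, which forces $\beta > 1$ (for $\beta \le 1$, $J_\beta = V_\omega$ does not contain $\omega$ as an element, or more precisely its internal $\omega$ is not a set of the structure). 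Under the collapse isomorphism, $X$ is then exactly an $\omega$-copy of $J_\beta$ in the sense of Definition~\ref{def:omega-copy} (with $A = M = \varnothing$). Countability of $J_\beta$ holds because $X \subseteq \omega$, so $|J_\beta| \le \aleph_0$.

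The main obstacle I expect is the bookkeeping in the first part: one must be careful that translating set-theoretic formulas (including the $\Sigma_1$-satisfaction predicate of Lemma~\ref{lem:universal-Sigma_1} and the fine-structure conditions (b),(c)) into arithmetic over $X$ really does stay within the arithmetic hierarchy with \emph{uniform} quantifier bounds, since unbounded set quantifiers do not a priori correspond to bounded arithmetic quantifiers. The key point making this work is that we are using only a \emph{fixed, finite} list of set-theoretic sentences (as emphasized in the discussion preceding Proposition~\ref{pro:def-sat}), so each contributes a fixed finite number of arithmetic quantifiers, and the ``moreover'' relativizations to $z \in \vecJ^X$ only add a bounded-by-$z$ layer; hence the total complexity is some fixed level of the arithmetic hierarchy, independent of anything, and $\varphi_{\Op{PC}}$ is a genuine arithmetic formula.
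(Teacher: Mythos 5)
Your proposal is correct and follows essentially the same route as the paper: translate the fixed, finite list of first-order clauses of Definition~\ref{def:pseudocopy} (using Lemma~\ref{lem:universal-Sigma_1} for the $\Sigma_1$-definability clauses and handling the $\omega$-model condition via the internal witness for $\omega$) into an arithmetic formula about $X$, then apply Mostowski collapse together with $\varphi_{\Ax{V=L}}$ (and rud-closure) for the well-founded case. Your additional explicit argument that $\beta>1$ and that $J_\beta$ is countable is a harmless elaboration of what the paper leaves implicit.
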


\begin{proof}
  Given Lemma~\ref{lem:reflection} and item (2), items (1), (3), and (4) are first-order properties and hence arithmetic relative to $X$. 

  With respect to (2), we can define $\omega$ using the usual $\Sigma_0$ set
theoretic formula (the least infinite ordinal).
In transitive, rud-closed sets, $\varphi_\omega(x)$ holds if and only if $x = \omega$. 
Interpreting $\varphi_\omega$ over $\Tup{F_X,E_X}$, we obtain an arithmetic in
$X$ property. We require a pseudocopy $\Tup{F_X,E_X}$ to satisfy
\[
	\exists x \; \varphi_\omega(x).
\]
This $x$ will be unique and define $\omega$ with respect to $\Tup{F_X,E_X}$. Let
us denote this unique number by $\omega_X$.

Given $\omega_X$, we can also recover the mapping $i \mapsto n_X(i)$ as in
Lemma~\ref{pro:recover-nat-num}. As the definition of $\omega_X$ is uniform, we
obtain that $i \mapsto n_X(i)$ is uniformly arithmetic in $X$. (2) holds exactly
when this map from $\omega$ to $\omega_X$ is a surjection, which is again
uniformly arithmetic relative to $X$.

The conjunction of the arithmetic formula that characterizes (2) with the arithmetic analogues of the first-order formulas characterizing (1), (3), and (4) yields the arithmetic formula $\varphi_{\Op{PC}}$.

\medskip 
By Mostowski's Collapsing Theorem, if $X$ satisfies (1) and $E_X$ is
well-founded, then $\Tup{F_X,E_X}$ is isomorphic to a transitive set structure
$\Tup{S,\in}$, and by (3) $S$ will be rud-closed. Finally, any rud-closed set that satisfies $\varphi_{V=L}$ is a $J_\alpha$.
\end{proof}

\bigskip

\subsection{Comparing pseudocopies} \label{ssec:comparing-copies}

If two pseudocopies $X$ and $Y$ define well-founded structures, they are
$\omega$-copies of sets $J_\alpha$ and $J_\beta$, respectively. Since $\alpha <
\beta$ implies $J_\alpha \in J_\beta$, it follows that one structure must embed
into the other as an initial segment.

We want to find an arithmetic formula that compares two pseudocopies in this
respect. The problem is that, in general, the isomorphism relation between countable
structures need not be arithmetic. In our case, however, we can make use of the
special set-theoretic structure present in the pseudocopies, by comparing the
subsets of the cardinals present, to show that the isomorphism relation is arithmetic.

The complexity of the arithmetic operations involved in these comparisons will
depend on the number of cardinals present in a pseudocopy.

Let us introduce the following notation. Recall that $\beta_N$ denotes the least
ordinal such that $L_{\beta_N} \models \ZF^-_N$. For any ordinal $\alpha$, let
\begin{equation} \label{equ:max-power}
	P_\alpha = \max \{ n \colon \P^{(n)}(\omega) \text{ exists in } J_\alpha \},
\end{equation}
if this maximum exists. We first note that for all $\alpha < \beta_N$, $P_\alpha
\leq N$. This is because, if $P_\alpha$ were greater than or equal to $n+1$ and
$\beta$ were the $(n+1)$st cardinal in $L_\alpha$, then $L_\beta$ would satisfy
$\ZF^-_N$, hence $\alpha > \beta_N$. Hence $P_\alpha$ is defined and uniformly
bounded by $N$ for all $\alpha < \beta_N$.

\medskip
Using the predicate $\varphi_\omega$, we can formalize the
(non-)existence of power sets of $\omega$ for pseudocopies.
For any $k \in \omega$, there exists a formula defining the predicate
$y = \P^{(k)}(\omega)$.

\begin{defn}
	A pseudocopy $X$ is an \emph{$n$-pseudocopy} if it satisfies the uniformly arithmetic in $X$ predicate
	\[
		\exists y ( y = \P^{(n)}(\omega) ) \and \forall z ( z \neq \P^{(n+1)}(\omega) ).
	\]
\end{defn}

We now use the fact that pseudocopies are $\omega$-models. Using the power sets
of $\omega$ in each pseudocopy, we can check whether two pseudocopies have the
same reals, sets of reals, etc.

First, we can check whether every real in $X$ has an analogue in $Y$:
\[
	\forall u \: ( X \models u \subseteq \omega \, \rightarrow \, \exists v ( Y
  \models v \subseteq \omega \, \wedge \, \forall i ( n_X(i) E_X u \,
  \leftrightarrow \, n_Y(i) E_Y v ))).
\]
By extensionality, such a $v$, if it exists, is unique. We can therefore define
the mapping $f_0^{X,Y}(u) = v$ which maps the representation of a real in
$\Tup{F_X,E_X}$ to its representation in $\Tup{F_Y,E_Y}$.
We can similarly check whether every real in $Y$ has an analogue in $X$.
This gives rise to a function $f_0^{Y,X}$. Let $\varphi^{(0)}_{\Op{comp}}(X,Y)$
be the arithmetic formula asserting that $X$ and $Y$ code the same subsets of
$\omega$.

We can continue this comparison through the iterates of the power set of $\omega$. 
This will yield arithmetic formulas $\varphi^{(n)}_{\Op{comp}}(X,Y)$ with the following property:
\begin{quote}
	If $X$ and $Y$ are pseudocopies in which $\P^{(n)}(\omega)$ exists, then
  $\varphi^{(n)}_{\Op{comp}}(X,Y)$ holds if and only if $X$ and $Y$ have
  (representations of) the same subsets of $\P^{(i)}(\omega)$, for all $0 \leq i
  \leq n$.
\end{quote}
Given two $n$-pseudocopies, the above formulas allow for an arithmetic definition of isomorphic pseudocopies.

\begin{lem} \label{lem:compare_well-founded}
	For given $n$ and for any two $n$-pseudocopies $X,Y$ that code well-founded
  structures $\Tup{F_X,E_X}$ and $\Tup{F_Y,E_Y}$, respectively, if
  $\varphi^{(n)}_{\Op{comp}}(X,Y)$, then $X$ and $Y$ code the same $J_\alpha$.
\end{lem}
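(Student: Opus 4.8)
The plan is to argue by contraposition at the level of the ordinals coded by the two pseudocopies. By Lemma~\ref{pro:pseudocopy-arithmetic}, since $X$ and $Y$ are well-founded pseudocopies they are $\omega$-copies of genuine transitive $J$-structures $J_\alpha$ and $J_\beta$ with $\alpha,\beta>1$; the claim is that $\alpha=\beta$. Since the comparison formula $\varphi^{(n)}_{\Op{comp}}$ is symmetric in its two arguments, it is enough to rule out $\alpha<\beta$, and I shall assume this for contradiction; by cumulativity $J_{\alpha+1}\subseteq J_\beta$. Throughout I identify, via the coding isomorphisms, a subset of $\P^{(i)}(\omega)$ coded by $X$ (respectively $Y$) with the corresponding element of $J_\alpha$ (respectively $J_\beta$); note $\P^{(n)}(\omega)\in J_\alpha$ because $X$ is an $n$-pseudocopy, and this is the same element of the larger structure $J_\beta$.

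First I would isolate the relevant set. Let $\mathcal{Q}=\{T\in J_\alpha:T\subseteq\P^{(n)}(\omega)\}$. As a subclass of $J_\alpha$ this is defined over $(J_\alpha,\in)$ by the $\Sigma_0$ formula ``$T\subseteq p$'' with the single parameter $p=\P^{(n)}(\omega)\in J_\alpha$, so by the fine-structural fact $J_{\alpha+1}\cap\P(J_\alpha)=\P_{\Def}(J_\alpha)$ recalled in Subsection~\ref{sub:fine_structure_and_jensen_s_} we have $\mathcal{Q}\in J_{\alpha+1}\subseteq J_\beta$. Moreover $\mathcal{Q}\notin J_\alpha$: if $\mathcal{Q}\in J_\alpha$ then, $J_\alpha$ being transitive, $\mathcal{Q}$ would witness inside $J_\alpha$ that the power set of $\P^{(n)}(\omega)$ exists, i.e.\ $\P^{(n+1)}(\omega)\in J_\alpha$, contradicting that $X$ is an $n$-pseudocopy.

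Next I would split into two cases according to whether any subset of $\P^{(n)}(\omega)$ appears strictly between the two structures. If $\{T\in J_\beta:T\subseteq\P^{(n)}(\omega)\}=\mathcal{Q}$, then $\mathcal{Q}\in J_\beta$ and, $J_\beta$ being transitive, $\mathcal{Q}$ witnesses inside $J_\beta$ that the power set of $\P^{(n)}(\omega)$ exists; hence $\P^{(n+1)}(\omega)\in J_\beta$, contradicting that $Y$ is an $n$-pseudocopy. Otherwise there is some $T\in J_\beta\setminus J_\alpha$ with $T\subseteq\P^{(n)}(\omega)$. Then $Y$ codes a representation of $T$ as a subset of $\P^{(n)}(\omega)$, whereas $X$ codes only representations of those subsets of $\P^{(n)}(\omega)$ that are elements of $J_\alpha$, and none of those equals $T$ since $T\notin J_\alpha$. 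As $\varphi^{(n)}_{\Op{comp}}(X,Y)$ asserts that $X$ and $Y$ code the same subsets of $\P^{(i)}(\omega)$ for all $i\le n$, it must fail. In either case the assumption $\alpha<\beta$ collapses, so $\alpha=\beta$ and $X$, $Y$ code the same $J_\alpha$.

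The routine parts I would then fill in are: that ``the power set of $\P^{(n)}(\omega)$ exists'' is read in a transitive model as ``the model contains the set of all of its own subsets of $\P^{(n)}(\omega)$'', so that membership of $\mathcal{Q}$ in $J_\beta$ is exactly the failure of the $n$-pseudocopy condition; and the translation from the syntactic formula $\varphi^{(n)}_{\Op{comp}}$ to the semantic statement used in the second case. I expect this last point to be the main obstacle: one must check, using the comparison maps $f_0^{X,Y},\dots$ constructed in Subsection~\ref{ssec:comparing-copies} together with the well-foundedness of both structures, that two well-founded pseudocopies agreeing level-by-level down to $\omega$ on all subsets of $\P^{(i)}(\omega)$, $i\le n$, in fact code literally the same such subsets and not merely elementarily equivalent ones. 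This follows by an induction along the (well-founded) relations $E_X$ and $E_Y$, and it is precisely where the hypothesis that $X$ and $Y$ are well-founded is used.
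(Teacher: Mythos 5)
Your proof is correct and takes essentially the same route as the paper's: assuming without loss of generality $\alpha<\beta$, the hypothesis $\varphi^{(n)}_{\Op{comp}}(X,Y)$ rules out any subset of $\P^{(n)}(\omega)$ in $J_\beta\setminus J_\alpha$, and then the definable collection of $J_\alpha$-subsets of $\P^{(n)}(\omega)$, which lies in $J_{\alpha+1}\subseteq J_\beta$, witnesses that $\P^{(n+1)}(\omega)$ exists in $J_\beta$, contradicting that $Y$ is an $n$-pseudocopy. The differences are purely presentational: you make explicit the appeal to $J_{\alpha+1}\cap\P(J_\alpha)=\P_{\Def}(J_\alpha)$ and the level-by-level identification of the internal power-set towers, which the paper compresses into the remark that no new subset of $\P^{(n)}(\omega)$ is constructed between $\alpha$ and $\beta$.
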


\begin{proof}
	Assume for a contradiction $X$ and $Y$ are not isomorphic. Since they are
  well-founded pseudocopies, there must exist countable $\alpha, \beta$ such
  that $\Tup{F_X,E_X} \cong (J_\alpha, \in)$ and $\Tup{F_Y,E_Y} \cong (J_\beta,
  \in)$. Without loss of generality, $\alpha < \beta$. Since $\Tup{F_X,E_X}$ and
  $\Tup{F_Y,E_Y}$ code the same subsets of $\P^{(n)}(\omega)$ no new subset of
  $\P^{(n)}(\omega)$ is constructed between $\alpha$ and $\beta$. But this
  implies $\P^{(n+1)}(\omega)$ exists at $\alpha+1$, which is an immediate
  contradiction if $\beta = \alpha+1$. If $\beta > \alpha+1$, since
  $\P^{(n+1)}(\omega)$ does not exist in $J_\beta$, a new subset of
  $\P^{(n)}(\omega)$ must be constructed between $\alpha+1$ and $\beta$,
  contradiction.
\end{proof}

We will consider the comparison between ill-founded structures later.

\begin{cor} \label{cor:transfer-function}
  For given $n$, if $X$ and $Y$ are
  well-founded $n$-pseudocopies and $\varphi^{(n)}_{\Op{comp}}(X,Y)$, then the isomorphism from the 
  structure coded by $X$ to the structure coded by $Y$ is arithmetic in $(X,Y)$. Furthermore, the formula which defines this isomorphism depends only on $n$. 	
\end{cor}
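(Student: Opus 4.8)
The corollary follows by unwinding the construction of the comparison formulas $\varphi^{(n)}_{\Op{comp}}$ together with the transfer functions $f_0^{X,Y}, f_0^{Y,X}$ built in the preceding discussion. The key point established in Lemma~\ref{lem:compare_well-founded} is that two well-founded $n$-pseudocopies satisfying $\varphi^{(n)}_{\Op{comp}}$ code the same $J_\alpha$; what remains is to extract from the data witnessing $\varphi^{(n)}_{\Op{comp}}$ an explicit arithmetic definition of the isomorphism $\iota\colon \Tup{F_X,E_X} \to \Tup{F_Y,E_Y}$, with the defining formula depending only on $n$.

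**The main steps.** First, recall that $\varphi^{(n)}_{\Op{comp}}(X,Y)$ was built level by level through the iterated power sets of $\omega$: at level $0$ one has the bijection $f_0^{X,Y}$ between (representations of) reals of $X$ and of $Y$, and this was checked to be uniformly arithmetic in $(X,Y)$; at level $i+1 \leq n$ one similarly obtains bijections $f_{i+1}^{X,Y}$ between representations of subsets of $\P^{(i)}(\omega)$, each uniformly arithmetic in $(X,Y)$ with formula depending only on $i$. Second, I would observe that since $X$ and $Y$ are $n$-pseudocopies, every element of the structure coded by $X$ lies in (or is coded within) some $J_\gamma$ appearing in $\vec J^X$, and the projectum machinery of Section~\ref{sub:projecta} — specifically Jensen's result that $\rho^{P_\alpha}_\alpha = 1$ at the top power-set level, so every element of $J_\alpha$ is coded by a master code which is a real relative to $\Tup{J_{\P^{(n)}(\omega)}, \cdot}$ — lets one express membership of an arbitrary element $a \in F_X$ in terms of the level-$n$ data. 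Concretely: using the $\Sigma_1$-map from the ordinals of $X$ onto $F_X$ (guaranteed by Definition~\ref{def:pseudocopy}(4)(c)) together with Lemma~\ref{lem:universal-Sigma_1}, every $a \in F_X$ is $\Sigma_1$-definable over $X$ from an ordinal parameter and hence from a subset of $\P^{(n)}(\omega)$ together with a bounded amount of extra data; the same holds in $Y$. Third, I would define $\iota(a) = b$ to hold iff $a$ and $b$ have matching such $\Sigma_1$-definitions whose ordinal/real parameters correspond under the chain of transfer bijections $f_i^{X,Y}$. Well-definedness and the fact that $\iota$ is an $E_X$-$E_Y$-isomorphism follow from Lemma~\ref{lem:compare_well-founded} (the structures are literally the same $J_\alpha$), from extensionality of both structures, and from the upward absoluteness of $\Sigma_1$ formulas. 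The complexity of the resulting formula is controlled by: a fixed number of applications of the uniformly-$\Sigma_n$ satisfaction relation (Proposition~\ref{pro:def-sat}), the arithmetic cost of $f_0^{X,Y}, \dots, f_n^{X,Y}$, and quantification over $X$ and $Y$; all of these depend only on $n$, not on $\alpha$, because $P_\alpha \leq n$ for all the pseudocopies under consideration.

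**The main obstacle.** The delicate point is making sure the isomorphism is pinned down \emph{arithmetically} rather than merely $\Sigma^1_1$: a priori, "$\iota$ is the unique isomorphism" is a $\Pi^1_1$-type condition. The escape, as in Section~\ref{ssec:comparing-copies}, is that in an $n$-pseudocopy \emph{every} set sits inside the top iterate $\P^{(n)}(\omega)$, so the isomorphism is completely determined by its behaviour on (codes for) subsets of $\P^{(i)}(\omega)$ for $i \le n$, and that behaviour is exactly what the bijections $f_i^{X,Y}$ record; there is no need to quantify over arbitrary reals or over the isomorphism itself. Thus the real work is bookkeeping: verifying that composing the finitely many transfer maps $f_i^{X,Y}$ with the $\Sigma_1$-Skolem-function decoding yields a single arithmetic formula, and confirming that this formula is the same for all well-founded $n$-pseudocopies $X,Y$ — which it is, since every ingredient (the pseudocopy axioms, the definability-of-satisfaction, the $f_i$'s, the $\Sigma_1$-map onto $F_X$) is uniform in $X,Y$ and depends on $n$ alone.
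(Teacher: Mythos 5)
Your overall plan has the same shape as the paper's proof (both reduce to the fact, via Lemma~\ref{lem:compare_well-founded}, that $X$ and $Y$ code one and the same $J_\alpha$, and both then try to assemble the isomorphism arithmetically from the level-by-level matching maps together with canonical $\Sigma_1$-definability from ordinal parameters), and your first step --- that the transfer maps $f_i^{X,Y}$ for $i\le n$ are uniformly arithmetic --- is fine. But there is a genuine gap at the step that carries all the weight: you never actually show how an \emph{ordinal} of the common $J_\alpha$ (equivalently, an arbitrary parameter occurring in a $\Sigma_1$ definition) is matched between $X$ and $Y$. The comparison data only aligns representations of subsets of $\P^{(i)}(\omega)$ for $i\le n$; an ordinal above the greatest cardinal is not such an object, and your bridge --- ``every $a\in F_X$ is $\Sigma_1$-definable from an ordinal parameter and hence from a subset of $\P^{(n)}(\omega)$ together with a bounded amount of extra data,'' justified by ``Jensen's result that $\rho^{P_\alpha}_\alpha=1$'' --- rests on a false statement. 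There is no such result: take, e.g., $\alpha=\beta_n$, or any $\alpha$ with $J_\alpha\models\ZF^-$ and exactly $n$ iterates of $\P(\omega)$; then Separation forces every projectum of $J_\alpha$ to equal $\alpha$, so elements of $J_\alpha$ are not coded by master codes (reals) over the top power-set level, yet an $\omega$-copy of such a $J_\alpha$ is a perfectly good well-founded $n$-pseudocopy to which the corollary must apply. The claim in your last paragraph that in an $n$-pseudocopy ``every set sits inside the top iterate $\P^{(n)}(\omega)$'' is false for the same reason, and it is precisely this gap that the missing argument has to fill.

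What the paper does at this point, and what your ``bounded amount of extra data'' would have to become, is the following. First it aligns the greatest cardinals of $X$ and $Y$: by the proof of \Ax{GCH} in $L$ there is in $J_\alpha$ a ($<_J$-least, hence canonically definable) bijection between the greatest cardinal and $\P(\omega_{n-1})$, so elements of the greatest cardinal become objects that the already-matched levels handle, arithmetically in $(X,Y)$. Then an arbitrary ordinal $\gamma$ of $X$ is transferred by coding it as the relation on the greatest cardinal induced by the $<_J$-least surjection of the greatest cardinal onto $\gamma$, moving that relation across via the aligned cardinals, and decoding in $Y$ as the image of the $<_J$-least function inducing the same relation; uniform arithmeticity comes from the uniform $\Sigma_1$-definability of $<_J$ and of satisfaction. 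Only after the ordinals are matched does the argument invoke Jensen's $\Sigma_1$ surjection from the ordinals onto $J_\alpha$ (Proposition~\ref{prop:Jensen-map-onto_Jalpha}), locating its standard parameter $p$ canonically on each side, to extend the ordinal correspondence to the full isomorphism. Note also that your clause ``$\iota(a)=b$ iff $a,b$ have matching $\Sigma_1$-definitions with corresponding parameters'' needs such a canonical choice (the canonical Skolem function with a $<_J$-least parameter) both for well-definedness and so that the defining formula depends only on $n$; with the ordinal-transfer step supplied, this is exactly how the paper concludes.
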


\begin{proof}
  Under the given hypothesis, there exists an $\alpha$ such that both $X$ and
  $Y$ are isomorphic to $J_\alpha$. 
  
  Since $\varphi^{(n)}_{\Op{comp}}(X,Y)$ holds, we can arithmetically in the pair $(X,Y)$ match the power sets of $\omega$ up to $n$, as follows. First, observe that by Lemma~\ref{pro:recover-nat-num}, there is an order-isomorphism between the natural numbers in $X$ and in $Y$. By the proof of the GCH, there are bijections in $J_\alpha$ between the $k$-th uncountable cardinal in $J_\alpha$ and the power sets of its predecessor cardinal in $J_\alpha$. We can use these bijections to inductively find order isomorphisms between the cardinals in $X$ and the cardinals in $Y$. Finally, this yields an order isomorphims between the greatest cardinals in $X$ and $Y$, which is arithmetically definable in $(X,Y)$. 
  
  Next we show how to define a mapping between ordinals of these structures. Let $\gamma$ be an ordinal in $X$. Since $\omega_n$ is the biggest cardinal in $X$, there exists a mapping from $\omega_n$ onto $\gamma$ in $X$. Let $f_X$ be the $<_J$-least such mapping. The function $f_X$ induces a relation on $\omega_n$ in $X$. Since $<_J$ is uniformly $\Sigma_1(J_\alpha)$, arithmetically in $(X,Y)$ we can find the $<_J$-least function $f_Y$ in $Y$ which induces the same relation on $\omega_n$. The image of $\omega_n$ under $f_Y$ represents the ordinal $\gamma$ in $Y$. 
  The result of this process is the desired mapping. 
  
  By the proof of
  Proposition~\ref{prop:Jensen-map-onto_Jalpha} given by Jensen, there is a $\Sigma_1(J_\alpha)$-definable map from
  $\omega\alpha$, that is the set of ordinals in $J_\alpha$, onto $J_\alpha$. The definition of this map involves a parameter $p$ from $J_\alpha$ such that $J_\alpha$ is the $\Sigma_1$ Skolem hull of $p$ and the ordinals of $J_\alpha$. Arithmetically in $(X,Y)$, we can find the $X$-version and the $Y$-version of $p$ and the map from finite sequences of ordinals in $J_\alpha$ corresponding to forming the Skolem hull. From these we get the isomorphism between $X$ and $Y$. Note that the definition of the isomorphism is uniformly arithmetic in the pair $(X,Y)$ and depends only on the number of iterates of the power set of $\omega$ present. 
\end{proof}

We can use the transfer function of 
Corollary~\ref{cor:transfer-function} to translate also between copies of $S^{(n)}(J_\alpha)$.

\begin{cor} \label{cor:transfer-S-copy}
  For every $N$, there exists a number $d_N$, which can be computed uniformly from $N$, such that the following holds. Suppose $X$ is an $\omega$-copy of some
  $J_\alpha$ with $P_\alpha \leq N$. Suppose further that $Z$ is an $\omega$-copy
  of $S^{(n)}(J_\alpha)$, for some $n \in \omega$. Then $\overline{S}^{(n)}(X)$
  is recursive in $(X \join Z)^{(d_N)}$.
\end{cor}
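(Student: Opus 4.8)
The plan is to bypass the obvious bound. Computing $\overline S^{(n)}(X)$ by iterating the arithmetic operator $\overline S$ of Lemma~\ref{lem:arithmetic-S-copy} costs on the order of $n$ jumps of $X$, which is useless here since $d_N$ must not depend on $n$. Instead I would exploit the fact that $Z$ already presents the result of the $n$-fold $S$-operation, only in a different system of codes, and transport it into the canonical code system of $\overline S^{(n)}(X)$ by extending the transfer function of Corollary~\ref{cor:transfer-function} (as anticipated in the remark preceding this corollary). The case $n=0$ is trivial, so assume $n\ge 1$.

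First one locates a copy of $J_\alpha$ inside $Z$. For $n\ge 1$ we have $J_\alpha\in S^{(n)}(J_\alpha)$, and $J_\alpha$ is in fact the $\in$-maximal transitive element of $S^{(n)}(J_\alpha)$: an element of $S^{(m+1)}(J_\alpha)$ whose rank is at least $\operatorname{rank}(S^{(m)}(J_\alpha))$ is either $S^{(m)}(J_\alpha)$ itself or one of the non-transitive sets $F_i(a,b)$. Since ``transitive'' and ``$\in$-maximal'' are first-order over $\Tup{F_Z,E_Z}$, a fixed jump of $Z$ computes the witness $v_0$ coding $J_\alpha$, and $W$, the sub-$\omega$-copy $\Set_Z(v_0)$ re-indexed onto $\omega$, is a well-founded $\omega$-copy of $J_\alpha$ recursive in a fixed jump of $Z$. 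The same first-order reading-off recovers $n$ itself and the internal $S$-chain $v_0\,E_Z\,v_1\,E_Z\,\cdots\,E_Z\,v_{n-1}$ whose members code $S^{(0)}(J_\alpha),\dots,S^{(n-1)}(J_\alpha)$, all recursive in a fixed jump of $Z$. Now $W$ and $X$ are well-founded $\omega$-copies of the same $J_\alpha$; since $\P^{(P_\alpha)}(\omega)$ exists in $J_\alpha$ with $P_\alpha\le N$, they are well-founded $P_\alpha$-pseudocopies, and being isomorphic they satisfy $\varphi^{(P_\alpha)}_{\Op{comp}}(X,W)$. By Corollary~\ref{cor:transfer-function} the isomorphism $\tau\colon W\to X$ is arithmetic in $(X,W)$ by a formula depending only on $P_\alpha$, hence $\tau\le_{\T}(X\join Z)^{(e_N)}$ with $e_N$ computable from $N$.

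The last and main step is to extend $\tau$ to an isomorphism $\Sigma$ between the structure coded by $Z$ and the canonical copy $\overline S^{(n)}(X)$, which by Lemma~\ref{lem:arithmetic-S-copy} iterated carries its $J_\alpha$-part on an explicit column and is otherwise generated by rudimentary terms over that column and over symbols for the intermediate $S$-stages. An element of $\overline S^{(n)}(X)$ is the value of such a term $t$; replacing its atoms via $\tau$ and the $v_i$ yields a term $\tilde t$ over $Z$, and $\Sigma$ sends the code of $t$ to the unique $w\in F_Z$ coding the value of $\tilde t$. The crux — the step I expect to be the real obstacle — is that the relation ``$w$ is the value of the rudimentary term $\tilde t$'' has arithmetic complexity bounded independently of the depth of $\tilde t$: written as $\exists y_1\cdots\exists y_r\,(y_1=F_{i_1}(\cdots)\wedge\cdots\wedge w=y_r)$ by naming all intermediate values, each conjunct $y=F_i(a,b)$ is uniformly $\Pi^0_k(Z)$ for a fixed $k$ (the conditions defining $F_0,\dots,F_8$ on a coded structure use only quantifiers over $F_Z=\omega$), so the relation is $\Sigma^0_{k+1}(Z\join\tau)$, uniformly in $n$, and $w$ is the unique element of a set r.e.\ in $(Z\join\tau)^{(k)}$. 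Hence $\Sigma\le_{\T}(X\join Z)^{(e_N+c_0)}$ for a fixed $c_0$, and the membership relation of $\overline S^{(n)}(X)$, read off from $E_Z$ through $\Sigma$, is recursive in $(X\join Z)^{(d_N)}$. One routine further point is that $\overline S$ passes to transitive closures, so the set coded by $\overline S^{(n)}(X)$ may properly extend $S^{(n)}(J_\alpha)$; but its extra elements are again values of rudimentary terms over the $J_\alpha$-part and over the codes already matched into $Z$, so the identical bounded-complexity term-evaluation device handles them, and their cost is absorbed into $d_N$. Finally $d_N$ is obtained primitive recursively from $N$ as $e_N$ together with the fixed number of jumps used to find $v_0$ and the $v_i$, to carry out the term matching, and to treat the transitive-closure fillers.
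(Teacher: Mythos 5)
Your overall strategy is a genuinely different route from the paper's: you try to transport $Z$'s presentation into the code system of $\overline{S}^{(n)}(X)$ in a single step, with the complexity bookkeeping resting on the observation that all intermediate term values lie in the field of $Z$, whereas the paper runs a recursion on $m\le n$, obtaining the isomorphism between $\overline{S}^{(m+1)}(X)$ and $Z$'s copy of $S^{(m+1)}(J_\alpha)$ uniformly \emph{recursively} in the previous isomorphism together with $Z^{(k)}$ for a fixed $k$ (the one-step term maps being arithmetic in $Z$ of fixed complexity), so that iterating $n$ times accumulates no jumps and the $N$-dependence enters only through the initial isomorphism between $X$ and $Z$'s copy of $J_\alpha$, as in Corollary~\ref{cor:transfer-function}. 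However, two steps of your argument do not stand as written. First, the identification of $J_\alpha$ inside $Z$ is wrong: $J_\alpha$ is \emph{not} the $\in$-maximal transitive element of $S^{(n)}(J_\alpha)$ once $n\ge 2$. Since $J_\alpha$ is transitive and rud closed, a check of $F_0,\dots,F_8$ on arguments from $J_\alpha\cup\{J_\alpha\}$ shows that every element of every value lies in $J_\alpha\cup\{J_\alpha\}$, so $S(J_\alpha)$ is itself transitive; it is an element of $S^{(n)}(J_\alpha)$ of rank strictly above $\operatorname{rank}(J_\alpha)$, so your first-order recipe selects it (and your rank argument tacitly assumes the intermediate stages are non-transitive, which fails already at stage one). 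Your recovery of $n$ and of the chain $v_0,\dots,v_{n-1}$ rests on this step. It is repairable --- identify the witness for $J_\alpha$ as the unique $b\in F_Z$ with $\Set_Z(b)\cong X$, found arithmetically in $X\join Z$ by the same transfer machinery (this is what the paper does), and then read off the $S$-chain from it --- but as stated the step fails.

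Second, the transitive-closure ``fillers'' are precisely the elements of $\overline{S}^{(n)}(X)$ that need not be represented in $Z$ at all (for instance the pair $(\{J_\alpha\},\{J_\alpha\})$ lies in the transitive closure of $S^{(2)}(J_\alpha)$ but not in $S^{(2)}(J_\alpha)$), so for them there is no ``unique $w\in F_Z$ coding the value of $\tilde t$'' and reading off membership from $E_Z$ through $\Sigma$ is unavailable; moreover their nesting depth over represented elements grows with $n$, so your one-clause claim that ``the identical bounded-complexity term-evaluation device handles them'' is exactly the point where the $n$-independence of $d_N$ carries its weight and is not justified. One can handle them, but by a different device (a recursive unraveling of the finite singleton/pair/triple structure over codes already matched into $Z$, using that $J_\alpha$ itself is transitive and contained in the field), i.e.\ by a computation relative to the fixed oracle rather than by the quantifier count you give; this needs to be supplied. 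The paper is admittedly terse about the same transitivity issues, but in its proof the burden is carried by the fixed-oracle recursion along $n$, while in yours it is carried by these two unproven steps.
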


\begin{proof}
There is a $k$ as follows.  For a given $\omega$-copy $A$ of a transitive set $a$ such that $a$ is in the model coded by $Z$, say represented by parameter $b$, and a given isomorphism $f$ between $A$ and the encoding given by $b$ in $Z$, if $S(a)$ is coded in $Z$ then uniformly recursively in $A\oplus f\oplus Z^{(k)}$  there is an isomorphism between 
$\overline{S}(A)$ and $b.$

We obtain the isomorphism as follows. We start with the $\omega$-copy $B$ of the set $a$ as encoded by $b$ inside $Z$. Using the notation of the proof of Lemma~\ref{lem:arithmetic-S-copy}, we form the term set copy $T_S(B)$ of $T_S(a)$. Arithmetically in $Z$, we can define maps $g_1$ from $T_S(B)$ to $\overline{S}(B)$ and $g_2$ from $\overline{S}(B)$ to $Z$'s copy of $S(a)$. Recursively in $f \oplus g_1$, we can obtain an isomorphism between $\overline{S}(A)$ and $\overline{S}(B)$: the isomorphism between the term sets is recursive, and $g_1$ provides the necessary information about identification of terms in $\overline{S}$. 
Finally, application of $g_2$ provides the isomorphism between $\overline{S}(A)$ and $Z$'s copy of $S(a)$. 

Now we turn to the proof of the corollary. Since $Z$ encodes $S^{(n)}(J_\alpha)$, it also encodes $J_\alpha$. Uniformly arithmetically in $Z \oplus X$, we can find the element in $Z$ that encodes $J_\alpha$ and the isomorphism $f$ between $X$ and $Z$'s $\omega$-copy of $J_\alpha$. 

By the above, recursively in $X \oplus f \oplus Z^{(k)}$, we can find an isomorphism, $f_1$, between $\overline{S}(X)$ and $Z$'s $\omega$-copy of $S(J_\alpha)$. By iterating the process, we obtain, recursively in $X \oplus f \oplus Z^{(k)}$, an isomorphism between $\overline{S}^{(n)}(X)$ and $Z$. Since $f$ is arithmetic in $X \oplus Z$, the corollary follows.
\end{proof}

It is worth highlighting that in the previous corollary we obtain a fixed arithmetic bound relative to $Z$ which is independent of $n$.

\medskip
For fixed $N \in \omega$, let
\begin{multline*}
    \PC_N = \{ X \colon X \text{ is an $n$-pseudocopy for some $n \leq N$} \\
    \text{and for every $Y \in \vec{J}^X$, $Y$ is a $k$-pseudocopy for some $k \leq N$} \}
\end{multline*}
	
This is an arithmetic set of reals.

Working inside $\PC_N$, we can also use $\varphi_{\Op{comp}}$ to arithmetically
define a pre-order $\prec$ on pseudocopies. The idea is that $X \prec Y$ if $X$
embeds its structure into $Y$.

\begin{lem} \label{lem:J-rigid}
\begin{enumerate}
    \item Suppose $X \in \PC_N$ then
  the structure $\Struc{X}$ has no non-trivial automorphism. 
  
  \item If for some $z_1, z_2 \in \vec{J}^X$, $\Set_X(z_1)$ is isomorphic
  to $\Set_X(z_2)$, then $z_1 = z_2$.
 
\end{enumerate}
\end{lem}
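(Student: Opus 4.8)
The plan is to prove both parts by showing \emph{directly} that any automorphism $\pi$ of $\Struc{X}$ — and, in part (2), any isomorphism between the two coded substructures — must fix every element. The textbook rigidity argument (a structure with a definable well-ordering has no non-trivial automorphism, since the only order-automorphism of a well-order is the identity) is \emph{not} available here: a pseudocopy $X \in \PC_N$ may be ill-founded, so its definable well-order $<_J$ can have non-trivial order-automorphisms. Instead I would exploit two features of Definition~\ref{def:pseudocopy}: from the \emph{bottom}, being an $\omega$-model together with extensionality pins down $\omega$, the natural numbers, the reals, and then, level by level, all subsets of each $\P^{(k)}(\omega)$ that exists; and from the \emph{top}, being an $n$-pseudocopy ($n \le N$) provides a largest power set $\P^{(n)}(\omega)$ into which everything can be coded, using that $\Struc{X} \models \varphi_{V=L}$.

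For part (1): since $X$ is an $\omega$-model, the witness $\omega_X$ for ``$x$ is the least infinite ordinal'' is unique, hence $\pi$-fixed, and each natural number is defined by a parameter-free formula, so $\pi$ fixes every $n_X(i)$. By extensionality (Definition~\ref{def:pseudocopy}(1)) $\pi$ then fixes every real of $\Struc{X}$ — a real is an $E_X$-subset of $\omega_X$, determined by which $n_X(i)$ lie $E_X$-in it — and, iterating this remark, $\pi$ fixes every $E_X$-subset of the $\Struc{X}$-version of $\P^{(k)}(\omega)$ for all $k \le n$, in particular $\P^{(n)}(\omega)$ and each of its members. The decisive step uses the $n$-pseudocopy hypothesis and $\varphi_{V=L}$: since $\Struc{X}$ is rud closed, satisfies $\varphi_{V=L}$, and carries definable $\Sigma_1$-Skolem functions and a $\Sigma_1$-definable well-order (Lemma~\ref{lem:universal-Sigma_1} and Definition~\ref{def:pseudocopy}(4)), G\"odel's condensation/GCH argument applies internally, so $\Struc{X}$ believes every set has cardinality at most $|\P^{(n)}(\omega)|$ — otherwise $\P^{(n+1)}(\omega)$ would exist in $X$, contradicting that $X$ is an $n$-pseudocopy. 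Hence each $a \in F_X$ is, inside $\Struc{X}$, the (unique, by Mostowski) transitive collapse of a well-founded extensional relation obtained by injecting the transitive $E_X$-closure of $\{a\}$ into $\P^{(n)}(\omega)$ and transporting $E_X$; via a $<_J$-least pairing bijection of $\P^{(n)}(\omega)$ with $\P^{(n)}(\omega) \times \P^{(n)}(\omega)$ (parameter-free $\Sigma_1$-definable from $\P^{(n)}(\omega)$, hence $\pi$-fixed) this relation is coded by a single $E_X$-subset $c_a$ of $\P^{(n)}(\omega)$. Since $\pi$ fixes $c_a$ and all the auxiliary definable data, $\pi(a) = a$; thus $\pi$ is the identity. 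For part (2) I would run the same argument on an isomorphism $g\colon \langle \Set_X(z_1), E_X\rangle \to \langle \Set_X(z_2), E_X\rangle$: by Definition~\ref{def:pseudocopy}(4)(d) the elements of $\vecJ^X$ are $E_X$-linearly ordered, so I may assume $z_1 \ne z_2$ and $z_1 \, E_X \, z_2$; by Lemma~\ref{lem:reflection} both $\Set_X(z_i)$ are pseudocopies, and (sharing a largest power set) are $m$-pseudocopies for a common $m \le n$; since $z_1 \, E_X \, z_2$ they share $\omega_X$ and all naturals, so exactly as above $g$ fixes every real, then every $E_X$-subset of $\P^{(k)}(\omega)$ for $k \le m$, and then — by the coding argument applied to the $m$-pseudocopy $\Set_X(z_1)$ — every element of $\Set_X(z_1)$; hence $g$ is the identity on $\Set_X(z_1)$, forcing $\Set_X(z_1) = \Set_X(z_2)$ and $z_1 = z_2$, a contradiction.

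The main obstacle is the final step of part (1): verifying that a pseudocopy $X \in \PC_N$ genuinely satisfies internally enough of $V = L + \mathrm{GCH}$ for the coding of arbitrary sets into $\P^{(n)}(\omega)$ to go through. This is precisely what conditions (3) and (4) of Definition~\ref{def:pseudocopy}, together with Lemma~\ref{lem:universal-Sigma_1}, are designed to deliver — they make the needed fragments of Jensen's fine-structure and condensation machinery available without external well-foundedness — but care is required throughout to invoke only parameter-free definitions, or definitions whose parameters ($\omega_X$, $\P^{(n)}(\omega)$, the canonical pairing function) have already been shown to be fixed by the automorphism.
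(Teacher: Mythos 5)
The first half of your argument is fine and matches the paper's opening move: fixing $\omega_X$ and the naturals (unique by extensionality in an $\omega$-model) and then, level by level, every member and every subset of each iterate $\P^{(k)}(\omega)$, $k\le n$, is exactly parallel to the paper's induction showing an automorphism fixes all subsets of $\omega_j^X$ for $j\le n$. The genuine gap is in your ``decisive step.'' To code an arbitrary $a\in F_X$ by a subset $c_a$ of $\P^{(n)}(\omega)$ you need, \emph{internally}, (a) an injection of the transitive closure of $\{a\}$ into $\P^{(n)}(\omega)$, which you justify by ``G\"odel's condensation/GCH argument applies internally,'' and (b), to decode, uniqueness of the Mostowski collapse of the coded relation. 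Neither is available from Definition~\ref{def:pseudocopy} for a possibly ill-founded pseudocopy: such a structure is only required to be an extensional, rud-closed $\omega$-model satisfying $\varphi_{V=L}$ and items (4)(a)--(d); it is not assumed to satisfy enough separation/collection or $\in$-induction to run condensation or the collapse-uniqueness argument inside, and the paper says explicitly that the clauses of Definition~\ref{def:pseudocopy} are intended as a \emph{substitute} for the Condensation Lemma, which is lacking in non-wellfounded structures. Moreover, if $c_{a_1}=c_{a_2}$ you only obtain an internal isomorphism between the transitive closures of $\{a_1\}$ and $\{a_2\}$; concluding $a_1=a_2$ is an internal rigidity statement of essentially the same kind as part (2) of the very lemma being proved, so invoking ``unique, by Mostowski'' here is unjustified and close to circular.

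The paper's proof is engineered to avoid exactly these two points, and the clauses you never use are the ones doing the work. Instead of proving internally that every set injects into $\P^{(n)}(\omega)$, it uses item (4)(c): the existence of a $\Sigma_1$-map from the ordinals onto $F_X$ is an \emph{axiom} of pseudocopies, so one only needs a map from subsets of the greatest cardinal $\omega_n^X$ onto the ordinals, obtained by construing a subset as a binary relation and sending it to the ordinal it is isomorphic to. The only place where ill-foundedness could bite --- a single code isomorphic to two distinct ordinals --- is excluded by item (4)(b), since such an isomorphism would yield a $\Sigma_1$-definable set of ordinals with no least element. To repair your proposal you should either adopt this route or prove, from the pseudocopy axioms alone, both the internal cardinality bound and internal collapse uniqueness; the latter is precisely what is not available. (A smaller point: in part (2), $\Set_X(z_1)$ and $\Set_X(z_2)$ need not share a largest power set, and an internal level $\Set_X(z)$ need not even belong to $\PC_N$; the paper instead just runs the part-(1) argument on subsets of the greatest cardinal of $\Set_X(z_1)$ and then uses surjectivity of the isomorphism to conclude $z_1=z_2$.)
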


\begin{proof}
(1) We first argue there is a definable parameter and a mapping, which is $\Sigma_1$ relative to this parameter, from the subsets of the greatest cardinal $\omega_n^X$, where $n \leq N$, onto $\Struc{X}$. By item (4)(c) of Definition~\ref{def:pseudocopy} there is a $\Sigma_1$-definable (relative to some parameter) mapping from the ordinals of $X$ onto $X$. By item (4)(e) among the parameters which can be used to define such a mapping there is a $\varphi_<$-least one (which thus makes the parameter definable).  
Since this parameter must be fixed by any automorphism, to prove (1) it is sufficient to find a mapping from subsets of the greatest cardinal onto the ordinals. 

Given  $a \subseteq \omega_n^X$, using the bijection between $\omega_n^X$ and $\omega_n^X \times \omega_n^X$ (guaranteed by item (4)(c) in Definition~\ref{def:pseudocopy}), construe $a$ as a binary relation on a subset of $\omega_n^X$. Map $a$ to ordinal $\gamma$ if there is an isomorphism between this relation and $\gamma$. Otherwise, map $a$ to $0$. A failure of this mapping's being well-defined would yield an existentially definable isomorphism between two distinct ordinals, contradicting item (4)(b) in Definition~\ref{def:pseudocopy}.
Moreover, the mapping is onto because for every ordinal there is an injection of it into $\omega_n^X$.

Now, by induction on $j \leq n$, an automorphism of the structure $\Struc{X}$ has to fix every subset of $\omega_j^X$. The case $j = 0$ follows from the fact that pseudocopies are $\omega$-models. Hence the only automorphism of $\Struc{X}$ is the identity.

(2) Assume $z_1, z_2 \in \vecJ^X$. By item (4)(d) in the definition of pseudocopies, without loss of generality, $\Set_X(z_1) \subseteq \Set_X(z_2)$. By the argument for (1), an isomorphism from $\Set_X(z_1)$ to $\Set_X(z_2)$ has to be the identity on all subsets of the greatest cardinal of $\Set_X(z_1)$, and hence the identity on all of $\Set_X(z_1)$. Since an isomorphism is surjective, $z_1 = z_2$.
\end{proof}

\begin{lem} \label{lem:isomophism-arithmetic-PCs}
  Suppose $X$ and $Y$ are in $\PC_N$. If $X$ and $Y$ code isomorphic structures ($X \cong Y$), the isomorphism is unique and uniformly arithmetically definable from the pair $(X,Y)$.
\end{lem}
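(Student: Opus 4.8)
The plan is to reduce to the already-established machinery for well-founded pseudocopies (Lemma \ref{lem:compare_well-founded} and Corollary \ref{cor:transfer-function}) together with the rigidity provided by Lemma \ref{lem:J-rigid}, handling the genuinely new case — isomorphic \emph{ill-founded} pseudocopies — by the same ``compare subsets of the iterated power sets of $\omega$'' technique, now internalized along the (possibly ill-founded, but linearly ordered) internal $J$-hierarchy. First I would observe that uniqueness of the isomorphism is immediate from Lemma \ref{lem:J-rigid}(1): if $f,g\colon \Struc{X}\to\Struc{Y}$ were two isomorphisms then $g^{-1}\circ f$ would be a non-trivial automorphism of $\Struc{X}$, contradiction. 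So the whole content is arithmetic \emph{definability} of the (unique) isomorphism, uniformly in the pair $(X,Y)$, with a bound depending only on $N$.

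Next I would set up the definability. Since $X,Y\in\PC_N$, each is an $m$-pseudocopy for some $m\le N$; arithmetically in $(X,Y)$ we can read off these numbers $m_X,m_Y$. If $m_X\ne m_Y$ then $X\not\cong Y$ (an isomorphism would match up the internal iterated power sets of $\omega$, hence $\P^{(m_X)}(\omega)$ exists in the structure coded by $Y$ and vice versa). So assume $m_X=m_Y=m$. The candidate isomorphism is built exactly as in the proof of Corollary \ref{cor:transfer-function}: using the internal proof of the GCH inside each pseudocopy (available because each is a $\Sigma_1$-Skolem hull of its ordinals, satisfies $\varphi_{V=L}$, and has its top power set $\P^{(m)}(\omega)$), we get an internal bijection between the greatest cardinal and $\P^{(m-1)}(\omega)$; matching the $\P^{(i)}(\omega)$'s across $X$ and $Y$ via the formulas $\varphi^{(i)}_{\Op{comp}}$ ($i\le m$) then yields an arithmetically-defined bijection between the greatest cardinals of the two structures, which is pushed up to a bijection on ordinals (via $<_J$-least surjections from the top cardinal onto a given ordinal, whose induced relations on the top cardinal we can match across $X,Y$) and finally, via the $<_J$-least $\Sigma_1$-Skolem parameter and the Jensen surjection of Proposition \ref{prop:Jensen-map-onto_Jalpha}, to a bijection $h$ on all of $F_X$. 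All of this is a \emph{fixed} arithmetic recipe applied to $(X,Y)$, with quantifier complexity bounded in terms of $m\le N$; the key point is that every ingredient used ($<_J$, $\varphi_{V=L}$, the Skolem-hull map, $\models^{\Sigma_1}$) is \emph{first-order} over a rud-closed $\omega$-model by Lemma \ref{lem:universal-Sigma_1} and Proposition \ref{prop:rud-base-functions}, so it transfers to pseudocopies without needing well-foundedness.

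Finally I would argue that, \emph{when $X\cong Y$ actually holds}, this recipe produces the isomorphism (not merely some arithmetic map). Here is where I expect the main obstacle: in the ill-founded case we cannot invoke Mostowski collapse or the Condensation Lemma to know that ``same subsets of $\P^{(i)}(\omega)$ for all $i\le m$'' forces isomorphism, as we did in Lemma \ref{lem:compare_well-founded}. The way around it is to notice that the recipe $h$ is built canonically from internal data that any isomorphism must respect — the top cardinal, the $<_J$-order, the canonical Skolem parameter — so if \emph{some} isomorphism $f\colon X\to Y$ exists, then $f$ carries the $X$-data to the $Y$-data and hence $f$ agrees with $h$ on ordinals, and then on all of $F_X$ via the Skolem map; by uniqueness $h=f$. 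Thus the formula ``$h$ (as defined by the recipe) is a total, bijective, $\in$-preserving map from $F_X$ onto $F_Y$'', which is arithmetic in $(X,Y)$ with complexity depending only on $N$, holds precisely when $X\cong Y$, and in that case defines the unique isomorphism. Strictly speaking one should also check the recipe is well-defined (the various ``$<_J$-least'' choices exist) whenever $X,Y\in\PC_N$ and $X\cong Y$; this follows because under that hypothesis each pseudocopy behaves, with respect to the finitely many first-order properties invoked, exactly like the actual $J_\alpha$ it is isomorphic to.
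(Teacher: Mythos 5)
Your proposal is correct and follows essentially the same route as the paper: uniqueness from the rigidity lemma (Lemma~\ref{lem:J-rigid}), and arithmetic definability by inductively matching the iterated power sets of $\omega$, transferring to the greatest cardinal via the internal GCH bijection, then to the ordinals via subsets of the top cardinal, and finally to the whole structure via the internal $\Sigma_1$-surjection from the ordinals. The only presentational difference is that you build a canonical ``recipe'' and then argue the actual isomorphism must agree with it, whereas the paper directly shows each restriction of the (unique) isomorphism is arithmetic in $X \oplus Y$; these amount to the same argument.
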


\begin{proof}
Assume $X$ and $Y$ code isomorphic structures. Two different isomorphisms between the structures would yield a non-trivial automorphism of each, contradicting Lemma~\ref{lem:J-rigid}. 

Let $N_0$ be such that there are $N_0$-many infinite cardinals in the structure coded by $X$ and $Y$.   
By induction on $n \leq N_0$ the isomorphism restricted to $\P^{(n)}(\omega)$ is arithmetic in $X \oplus Y$. The base of the induction is provided by the fact that both $X$ and $Y$ are $\omega$-models. Inside each structure, there is a map $\P^{(N_0)}(\omega)$ to its greatest cardinal, $\kappa$, by which the isomorphism restricted to these greatest cardinals is arithmetic in $X \oplus Y$. Consequently, the isomorphism restricted to subsets the greatest cardinals, respectively, is arithmetic in $X \oplus Y$. Since every ordinal in each structure determined by a subset of the greatest cardinal, the isomorphism between the ordinals in each structure is arithmetic in $X \oplus Y$. Finally, to obtain an arithmetic isomorphism between $X$ and $Y$, we use the existence of a $\Sigma_1$-map from the ordinals to the whole structure, as given by item (4)(c) in Definition~\ref{def:pseudocopy}. 

By the previous paragraph, there is a $k$, depending on $N$, by taking into account all the $N_0 \leq N$, such that the isomorphism between the structures code by $X$ and $Y$, respectively, is recursive in $(X\oplus Y)^{(k)}$. Finding an index for this isomorphism is uniformly computable in $(X\oplus Y)^{(k+2)}$. 
\end{proof}

\begin{cor} \label{cor:iso-arithmetic}
    Suppose $X$ and $Y$ are in $\PC_N$. Whether $X \cong Y$ is a uniformly arithmetic property of the pair $(X,Y)$.  
\end{cor}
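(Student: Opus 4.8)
The plan is to derive this corollary directly from Lemma~\ref{lem:isomophism-arithmetic-PCs}, which already does almost all the work. That lemma states that \emph{if} $X \cong Y$ then the isomorphism is unique and uniformly arithmetically definable from $(X,Y)$; moreover, inspecting its proof, there is a number $k$ depending only on $N$ such that the (unique candidate) isomorphism is recursive in $(X \oplus Y)^{(k)}$, and an index for it can be found uniformly in $(X\oplus Y)^{(k+2)}$. So the only thing left to argue is that the \emph{existence} of such an isomorphism — equivalently, the assertion that the specific, uniformly $(X\oplus Y)^{(k+2)}$-computed partial function $g$ really is an isomorphism between $\Struc{X}$ and $\Struc{Y}$ — is itself an arithmetic property of the pair $(X,Y)$, and at a level bounded uniformly in $N$.

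Here is how I would lay it out. First I would fix $N$ and recall from the proof of Lemma~\ref{lem:isomophism-arithmetic-PCs} the explicit construction: working up through the at most $N$ iterated power sets of $\omega$ coded in $X$ and in $Y$, one matches $\P^{(0)}(\omega)$ (using that both are $\omega$-models), then the greatest cardinals and their subsets, then the ordinals via $\Sigma_1$-maps from ordinals to power sets, and finally the whole structure via the $\Sigma_1$-map from the ordinals onto the structure guaranteed by item (4)(c) of Definition~\ref{def:pseudocopy}. Each of these steps is given by a fixed formula, so there is an arithmetic procedure — uniformly in $(X,Y)$ and bounded by $(X\oplus Y)^{(k+2)}$ for a $k$ depending only on $N$ — that outputs a (code for a) candidate partial function $g_{X,Y}$, which is the \emph{only} possible isomorphism by the rigidity established in Lemma~\ref{lem:J-rigid}. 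Then I would note that ``$g_{X,Y}$ is a total bijection from $F_X$ onto $F_Y$ that preserves and reflects $E_X,E_Y$'' is expressible by a single arithmetic formula in $(X,Y)$ (it is $\Pi^0_2$ relative to the oracle computing $g_{X,Y}$, since totality, injectivity, surjectivity and the homomorphism conditions are each $\Pi^0_2$ in $X\oplus Y\oplus g_{X,Y}$). Finally, since $X\cong Y$ holds if and only if this candidate function is in fact an isomorphism — there being no other candidate — the relation $X\cong Y$ is equivalent, for $X,Y\in\PC_N$, to an arithmetic statement about $(X,Y)$, with the number of jumps bounded uniformly in $N$. That is exactly the claim.

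The only genuinely delicate point is the uniformity: one must be careful that the level of the arithmetic hierarchy needed does not grow with the particular pseudocopies but only with $N$, the bound on the number of iterated power sets present. This is guaranteed because all the set-theoretic definitions invoked — the $\Sigma_1$-satisfaction predicate (Lemma~\ref{lem:universal-Sigma_1}, via Jensen's Corollary~1.13), the definability of $<_J$ (Proposition~\ref{prop:definability-J}), the $\Sigma_1$-map from ordinals onto the structure (Proposition~\ref{prop:Jensen-map-onto_Jalpha}), and the identification of the iterated power sets $\P^{(i)}(\omega)$ — are each \emph{uniform} first-order definitions, so their arithmetic translates over $\omega$-copies have complexity depending only on the quantifier structure of the fixed defining formulas and on how many iterated power sets we must climb through, i.e., only on $N$. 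Since $\PC_N$ is itself an arithmetic set of reals and membership in it is uniformly arithmetic, combining this with the bounded-level construction of $g_{X,Y}$ and the $\Pi^0_2$ check above gives a single formula, uniform in $N$, deciding $X\cong Y$ on $\PC_N$.
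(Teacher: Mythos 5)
Your proposal is correct and follows essentially the same route the paper intends: the corollary is meant to fall directly out of Lemma~\ref{lem:isomophism-arithmetic-PCs}, since the unique candidate isomorphism is uniformly arithmetically definable (with complexity depending only on $N$) and the statement ``the candidate is a total bijection preserving and reflecting the membership relations'' is an arithmetic condition on $(X,Y)$, so that $X\cong Y$ holds exactly when this uniformly arithmetic assertion does. Your attention to the uniformity in $N$ matches the role the bound $c$ plays later in the paper.
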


\begin{defn} \label{def:pc-order}
	We define
	\[
		X \prec_N Y \quad : \Leftrightarrow X,Y \in \PC_N \and \exists z \, ( z \in
    \vecJ^Y \and X \cong \Set_Y(z)).
	\]
\end{defn}
By Corollary~\ref{cor:iso-arithmetic}, this is an arithmetic property of the pair $(X,Y)$. We let $X \preceq_N Y$ if $X
\prec_N Y$ or $X \cong Y$. Clearly, $\preceq_N$ is reflexive. Since the property of being an element of the $J$-hierarchy is determined by a $\Sigma_1$-formula, and these properties are upwards absolute, $\preceq_N$ is transitive. Therefore,
$\preceq_N$ defines a partial order on $\PC_N$.

If both $X$ and $Y$ are well-founded pseudocopies in $\PC_N$, we have either $X
\prec_N Y$ or $X \cong Y$ or $Y \prec_N X$, that is, ``true'' pseudocopies
(i.e., those that code a $J_\alpha$) are linearly pre-ordered by $\preceq_N$. Hence comparability can only fail if (at least) one of the
pseudocopies is not well-founded.

Provided with a countable subset of $\PC_N$, such as all the elements of $\PC_N$
recursive in a real $Z$, we will want to arithmetically define a subset that is
linearly ordered by $\preceq_N$ by excluding some ill-founded pseudocopies.

\begin{defn}
  Given a real $Z$, let $\PC_N(Z)$ be the intersection of $\PC_N$ with the set of reals computable in $Z$.
\end{defn}

\begin{lem} \label{lem:lin-pseudocode}
  For every natural number $N$ there is an arithmetic predicate such that for every real $Z$, the predicate defines a set of reals $\PC^*_N(Z) \subseteq \PC_N(Z)$ with the following properties:
  \begin{enumerate}[(1)]
   \item For every $X \in \PC^*_N(Z)$ and $z \in F_X$, if $z \in \vecJ^X$, then $\Set_X(z) \in \PC^*_N(Z)$.
    \item $\preceq_N$ is a total preorder on $\PC^*_N(Z)$.
  \item If $X \in \PC_N(Z)$ is well-founded, then $X \in \PC^*_N(Z)$.
  \end{enumerate}
\end{lem}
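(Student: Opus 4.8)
The plan is to enumerate $\PC_N(Z)$ arithmetically, carve out a $\preceq_N$-chain by a recursion along the enumeration, and then close it under $\vecJ$-substructures.

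\emph{Enumeration and arithmetic comparison.} By Lemma~\ref{pro:pseudocopy-arithmetic} the predicate $\varphi_{\Op{PC}}$ is arithmetic, and ``$X$ is an $n$-pseudocopy for some $n \le N$'' is arithmetic uniformly in $N$; together with the $\Pi^0_2(Z)$ test ``$\Phi_e^Z$ is total'' this yields an enumeration $(X_e)_{e\in\omega}$ of $\PC_N(Z)$ that is arithmetic in $Z$ at a level depending only on $N$. By Corollary~\ref{cor:iso-arithmetic} and Definition~\ref{def:pc-order}, the relations $X \cong Y$ and $X \preceq_N Y$ are arithmetic on pairs from $\PC_N$, again at a level depending only on $N$.

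\emph{Building a chain and closing it.} Define $\mathcal{C} \subseteq \PC_N(Z)$ by recursion along the enumeration: at stage $e$ put $X_e$ into $\mathcal{C}$ provided it is $\preceq_N$-comparable with every member placed so far (with a tie-breaking rule specified below). Since the restriction of the characteristic function of $\mathcal{C}$ to $\{0,\dots,e\}$ is the unique finite function obeying the stage clauses, ``$X_e \in \mathcal{C}$'' is a bounded existential over an arithmetic-in-$Z$ matrix whose complexity does not grow with $e$ (all the $X_j$ appearing being uniformly recursive in $Z$), so $\mathcal{C}$ is arithmetic in $Z$; and any two members of $\mathcal{C}$ are $\preceq_N$-comparable. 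Now put
\[
  \PC^*_N(Z) = \mathcal{C} \cup \{\, X \in \PC_N(Z) : \exists Y \in \mathcal{C}\ \exists z \in \vecJ^Y\ \ X \cong \Set_Y(z)\,\}.
\]
By Lemma~\ref{lem:reflection} each $\Set_Y(z)$ with $z \in \vecJ^Y$ is a pseudocopy recursive in $Y$, so $\PC^*_N(Z) \subseteq \PC_N(Z)$, and $\PC^*_N(Z)$ is arithmetic in $Z$. Property~(1) holds since $\vecJ$-membership is $\Sigma_1$ and upward absolute, so a $\vecJ$-substructure of $\Set_Y(z)$ is again a $\vecJ$-substructure of $Y$; property~(2) follows from the fact that the sets represented by $\vecJ^Y$ are linearly ordered by $E_Y$ (item (4)(d) of Definition~\ref{def:pseudocopy}), from $\Set_Y(z) \prec_N Y$, and from transitivity of $\preceq_N$.

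\emph{The crux: property~(3).} What remains is to guarantee that no well-founded pseudocopy is kept out of $\PC^*_N(Z)$. Well-founded pseudocopies are pairwise $\preceq_N$-comparable --- if $\Tup{F_X,E_X} \cong J_\alpha$ and $\Tup{F_Y,E_Y} \cong J_\beta$ with $\alpha < \beta$, then $J_\alpha$ is represented among the elements of $\vecJ^Y$, so $X \prec_N Y$ --- hence two well-founded pseudocopies never conflict, and the only way a well-founded $X_e$ could be excluded from $\mathcal{C}$ is a conflict with an \emph{ill-founded} pseudocopy that was placed at an earlier stage. The plan for the tie-breaking rule is therefore: when $X_e$ conflicts with a member $Y$ already in $\mathcal{C}$, retain whichever of $X_e$, $Y$ is ``$\preceq_N$-higher'', gauging the height of a possibly ill-founded pseudocopy arithmetically by comparing, via the formulas $\varphi^{(i)}_{\Op{comp}}$ and the transfer maps of Section~\ref{ssec:comparing-copies}, its collections of subsets of $\P^{(i)}(\omega)$ for $i \le N$. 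On well-founded pseudocopies this gauge agrees with $\preceq_N$ (Lemma~\ref{lem:compare_well-founded}), so a well-founded pseudocopy always wins such a contest, while an ill-founded pseudocopy that is ``too short'' to sit in a $\preceq_N$-chain with a given well-founded member of $\PC_N(Z)$ is discarded in its favor. I expect this last point --- arranging arithmetically that a well-founded-versus-ill-founded conflict is always resolved in favor of the well-founded side, even though well-foundedness is not itself an arithmetic property --- to be the main obstacle, and it is precisely here that the fine-structural content encoded in the pseudocopy axioms (items (4)(a)--(d) of Definition~\ref{def:pseudocopy}) and in Lemmas~\ref{lem:J-rigid} and~\ref{lem:isomophism-arithmetic-PCs} has to be used.
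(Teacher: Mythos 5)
There is a genuine gap, and you have located it yourself: your construction of $\PC^*_N(Z)$ stands or falls with the unproven claim that a conflict between a well-founded and an ill-founded pseudocopy can be resolved arithmetically in favor of the well-founded side, and your proposed gauge does not achieve this. Comparing the collections of subsets of $\P^{(i)}(\omega)$ for $i\le N$ (via $\varphi^{(i)}_{\Op{comp}}$) decides isomorphism only when \emph{both} structures are well-founded (Lemma~\ref{lem:compare_well-founded}); for an incomparable pair with one ill-founded member it neither certifies comparability nor tells you which member is well-founded, since well-foundedness is not determined by which ``reals'' and higher subsets are present --- an ill-founded pseudocopy may contain nonstandard constructible subsets making it look ``higher'' on your gauge. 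Your greedy chain along an enumeration also inherits an order-dependence problem (retroactively ejecting an ill-founded member of $\mathcal{C}$ invalidates earlier admission decisions), but that is secondary: without a correct arithmetic resolution of the well-founded-versus-ill-founded conflict, property (3) is simply not established.

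The paper's proof supplies exactly the missing idea, and it is different in kind from a tie-breaking rule: one never tries to decide which of two incomparable pseudocopies is ``higher''; instead one shows that $\preceq_N$-incomparability itself arithmetically exhibits a concrete witness of ill-foundedness in at least one of the two. Using the predicate $\Op{Iso}(x,y,X,Y)$ one gets a partial map between $\vecJ^X$ and $\vecJ^Y$ that is single-valued by rigidity (Lemma~\ref{lem:J-rigid}), with linearly ordered domain $D_X$ and range $R_Y$; a case analysis on whether $D_X$, $R_Y$ are cofinal or bounded shows that either $X\cong Y$ (contradicting incomparability), or a bounded range whose union is omitted, or a non-principal cut, explicitly displays an instance of ill-foundedness, and the identification of the culprit is uniformly arithmetic in $(X,Y)$. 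One then defines $\PC^*_N(Z)$ symmetrically, with no enumeration and no tie-breaking, by discarding every element of $\PC_N(Z)$ that is flagged in some pairwise comparison, either itself or through a member of its internal $J$-hierarchy. Property (3) is then automatic because only genuinely ill-founded structures can be flagged, (2) holds because every incomparable pair loses at least one member, and (1) holds because the discarding criterion is closed under passing to $\vecJ$-members. To repair your proof you would need to replace your gauge by this incomparability-implies-detectable-ill-foundedness analysis; once you have it, your chain-plus-closure scaffolding becomes unnecessary.
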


\begin{proof}
To start, note that if $z \in \vecJ^X$ and $X \in \PC_N(Z)$, then $\Set_X(z)$ is recursive in $Z$ and then by applying Lemma~\ref{lem:reflection}, $\Set_X(z) \in \PC_N(Z)$.

We show that if $X,Y$ are $\preceq_N$-incomparable, at least one of two must be ill-founded. Further, this identification of ill-foundedness is uniformly arithmetic in the pair $(X,Y)$. 

Suppose $X,Y$ are $\preceq_N$-incomparable.
Consider the predicate
\begin{equation*}
\Op{Iso}(x,y,X,Y) : \Leftrightarrow  x \in \vecJ^X \and y \in \vecJ^Y \and \Set_X(x) \cong \Set_Y(y).  
\end{equation*}
It yields an arithmetic partial function from $\vecJ^X$ to $\vecJ^Y$. It is single-valued by Lemma~\ref{lem:J-rigid}. Denote the domain of this function by $D_X$ and the range by $R_Y$. Both $D_X$ and $R_Y$ are linearly ordered by item (4)(d) of Definition~\ref{def:pseudocopy}.

Now we apply the incomparability of $X$ and $Y$ to show that there must be at least one instance of ill-foundedness in $X$ or in $Y$.

\begin{description}
\item[Case 1] \emph{$D_X$ is cofinal in $X$, $R_Y$ is cofinal in $Y$}.\\
This means, by the definition of the function for which $D_X$ and $R_Y$ are domain and range, respectively, the complete internal $J$-hierarchies of $X$ and $Y$, respectively, are pairwise isomorphic. Furthermore, these isomorphisms are compatible by Lemma~\ref{lem:J-rigid}. Their union hence exhibits an isomorphism between the structure coded by $X$ and the structure coded by $Y$, contradicting $\preceq_N$-incomparability.

\item[Case 2] \emph{$D_X$ is cofinal in $X$, $R_Y$ is bounded in $Y$}.\\
	Since $X \not\cong \Set_Y(z)$ for any $z \in \vecJ^Y$, $Y$ must omit $\bigcup_{z \in R_Y} \Set_Y(z)$ and hence the $J$-hierarchy in $Y$ is ill-founded. The case when $D_X$ is bounded and $R_Y$ is cofinal is analogous.

\item[Case 3] \emph{Both $D_X, R_Y$ are bounded in $X,Y$, respectively}.\\
In this case, $D_X$ and $R_Y$ are cuts in $\vecJ^X$ and $\vecJ^Y$, respectively. If these cuts
were principal in both structures, it would contradict the definition of $D_X$ and $R_Y$ by adding a new element to each set, as follows. In the limit case, reason as in Case 1: the union of the $\Set_X(x)$, $x \in D_x$, as evaluated in $X$, maps to the union of the $\Set_Y(y)$, $y \in R_Y$, as evaluated in $Y$. In the successor case, given an isomorphism between $\Set_X(x)$ and $\Set_Y(y)$, because $X$ and $Y$ code $\omega$-models, there also exists an isomorphism between $S(\Set_X(x))$, as evaluated in $X$, and $S(\Set_Y(y))$, as evaluated in $Y$.

Therefore, at least one of the two cuts is not principal, thereby exhibiting an instance of non-wellfoundedness.
\end{description}

By inspection of the proof, the case distinction and the identification of the ill-founded structure is uniformly arithmetic in the pair $(X,Y)$.

We obtain the arithmetic set $\PC^*_N(Z)$ by considering all incomparable pairs in $\PC_N(Z)$ and discarding all elements of $\PC_N(Z)$ such that they or some element of their $J$-hierarchy are shown to be ill-founded by the above analysis. This ensures property (1). Since all pairs are being considered,  $\PC^*_N(Z)$ is linearly pre-ordered by $\preceq_N$, ie. property (2) holds.
For property (3) of the lemma, note that any element removed from $\PC_N(Z)$ in this process is ill-founded.
\end{proof}

\subsubsection*{Taking limits of $\omega$-copies}

We can use the ordering $\preceq_N$ to construct limits of $\omega$-copies. This will be needed in the proof of Theorem~\ref{thm:mastercodes-in-NCR}.

\begin{lem} \label{lem:limit-copy}
    For every $N$, there exists a number $e_N$, which can be computed uniformly from $N$, such that the following holds. Suppose $\overline{X} = \{X_i \colon i \in \omega\}$ is a family of well-founded pseudocopies from $\PC_N$, in other words, 
  each $X_i$ codes a countable $J_{\alpha_i}$ in which there are at most $N$ uncountable cardinals. Let $\gamma$ be the supremum of the $\alpha_i$. Then there exists an $\omega$-copy of
  $J_\gamma$ recursive in $\overline{X}^{(e_N)}$.
\end{lem}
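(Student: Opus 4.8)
The plan is to realize $J_\gamma$ as a direct limit of the structures coded by the $X_i$ and then to check that this limit, together with all the bookkeeping needed to construct it, is recursive in a single jump $\overline{X}^{(d_N)}$ whose order depends only on $N$. Fix once and for all a number $k_N$, computable from $N$, large enough that for any two members $X,Y$ of $\PC_N$ the question whether $X \cong Y$, and (when it holds) the unique isomorphism between the structures coded by $X$ and $Y$, are recursive in $(X\oplus Y)^{(k_N)}$, uniformly; such a $k_N$ exists by Corollary~\ref{cor:iso-arithmetic} and Lemma~\ref{lem:isomophism-arithmetic-PCs}. Since the $X_i$ are well-founded pseudocopies they are linearly ordered by $\preceq_N$, with $X_i \prec_N X_j$ exactly when $\alpha_i < \alpha_j$; by Corollary~\ref{cor:iso-arithmetic} this linear order, together with the witnessing data (for $X_i \prec_N X_j$, the unique $z \in \vecJ^{X_j}$ with $X_i \cong \Set_{X_j}(z)$, which is unique by Lemma~\ref{lem:J-rigid}), is recursive in a fixed jump of $\overline{X}$ beyond $k_N$.

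First I would dispose of the trivial case: if some $X_{i^\ast}$ is $\preceq_N$-maximal among the $X_i$ then $\alpha_{i^\ast} = \gamma$, so $X_{i^\ast}$ is already an $\omega$-copy of $J_\gamma$, and such an $i^\ast$ can be found arithmetically in $\overline{X}$. Otherwise $\gamma$ is a limit of the $\alpha_i$, and I would extract an increasing cofinal subsequence by the greedy recursion $i_0 = 0$, $i_{k+1} = $ least $j$ with $X_{i_k} \prec_N X_j$ (the process never gets stuck, since getting stuck would mean $X_{i_k}$ is $\preceq_N$-maximal). This produces $\alpha_{i_0} < \alpha_{i_1} < \cdots$, and one checks $\sup_k \alpha_{i_k} = \gamma$: if these ordinals were bounded by some $\alpha_m$ then every $i_{k+1}$ would be $\le m$, contradicting that the $i_k$ are pairwise distinct. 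Write $Y_k = X_{i_k}$; this recursion runs recursively in a fixed jump of $\overline{X}$.

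Next, for $k \le l$ I would produce the canonical $\in$-initial embedding $\iota_{kl}$ of the structure coded by $Y_k$ into that coded by $Y_l$: take the unique $z \in \vecJ^{Y_l}$ with $Y_k \cong \Set_{Y_l}(z)$ (note $\Set_{Y_l}(z) \in \PC_N$ by Lemma~\ref{lem:reflection}), compose the unique such isomorphism, arithmetic by Lemma~\ref{lem:isomophism-arithmetic-PCs}, with the inclusion $\Set_{Y_l}(z) \hookrightarrow \Struc{Y_l}$, and read off that $\iota_{kl}$ is uniformly recursive in a fixed jump of $\overline{X}$ as a function of $(k,l)$. Rigidity (Lemma~\ref{lem:J-rigid}) makes the $\iota_{kl}$ the unique embeddings of their kind, hence coherent: $\iota_{ll'}\circ\iota_{kl} = \iota_{kl'}$. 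I would then form the direct limit of the $\Struc{Y_k}$ along the $\iota_{kl}$: its elements are the classes of pairs $(k,x)$, $x \in F_{Y_k}$, with $(k,x)\sim(l,y)$ for $k\le l$ iff $\iota_{kl}(x) = y$; since every set in $J_\gamma$ already lies in some $J_{\alpha_{i_k}}$ and persists upward, each class has a unique representative of least first coordinate. Because membership in a range $\iota_{k'l}$, composition of embeddings, and equality of images are all recursive in a fixed jump $\overline{X}^{(d_N)}$ with $d_N = k_N + O(1)$, I can enumerate the set $C$ of canonical representatives recursively in $\overline{X}^{(d_N)}$, take this enumeration to index $F_Y$, and set $c_1\,E_Y\,c_2$ iff, pushing both representatives up to the larger first coordinate $m$, $\iota_{k_1 m}(x_1)\,E_{Y_m}\,\iota_{k_2 m}(x_2)$. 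The direct limit of the $\Struc{Y_k}$ along the inclusion embeddings is exactly $\bigcup_k J_{\alpha_{i_k}} = J_\gamma$, so this $Y$ is an $\omega$-copy of $J_\gamma$ (with empty predicate), recursive in $\overline{X}^{(d_N)}$ with $d_N$ computed uniformly from $N$.

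The main obstacle, and the only place where the argument is more than routine bookkeeping, is keeping the jump level uniform in $N$: a priori, composing longer and longer chains $\iota_{k,k+1}$ and searching for canonical representatives across all levels could drive the arithmetic complexity up without bound. The way around this is exactly the combination of rigidity (Lemma~\ref{lem:J-rigid}), which forces the $\iota_{kl}$ to be the unique such embeddings and hence uniformly $\overline{X}^{(k_N)}$-computable as a function of $(k,l)$, so that a chain of them is no harder than a single one, together with the uniform arithmetic definability of isomorphisms of pseudocopies (Lemma~\ref{lem:isomophism-arithmetic-PCs}); the remaining direct-limit bookkeeping then contributes only a fixed constant number of extra jumps.
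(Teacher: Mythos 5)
Your argument is correct and takes essentially the same route as the paper: the paper's own proof also forms the direct limit of the directed system of structures coded by the $X_i$, using Lemma~\ref{lem:isomophism-arithmetic-PCs} to get the connecting embeddings uniformly arithmetically in $\overline{X}$, and concludes that the limit is arithmetic in $\overline{X}$ at a level depending only on $N$. Your write-up simply makes explicit the bookkeeping the paper leaves implicit (extracting a cofinal chain, uniqueness/coherence of the embeddings via rigidity, canonical representatives in the limit, and the observation that each $\iota_{kl}$ is computed directly from the pair rather than by composing a chain, which is what keeps the jump level uniform).
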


\begin{proof}
    We first note that if $X_i$ and $X_j$ code $J_{\alpha_i}$ and $J_{\alpha_j}$, respectively, and $\alpha_i<\alpha_j$, then the embedding of $J_{\alpha_i}$ as coded by  $X_i$ into $J_{\alpha_j}$ as coded by  $X_j$ is uniformly arithmetic in $\overline{X}.$  See Lemma~\ref{lem:isomophism-arithmetic-PCs}.  Thus, the directed system consisting of the structures coded by the $X_i$'s and the maps between them is arithmetic in $\overline{X}.$  It follows that the direct limit is also arithmetic in $\overline{X},$ and uniformly so.
\end{proof}

\bigskip

\subsection{Canonical copies are not random for continuous measures}
\label{sub:ncr-not-rand}

We now want to use the framework of $\omega$-copies to show that for any $\alpha
< \beta_N$, the canonical copy of a standard $J$-structure $\JA{k}$ cannot be
$K$-random for a continuous measure, with $K$ sufficiently large. 

The argument rests mostly on various applications of the stair trainer technique
introduced in Propositions~\ref{pro:jump-non-random-1} and
\ref{pro:jump-non-random-2}, adapted to the notions of codings of countable
$J$-structures developed in the previous sections. For convenience, we briefly
review the core concepts.
\begin{description}
\item[$\omega$-copy] A coding of a countable set-theoretic structure
  $\Tup{S,A}$, $A \subseteq S$, as a subset of $\omega$; see
  Definitions~\ref{def:relational_structure} and~\ref{def:omega-copy}.
\item[Canonical copy] The copy of a $J$-structure $\JA{n}$ with projectum
  $\rho^n_\alpha = 1$ by means of a canonical bijection $V_\omega
  \leftrightarrow \omega$; see Definition~\ref{def:canonical-copy}.
\item[Effective copy] An $\omega$-copy of a $J$-structure $\JA{n}$ from which
  the internal fine structure hierarchy can effectively be recovered; see
  Definition~\ref{def:effective-copy} and
  Corollary~\ref{cor:effective-copies-from-above}. A canonical copy is always
  effective.
\item[Pseudocopy] An $\omega$-copy of a rud-closed structure that is an $\omega$-model 
  satisfying $\varphi_{V=L}$ and other $J$-like properties. The set of pseudocopies is arithmetically
  definable. A pseudocopy may be ill-founded. If it is well-founded, it codes
  some countable level $J_\alpha$ of the $J$-hierarchy; see
  Definition~\ref{def:pseudocopy} and Lemma~\ref{pro:pseudocopy-arithmetic}. By
  comparing their internal $J$-hierarchies, a subset of pseudocopies can be
  linearly ordered (up to isomorphism). This linear ordering is arithmetic, too;
  see Lemma~\ref{lem:lin-pseudocode}.
\end{description}

We also fix some notation for the rest of this section. 
Given $N \in \omega$, we fix $c \in \omega$ to be a sufficiently large number. It will be greater
than the complexity of all arithmetic definitions ($N$-pseudocopies, comparison
of pseudocopies and $S$-operators, linearization) introduced in the previous
sections. In particular, Corollary~\ref{cor:transfer-function} yields that, if
$X$ and $Y$ are well-founded $\omega$-copies of a $J_\alpha$ where $\alpha <
\beta_N$, then the isomorphism between the two coded structures is recursive in
$(X \join Y)^{(c)}$. It will also be greater than the respective constants from Corollary~\ref{cor:transfer-S-copy} and Lemma~\ref{lem:limit-copy}. After proving a couple of auxiliary results (Lemmas~\ref{lem:arith-S-tower} and \ref{lem:stair-J-successor}), we will also assume $c$ to be greater than the constants appearing in these lemmas.

\medskip
We give a first application of the stair trainer technique (as used in the proofs of Propositions~\ref{pro:jump-non-random-1} and \ref{pro:jump-non-random-2}) in the  context of $\omega$-copies and pseudocopies. 

\begin{lem} \label{lem:arith-S-tower}
    For each $N$, there exist numbers $d,e \in \omega$ such that the following holds. Suppose $\mu$ is a continuous measure, $\alpha < \beta_N$,
    and $X$ is an $\omega$-copy of $J_\alpha$,  recursive in $\mu^{(m)}$, for some $m \in \omega$. Suppose
    further that $R$ is $(m+e)$-random with respect to $\mu$ and
    computes an $\omega$-copy of $J_{\alpha+1}$. Then there exists an $\omega$-copy of $J_{\alpha+1}$ recursive in $\mu^{(m+d)}$.
\end{lem}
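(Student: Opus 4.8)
The plan is to run the ``stair trainer'' argument of Propositions~\ref{pro:jump-non-random-1} and~\ref{pro:jump-non-random-2} along the tower $\bigl(\overline{S}^{(n)}(X)\bigr)_{n\in\omega}$, using the random real $R$ only as an auxiliary oracle which then gets absorbed into a fixed jump of $\mu$. First I would fix the relevant constants, all depending only on $N$. Since $\alpha<\beta_N$ we have $P_\alpha\leq N$, so Corollary~\ref{cor:transfer-S-copy} supplies a number $d_N$ (computed uniformly from $N$) with the property that $\overline{S}^{(n)}(X)\leq_{\T}(X\oplus Z)^{(d_N)}$ whenever $Z$ is an $\omega$-copy of $S^{(n)}(J_\alpha)$. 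Let $k_1\geq 1$ be the fixed arithmetic level of the $\overline{S}$-operator from Lemma~\ref{lem:arithmetic-S-copy}, so that $\overline{S}(A)\leq_{\T}A^{(k_1)}$ for every real $A$, and let $\ell$ be the (absolute, fixed) arithmetic level of the definition in the lemma preceding Corollary~\ref{cor:successor-copy}. I will take $d=d_N+\ell$ and $e=d_N+k_1+1$.

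Next I would feed $R$ into Corollary~\ref{cor:transfer-S-copy}. Let $W\leq_{\T}R$ be the $\omega$-copy of $J_{\alpha+1}$ computed by $R$. Since $S^{(n)}(J_\alpha)\in J_{\alpha+1}$ for every $n$, Lemma~\ref{prop:copy-computes-S-below} yields, for each $n$, an $\omega$-copy $Z_n$ of $S^{(n)}(J_\alpha)$ with $Z_n\leq_{\T}W\leq_{\T}R$. Using $X\leq_{\T}\mu^{(m)}$, Corollary~\ref{cor:transfer-S-copy}, and monotonicity of the iterated jump,
\[
  \overline{S}^{(n)}(X)\;\leq_{\T}\;(X\oplus Z_n)^{(d_N)}\;\leq_{\T}\;\bigl(\mu^{(m)}\oplus R\bigr)^{(d_N)}\qquad(n\in\omega).
\]
Because $\mu$ is continuous and $R$ is $(m+e)$-random for $\mu$ with $m+e\geq m+d_N+1$, two applications of Proposition~\ref{pro:random-join-jump} (Kautz) collapse the right-hand side:
\[
  \bigl(\mu^{(m)}\oplus R\bigr)^{(d_N)}\;\equiv_{\T}\;(R\oplus\mu)^{(m+d_N)}\;\equiv_{\T}\;R\oplus\mu^{(m+d_N)}.
\]
Hence
\begin{equation}\label{eq:S-below-R}
  \overline{S}^{(n)}(X)\;\leq_{\T}\;R\oplus\mu^{(m+d_N)}\qquad(n\in\omega).
\end{equation}

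The core of the proof is then an induction on $n$ showing $\overline{S}^{(n)}(X)\leq_{\T}\mu^{(m+d_N)}$ for all $n$. The base case is $\overline{S}^{(0)}(X)=X\leq_{\T}\mu^{(m)}$. For the inductive step, assume $\overline{S}^{(n)}(X)\leq_{\T}\mu^{(m+d_N)}$. Then $\overline{S}^{(n+1)}(X)\leq_{\T}(\overline{S}^{(n)}(X))^{(k_1)}\leq_{\T}\mu^{(m+d_N+k_1)}$ by Lemma~\ref{lem:arithmetic-S-copy}, while \eqref{eq:S-below-R} gives $\overline{S}^{(n+1)}(X)\leq_{\T}R\oplus\mu^{(m+d_N)}$. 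Now I apply Lemma~\ref{lem:stair-with-jump} to the real $\overline{S}^{(n+1)}(X)$, with the continuous measure $\mu$, the random real $R$ in the role of ``$Y$'', $k=m+d_N$, and $n=k_1+1\geq 2$: the hypotheses are met, since $R$ is $\mu$-$(k+n)$-random (as $k+n=m+d_N+k_1+1=m+e$), $\overline{S}^{(n+1)}(X)\leq_{\T}\mu^{(k+n-1)}=\mu^{(m+d_N+k_1)}$, and $\overline{S}^{(n+1)}(X)\leq_{\T}R\oplus\mu^{(k)}$. The conclusion is $\overline{S}^{(n+1)}(X)\leq_{\T}\mu^{(k)}=\mu^{(m+d_N)}$, closing the induction. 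I expect this step to be the only delicate one, and the point that makes it work is that the $\overline{S}$-operator raises arithmetic complexity by the \emph{fixed} amount $k_1$, independent of $n$: so the gap between the cheap bound $\mu^{(m+d_N+k_1)}$ and the target $\mu^{(m+d_N)}$ never grows along the tower, and a single, $n$-independent amount of $\mu$-randomness of $R$ suffices at every rung. (By contrast, the naive bound $\overline{S}^{(n)}(X)\leq_{\T}\mu^{(m+nk_1)}$ alone would demand randomness growing with $n$.)

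Finally, with $\overline{S}^{(n)}(X)\leq_{\T}\mu^{(m+d_N)}$ established for every $n$, Corollary~\ref{cor:successor-copy} shows that $\mu^{(m+d_N)}$ uniformly arithmetically defines an $\omega$-copy of $J_{\alpha+1}$; since that definition has the fixed arithmetic level $\ell$, such a copy is recursive in $(\mu^{(m+d_N)})^{(\ell)}=\mu^{(m+d)}$. This is exactly the assertion of the lemma, with $d=d_N+\ell$ and $e=d_N+k_1+1$, both functions of $N$ alone.
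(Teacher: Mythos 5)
Your argument is correct and is essentially the paper's own proof: use $R$'s copies of $S^{(n)}(J_\alpha)$ together with Corollary~\ref{cor:transfer-S-copy} to bound each $\overline{S}^{(n)}(X)$ by a fixed jump of $R\join\mu$, note that one application of $\overline{S}$ costs only a fixed number of jumps (Lemma~\ref{lem:arithmetic-S-copy}), and then run the stair-trainer induction via Proposition~\ref{pro:random-join-jump} and Lemma~\ref{lem:stair-with-jump} to pin every $\overline{S}^{(n)}(X)$ below $\mu^{(m+d_N)}$, finishing with Corollary~\ref{cor:successor-copy}. Your write-up merely makes the choice of constants $d$ and $e$ and the $n$-independence of the randomness requirement more explicit than the paper does.
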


\begin{proof}
  By Lemma~\ref{prop:copy-computes-S-below}, $R$ computes an $\omega$-copy of $S^{(n)}(J_\alpha)$, for all $n \in \omega$. 
  By Corollary~\ref{cor:transfer-S-copy}, we can fix a constant $d_N$ so that $\overline{S}^{(n)}(X)$ is recursive in $(R \join \mu)^{(m+d_N)}$. 
  By Lemma~\ref{lem:arithmetic-S-copy}, we can fix $a$ so that  for any $\omega$-copy of a transitive set, the $a$-th jump of the $\omega$-copy computes its $\overline{S}$. Therefore $\overline{S}(X)$ is recursive in $\mu^{(m+a)}.$ We may assume that $R$ is $(m+d_N+a+1)$-random for $\mu$.
  By Proposition~\ref{pro:random-join-jump} and Lemma~\ref{lem:stair-with-jump}, $\overline{S}(X)$ is recursive in $\mu^{(m+d_N)}$. Again by Lemma~\ref{lem:arithmetic-S-copy}, $\overline{S}^{(2)}(X)$ is recursive in $\mu^{(m+d_N+a)}$. Since $\overline{S}^{(2)}(X)$ is also recursive in $(R \join \mu)^{(m+d_N)}$, we can apply Proposition~\ref{pro:random-join-jump} and Lemma~\ref{lem:stair-with-jump} again and obtain that $\overline{S}^{(2)}(X)$ is recursive in $\mu^{(m+d_N)}$. We can continue inductively and obtain that for each $n \in \omega$, $\overline{S}^{(n)}(X)$ is recursive in $\mu^{(m+d_N)}$. Now apply Corollary~\ref{cor:successor-copy}.
\end{proof}

The lemma shows that with the help of a sufficiently random real that  computes an $\omega$-copy of the next level of the $J$-hierarchy, $\mu$ can reach a copy of this level arithmetically, too. Combined with Lemma~\ref{lem:limit-copy}, this will be the key ingredient in proving that canonical copies of standard codes cannot be random with respect to a continuous measure.   

\medskip
The next lemma establishes a similar fact for the standard $J$-structures $\JA[\delta]{n}$ over a given $J_\delta$.

\begin{lem} \label{lem:stair-J-successor}
  There exist numbers $d,e \in \omega$ such that the following holds. Suppose $\mu$ is a continuous measure
    and $X$ is an $\omega$-copy of
  $J_\delta$. Further suppose 
  $X$ is recursive in $\mu^{(m)}$. Finally, suppose that $R$ is
 $(m+e)$-random with respect to $\mu$ and
  $n$ is such that $R$ computes an $\omega$-copy of $\JA[\delta]{n}$. Then there exists an $\omega$-copy $\EM[\delta]{n}$ of $\JA[\delta]{n}$
  recursive in $\mu^{(m+d)}$.
\end{lem}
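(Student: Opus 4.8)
The plan is to mimic the proof strategy of Lemma~\ref{lem:arith-S-tower}, but instead of stepping up the $S$-hierarchy one application at a time, we step up the projectum hierarchy $\JA[\delta]{0}, \JA[\delta]{1}, \dots, \JA[\delta]{n}$ one projectum at a time, using the stair trainer technique (Proposition~\ref{pro:random-join-jump} together with Lemma~\ref{lem:stair-with-jump}) to ``sink'' each level down to a fixed jump of $\mu$. The base case is the hypothesis: $X$ is an $\omega$-copy of $J_\delta = \JA[\delta]{0}$, recursive in $\mu^{(m)}$. For the inductive step, suppose we have an $\omega$-copy of $\JA[\delta]{k}$ recursive in $\mu^{(m+d_k)}$ for some fixed $d_k$ depending only on $k$ (and ultimately bounded by a constant depending on $n$ but not on $\delta$ or $\mu$). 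By Corollary~\ref{lem:successor-copy}, there is an $\omega$-copy of $\JA[\delta]{k+1}$ uniformly arithmetically definable in that copy, hence recursive in $\mu^{(m+d_k+a)}$ for a fixed constant $a$ bounding the complexity of that uniform arithmetic definition. On the other hand, $R$ computes an $\omega$-copy of $\JA[\delta]{n}$, and by Corollary~\ref{cor:copies-from-above} (applied $n-k-1$ times, or by Corollary~\ref{cor:effective-copies-from-above} if the copy computed by $R$ is effective) $R$ also computes an $\omega$-copy of $\JA[\delta]{k+1}$.

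First I would fix the constant bounding all the relevant uniform arithmetic definitions. Let $a$ be large enough that: (i) from any $\omega$-copy of $\JA[\delta]{k}$ we can compute, recursively in its $a$-th jump, an $\omega$-copy of $\JA[\delta]{k+1}$ (via Corollary~\ref{lem:successor-copy}); (ii) from any $\omega$-copy of $\JA[\delta]{n}$ we can compute, recursively in its $a$-th jump, an $\omega$-copy of $\JA[\delta]{k+1}$ for each $k < n$ (via Corollary~\ref{cor:copies-from-above}); and (iii) any two $\omega$-copies of the same $J$-structure $\JA[\delta]{k}$ have isomorphism recursive in the $a$-th jump of their join (via Corollary~\ref{cor:transfer-function} / Lemma~\ref{lem:isomophism-arithmetic-PCs}, legitimate since $\delta < \beta_N$ implies the number of iterates of the power set present is bounded by $N$). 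Now set $e = na + 2$ (so that $R$ is random enough to survive all the stair-trainer applications) and $d = na$, say. The claim is that with these choices the lemma holds.

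The main inductive move: assume $P_k$, an $\omega$-copy of $\JA[\delta]{k}$, is recursive in $\mu^{(m+ka)}$. By (i), there is an $\omega$-copy $P_{k+1}'$ of $\JA[\delta]{k+1}$ recursive in $\mu^{(m+ka+a)} = \mu^{(m+(k+1)a)}$ — but this is exactly where we want it, so actually no stair-trainer step is needed here if we are only tracking $\mu$-copies. The stair-trainer technique is needed to handle the fact that the copy computed by $R$ need not agree with the copy computed by $\mu$. Concretely: $R$ computes an $\omega$-copy $Q_{k+1}$ of $\JA[\delta]{k+1}$ by (ii); the $\mu$-side has $P_{k+1}'$; by (iii), there is an isomorphism between $Q_{k+1}$ and $P_{k+1}'$ recursive in $(R \join \mu)^{(m+(k+1)a)}$. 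Since $P_{k+1}'$ is already recursive in $\mu^{(m+(k+1)a)}$, and it is recursive in $(R \join \mu)^{(m+(k+1)a)}$, applying Proposition~\ref{pro:random-join-jump} and Lemma~\ref{lem:stair-with-jump} (using that $R$ is $(m+e)$-random with $e$ large enough, and that $\mu$-random implies $\mu^{(j)}$-random with a loss of $j$ levels) we conclude $P_{k+1}'$ is actually already where we need it — but the real content is the \emph{downward} direction. I would restructure this exactly as in Lemma~\ref{lem:arith-S-tower}: we want to show that some $\omega$-copy of $\JA[\delta]{n}$ is recursive in $\mu^{(m+d)}$, and we do this by showing inductively that for each $k$, \emph{some} $\omega$-copy of $\JA[\delta]{k}$ is recursive in $\mu^{(m+d)}$ where $d$ is a single constant not growing with $k$; the point is that at each level the copy we extract from $R$ plus the copy recursive in $\mu^{(m+d)}$ at the previous level together give, via (i)–(iii), a copy recursive in $(R\join\mu)^{(m+d+O(1))}$, and then the stair trainer (Proposition~\ref{pro:random-join-jump} + Lemma~\ref{lem:stair-with-jump}) sinks it back down to $\mu^{(m+d)}$, consuming one more level of randomness each time.

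The hard part will be bookkeeping the constants so that $d$ and $e$ genuinely depend only on the implicit $N$ (which controls the number of iterates of the power set present in $J_\delta$, and hence the complexity in (iii)) and on $n$ — verifying that the stair-trainer loss of randomness over the $n$ steps is absorbed by choosing $e$ appropriately, and that $d$ does not grow with the number of steps because each stair-trainer application brings us back to the \emph{same} jump level $\mu^{(m+d)}$ rather than a higher one. This is precisely the phenomenon already exploited in Lemma~\ref{lem:arith-S-tower} (``we obtain a fixed arithmetic bound relative to $Z$ which is independent of $n$'', as remarked after Corollary~\ref{cor:transfer-S-copy}), so I expect the argument to go through by the same mechanism; one just has to be careful that the isomorphism-transfer in (iii) is uniformly arithmetic with a bound depending only on $N$, which is exactly the content of Corollary~\ref{cor:transfer-function}. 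I would close by remarking that the case $n=0$ is trivial (take $d=e=0$ and $\EM[\delta]{0} = X$) and note that the same proof, combined with the corollaries, is the template for the main theorem to follow.
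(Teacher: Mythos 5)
Your overall strategy is the paper's: the published proof is exactly an induction up the projectum hierarchy $\JA[\delta]{0},\dots,\JA[\delta]{n}$ using Corollary~\ref{lem:successor-copy} to define the next level arithmetically on the $\mu$-side, Corollary~\ref{cor:copies-from-above} to extract the corresponding level from $R$'s copy, Corollary~\ref{cor:transfer-function}/Lemma~\ref{lem:isomophism-arithmetic-PCs} to identify the two copies, and then Proposition~\ref{pro:random-join-jump} plus Lemma~\ref{lem:stair-with-jump} to sink the result back to a fixed jump of $\mu$, exactly on the model of Lemma~\ref{lem:arith-S-tower}. Your eventual restructuring (a single constant $d$, each step returning to the same level $\mu^{(m+d)}$) is the right mechanism for $d$.

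There is, however, a genuine gap in your handling of the randomness parameter: you set $e=na+2$, speak of the stair trainer ``consuming one more level of randomness each time,'' and in the end allow $d$ and $e$ to depend on $n$. That does not prove the lemma as stated, whose quantifier order is ``there exist $d,e$ such that for all $\mu, X, m, R, n$\dots''; and the uniformity in $n$ is precisely what is needed downstream, since in Theorem~\ref{thm:mastercodes-in-NCR} (e.g.\ Lemma~\ref{lem:Ik_1}) the constant $c$ and the randomness level $G(N)$ are fixed in advance while the projectum depth $m_R$ is unbounded. In fact no randomness is consumed: at every step of the induction the real being sunk (take the domain copy of $J_{\rho^{k+1}_\delta}$ sitting inside $X$, hence recursive in $\mu^{(m)}$, with the master-code predicate transferred onto it) is recursive in $\mu^{(m+d+a')}$ for a fixed $a'$ and in $R\oplus\mu^{(m+d)}$ for a fixed $d$, so Lemma~\ref{lem:stair-with-jump} and Proposition~\ref{pro:random-join-jump} are applied with the \emph{same} parameters $k=m+d$, $n=a'+1$ each time; one fixed hypothesis ``$R$ is $(m+d+a'+1)$-random'' supports all $n$ applications, just as in the proof of Lemma~\ref{lem:arith-S-tower}, where a single ``$(m+d_N+a+1)$-random'' assumption carries the whole induction. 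Relatedly, be careful to extract $R$'s copies of the lower levels at a cost independent of $k$ and $n$: Corollary~\ref{cor:copies-from-above} yields \emph{all} levels $\JA[\delta]{k}$, $k\le n$, from two jumps of $R$'s copy in one application; your parenthetical ``applied $n-k-1$ times'' would cost jumps growing with $n$ and would again destroy the uniformity that is the whole content of the lemma (compare the remark after Corollary~\ref{cor:transfer-S-copy}).
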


\begin{proof}
	The proof is similar to that of Lemma~\ref{lem:arith-S-tower}, inductively using Proposition~\ref{pro:random-join-jump}, Lemma~\ref{lem:stair-with-jump}, Corollary~\ref{lem:successor-copy}, and Corollary~\ref{cor:transfer-function}. We choose $d$ large enough so that the isomorphism between $X$'s and $R$'s copy of $J_\delta$ is computable in $(\mu \oplus R)^{(m+d)}$. The number $e$ then guarantees sufficient randomness in $R$ to apply the stair trainer technique.
\end{proof}

\medskip
From now on, we assume that $c$ is also greater than the respective constants
from Lemmas~\ref{lem:limit-copy}, \ref{lem:arith-S-tower} and 
\ref{lem:stair-J-successor}. We define $G(N) = (N+2)(3c+6)$.

\begin{thm} \label{thm:mastercodes-in-NCR}
  Suppose $N \geq 0$, $\alpha <
  \beta_N$, and for some $m > 0$, $\rho^m_\alpha = 1$. Then the canonical copy
  of the standard $J$-structure $\JA{m}$ is not $G(N)$-random with respect to
  any continuous measure.
\end{thm}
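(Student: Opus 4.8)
The plan is a proof by contradiction that runs the stair-trainer technique along the $J$-hierarchy. Suppose the canonical copy $C$ of $\JA{m}$ is $G(N)$-random for some continuous measure $\mu$; the goal is to show $C\leq_{\T}\mu^{(k)}$ for some $k<G(N)$, which contradicts Proposition~\ref{pro:random-not-delta}. The first observation is that, $C$ being the canonical (hence effective) copy of $\JA{m}$, Corollary~\ref{cor:effective-copies-from-above} and Lemmas~\ref{prop:copy-computes-below}, \ref{prop:copy-computes-S-below} give that $C$ computes an $\omega$-copy of $\JA[\delta]{n}$ for \emph{every} $\delta\leq\alpha$ and every $n$, as well as of each $S^{(j)}(J_\delta)$. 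So for all such $\delta$, $n$ an $\omega$-copy of $\JA[\delta]{n}$ is recursive in $C\oplus\mu$; this is the precise analogue of the fact, exploited in Proposition~\ref{pro:jump-non-random-1}, that $\emptyset^{(i)}\leq_{\T}Y$ for all $i$ when $Y\equiv_{\T}\emptyset^{(k)}$, and it is what lets $C$ serve as a ``free'' upper bound in every application of the stair-trainer lemmas (Proposition~\ref{pro:random-join-jump}, Lemmas~\ref{lem:stair-with-jump}, \ref{lem:arith-S-tower}, \ref{lem:stair-J-successor}, \ref{lem:limit-copy}) below.

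The heart of the argument is the claim that there is a $K_0\leq(N+1)(3c+6)$ such that $J_\alpha$ has an $\omega$-copy recursive in $\mu^{(K_0)}$. Granting this, the proof finishes quickly: $C$ is trivially an $\omega$-copy of $\JA{m}=\JA[\alpha]{m}$ and $K_0+e\leq G(N)$ (recall $c$ was taken larger than the constants $d,e$ of Lemma~\ref{lem:stair-J-successor}), so Lemma~\ref{lem:stair-J-successor} with the $\mu^{(K_0)}$-recursive copy of $J_\alpha$ as $X$, $R=C$, $n=m$, yields an $\omega$-copy $D$ of $\JA{m}$ recursive in $\mu^{(K_0+d)}$; by Lemma~\ref{pro:compute-canonical-copy}, $D'$ uniformly computes the canonical copy, so $C\leq_{\T}\mu^{(K_0+d+1)}$, i.e.\ $C$ is $\Delta^0_{K_0+d+2}(\mu)$. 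Since $K_0+d+2\leq(N+1)(3c+6)+c+2<(N+2)(3c+6)=G(N)$, Proposition~\ref{pro:random-not-delta} contradicts the $(\mu,G(N))$-randomness of $C$.

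To prove the claim I would induct on the number of infinite cardinals of $J_\alpha$: since $\alpha<\beta_N$ we have $P_\alpha\leq N$, so there are at most $N+1$ ``cardinal blocks'' to traverse. Write $\kappa_0=1<\kappa_1<\dots<\kappa_P$ for the cardinals of $J_\alpha$ and set $\kappa_{P+1}=\alpha$; maintaining an $\omega$-copy of $J_{\kappa_i}$ recursive in $\mu^{(K_i)}$ with $K_0=0$ (as $J_1=V_\omega$ has a recursive copy) and $K_{i+1}\leq K_i+(3c+6)$, the copy of $J_{\kappa_{P+1}}=J_\alpha$ at level $K_{P+1}\leq(N+1)(3c+6)$ is the claim. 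For the block step, given a $\mu^{(K_i)}$-recursive copy of $J_{\kappa_i}$, consider the linear preorder $\PC^*_N(\mu^{(K_i+3c+6)})$ of pseudocopies, arithmetic in $\mu^{(K_i+3c+6)}$, with its well-founded initial segment $I$ (Lemma~\ref{lem:lin-pseudocode}); $I$ codes exactly those $J_\delta$ that have a $\mu^{(K_i+3c+6)}$-recursive $\omega$-copy with at most $N$ iterates of $\P(\omega)$, and the copy of $J_{\kappa_i}$ (together with Lemma~\ref{prop:copy-computes-below}) already puts $\kappa_i$ inside $I$. It suffices to show the length of $I$ exceeds $\kappa_{i+1}$, for then $J_{\kappa_{i+1}}$ — hence every $J_\delta$ with $\delta<\kappa_{i+1}$ — has a $\mu^{(K_i+3c+6)}$-recursive copy. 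If the length $\sigma$ of $I$ were $\leq\kappa_{i+1}\leq\alpha$, then, since $C$ computes an $\omega$-copy of $J_\alpha$ and isomorphism between well-founded pseudocopies is arithmetic (Corollary~\ref{cor:transfer-function}, Lemma~\ref{lem:isomophism-arithmetic-PCs}), $C$ would decide membership in $I$ at a fixed jump, so $I\leq_{\T}(C\oplus\mu^{(K_i+3c+6)})^{(j)}$ for a fixed $j$; Lemma~\ref{lem:recog-wf-segment} (taken relative to $\mu^{(K_i+O(c))}$, with $Y=C$, which is random enough since $K_i+O(c)<G(N)$) then gives $I\leq_{\T}\mu^{(K_i+O(c))}$, and Lemma~\ref{lem:limit-copy} for limit $\sigma$ — or Lemma~\ref{lem:arith-S-tower} with $R=C$ and Proposition~\ref{pro:random-join-jump} for successor $\sigma$ — produces an $\omega$-copy of $J_\sigma$ recursive in $\mu^{(K_i+3c+6)}$, contradicting the maximality of $\sigma$. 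The budget $3c+6$ per block is exactly what is needed to absorb, all at once, the constants of Lemma~\ref{lem:recog-wf-segment}, Lemma~\ref{lem:limit-copy}, Corollary~\ref{cor:transfer-function}, and the arithmetic definitions of $\PC^*_N$ and of $\overline{S}$. The key fine-structural input making the within-block argument go through at a bounded level is that every $J_\delta$ with $\kappa_i\leq\delta<\kappa_{i+1}$ projects into $J_{\kappa_i}$, so only bounded ``oracle'' information (namely a master code living inside the fixed structure $J_{\kappa_i}$) is ever required.

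The step I expect to be the main obstacle is exactly this within-block cost-uniformity. A naive traversal of a block — iterating Lemma~\ref{lem:arith-S-tower} through the successor steps $J_\delta\to J_{\delta+1}$ — loses $d$ quantifiers at every step and so blows up across the transfinitely many limit stages inside a single block. Forcing the cost to stay within a fixed per-block budget requires combining the projectum fact above with the arithmetic linearization $\PC^*_N$ of the $\mu$-definable pseudocopies and with Lemma~\ref{lem:recog-wf-segment}; the latter is what allows the $\mu$-arithmetic reach of the $J$-hierarchy to ``close off'' at precisely the right ordinal, rather than at an ordinal that depends on help from the random real. Everything else — the outer contradiction, the final passage from $J_\alpha$ to $\JA{m}$, and the accounting of constants — is routine.
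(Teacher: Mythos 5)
Your overall architecture (contradiction via the stair trainer, the arithmetic linearization $\PC^*_N$, Lemma~\ref{lem:recog-wf-segment} to pin down the well-founded part, and the final passage from a cheap copy of $J_\alpha$ to $C\leq_{\T}\mu^{(k)}$ via Lemma~\ref{lem:stair-J-successor}, Lemma~\ref{pro:compute-canonical-copy} and Proposition~\ref{pro:random-not-delta}) matches the paper's in spirit, and that last assembly step is sound. But the central claim --- that the $\mu$-arithmetic copies advance past each cardinal block at cost $3c+6$ --- is not established by the mechanism you give, and the mechanism as stated fails on jump counting. If $\sigma$ is the supremum of the $\delta$ such that $J_\delta$ has a copy recursive in $\mu^{(K_i+3c+6)}$, then everything you build from $I$ lives strictly above that level: $I$ itself is only recursive in roughly $\mu^{(K_i+3c+6+c+4)}$, Lemma~\ref{lem:limit-copy} then yields a copy of $J_\sigma$ only at $\mu^{(K_i+3c+6+c+4+c)}$, and in the successor case Lemma~\ref{lem:arith-S-tower} yields $J_{\delta+1}$ only at $\mu^{(K_i+3c+6+d)}$. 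A copy of $J_\sigma$ at a higher jump of $\mu$ is perfectly consistent with the definition of $\sigma$, so "contradicting the maximality of $\sigma$" never happens; the lemmas you cite cannot pull the new copy back down below the level that defined $I$, and Lemma~\ref{lem:recog-wf-segment} does not do this either (it only removes the dependence on $C$ in computing $I$, at the cost of four more jumps).

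What is missing is exactly the device the paper uses to "come back down," and it is where randomness does its real work. The paper runs an internal recursion on ordinals $\gamma_0,\gamma_1,\dots$ (suprema of ordinals seen to have cardinality $\leq\gamma_k$ in structures indexed at level $k$), and at the first place $J_\delta$ where some $\gamma_k$ fails to be a cardinal it exploits the projectum drop $\rho_\delta\in\{1\}\cup\{\gamma_i\}$: one builds a \emph{cheap} copy $Y$ of $J_{\rho_\delta}$ from the earlier level via Lemma~\ref{lem:limit-copy}, obtains the master code $A_\delta$ on the expensive side via Lemma~\ref{lem:stair-J-successor} and transfers it onto $Y$ (Lemma~\ref{lem:isomophism-arithmetic-PCs}), and also transfers onto $Y$ the copy of $A_\delta$ computed outright by the random real $R$; since the predicate $M_Y$ is now computable both from a high jump of $\mu$ and from $R$ joined with a low jump of $\mu$, the minimal-pair Lemma~\ref{lem:stair-with-jump} forces $\Tup{Y,M_Y}$, and hence (by Corollary~\ref{cor:copies-from-above}) a copy of $J_\delta$, down to the low level, contradicting the definition of the recursion. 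You name the projectum fact as "the key fine-structural input," but your actual block argument never transfers a master code onto a low-complexity copy and never applies Lemma~\ref{lem:stair-with-jump} to it; without that step the within-block cost-uniformity you need (and with it your Claim that $J_\alpha$ is coded in $\mu^{(K_0)}$ with $K_0\leq(N+1)(3c+6)$) is unproven, so the proposal has a genuine gap at its core.
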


\begin{proof}
We fix $R$ and $m_R$ so that, when $R$ is interpreted as a
pair of reals, $R$ is the canonical copy of some standard $J$-structure
$\Tup{J_{\rho^{m_R}_\alpha}, A^{m_R}_\alpha}$, where $\rho^{m_R}_\alpha = 1$. We assume for
the sake of a contradiction that $R$ is $G(N)$-random with respect to a
continuous measure $\mu$.

\medskip
To obtain a contradiction similar to the proofs of
Propositions~\ref{pro:jump-non-random-1} and \ref{pro:jump-non-random-2}, we
inductively define a hierarchy of indices of (pseudo)copies arithmetic in $\mu$.

\begin{defn}
  For each $k$ with $0 \leq k \leq N$, we let
\begin{multline*}
  S_k = \{ e \in \omega\colon \Phi^{\mu^{(k(3c+6))}}_e \text{ is total  and } \Phi^{\mu^{(k(3c+6))}}_e \in \PC_N(\mu^{(k(3c+6))}) \\
  \qquad \text{ and  for all $d < e$, $\Phi^{\mu^{(k(3c+6))}}_d \neq \Phi^{\mu^{(k(3c+6))}}_e$ } \}.
\end{multline*}
  The relation $\preceq_N$ induces an ordering on the indices in each $S_k$, which will be denoted by $\preceq_N$, too. The linearly ordered subsets corresponding to $\PC^*_N$ are given as
  \[
    L_k = \{e \in S_k \colon \Phi^{\mu^{(k(3c+6))}}_e \in \PC^*_N(\mu^{(k(3c+6))})\}.
  \]
  Finally, we let
  \[
    I_k = \{ e \in S_k \colon \Phi^{\mu^{(k(3c+6))}}_e \text{ is well-founded} \}.
  \]
\end{defn}

By Lemma~\ref{lem:lin-pseudocode}, the sets $S_k$ and $L_k$ are arithmetic in $\mu^{(k(3c+6))}$, for all $k \leq N$. In particular, by choice of $c$, $L_k$ is recursive in $\mu^{(k(3c+6)+c)}$. 

The following lemma shows that $I_k$ is the longest well-founded initial segment of $L_k$.

\begin{lem}\label{lem:longest-wf-segment}
  Given $k \leq N$, let $I$ be a well-founded initial segment of $L_k$. Then, for every $e \in I$, $\Phi^{\mu^{(k(3c+6))}}_e$ is a well-founded pseudocopy.
\end{lem}

\begin{proof}[Proof of Lemma~\ref{lem:longest-wf-segment}]
    For any $i \in S_k$, denote by $M_e$ the structure coded by $\Phi^{\mu^{(k(3c+6))}}_e$. Since $I$ is well-founded, we can use induction on the ordinal height of $I$. Let $e \in I$. As induction hypothesis assume that for any $d \prec_N e$, $M_d$ is well-founded. Since $\Phi^{\mu^{(k(3c+6))}}_e$ is a pseudocopy, $M_e$ satisfies $\varphi_{V=L}$. Suppose $x$ is an element of $M_e$. Since $M_e$ satisfies $\varphi_{V=L}$, $x$ must be in the rud-closure of a member $y$ of $M_e$'s internal $J$-hierarchy. $y$ is transitive in $M_e$. Similarly, the rud-closure of $y$ is transitive in $M_e$ (it may be all of $M_e$). 
    
    By Lemma~\ref{lem:lin-pseudocode}, there is a $d \prec_N e$ such that $y$ with the membership relation of $M_e$ is isomorphic to $M_d$. The rud-closure of $M_d$ is isomorphic to the rud-closure of $y$. Hence the rud-closure of $y$ is well-founded. Therefore, $x$ belongs to the rud-closure of $y$, which is transitive in $M_e$ and well-founded.
\end{proof}

Note that, conversely, if an $\omega$-copy of $J_\beta$ is recursive in $\mu^{(k(3c+6))}$, $J_\beta$ will be indexed in $L_k$, since we only exclude ill-founded structures when passing to $\PC^*_N(\mu^{(k(3c+6))})$. Since it is well-founded, it will be indexed in $I_k$, too.

We will need an additional property of $I_k$.

\begin{lem} \label{lem:Ik_1}
  If $J_\beta$ is represented in $I_k$, then $\beta < \alpha$.
\end{lem}

\begin{proof}[Proof of Lemma~\ref{lem:Ik_1}]
  Suppose $J_\alpha$ is represented in $I_k$. By Lemma~\ref{lem:stair-J-successor}, there exists an $\omega$-copy $\XM$ of $\JA{m_R}$ recursive in $\mu^{(k(3c+6)+c)}$. (Recall $m_R$ is such that $\rho^{m_R}_\alpha =1$ and $R$ represents $\Tup{J_1, A^{m_R}_\alpha}$.) Comparing the canonical encoding of $J_1$ with $X$, we obtain that $R$ is recursive in $\mu^{(k(3c+6)+2c)}$, contradicting the randomness of $R$.  
\end{proof}

We continue the proof of Theorem~\ref{thm:mastercodes-in-NCR}. We apply Lemma~\ref{lem:recog-wf-segment} to $L_k$ to conclude $I_k$ is recursive in $\mu^{(k(3c+6)+c+4)}$ as follows. Since $R$ is a canonical copy, by Corollary~\ref{cor:effective-copies-from-above} it computes an $\omega$-copy of $J_\alpha$. We can use that copy to test for any pseudocopy $M$
with an index in $L_k$ whether $M$ embeds into $J_\alpha$ (using Corollary~\ref{cor:iso-arithmetic}). This can be done
recursively in $(\mu^{(k(3c+6))} \join R)^{(c)}$ by Lemma~
\ref{lem:lin-pseudocode} and the choice of $c$. By Lemma~\ref{lem:longest-wf-segment}, every pseudocopy with an index in $I_k$ will embed into $J_\alpha$.
Consequently, $I_k$ is recursive in $(\mu \join R)^{(k(3c+6)+c)}$. By choice of
$G$, $R$
is at least
$(k(3c+6)+c+5)$-random for $\mu$, so Lemma~\ref{lem:recog-wf-segment} implies
that $I_k$ is
recursive in $\mu^{(k(3c+6)+c+4)}$.

\medskip
Next we define by recursion a sequence of ordinals $\gamma_0, \ldots, \gamma_K$,
where $K$ is at most $N+1$. 

\begin{itemize}
  \item Let $\gamma_0 = \omega$ and $\xi_0 = 1$.
  \item Given $\gamma_k$,
we check whether $\gamma_k$ is a cardinal in each of the structures represented in $I_k$ to which it belongs. 
  \begin{itemize}
    \item If so, we let
\begin{align*}
  \qquad \gamma_{k+1}  & = \sup \{ \beta \colon \exists e \in I_k  \: ( \text{$\beta$ has cardinality at most $\gamma_k$} \\
  & \qquad \qquad\qquad\text{in the structure indexed by $e$})\}, \\
  \xi_{k+1} & = \sup \{ \beta \colon J_\beta \text{ is represented in } I_k \},
\end{align*}
and we continue the recursion. 
  
  \item Otherwise,  $\gamma_k$ is not a cardinal inside some structure $J_\delta$ represented in $I_k$. 
Since the recursion made it to step $k$, $\delta$ is greater than any $\beta$ such that $J_\beta$ has a representation in $I_{k-1}$. We terminate the recursion and let $K = k$. 

  \end{itemize}

\item If we reach $\gamma_{N+1}$, we terminate the recursion.
\end{itemize}
 
By way of an example, consider $\gamma_1$.  It is the supremum of the ordinals which appear and are seen to be countable in one of the structures indexed in $I_0$.  Either it is the $\omega_1$ of cofinally many of these structures or it is the supremum of their ordinals.  If at step $k+1$ there were a $J_\delta$ indexed in $I_k$ in which $\gamma_1$ is countable, and (by a condensation argument for the $J$-hierarchy as in Section~\ref{sub:projecta}) every $\gamma_j$ with $j\leq k$ is similarly countable, then the recursion ends.  For $\beta$ between $\gamma_1$ and the least such $\delta,$ $\gamma_1=(\omega_1)^{J_\beta}.$  More generally, for all $k\leq K$, and $\beta$ between $\gamma_k$ and the least $\delta$ as in the specification of the recursion,  $\gamma_k=(\omega_k)^{J_\beta}$.

\begin{lem} \label{lem:xi_k}
  For every $0 \leq k \leq K$, $J_{\xi_k+1}$ is represented in $I_k$.
\end{lem}

\begin{proof}[Proof of Lemma~\ref{lem:xi_k}]
  We first prove that $J_2$ is represented in $I_0$. The canonical copy of $J_1$ is recursive. By Lemma~\ref{lem:arith-S-tower}, there exists an $\omega$-copy of $J_2$ recursive in $\mu^{(c)}$. Note that $\rho_2 = 1$, hence the standard $J$-structure of $J_2$ is of the form $\Tup{J_1, A_2}$. The canonical copy of $\Tup{J_1,A_2}$ is recursive in $R$ by Lemma~\ref{cor:effective-copies-from-above}. By Lemma~\ref{lem:stair-J-successor}, there exists an $\omega$-copy of $\Tup{J_1,A_{2}}$ recursive in $\mu^{(2c)}$. The isomorphism between the $J_1$ of this copy and the canonical copy of $J_1$ is recursive in $\mu^{(3c)}$. Therefore, by Lemma~\ref{prop-non-accel}, the canonical copy of $\Tup{J_1,A_{2}}$ is recursive in $\mu$. By Corollary~\ref{cor:effective-copies-from-above}, there exists an $\omega$-copy of $J_2$ recursive in $\mu$.

  Now assume $k > 0$.
  We show that $J_{\xi_k}$ has a representation recursive in $\mu^{((k-1)(3c+6)+c)}$ as follows.
  If $\xi_k$ is the maximum of the $\beta$ for which $J_\beta$ is represented in $I_{k-1}$, there is an $\omega$-copy of $J_{\xi_k}$ recursive in $\mu^{((k-1)(3c+6))}$. Otherwise, since $I_{k-1}$ is recursive in $\mu^{(k-1)(3c+6)+4},$ Lemma~\ref{lem:limit-copy} implies that there exists an $\omega$-copy of $J_{\xi_k}$ recursive in $\mu^{((k-1)(3c+6)+4+c)}$. 
  
  Next, we can apply Lemma~\ref{lem:arith-S-tower} to obtain an $\omega$-copy of  $J_{\xi_k+1}$ recursive in $\mu^{((k-1)(3c+6)+4+2c)}$, which implies that it recursive in $\mu^{k(3c+6)}$ and hence that it is represented in $I_k$.
\end{proof}

The lemma implies that for each $i < K$, $\gamma_i$ appears as a cardinal in some structure represented in $I_i$.

\medskip
\emph{Case 1:} $K < N+1$, that is, the recursion terminates early.  Let $\delta$ be fixed to be least as it appeared in the definition of the $\gamma$ and $\xi$ sequences.

\smallskip
In this case 
either the ultimate projectum $\rho_\delta$ of $J_\delta$ is $1$ or there is
an $1< i < K$ such that $\rho_\delta$ is $\gamma_{i}$. This is because for every infinite ordinal less than $\gamma_K$, there is a structure represented in $I_{K-1}$ in which this ordinal is in one-to-one correspondence with some $\gamma_i$, $i< K$.

\medskip
\emph{Case 1a:} $\rho_\delta = 1$.
The canonical copy of $J_1$ is recursive. By Lemma~\ref{lem:stair-J-successor},
there exists an $\omega$-copy of $\Tup{J_{1}, A_\delta}$ recursive in $\mu^{(K
(3c+6)+c)}$. Lemma~\ref{pro:compute-canonical-copy} implies that the canonical copy $\XM$ of $\Tup{J_{1}, A_\delta}$ is recursive in $\mu^{(K (3c+6)+c+1)}$. Since $\alpha >
\delta$, $R$ computes $\XM$, by
Lemma~\ref{prop:copy-computes-below}. By Lemma~\ref{prop-non-accel}, $\mu$ computes $\XM$. Since $\XM$ is an effective copy, Corollary~\ref{cor:effective-copies-from-above} implies $\mu$ computes an $\omega$-copy of $J_\delta$. But this means $J_\delta$ is represented in $I_0$, which contradicts the definition of $\gamma_1$.

\medskip
\emph{Case 1b:} $\rho_\delta=\gamma_i > 1 $. We established above that $I_{i-1}$ is recursive in $\mu^{((i-1)(3c+6)+c+4)}.$ 
Then by
Lemma~\ref{lem:limit-copy}, there exists an $\omega$-copy $Y$ of $J_{\rho_\delta}$ recursive in $\mu^{((i-1)(3c+6)+2c+4)}$.

Since $J_\delta$ is represented in $I_K$, there exists an
$\omega$-copy of $J_\delta$ recursive in $\mu^{(K(3c+6))}$. By 
Lemma~\ref{lem:stair-J-successor}, there exists an $\omega$-copy $\Tup{X,M_X}$ of
$\Tup{J_{\rho_\delta}, A_\delta}$ recursive in $\mu^{(K (3c+6)+c)}$.
We think of $Y$ as a simple representation of $J_{\rho_\delta}$ relative to $\mu$, whereas $X$ is a complicated representation. 

By Lemma~\ref{lem:isomophism-arithmetic-PCs} and choice of $c$, we can retrieve the isomorphism between $X$'s and $Y$'s representations of $J_{\rho_\delta}$ recursively in $\mu^{(K 
(3c+6)+c+c)} = \mu^{(K
(3c+6)+2c)}$ and transfer $M_X$ (the coding of $A_\delta$ in $X$)
to $Y$. This gives us an $\omega$-copy $\Tup{Y,M_Y}$ of $\Tup{J_{\rho_\delta}, A_\delta}$
recursive in $\mu^{(K (3c+6)+2c)}$.

Since $\alpha> \delta$, $R$ computes, by
Lemma~\ref{prop:copy-computes-below}, another $\omega$-copy of
$\Tup{J_{\rho_\delta}, A_\delta}$, say $\Tup{Z,M_Z}$. This copy is trivial relative to $R$. Using at most $c$ jumps, the join of
$R$ and $\mu^{((i-1)(3c+6)+2c+4)}$ can compare $Y$ and $Z$ and map $M_Z$, the encoding of
$A_\delta$ in $Z$, to $M_Y$, the encoding of $A_\delta$ in $Y$. This way we obtain that the $\omega$-copy $\Tup{Y,M_Y}$
of
$\Tup{J_{\rho_\delta}, A_\delta}$ recursive in  $(R \join \mu^{((i-1) 
(3c+6)+2c+4)})^{(c)}$, i.e.\ recursive in $R \join \mu^{((i-1)
(3c+6)+3c+4)}$. By Lemma~\ref{lem:stair-with-jump}, $\Tup{Y,M_Y}$ is recursive in $\mu^{((i-1)(3c+6)+3c+4)}$.
The complicated predicate $M_Y$ on the simple structure $Y$ is in fact relatively simple.

By Corollary~
\ref{cor:copies-from-above}, $\mu^{((i-1)(3c+6)+3c+4 + 2)} = \mu^{(i(3c+6))}$
computes an
$\omega$-copy of $J_\delta$. But this implies $J_\delta$ is represented in
$S_{i}$. In particular, $J_\delta$ is represented in
$I_{i}$. This contradicts the fact that $\delta$ is greater than any $\beta$ such that $J_\beta$ has a representation in $I_{K-1}$.

\medskip
\emph{Case 2:} $K = N+1$.

\smallskip
The analysis is similar to Case 1. $\gamma_{N+1}$ is defined. Since $R$ cannot be represented in any $S_N$, $\gamma_{N+1} < \beta_N$. Hence $\gamma_{N+1}$ is not a cardinal in $J_{\gamma_{N+1}}$ and thus $J_{\gamma_{N+1}}$ must be projectable, say to $\gamma_i$. There is an $\omega$-copy of $J_{\gamma_{N+1}}$ recursive in $\mu^{(N(3c+6)+2c+4)}$. By the same argument as above, there is a $\omega$-copy of $\Tup{J_{\gamma_{i}}, A_{\gamma_{N+1}}}$ recursive in $\mu^{((i-1)(3c+6)+3c+4)}$, which in turn yields a an $\omega$-copy of $J_{\gamma_{N+1}}$ recursive in $\mu^{(i(3c+6))}$. This contradicts that $J_{\gamma_{N+1}}$ is not represented in any $I_i$, for $i \leq N$.

\medskip
This is sufficient to complete the proof of Theorem~\ref{thm:mastercodes-in-NCR}.
\end{proof}

\subsection{Finishing the proof of Theorem~\ref{thm:second-main}} 
\label{sub:finishing_the_proof_of_theorem_2}

We restate Theorem~\ref{thm:second-main}. Let $G(n)$ be the recursive function defined before the statement of Theorem~\ref{thm:mastercodes-in-NCR} in Section~\ref{sub:ncr-not-rand}. 

\begin{thm1.2a}
	For every $n \in \Nat$,
	\[
		\ZFC^{-}_n \nvdash \text{ ``$\NCR_{G(n)}$ is countable.'' }
 	\]
\end{thm1.2a}

\begin{proof}
    By Theorem \ref{thm:mastercodes-in-NCR}, the set $\mathcal{X}$ of canonical copies of standard $J$-structures is a subset of $\NCR_{G(n)}$. The non-existence of the representation of a measure with an arithmetic property is downwards absolute between $\omega$-models.  Therefore, to prove the theorem, it is sufficient to show that $\mathcal{X}$ is not countable in $L_{\beta_n}$.
    
    Every real inside $L_{\beta_n}$ is computable from some element in $\mathcal{X}$. This follows from Jensen's analysis of projecta: when a new real is constructed so is a  subset of $\omega$ that is a standard code for the $J$-structure in which the real appears.  By the usual Cantor diagonal argument, the set of reals in $L_{\beta_n}$ is uncountable in $L_{\beta_n}$, so $\mathcal{X}$ is not countable in $L_{\beta_n}$, too.

\end{proof}

\end{document}